\newenvironment{Proof of}[1]{\emph{Proof of #1.}}{$\qquad \square$\par}
\DeclareMathOperator{\hull}{hull}
\DeclareMathOperator{\dashind}{-Ind}
\DeclareMathOperator{\Prim}{Prim}
\DeclareMathOperator{\fin}{fin}
\DeclareMathOperator{\Prime}{Prime}
\DeclareMathOperator{\End}{End}
\DeclareMathOperator{\Pos}{Pos}
\DeclareMathOperator{\Irr}{Irr}
\DeclareMathOperator{\clsp}{\overline{span}}
\newcommand{\I}{\mathcal I}
\newcommand{\K}{\mathcal K}
\newcommand{\h}{\widehat h}
\newcommand{\VV}{\mathcal V}
\newcommand{\morp}{\varrho}
\newcommand{\LL}{\mathcal{L}}
\newcommand{\al}{\alpha}
\newcommand{\OO}{\mathcal O}
\newcommand{\B}{\mathcal B}
\newcommand{\SA}{\widehat{A}}
\newcommand{\SI}{\widehat{I}}
\newcommand{\SB}{\widehat{B}}
\newcommand{\C}{\mathbb C}
\newcommand{\J}{\mathcal J}
\newcommand{\Z}{\mathbb Z}
\newcommand{\N}{\mathbb N}
\newcommand{\T}{\mathbb T}
\newcommand{\TT}{\mathcal T}
\newtheorem{thm}{Theorem}[section]\newtheorem{lem}[thm]{Lemma} 
\newtheorem{prop}[thm]{Proposition} 
\newtheorem{cor}[thm]{Corollary}
\theoremstyle{definition} 
\newtheorem{defn}[thm]{Definition}
\newtheorem{ex}[thm]{Example}
\newtheorem{rem}[thm]{Remark}
\title[Pure infiniteness and ideal structure of cross-sectional $C^*$-algebras]{Pure infiniteness and ideal structure of $C^*$-algebras associated to Fell bundles}
\author{Bartosz Kosma  Kwa\'sniewski}
\address{Department of Mathematics and Computer Science, The University of Southern Denmark, 
Campusvej 55, DK--5230 Odense M, Denmark// Institute of Mathematics,  University  of Bia{\l}ystok\\
ul. Akademicka 2,  PL-15-267  Bia{\l}ystok,   Poland}
\email{bartoszk@math.uwb.edu.pl}
\author{Wojciech Szyma\'nski}
\address{Department of Mathematics and Computer Science, The University of Southern Denmark, 
Campusvej 55, DK--5230 Odense M, Denmark}
\email{szymanski@imada.sdu.dk}
\keywords{Fell bundle, topological freeness, aperiodicity, paradoxicality, exactness, pure infiniteness, ideals, cross-sectional algebra}
\subjclass[2010]{46L05}
\begin{document}
\begin{abstract}
We investigate structural properties of the reduced cross-sectional algebra $C^*_r(\B)$ of a Fell bundle $\B$ over a discrete group $G$.
Conditions allowing one to determine the ideal structure of $C^*_r(\B)$  are studied.  Notions of aperiodicity, paradoxicality and $\B$-infiniteness for the Fell bundle $\B$ are introduced and investigated by themselves and in relation to the partial dynamical system dual to $\B$. Several criteria of pure infiniteness of  $C^*_r(\B)$ are given. It is shown that they generalize and unify  corresponding results obtained in the context of crossed products,  by the following duos: Laca, Spielberg \cite{Laca-Spiel}; Jolissaint, Robertson \cite{Joli-Robert};  Sierakowski, R{\o}rdam  \cite{rordam_sier};  Giordano, Sierakowski \cite{gs} and 
Ortega, Pardo \cite{Ortega-Pardo}.

For exact, separable Fell bundles satisfying the residual intersection property primitive ideal space of $C^*_r(\B)$ is determined. The results of the paper are shown to be optimal when applied to graph $C^*$-algebras. Applications to a class of Exel-Larsen crossed products are presented. 
\end{abstract}
\maketitle

\setcounter{tocdepth}{1}
%\tableofcontents

%%%%%%%%%%%%%%%%%%%%%%%%%%%%%%%%%%%%%%%%%%%%%%%%%%%%%%
\section{Introduction}

Many of $C^*$-algebras studied in literature are equipped with a natural additional structure which can be used to study their properties. This structure can be exhibited by a  group co-action (or a group action if the underlying group is abelian) or  more generally  by a group grading of the 
$C^*$-algebra. It allows one to investigate the $C^*$-algebra by means of the associated Fell bundle of subspaces determining the grading.  
Fell bundles over discrete groups  proved to be  a  convenient framework for studying  crossed products corresponding to global or partial group actions, and were successfully applied to  diverse classes of $C^*$-algebras, \cite{exel-book}, \cite{gs}, \cite{AA}. 
Moreover,  the approach based on Fell bundles  has recently gained  an increased interest in an analysis of $C^*$-algebras associated to generalized graphs \cite{BSV}, Nica-Pimsner algebras \cite{CLSV}, and Cuntz-Pimsner algebras  \cite{kwa-szym}, \cite{AM} associated to product systems over semigroups. 
We remark that,  in contrast to most of applications in  \cite{exel-book}, \cite{gs}, \cite{AA}, in the latter case the core $C^*$-algebra corresponding to the unit in the group, as a rule, is non-commutative. 
The present paper is devoted to investigations of the ideal structure, pure infiniteness  and related features of the reduced cross-sectional algebras $C^*_r(\B)$ 
arising from a Fell bundle $\B=\{B_t\}_{t\in G}$ over a discrete group $G$ with the unit fiber $B_e$ being genuinely a non-commutative $C^*$-algebra. 
One of our primary aims is to give  convenient $C^*$-dynamical conditions on $\B$ that lead to a coherent treatment 
unifying various approaches to  pure infiniteness of crossed products by group actions \cite{Laca-Spiel}, \cite{Joli-Robert}, \cite{rordam_sier}, \cite{gs}, and that are applicable to $C^*$-algebras arising from semigroup structures. Actually, for a class of Fell bundles we consider, the $C^*$-algebra $C^*_r(\B)$ has the ideal property, and it is known that in the presence of this property pure infiniteness  \cite[Definition 4.1]{kr} is equivalent to strong pure infiniteness \cite[Definition 5.1]{kr2}. Additionally, if $C^*_r(\B)$ is separable  we provide a  description of the primitive spectrum of $C^*_r(\B)$. This together with known criteria for nuclearity of $C^*_r(\B)$, cf.  \cite[Proposition 25.10]{exel-book}, form  a full toolkit  for producing and analyzing graded $C^*$-algebras  that undergo Kirchberg's classification (up to stable isomorphism)  via ideal system equivariant  KK-theory \cite{kirchberg}.

In order to detect pure infiniteness of a non-simple $C^*$-algebra, one needs to understand its ideal structure. The general algebraic necessary and sufficient conditions assuring that the ideals in the ambient algebra are uniquely determined by their intersection with the core are known. These conditions are exactness and the residual intersection property. They were introduced in the context of crossed products in \cite{s}, then generalized to partial crossed products in \cite{gs} and  to cross-sectional algebras in \cite{AA}. We give a metric characterisation of the intersection property using a notion of topological grading, and we shed  light on the notion of exactness of a Fell bundle $\B=\{B_t\}_{t\in G}$ by characterising it in terms of graded and Fourier ideals in  $C^*_r(\B)$. 

An important dynamical condition implying  the (residual) intersection property  of $\B$ is (residual) topological freeness of a dual partial dynamical system $(\{\widehat{D}_t\}_{t\in G}, \{\h_t\}_{t\in G})$ defined on the spectrum $\widehat{B}_e$  of the core $B_e$. This result is well-known for crossed products, cf. \cite{Arch_Spiel}. Recently, it was generalized to cross-sectional algebras of saturated Fell bundles by the authors of the present paper \cite{kwa-szym}, and to general Fell bundles by Beatriz Abadie and Fernando Abadie \cite{AA}. 
 The system $(\{\widehat{D}_t\}_{t\in G}, \{\h_t\}_{t\in G})$ is very useful in investigation of the ideal structure of $C^*_r(\B)$. In particular, it factorizes to a partial dynamical system on the primitive spectrum $\Prim(B_e)$ of $B_e$, and we show that for exact, separable Fell bundles satisfying the residual intersection property the primitive ideal space of $C^*_r(\B)$ can be identified with the quasi-orbit space of this dual action on $\Prim(B_e)$. We show below that this result applied to  graph $C^*$-algebras $C^*(E)$  with their natural $\Z$-gradings  gives a new way of determining primitive ideal space  of $C^*(E)$ for an arbitrary graph $E$ satisfying Condition (K). The latter description was originally obtained in \cite{bhrs} by different methods.

In general, the aforementioned dual system is not  well suited for  determining pure infiniteness of $C^*_r(\B)$, as it gives no control on positive elements.    Therefore we introduce a concept of 
aperiodicity for  Fell bundles, which is related to the aperiodicity condition for $C^*$-correspondences 
introduced by Muhly and Solel in \cite{MS}. One should note that the origins of this notion go back 
to the work of Kishimoto \cite{kishimoto} and Olesen and Pedersen \cite{OlPe} where the close relationship between this condition and properties of the Connes spectrum were revealed. More recently, similar aperiodicity conditions were investigated 
in the context of partial actions by Giordano and Sierakowski in \cite{gs}. 
 The precise relationship between aperiodicity and  topological freeness is not clear, however we prove that, under the additional hypothesis that  the primitive ideal space of $B_e$ is Hausdorff, topological freeness of the partial  dynamical system on $\Prim(B_e)$ implies aperiodicity of $\B$.
We show that  a Fell bundle associated to a graph $E$ is aperiodic if and only if $E$ satisfies Condition (L).

Exploiting ideas of R\o rdam and Sierakowski \cite{rordam_sier}, modulo observations made in \cite{kwa-endo}, we prove that if a Fell bundle $\B$ is exact, residually aperiodic, and $B_e$ has the ideal property or contains finitely many $\B$-invariant ideals\footnote{In the initially submitted manuscript we considered only the case when $B_e$ has the ideal property}, then $C^*_r(\B)$  has the ideal property and  pure infiniteness of  $C^*_r(\B)$ is equivalent to proper infiniteness  of every non-zero positive element in $B_e$ (treated as an element in $C^*_r(\B)$). If additionally $B_e$ has real rank zero then  pure infiniteness of  $C^*_r(\B)$ is equivalent to proper infiniteness of every non-zero projection in $B_e$. One can find many  different dynamical conditions implying proper infiniteness of every non-zero positive element in $B_e$. For instance, in the context of group action this holds  for strong boundary actions \cite{Laca-Spiel}, $n$-filling actions \cite{Joli-Robert}, and paradoxical actions \cite{rordam_sier}, \cite{gs}. We note that $n$-filling actions generalize strong boundary actions and are necessarily minimal and paradoxical actions. However, paradoxical actions  considered in \cite{rordam_sier}, \cite{gs}, are acting on totally disconnected spaces, while actions studied in \cite{Laca-Spiel}, \cite{Joli-Robert} do not have this restriction.  The notion of a paradoxical set can be naturally generalized to the setting of Fell bundles, and we define  $\B$-paradoxical elements for an arbitrary Fell bundle $\B$.  However,  we found that in general $\B$-paradoxicality is hard to be checked  in practice. Therefore we also introduce a weaker notion of residually $\B$-infinite elements in $B_e$.  We prove that if, in addition to previously mentioned assumptions on $\B$, every non-zero positive element  in $B_e$ is Cuntz equivalent to a residually infinite element, then $C^*_r(\B)$ is purely infinite.
This result can be viewed as a strengthening and unification of all the aforementioned results, as we show that for $n$-filling actions considered in \cite{Joli-Robert} every non-zero positive element in $B_e$ is residually $\B$-infinite for the corresponding Fell bundle. Moreover, we prove that for a graph $C^*$-algebra and the associated Fell bundle our conditions for pure infiniteness are not only sufficient but also necessary.  

Apart from already mentioned applications to partial crossed products and graph $C^*$-algebras, we use  the results of the present paper to study semigroup corner systems $(A,G^+,\alpha,L)$ and their crossed products. These objects are important as they lie on the intersection of various approaches to semigroup crossed products. We explain below that $(A,G^+,\alpha,L)$  can be equivalently treated as an Exel-Larsen system \cite{Larsen}, a semigroup of endomorphisms $\alpha=\{\alpha_t\}_{t\in G^+}$, a semigroup of retractions (transfer operators) $L=\{L_t\}_{t\in G^+}$, or a group interaction $\VV=\{\VV_g\}_{g\in G}$ in the spirit of \cite{exel4}. The  semigroup $G^+$ we consider is a positive cone in a totally ordered abelian group $G$, and the maps act on an arbitrary $C^*$-algebra $A$. To any corner system $(A,G^+,\alpha,L)$ we associate a Fell bundle $\B$ and define the corresponding crossed product $A\rtimes_{\alpha,L}G^+$ to be the cross-sectional algebra $C^*(\B)$. Then we identify $A\rtimes_{\alpha,L}G^+$ as a universal $C^*$-algebra with respect to certain representations. Thus we see that in the unital case,  $A\rtimes_{\alpha,L}G^+$ coincide with the crossed product  constructed, using more direct methods, in \cite{Kwa-Leb}. We also conclude that $(A,G^+,\alpha,L)$ coincides with Exel-Larsen crossed product introduced in \cite{Larsen}.  We manage to formulate in a natural  way the  constructions and results for the Fell bundle $\B$  in terms of the systems $\alpha$, $L$ and $\VV$. This gives us  several completely new results, including  description of ideal structure,  the primitive ideal space of $A\rtimes_{\alpha,L}G^+$, and criteria for pure infiniteness of $A\rtimes_{\alpha,L}G^+$. In particular, in the case $G^+=\N$, our pure infiniteness criteria imply the result of Ortega and Pardo  \cite{Ortega-Pardo}, cf. Remark \ref{Eduard remark} below.

The paper is organized as follows.

After some preliminaries, in Section 3, we discuss notions of exactness, intersection property, and topological 
freeness for a Fell bundle, recently introduced in the context of bundles in \cite{AA}. In particular, we give
 convenient characterizations of exactness (Proposition \ref{characterization of exactness}) and the intersection property (Proposition 
\ref{intersection and uniqueness for Fell bundles}). 

In Section 4  we study the concept of 
aperiodicity for Fell bundles. We note that aperiodicity of a Fell bundle implies the intersection property (Corollary 
\ref{aperiodicity imply intersection property}) and indicate its relation to topological freeness (Proposition \ref{twierdzenie do sprawdzenia}).  In the main result of this section (Theorem 
\ref{pure infiniteness for general Fell bundles}) we give a characterization of pure infiniteness of the 
reduced cross-sectional algebra $C^*_r(\B)$ of an exact, residually aperiodic Fell bundle $\B$ whose unit fiber $B_e$  
 has the ideal property. 

Section 5 is devoted to investigation of conditions implying proper infiniteness of elements in the core $B_e$ of the cross sectional $C^*$-algebra $C^*_r(\B)$.  We introduce $\B$-paradoxical elements and  closely related residually $\B$-infinite elements for a  Fell bundle $\B$. We clarify the relationship between these notions and other conditions of this type studied in the literature. The main result of this section (Theorem 
\ref{pure infiniteness for paradoxical Fell bundles}) contains a criterion of pure infiniteness of the reduced 
cross-sectional algebra of a Fell bundle $\B$, phrased in terms of $\B$-infinite elements.

We describe the primitive ideal space of $C^*_r(\B)$ in Section 6. 
More specifically, the main result of this section (Theorem 
\ref{Primitive ideal space description}) identifies the primitive ideal space of the reduced cross-sectional 
algebra of a separable, exact Fell bundle $\B$ satisfying the residual intersection property both with the space 
of $\B$-primitive ideals of the trivial fiber (cf. Definition \ref{B-primitive}) and with the quasi-orbit space 
associated to the partial action dual to $\B$. 

In Section 7, we test the results of this paper against graph $C^*$-algebras $C^*(E)$ equipped with their natural grading over $\Z$. 
We show that aperiodicity of the associated Fell bundle is equivalent to Condition (L) on the graph $E$. Likewise, residual aperiodicity of that bundle is equivalent 
to Condition (K) on the graph.  We use our general results to get an alternative way of determining the primitive ideal space of $C^*(E)$ for $E$ satisfying Condition (K)  (Corollary \ref{primitives for graphs}). Finally, we show that our criterion of pure infiniteness is optimal in the sense 
that in the case of graph $C^*$-algebras it is not only sufficient but also necessary  (Theorem 
\ref{pure infiniteness of graph algebras}). 

In Section 8, we present various equivalent points of view on a semigroup corner system $(A,G^+,\alpha,L)$. We associate to $(A,G^+,\alpha,L)$  a Fell bundle $\B$   (Proposition \ref{Fell bundle of corner system}). This allows us to define the crossed product $A\rtimes_{\alpha,L}G^+$ as a cross sectional algebra of $\B$ (Definition \ref{definition of semigroup crossed product}). We describe $A\rtimes_{\alpha,L}G^+$  as a universal $C^*$-algebra for certain representations of $(A,G^+,\alpha,L)$  (Theorem \ref{Description  by generators and relations}) and we show it is isomorphic to Exel-Larsen crossed product (Corollary \ref{Exel-Larsen crossed product}). The main structural results on $A\rtimes_{\alpha,L}G^+$ are criteria of faithfulness of representations of  $A\rtimes_{\alpha,L}G^+$, description of ideal structure and primitive ideal space (Theorem \ref{first of main results of the section}),   and the criteria for pure infiniteness of $A\rtimes_{\alpha,L}G^+$ (Theorem \ref{pure infiniteness for paradoxical corner systems}).

\subsection{Acknowledgements}
The first named author was supported by a Marie Curie Intra European Fellowship within the 7th
European Community Framework Programme FP7-PEOPLE-2013-IEF; project `OperaDynaDual' number 621724 (2014-2016). The second named author was supported by the FNU Project Grant `Operator algebras, dynamical systems and quantum information theory' (2013--2015) and by Villum Fonden Research Grant  `Local and Global Structures of Groups and their Algebras'  (2014-2018).

%%%%%%%%%%%%%%%%%%%%%%%%%%%%%%%%%%%%%%%%%%%%%%%%%%%%%%%%%
\section{Preliminaries}

\subsection{$C^*$-algebras, positive elements and  ideals}
Let  $A$ be a $C^*$-algebra. By $1$ we  denote  the unit in the multiplier $C^*$-algebra $M(A)$.  All ideals in  $C^*$-algebras  are assumed to be closed and two-sided. All homomorphisms between $C^*$-algebras are by definition $*$-preserving.  For  actions $\gamma\colon A\times B\to C$ 
such as  multiplications, inner products, etc., we use the notation:
\begin{equation}\label{stupid convention for lazy people}
\gamma(A,B)=\clsp\{\gamma(a,b) :a\in A, b\in B\}.
\end{equation}
%Occasionally, we will also write $\overline{\gamma(A,B)}:=\gamma(A,B)$.
The set of positive elements in a $C^*$-algebra $A$ is denoted by $A^+$. In \cite{Cuntz2}, Cuntz introduced a preorder $\precsim$ on $A^+$, which nowadays is called \emph{Cuntz comparison}, cf., for instance, \cite{kr}. Namely, for any $a, b\in A^+$ we write $a \precsim b $ whenever there exists a sequence $\{x_k\}_{k=1}^\infty$ in $A$ with $x^*_k b x_k \to a$. We say two elements $a,b\in A^+$ are \emph{Cuntz equivalent} if both $a \precsim b$ and $b\precsim a$ holds.  We recall, see \cite[Definition 3.2]{kr}, that an element $a\in A^+$ is \emph{infinite} if there is $b\in A^+\setminus \{0\}$ such that $a\oplus b \precsim a\oplus 0$  in the matrix algebra $M_2(A)$.  An $a\in A^+\setminus\{0\}$ is \emph{properly infinite} if $a\oplus a \precsim a\oplus 0$. We have the following simple characterisations of these notions,  cf. \cite[Proposition 3.3(iv)]{kr}. We write $\approx_{\varepsilon}$ to indicate that $\|a- b\|< \varepsilon$, for $a,b\in A$.
\begin{lem} If  $a\in A^+\setminus\{0\}$ then 
\begin{equation}\label{infiniteness characterisation}
a\textrm{ is infinite }\Longleftrightarrow \,\, \exists_{b\in A^+\setminus\{0\}} \forall_{\varepsilon >0}\,\, \exists_{x,y\in aA}\quad x^*x\approx_{\varepsilon} a, \quad y^*y\approx_{\varepsilon} b, \quad x^*y\approx_{\varepsilon} 0,
\end{equation}
\begin{equation}\label{proper infiniteness}
a\textrm{ is properly infinite }\Longleftrightarrow \,\, \forall_{\varepsilon >0}\,\, \exists_{x,y\in aA}\quad x^*x\approx_{\varepsilon} a, \quad y^*y\approx_{\varepsilon} a, \quad x^*y\approx_{\varepsilon} 0.
\end{equation}
\end{lem}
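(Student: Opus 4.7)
The two equivalences are parallel, so I would treat them together after unfolding the definition of Cuntz subequivalence in $M_2(A)$. Writing $z = \begin{pmatrix} \alpha & \beta \\ \gamma & \delta \end{pmatrix} \in M_2(A)$, a direct computation gives
\[
z^*(a\oplus 0)z \;=\; \begin{pmatrix} \alpha^*a\alpha & \alpha^*a\beta \\ \beta^*a\alpha & \beta^*a\beta \end{pmatrix},
\]
so the entries $\gamma,\delta$ are irrelevant, and the relation $a\oplus b \precsim a\oplus 0$ amounts to the existence of sequences $\alpha_k,\beta_k\in A$ with
\[
\alpha_k^* a\alpha_k\to a, \qquad \beta_k^* a\beta_k\to b,\qquad \alpha_k^* a\beta_k\to 0
\]
(and the same with $b=a$ for proper infiniteness). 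Both equivalences in the lemma then reduce to trading such sequences for elements $x,y\in aA$ satisfying the analogous approximate identities.

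The ``$\Leftarrow$'' direction is straightforward. For $\varepsilon>0$ and $x=ac$, $y=ad$ in $aA$ as in the right-hand side, set $\alpha:=a^{1/2}c$, $\beta:=a^{1/2}d$. Then $\alpha^*a\alpha = c^*a^2c = x^*x\approx_\varepsilon a$, and likewise $\beta^*a\beta = y^*y\approx_\varepsilon b$ and $\alpha^*a\beta = x^*y\approx_\varepsilon 0$. Letting $\varepsilon$ run through a null sequence produces the data realising $a\oplus b\precsim a\oplus 0$.

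For ``$\Rightarrow$'' I would first pass to $u_k := a^{1/2}\alpha_k$ and $v_k := a^{1/2}\beta_k$, which satisfy $u_k^*u_k\to a$, $v_k^*v_k\to b$, $u_k^*v_k\to 0$, but a priori lie only in $a^{1/2}A$. The main obstacle, and essentially the only analytic content, is to replace $u_k,v_k$ by nearby elements of $aA$. This is handled by the functional-calculus fact $a^{1/2}\in\overline{aA}$: the continuous functions $f_n(t):=t/(t^{1/2}+1/n)$ factor as $t\cdot g_n(t)$ with $g_n\in C_0(\sigma(a))$ and converge uniformly to $t^{1/2}$ on $\sigma(a)$, so $f_n(a)\in aA$ and $f_n(a)\to a^{1/2}$, giving $a^{1/2}A\subseteq\overline{aA}$. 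Given $\varepsilon>0$, I would fix $k$ large enough that the three convergences hold to within $\varepsilon/2$, and then norm-approximate $u_k,v_k$ by $x,y\in aA$. The uniform bound $\|u_k\|^2 = \|\alpha_k^*a\alpha_k\|$ is eventually $\le\|a\|+1$, and combined with the routine estimate $\|x^*x-u^*u\|\le(\|x\|+\|u\|)\|x-u\|$ (and the analogue for mixed products) this makes $x^*x\approx_\varepsilon a$, $y^*y\approx_\varepsilon b$, $x^*y\approx_\varepsilon 0$ as required. The argument is essentially \cite[Proposition~3.3(iv)]{kr}, adapted to produce the witnessing elements explicitly in the right ideal $aA$, which is the form needed later in the paper.
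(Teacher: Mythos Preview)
Your argument is correct and essentially identical to the paper's: both compute $z^*(a\oplus 0)z$ and then pass between pairs $(\alpha,\beta)$ and $(x,y)=(a^{1/2}\alpha,a^{1/2}\beta)$. The only cosmetic difference is placement of the approximation step: the paper, using its convention \eqref{stupid convention for lazy people} that $aA$ denotes the \emph{closed} span, simply sets $x:=a^{1/2}s\in aA$ in ``$\Rightarrow$'' and does the density step in ``$\Leftarrow$'', whereas you read $aA$ algebraically, making ``$\Leftarrow$'' immediate (your $x=ac$) and pushing the functional-calculus approximation $a^{1/2}\in\overline{aA}$ into ``$\Rightarrow$''---the two versions are mirror images of one another.
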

\begin{proof} We  only show \eqref{infiniteness characterisation}. If $a$ is infinite, then there is $b\in A^+\setminus\{0\}$ such that for $\varepsilon >0$ there is a matrix 
$
z=\left(\begin{array}{c c} s  & t 
\\
 *  &* \end{array}\right)$ such that $z^* (a\oplus 0) z \approx_{\varepsilon} a\oplus b$. The last relation  implies that $s^*as\approx_{\varepsilon} a$,   $t^*at\approx_{\varepsilon} b$ and $s^*a t\approx_{\varepsilon} 0$. Hence putting $x:=a^{1/2}s$ and $y:=a^{1/2}t$ we get $x,y\in aA$ such that $x^*x\approx_{\varepsilon} a$, $y^*y\approx_{\varepsilon} b$ and $x^*y\approx_{\varepsilon} 0$. 
Now assume the condition on the right hand side of \eqref{infiniteness characterisation}. Then there is  $b\in A^+\setminus\{0\}$ and sequences $\{x_n\}\subseteq aA$, $\{y_n\}\subseteq aA$ such that $x_n^*x_n\to a$, $y_n^*y_n \to b$ and $x_n^*y_n \to 0$. Since $\{x_n\}, \{y_n\}\subseteq a^{1/2}A$ we can  find $s_n$ and $t_n$ in $A$ with $\|a^{1/2}s_n - x_n\|\to 0$ and $\|a^{1/2}t_n - y_n\|\to 0$. Then $s_n^*a s_n\to a$, $t_n^*at_n \to b$ and $s_n^*at_n \to 0$. Put $
z_n=\left(\begin{array}{c c} s_n  &t_n 
\\
0  & 0 \end{array}\right)$. Then $z_n^* (a \oplus 0) z_n \to a\oplus b$, showing that $a\oplus b \precsim a \oplus 0$.
\end{proof}
We will often exploit \cite[Proposition 3.14]{kr}  which says that $a\in A^+\setminus\{0\}$ is properly infinite if and only if $a+I$ in $A/I$ is either zero or infinite for every ideal $I$ in $A$.

 In view of \cite[Theorem 4.16]{kr},  pure infiniteness for (not necessarily simple) $C^*$-algebras as introduced in \cite{kr}, can be expressed as follows:     a $C^*$-algebra $A$ is \emph{purely infinite} if and only if every $a\in A^+\setminus\{0\}$ is properly infinite. We say that a $C^*$-algebra $A$ has \emph{the ideal property} \cite{Pasnicu}, \cite{Pas-Ror},  if  every ideal in $A$ is generated (as an ideal) by its projections. By \cite[Proposition 2.14]{Pas-Ror}, in the presence of ideal property pure infiniteness is equivalent to strong pure infiniteness \cite[Definition 5.1]{kr2}.

Let $A$ be a  $C^*$-algebra. We denote by $\I(A)$ the set of ideals in $A$ equipped with the
\emph{Fell topology} for which a sub-base of open sets is given by the sets of the form
$$
U_I := \{J \in \I(A): J \nsupseteq  I \},\qquad  I \in \I(A).
$$
 If $A$ and $B$ are two $C^*$-algebras, then   $h: \I(A)\to \I(B)$ is a homeomorphism if and only if it is a bijection  which preserves inclusion of ideals.

We denote by $\Irr(A)$  the set of all irreducible representations of $A$, and let $\Prim(A):=\{\ker\pi: \pi \in \Irr(A)\}$ be the set of primitive ideals in $A$.  
Fell topology restricted to $\Prim(A)$  is the usual Jacobson topology. We have a one-to-one correspondence between  closed sets in $\Prim(A)$ and ideals in $A$ given by:
$
\hull(I):=\{P\in \Prim(A): P \supseteq I\}$ and  $I=\bigcap_{P\in \hull(I)} P$, for all $I\in \I(A)$.
For any ideal $I\in \I(A)$ we have mutually inverse maps $P_I\mapsto P:=\{a\in A: aI\subseteq I\}$ and  $ P \mapsto P_I:=P\cap I$ that allow us to identify $\Prim(I)$ with the open set $\Prim(A)\setminus\hull(I)$:
\begin{equation}\label{identification equation}
\Prim(I)=\{P\in \Prim(A): P \nsupseteq I\}.
\end{equation}
The above identification extends  to hereditary subalgebras. Namely, for any hereditary $C^*$-subalgebra $B$ of $A$ the map $ P \mapsto P\cap B$ allows us to assume the identification $\Prim(B)=\{P\in \Prim(A): P \nsupseteq B\}$.

For any $\pi\in \Irr(A)$ we denote by $[\pi]$ the  unitary equivalence class of $\pi$. Then  $[\pi] \to \ker \pi$ is a well defined surjection from the spectrum  
$\SA:=\{[\pi]: \pi\in \Irr(A)  \}$ of $A$ onto  $\Prim(A)$, which induces Jacobson topology on $\SA$. Identification \eqref{identification equation} lifts to the following identification on the level of spectra:
\begin{equation}\label{identification of ideals}
\SI=\{[\pi] \in \SA: \ker\pi   \nsupseteq  I\}, \qquad I\in \I(A).
\end{equation}
More generally,  for any hereditary $C^*$-subalgebra $B$ of $A$  identification $\Prim(B)=\{P\in \Prim(A): P \nsupseteq B\}$ lifts to the identification $\widehat{B}=\{[\pi] \in \SA: \ker\pi   \nsupseteq  B\}$.

We recall that  $I\in \I(A)$  is a \emph{prime ideal} if for any pair of  $J_1,J_2\in \I(A)$ with $J_1\cap J_2 \subseteq I$  either $J_1\subseteq I$ or $J_2\subseteq I$. We denote by $\Prime(A)$  the set of prime ideals in $A$ and equip it with Fell topology. It is well known that  $\Prim(A)\subseteq \Prime(A)$ and if $A$ is separable, then   actually $\Prim(A)=\Prime(A)$. Let us note that using the identification \eqref{identification of ideals}  the inclusion $\Prim(A)\subseteq \Prime(A)$ actually  means that 
$$
\widehat{I\cap J}=\widehat{I}\cap \widehat{J}, \qquad \textrm{for all } I,J \in  \I(A).
$$

\subsection{Hilbert bimodules and induced representations} 
Let $A$ and $B$ be $C^*$-algebras. Following \cite[1.8]{BMS}, by an   \emph{$A$-$B$-Hilbert bimodule} we mean  a linear space $X$ which is both a left Hilbert $A$-module and  right Hilbert $B$-module with  the corresponding inner products  ${_A\langle} \cdot , \cdot \rangle:X\times X \to A$ and ${\langle} \cdot , \cdot \rangle_B:X\times X \to B$ satisfying:
$$
x\langle y , z \rangle_B={_A\langle} x , y \rangle z, \qquad x,y,z\in X.
 $$ 
Note that then   $X$ establishes  a Morita-Rieffel equivalence between the  ideals ${_A\langle} X , X \rangle \in \I(A)$  and $\langle X , X \rangle_B \in \I(B)$. We recall \cite{Rieffel}, see \cite[Definition 3.1]{morita}, that a \emph{Morita-Rieffel} (or \emph{imprimitivity}) $A$-$B$-bimodule is an   $A$-$B$-Hilbert bimodule $X$ such that ${_A\langle} X , X \rangle=A$ and $\langle X , X \rangle_B=B$.  For the Morita-Rieffel bimodule $X$ the formula
$$
\I(B) \ni I  \to {_A\langle} XI , X \rangle \in \I(A)
$$
defines a homeomorphism $\I(B)\cong  \I(A)$ called \emph{Rieffel correspondence} \cite[Proposition 3.24]{morita}. This correspondence restricts to the homeomorphism $h_X:\Prim(B)\to \Prim(A)$ called \emph{Rieffel homeomorphism} \cite[Proposition 3.3]{morita}. The latter has a lift to a homeomorphism $\h_X:\SB\to \SA$ also called \emph{Rieffel homeomorphism}.

More specifically, let $X$ be an $A$-$B$-Hilbert bimodule and let $\pi:B\to \B(H_\pi)$ be a representation. 
We define  a Hilbert space $X\otimes_\pi H_\pi$ to be a Hausdorff completion  of the  tensor product vector space $X\otimes H$ with the semi-inner-product given by
$$
\langle x_1\otimes_\pi h_1, x_2\otimes_\pi h_2 \rangle_{\C} = \langle h_1,\pi(\langle x_1, x_2 \rangle_{A})h_2\rangle_{\C}.
$$
Then the  formula
$$
X\dashind^A_B(\pi)(a)  (x\otimes_\pi h) = (a x)\otimes_\pi h, \qquad  a\in A,
$$
defines a representation $X\dashind^A_B(\pi):A\to \B(H_\pi)$ called \emph{induced representation}. If $X$ is a Morita-Rieffel bimodule then the formula
$$
\h_X([\pi])=[X\dashind^A_B(\pi)], \qquad \pi \in \Irr(B),
$$
defines the Rieffel homeomorphism $\h_X:\SB\to \SA$,  see \cite[Corollaries 3.32, 3.33]{morita}. In particular, we have  $h_X(\ker\pi)=\ker\left(X\dashind^A_B(\pi)\right)$ for any $\pi \in \Irr(B)$. 

Let $X$ be an $A$-$A$-Hilbert bimodule. In this case, we will  also call $X$ a \emph{Hilbert bimodule over $A$}. An ideal $I\in \I(A)$ is said to be \emph{$X$-invariant} if 
$
IX=XI.
$
For an $X$-invariant ideal the quotient space $X/XI$ is naturally a Hilbert bimodule over $A/I$.  In the sequel we will need the following simple fact, which is probably well-known to experts, but we lack a good reference.
\begin{lem}\label{coincidence of restrictions and quotients}
Let $X$ be a Hilbert bimodule over a $C^*$-algebra $A$ and let $I$ be an $X$-invariant ideal in $A$. Then  for any  representation $\pi:A/I\to \B(H_\pi)$ we have the following unitary equivalence of  representations of $A$:
$$
\big((X/XI)\dashind \pi\big)\circ q\cong X\dashind (\pi\circ q)
$$
where $q:A\to A/I$ is the quotient map.
\end{lem}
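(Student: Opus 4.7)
The plan is to construct an explicit unitary intertwiner between the Hilbert spaces of the two representations. Write $H_1:=X\otimes_{\pi\circ q}H_\pi$ and $H_2:=(X/XI)\otimes_\pi H_\pi$, and let $[x]$ denote the image of $x\in X$ in $X/XI$. The natural candidate is the map $U\colon H_2\to H_1$ defined on elementary tensors by $U([x]\otimes h):=x\otimes h$; I will show that $U$ is well defined, isometric, surjective, and intertwines the two $A$-actions.

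Before producing $U$, I would first verify that $X/XI$ is genuinely an $A/I$-Hilbert bimodule, so that the induced representation $(X/XI)\dashind\pi$ makes sense. The $X$-invariance hypothesis $IX=XI$ makes the left and right $A/I$-actions on $X/XI$ well defined. For the right $A/I$-valued inner product, the natural formula $\langle [x_1],[x_2]\rangle_{A/I}:=q(\langle x_1,x_2\rangle_A)$ is independent of representatives because $\langle X,XI\rangle_A\subseteq\langle X,X\rangle_A\,I\subseteq I$, and taking adjoints gives $\langle XI,X\rangle_A\subseteq I$; the left $A/I$-valued inner product is handled analogously.

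Next, comparing semi-inner products on elementary tensors yields
\[
\langle[x_1]\otimes h_1,[x_2]\otimes h_2\rangle_{H_2}=\langle h_1,\pi\bigl(q(\langle x_1,x_2\rangle_A)\bigr)h_2\rangle=\langle h_1,(\pi\circ q)(\langle x_1,x_2\rangle_A)h_2\rangle=\langle x_1\otimes h_1,x_2\otimes h_2\rangle_{H_1}.
\]
Thus $U$ is well defined and isometric on the algebraic tensor product, and extends to a bounded isometry between the Hausdorff completions. Its range contains every elementary tensor $x\otimes h\in H_1$, so $U$ is surjective, hence unitary.

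Finally, for each $a\in A$ and each elementary tensor $[x]\otimes h\in H_2$,
\[
U\bigl(((X/XI)\dashind\pi)(q(a))([x]\otimes h)\bigr)=U([ax]\otimes h)=ax\otimes h=(X\dashind(\pi\circ q))(a)\bigl(U([x]\otimes h)\bigr),
\]
so the desired intertwining relation holds on a dense subspace and consequently on all of $H_2$. There is no real obstacle here; the only subtle point is the well-definedness of the quotient bimodule $X/XI$ and of the map $U$, both of which rest precisely on the $X$-invariance of $I$.
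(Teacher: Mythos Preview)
Your proof is correct and follows essentially the same approach as the paper: you construct the unitary $U\colon (X/XI)\otimes_\pi H_\pi\to X\otimes_{\pi\circ q}H_\pi$ by $[x]\otimes h\mapsto x\otimes h$, verify it is well defined and isometric via the identity $\langle [x_1],[x_2]\rangle_{A/I}=q(\langle x_1,x_2\rangle_A)$, and then check the intertwining relation on elementary tensors. The paper does exactly this (with the map denoted $V$), though it omits your preliminary verification that $X/XI$ is a genuine $A/I$-Hilbert bimodule, taking that as understood.
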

\begin{proof}
Note that for any $x_i\in X$ and $h_i\in H_\pi$, $i=1,...,n$, we have
\begin{align*}
\left\|\sum_{i=1}^n (x_i +XI)\otimes_\pi h_i\right\|^2&=\sum_{i,j=1}^n \langle h_i,\pi\big(\langle x_i+XI,x_j+XI\rangle_{A/AI}\big) h_j \rangle
\\
&=\sum_{i,j=1}^n \langle h_i,\pi\big(q(\langle x_i,x_j\rangle_A)\big) h_j \rangle = \|\sum_{i=1}^n x_i\otimes_{(\pi\circ q)} h_i\|^2.
\end{align*}
Accordingly, the mapping      $(x +XI)\otimes_\pi h \mapsto x \otimes_{\pi\circ q} h$, $x\in X$, $h\in H_\pi$, extends by linearity and continuity to a unitary  operator $V:X/XI\otimes_\pi H_\pi \to X\otimes_{\pi\circ q} H_\pi$. Unitary $V$  intertwines $\big((X/XI)\dashind \pi\big)\circ q$ and $X\dashind (\pi\circ q)$ because for any $a\in A$, $x\in X$, $h\in H_\pi$ we have
\begin{align*}
V \big((X/XI)\dashind \pi\big)(q(a))(x +XI)\otimes_\pi h&=  V (ax +XI)\otimes_\pi h 
\\
&=ax \otimes_{\pi\circ q} h 
\\
&= X\dashind (\pi\circ q)(a) V (x +XI)\otimes_\pi h.
\end{align*}

\end{proof}
\subsection{Partial actions}
We recall that a \emph{partial action of a discrete group $G$ on a $C^*$-algebra $A$} is a pair $\alpha= (\{D_t\}_{t \in G}, \{\alpha_t\}_{t \in G}) $, where for each ${t \in G}$,   $\alpha_t:D_{t^{-1}}\to D_t$ is an isomorphism between ideals of $A$ such that
$$
\alpha_e=id_A \quad\textrm{and}\quad  \alpha_{st} \textrm{ extends } \alpha_s\circ \alpha_t \textrm{ for } s,t \in G.
$$
The second property above  is equivalent to the following relations: $
\alpha_t(D_{t^{-1}}\cap D_{s})\subseteq  D_{ts}$ and $\alpha_s(\alpha_t(a))=\alpha_{st}(a)$ for   $a\in D_{t^{-1}}\cap D_{t^{-1}s^{-1}}$, $s,t \in G$.
The triple $(A,G,\alpha)$ is called  {\em partial $C^*$-dynamical system}. There are two $C^*$-algebras naturally associated to such systems: the full crossed product $A\rtimes_\alpha G$ and the reduced crossed product $A\rtimes_{\alpha,r} G$ (they can be defined in terms of $C^*$-algebras associated to Fell bundles, see Example \ref{delpartialsystem} below). When $D_{t}=A$ for every $t\in G$, we talk about global actions and global  $C^*$-dynamical systems.

Any partial action $\alpha$ on a commutative $C^*$-algebra  $A=C_0(\Omega)$, where $\Omega$ is a locally compact Hausdorff space, is given by 
\begin{equation}\label{partial action commutative case}
\alpha_t(f)(x) := f(\theta_{t^{-1}} (x)),\qquad  f\in C_0(\Omega_{t^{-1}} ),
\end{equation}
where $D_{t}=C_0(\Omega_t)$, $t\in G$, and  $(\{\Omega_t\}_{t\in G}, \{\theta_t\}_{t\in G}) $ is a partial action of $G$  on   $\Omega$. In general,  a \emph{partial action of $G$  on  a topological space} $\Omega$  is a pair $\theta=(\{\Omega_t\}_{t\in G}, \{\theta_t\}_{t\in G}) $ where $\Omega_t$'s  are open subsets of $\Omega$ 
and   $\theta_t:\Omega_{t^{-1}}\to \Omega_{t}$ are homeomorphisms such that
$$
\theta_e=id_\Omega \quad\textrm{and}\quad  \theta_{st} \textrm{ extends } \theta_s\circ \theta_t \textrm{ for } s,t \in G.
$$
The triple $(\Omega, G,\theta)$ is called  \emph{partial (topological) dynamical system}. In case every $\Omega_{t}=\Omega$, we say $\theta$ is a global action.

 For global actions on commutative $C^*$-algebras, it is a part of $C^*$-folklore, for the extended discussion see, for instance, \cite{Arch_Spiel} or \cite{kwa}, that   simplicity of the associated reduced partial crossed products is equivalent to minimality and topological freeness of the dual action. This result was adapted to partial actions in \cite{ELQ}. Let us  recall the relevant definitions:

Let $\theta=(\{\Omega_t\}_{t\in G}, \{\theta_t\}_{t\in G}) $ be a partial action on a (not necessarily Hausdorff) topological space $\Omega$.  A subset $V$ of $\Omega$ is $\theta$-\emph{invariant}  if 
$
\theta_t(V\cap \Omega_{t^{-1}}) \subseteq V$  for every $t\in G$ (then we actually have $\theta_t(V\cap \Omega_{t^{-1}})= V\cap \Omega_t$, for all  $t\in G$, and thus $\Omega\setminus V$ is also $\theta$-invariant). The restriction $\theta_V:=(\{\Omega_t\cap V\}_{t\in G}, \{\theta_t|_{\Omega_{t^{-1}}\cap V }\}_{t\in G})$ of $\theta$ to a  $\theta$-invariant  set $V\subseteq \Omega$ is a again a partial dynamical system, and   $\theta$ is called \emph{minimal} if there are no non-trivial closed $\theta$-invariant subsets of $\Omega$. 
The partial action $\theta$ is \emph{topologically free} if for  finite  set $F\subseteq G\setminus\{e\}$ the set $\bigcup_{t \in F} \{x\in \Omega_{t^{-1}}: \theta_{t}(x)=x\}$ has empty interior in $\Omega_e$.  We say, cf. \cite[Page 230]{s}, \cite[Definition 3.4]{gs}, that  $\theta$ is \emph{residually topologically free} if the restriction of $\theta$ to any closed  $\theta$-invariant set is topologically free.

Dynamical conditions implying pure infiniteness of reduced crossed products for global actions on totally disconnected spaces  were introduced in \cite{rordam_sier}, and adapted to the case of partial actions in  \cite{gs}. A crucial notion is that of paradoxical set,  cf. \cite[Definition 4.2]{rordam_sier}, \cite[Definition 4.3]{gs}:
\begin{defn}\label{paradoxical definition of Sierakowski and Rordam}
Let $(\{\Omega_t\}_{t\in G}, \{\theta_t\}_{t\in G}) $ be a partial action on a topological space $\Omega$. A non-empty open set $V\subseteq \Omega$ is called \emph{$G$-paradoxical} if there are open sets $V_1,...,V_{n+m}$ and elements  $t_1,...,t_{n+m}\in G$, such that 
\begin{itemize}
\item[(1)] $V=\bigcup_{i=1}^n V_i=\bigcup_{i=n+1}^{n+m} V_i,$
\item[(2)] $V_{i}\subseteq \Omega_{t_i^{-1}}$ and  $\theta_{t_i}(V_i)\subseteq V$ for all $i=1,...,n+m$,
\item[(3)] $\theta_{t_i}(V_{t_i})\cap \theta_{t_j}(V_{t_j})=\emptyset$ for all $i\neq j$.
\end{itemize}
\end{defn}
The notion of quasi-orbit space adapted to partial actions, cf. \cite[Page 5740]{gs}, is defined as follows:
\begin{defn}
 Let $(\{\Omega_g\}_{g\in G}, \{\theta_g\}_{g\in G})$ be a  partial action of a group $G$ on a topological  space $\Omega$. We let the \emph{orbit} of a point $x\in \Omega$ to be the set
 $$
 Gx:=\bigcup_{t\in G \atop x\in \Omega_{t^{-1}}} \{\theta_t(x)\}.
 $$
We define the \emph{quasi-orbit} $\OO(x)$ of $x$ to be the equivalence class of $x$ under the equivalence relation  on $\Omega$ given by 
$$
x\sim y \,\,\Longleftrightarrow\,\, \overline{Gx}=\overline{Gy}.
$$ 
We denote by $\OO(\Omega)$ the \emph{quasi-orbit space} $\Omega/\sim$ endowed with the quotient topology.
\end{defn}

\subsection{Fell bundles and graded $C^*$-algebras}

Let $G$ be a discrete group. A \emph{Fell bundle} over $G$ can be defined as a collection $\B=\{B_t\}_{t\in G}$ of closed subspaces of a $C^*$-algebra $C$ such that $B_t^*=B_{t^{-1}}$ and $B_t B_s\subseteq B_{ts}$ for all $s,t \in G$ (see \cite[Definition 16.1]{exel-book} for the  axiomatic description). These relations in particular imply that
\begin{equation}\label{ternary relation}
B_t B_{t^{-1}} B_t =B_t, \qquad t \in G,
\end{equation}
and $B_t B_{t^{-1}}$ is an ideal in the \emph{core} $C^*$-algebra $B_e$, cf. \cite[Lemma 16.12]{exel-book}. Moreover, for any $t\in G$, 
$
B_t \textrm{ is naturally a Hilbert bimodule over }B_e 
$ with right and left inner products given by $\langle x,y\rangle_{B_e}:=x^*y$ and ${_{B_e}\langle} x,y\rangle:=xy^*$, $x,y\in B_t$. Then $\langle B_t,B_t\rangle_{B_e}=B_t B_{t^{-1}}$.

 If $\B=\{B_t\}_{t\in G}$ is a Fell bundle, then the direct sum 
$\bigoplus_{t\in G} B_t$ is naturally equipped with the structure of a $*$-algebra which admits a $C^*$-norm. In general, there are many $C^*$-norms on 
$\bigoplus_{t\in G} B_t$. There is always  a maximal such norm, 
and it satisfies the inequality 
\begin{equation}\label{topological grading inequality}
\|a_e\|\leq \|\sum_{t \in G} a_t\|, \qquad \textrm{ for all } \,\,\sum_{t \in G} a_t\in \bigoplus_{t\in G} B_t,\,\,a_t\in B_t, \,\, t\in G,
\end{equation}
 see \cite[Lemma 1.3]{quigg}, \cite[Proposition 2.9]{Exel}, or \cite[Lemma 17.8]{exel-book}. 
The completion of $\bigoplus_{t\in G} B_t$  in  the maximal $C^*$-norm is denoted by $C^*(\B)$. It is called \emph{cross sectional algebra}
 of $\B$. It follows from 
\cite[Theorem 3.3]{Exel} that there is also  a minimal $C^*$-norm on $\bigoplus_{t\in G} B_t$ satisfying \eqref{topological grading inequality} and a completion of $\bigoplus_{t\in G} B_t$ in this minimal $C^*$-norm is naturally isomorphic to the \emph{reduced cross sectional algebra} $C_r^*(\B)$, as introduced in 
\cite[Definition 2.3]{Exel} or in \cite[Definition 3.5]{quigg} (both definitions are known to be equivalent). A Fell bundle  $\B=\{B_t\}_{t\in G}$ is said to be \emph{amenable} \cite[Definition 20.1]{exel-book} if the algebras $C_r^*(\B)$ and $C^*(\B)$ coincide; in other words,
  if there exists a unique $C^*$-norm on 
$\bigoplus_{t\in G} B_t$ satisfying \eqref{topological grading inequality}. It is known that if the group $G$ is amenable (or more generally 
if $\B=\{B_t\}_{t\in G}$ has the approximation property, see  \cite[Definition 20.4]{exel-book}) then the Fell bundle $\B=\{B_t\}_{t\in G}$ is automatically amenable, see  \cite[Theorem 20.7]{exel-book}.

Let $\B=\{B_t\}_{t\in G}$ be a Fell bundle. Any $C^*$-algebra $B$ which is a closure of $\bigoplus_{t\in G} B_t$ is called $\B$-\emph{graded} (or simply \emph{graded}). If  additionally the norm in $B$ satisfies   \eqref{topological grading inequality}, then   $B$ is called \emph{topologically graded} \cite[Definition 19.2]{exel-book}, \cite[Definition 3.4]{Exel}. For any topologically $\B$-graded  $C^*$-algebra $B$ the canonical projections 
$$
F_t:\bigoplus_{s\in G} B_s\to B_t, \qquad t\in G,
$$
extend to contractive linear maps on $B$, cf. \cite[Corollary 19.6]{exel-book}. They are called \emph{Fourier coefficient operators} in \cite[page 197]{exel-book}. In particular, $F_e:B\to B_e$ is a conditional expectation onto the core $C^*$-algebra $B_e$. This conditional expectation is faithful  if and only if $B=C^*_r(\B)$, see \cite[Proposition 2.12]{Exel}. 

Perhaps the most significant example of a Fell bundle is the one coming from partial actions. In particular, every separable Fell bundle whose unit fiber is stable arises in this way, see \cite[Theorem 27.11]{exel-book}.

\begin{ex}[Fell bundle associated to a partial action]
\label{delpartialsystem}
The \emph{Fell bundle $\B_\alpha=\{B_t\}_{t \in G}$ associated to a partial action} $\alpha= (\{D_t\}_{t \in G}, \{\alpha_t\}_{t \in G})$ on a $C^*$-algebra $A$ is defined as follows: $B_t:=\{a_t\delta_t:  a_t \in D_t\}$  is isomorphic as a Banach space to $D_t$ ($\delta_t$ is just an abstract marker), and multiplication and star operation are given by 
$$(a_t\delta_t)(a_s\delta_s)=\alpha_t(\alpha_{t^{-1}}(a_t)a_s)\delta_{ts}, \ \ \ (a_t\delta_t)^*=\alpha_{t^{-1}}(a_t^*)\delta_{t^{-1}}.
$$ 
In particular, cf. \cite[Proposition 16.28]{exel-book} the \emph{full crossed product} and the \emph{reduced crossed product} can be  defined as follows
$$
A\rtimes_{\alpha}G:= C^*(\B),\qquad A\rtimes_{\alpha,r}G:= C^*_r(\B).
$$     
In the sequel, we will identify $B_e$ with $D_e=A$, so we will write $a$ for $a\delta_e$.
\end{ex}

\subsection{Ideals in Fell bundles and graded $C^*$-algebras}
An \emph{ideal} in a Fell bundle $\B=\{B_t\}_{t\in G}$ is a collection $\J = \{J_t\}_{t\in G}$,
consisting  of closed subspaces $J_t \subseteq B_t$, such that 
  $B_s J_t \subseteq J_{st}$ and
  $J_s B_t \subseteq J_{st}$, for all $s,t\in G$, see \cite[Definition 2.1]{Exel1}.  Then it follows, see \cite{Exel1}, that $\J$ is
self-adjoint in the sense that $(J_t)^*=J_{t^{-1}}$,  so in particular
$\J$ is a Fell bundle in its own right (thus our definition agrees with \cite[Definition 21.10]{exel-book}). Moreover, the family $\B/\J :=
\{B_t/J_t\}_{t\in G}$ is  equipped with a natural Fell bundle structure  and as such is called \emph{quotient Fell bundle}, cf. \cite[Definition 21.14]{exel-book}.
In view of \cite[Proposition 2.2]{Exel1}, see also  \cite[Proposition 21.15]{exel-book}, we have the following  natural exact sequence 
\begin{equation}\label{sequence which is always exact}
  0 \longrightarrow C^*(\J) \stackrel{\iota}{\longrightarrow} C^*(\B) \stackrel{\kappa}{\longrightarrow} C^*(\B/\J) \longrightarrow 0, %  \label \FirstSequence
  \end{equation}
which by \cite[Lemma 4.2]{Exel1} induces the following (not necessarily exact!) sequence
\begin{equation}\label{sequence to be exact}
  0 \longrightarrow C^*_r(\J) \stackrel{\iota_r}{\longrightarrow} C^*_r(\B) \stackrel{\kappa_r}{\longrightarrow} C^*_r(\B/\J) \longrightarrow 0 
  %  \label \FirstSequence
  \end{equation}
	where $\iota_r$ is injective and $\kappa_r$ surjective, but in general  $\iota_r(C^*_r(\J))\subsetneq \ker\kappa_r$.
	
	Ideals in Fell bundles and graded algebras are related to each other in the following way. 
	If $J$ is an ideal in a graded $C^*$-algebra $B=\overline{\bigoplus_{t\in G} B_t}$, then it is easy to see that $\J:=\{J\cap B_t\}_{t\in G}$ is an ideal in the Fell bundle  $\B=\{B_t\}_{t\in G}$. Moreover, by \cite[Proposition 23.1]{exel-book} we have the equivalence
\begin{equation}\label{induced ideals are graded}
J \textrm{ is generated as an ideal by }J\cap B_e\, \,\Longleftrightarrow \, \, J=\overline{\bigoplus_{t\in G} J\cap B_t}.   %\textrm{ is graded by }\J=\{J\cap B_t\}_{t\in G}
\end{equation}
The ideals in $B=\overline{\bigoplus_{t\in G} B_t}$ satisfying equivalent conditions in \eqref{induced ideals are graded}, are called \emph{induced} \cite[Definition 3.10]{Exel} or \emph{graded} \cite[Definition 23.2]{exel-book}. In the present general context we prefer the second name. 

In topologically graded algebras there is another important class of ideals. Recall that $B=\overline{\bigoplus_{t\in G} B_t}$ is topologically graded if and only there are Fourier coefficient operators $F_t:B\to B_t\subseteq B$, $t\in G$. In this case an ideal $J$ in $B$ is called \emph{Fourier} if 
$$
F_t(J)\subseteq J, \qquad \textrm{ for all } t \in G,
$$
see \cite[Definition 23.8]{exel-book}. The following fundamental relationship between the general, graded and Fourier ideals  in topologically graded $C^*$-algebras was already established in \cite[Theorem 3.9]{Exel}, see also \cite[Proposition 23.4]{exel-book}.
\begin{prop}\label{remark on Exel's thm}
If $J$ is an ideal in a topologically graded $C^*$-algebra $B$,  then 
$$
\{b\in B: F_e(b^*b)\in J\}=\{b\in B: F_t(b)\in J, \,\, t\in G\}
$$
and this set  is a Fourier ideal in $B$ that contains the graded ideal generated by $J\cap B_e$. In particular, 
 if $J$ is graded, then it is Fourier.
\end{prop}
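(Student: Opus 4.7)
Set $K := \{b \in B : F_e(b^*b) \in J\}$ and $L := \{b \in B : F_t(b) \in J \text{ for all } t \in G\}$. The heart of the proof will be the equality $K = L$; once this is in hand, the Fourier property, the containment of the graded ideal generated by $J \cap B_e$, and the final ``graded implies Fourier'' clause all follow by short arguments.

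The inclusion $K \subseteq L$ will rest on the Pythagorean-type inequality
\[
\sum_{t \in F} F_t(b)^* F_t(b) \,\leq\, F_e(b^*b), \qquad b \in B,\ F \subseteq G \text{ finite}.
\]
To prove it, I decompose $b = b_F + c$ with $b_F := \sum_{t \in F} F_t(b)$ and $F_t(c) = 0$ for $t \in F$; the cross terms $F_e(b_F^* c)$ and $F_e(c^* b_F)$ vanish via the bimodule identity $F_e(F_t(b)^* c) = F_t(b)^* F_t(c) = 0$, verified on finite sums and extended by continuity of $F_e$. Combined with hereditariness of $J$, the inequality yields $F_t(b)^* F_t(b) \in J \cap B_e$ whenever $F_e(b^*b) \in J$. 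A short lemma---any $x \in B_t$ with $x^*x \in J \cap B_e$ already lies in $J$, proved by the approximation $x = \lim_{\varepsilon \to 0} x f_\varepsilon(x^*x)$ with $f_\varepsilon(s) = s/(s+\varepsilon) \in J \cap B_e$---then upgrades this to $F_t(b) \in J$, so $b \in L$.

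The reverse inclusion $L \subseteq K$ is the main obstacle, and I will reduce it to the reduced cross-sectional algebra of a quotient bundle. Put $\J_0 := \{J \cap B_t\}_{t \in G}$, readily an ideal in $\B$, and $J^{\mathrm{grd}} := \overline{\bigoplus_{t \in G} (J \cap B_t)} \subseteq J$. Since $J^{\mathrm{grd}} \cap B_e = J \cap B_e$, the quotient $B/J^{\mathrm{grd}}$ is topologically graded over $\B/\J_0$ and admits the canonical surjection $\Lambda' : B/J^{\mathrm{grd}} \twoheadrightarrow C^*_r(\B/\J_0)$; let $\phi := \Lambda' \circ q : B \to C^*_r(\B/\J_0)$, where $q: B \twoheadrightarrow B/J^{\mathrm{grd}}$. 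Then $\phi$ intertwines the Fourier operators and restricts on each $B_t$ to the quotient $B_t \twoheadrightarrow B_t/(J \cap B_t)$. Faithfulness of $F_e$ on $C^*_r(\B/\J_0)$ gives $K = \ker\phi$: indeed $\phi(b) = 0$ iff $F_e^{C^*_r(\B/\J_0)}(\phi(b^*b)) = 0$ iff $\phi(F_e(b^*b)) = 0$ iff $F_e(b^*b) \in J$. Meanwhile $b \in L$ is equivalent to $\phi(F_t(b)) = F_t^{C^*_r(\B/\J_0)}(\phi(b)) = 0$ for every $t$, and the regular representation of $C^*_r(\B/\J_0)$ on $\ell^2(\B/\J_0)$---under which $(c \cdot (\delta_e \cdot a))(t) = F_t(c) \cdot a$---shows that an element of a reduced cross-sectional algebra with all Fourier coefficients zero must itself be zero; hence $\phi(b) = 0$, i.e., $b \in K$.

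With $K = L$ established, the remaining assertions are short. The set $K = L = \ker\phi$ is a norm-closed two-sided ideal, and it is Fourier because for $b \in L$ and $s \in G$, $F_t(F_s(b)) \in \{F_s(b), 0\} \subseteq J$, so $F_s(b) \in L$. For the graded ideal $I$ generated by $J \cap B_e$, one has $I = J^{\mathrm{grd}}$: clearly $I \subseteq J^{\mathrm{grd}}$, and conversely any $x \in J \cap B_t$ satisfies $x^*x \in J \cap B_e \subseteq I$, so $x \in I$ by the earlier lemma applied to $I$; thus $I \subseteq J$, and $F_t(I) \subseteq I \subseteq J$, so $I \subseteq L$. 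Finally, if $J$ itself is graded then $J = \overline{\bigoplus_t (J \cap B_t)}$, so each $F_t(b)$ for $b \in J$ is a norm-limit of elements of $J \cap B_t$ and hence lies in $J$, proving $J$ is Fourier.
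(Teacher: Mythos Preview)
Your proof is correct. The paper does not prove this proposition at all: it is stated as a reformulation of \cite[Theorem 3.9]{Exel} (see also \cite[Proposition 23.4]{exel-book}), so there is no proof to compare against.

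One remark on efficiency: your route for $L\subseteq K$ via the auxiliary map $\phi:B\to C^*_r(\B/\J_0)$ is sound, but it is more elaborate than necessary. The inclusion follows directly from the same bimodule identity you already exploited for the other direction. Namely, for $b\in L$ choose $b_n\in\bigoplus_{s\in F_n}B_s$ with $b_n\to b$; writing $b_n=\sum_{s\in F_n}a_s^{(n)}$ with $a_s^{(n)}\in B_s$, one has
\[
F_e(b_n^*b)\;=\;\sum_{s\in F_n}(a_s^{(n)})^*F_s(b)\;\in\;J,
\]
since each $F_s(b)\in J$ and $J$ is an ideal. Letting $n\to\infty$ gives $F_e(b^*b)\in J$, i.e.\ $b\in K$. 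This avoids the detour through $B/J^{\mathrm{grd}}$ and the reduced quotient bundle algebra, and in particular sidesteps the (true but nontrivial) fact that any topologically graded $C^*$-algebra surjects onto the reduced cross-sectional algebra of its Fell bundle.
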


For the sake of discussion let us denote the Fourier and the graded ideal in the above proposition respectively by $J_F$ and $J_G$. Then  we have  two  inclusions $J_G\subseteq J$ and $J_G\subseteq J_F$, and in general this is all we can say. More precisely, $J_G\subsetneq J_F$ if and only if $J_F$ is a Fourier ideal which is not graded. There is  always such an ideal if $B\neq C^*_r(\B)$  (consider the kernel of the canonical epimorphism from $B=\overline{\bigoplus_{t\in G} B_t}$ onto  $C^*_r(\B)$), and  even if $B=C^*_r(\B)$ one can construct such an ideal when the underlying group $G$ is not exact, see \cite[page 199]{exel-book}. On the other hand, considering the Fell bundle arising from the $C^*$-dynamical system $(\C,id, \Z)$ for any non-trivial ideal $J$ in $\C\rtimes_{id} \Z\cong C(\T)$ we get $J\cap B_e=\{0\}$, and therefore $J\nsubseteq J_F=J_G=\{0\}$. This  indicates that the equality  $J_G= J_F$ is related with a notion of `exactness' while inclusion $J\subseteq J_F$ has to do with an `intersection property'.  We will make  these notions precise and study them in more detail in the forthcoming section.

\section{Exactness, the intersection property and topological freeness}
In this section, we exploit notions of exactness, the intersection property and topological freeness for a  Fell bundle $\B$, introduced recently in \cite{AA}. 
As shown in \cite{AA}, these properties allow one to  parametrize  ideals in  $C^*_r(\B)$  by   ideals in the core $C^*$-algebra $B_e$. 
The relevant ideals in $B_e$  are defined as follows.
\begin{defn}[Definition 3.5 in \cite{AA}] Let  $\B=\{B_t\}_{t\in G}$ be a Fell bundle.  
We say that an ideal $I$ in  $B_e$ is  \emph{$\B$-invariant} if  $B_t I B_{t^{-1}}\subseteq I$ for every $t\in G$.
 We denote the set of all $\B$-invariant ideals in $B_e$  by $\I^\B(B_e)$ and equip it with the Fell topology inherited from  $\I(B_e)$.
\end{defn}

The relationship between various types of ideals is explained in the following: 

\begin{prop}\label{proposition on induced ideals}
Let $B=\overline{\bigoplus_{t\in G}B_t}$ be a graded $C^*$-algebra. Relations  
$$
 J=\overline{\bigoplus_{t\in G}J_t}, \quad \qquad J_t =J\cap B_t=B_tI=IB_t,\,\quad t\in G,
$$
establish natural bijective  correspondences between the following objects:
\begin{itemize}
\item[(i)] graded ideals $J$ in $B$,
\item[(ii)]  ideals  $\J=\{J_t\}_{t\in G}$ in the Fell bundle $\B=\{B_t\}_{t\in G}$,
\item[(iii)] $\B$-invariant ideals $I$ in the core $C^*$-algebra $B_e$.
\end{itemize}
\end{prop}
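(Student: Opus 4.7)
The plan is to establish each of the three bijections. The correspondence (i) $\leftrightarrow$ (ii) is essentially a restatement of \eqref{induced ideals are graded}, while (ii) $\leftrightarrow$ (iii) carries the new algebraic content via the identities $I = J_e$ and $J_t = B_t I = I B_t$.

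A key preliminary lemma is that for any $\B$-invariant $I \in \I(B_e)$ and any $t \in G$ one has $B_t I = I B_t$. I would approximate a generic $xa \in B_t I$ (with $x \in B_t$, $a \in I$) by $xau_\lambda$, where $(u_\lambda)$ is an approximate unit for the ideal $B_{t^{-1}} B_t \subseteq B_e$; convergence $xau_\lambda \to xa$ follows from the ternary identity $B_t = B_t B_{t^{-1}} B_t$ (equation \eqref{ternary relation}), which makes $B_{t^{-1}} B_t$ act as a right approximate identity on $B_t$. Expanding $u_\lambda = \sum_i y_i^* z_i$ with $y_i, z_i \in B_t$ gives $xa u_\lambda = \sum_i (xay_i^*) z_i$, and by $\B$-invariance $xa y_i^* \in B_t I B_{t^{-1}} \subseteq I$, placing each term in $IB_t$. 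The reverse inclusion $IB_t \subseteq B_t I$ follows symmetrically from $B_{t^{-1}} I B_t \subseteq I$, which is $\B$-invariance applied at $t^{-1}$.

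For (ii) $\leftrightarrow$ (iii): given a Fell-bundle ideal $\J = \{J_t\}$, the axioms yield $B_t J_e B_{t^{-1}} \subseteq J_t B_{t^{-1}} \subseteq J_e$, so $I := J_e$ is $\B$-invariant. Conversely, for an invariant $I$, the lemma makes $J_t := B_t I = I B_t$ well defined, and the relations $B_s J_t = B_s B_t I \subseteq B_{st} I = J_{st}$ and $J_s B_t = I B_s B_t \subseteq I B_{st} = J_{st}$ verify that $\J := \{J_t\}$ is a Fell-bundle ideal. These maps are mutually inverse: $I \mapsto \{B_t I\} \mapsto B_e I = I$ is immediate, and in the other direction the ternary identity applied to the Fell bundle $\J$ itself gives $J_t = J_t J_{t^{-1}} J_t \subseteq J_t \cdot J_e \subseteq B_t J_e$, while $B_t J_e \subseteq J_t$ is built into the ideal axioms.

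For (i) $\leftrightarrow$ (ii): by definition, any graded ideal $J$ satisfies $J = \overline{\bigoplus_t J \cap B_t}$, and the fibers $\J := \{J \cap B_t\}$ form a Fell-bundle ideal because $J$ is two-sided in $B$ and $B_s B_t \subseteq B_{st}$. Conversely, given a Fell-bundle ideal $\J$, the subspace $J := \overline{\bigoplus_t J_t}$ is closed by construction and two-sided in $B$ by continuity of multiplication together with density of $\bigoplus_t B_t$ in $B$; it is generated as an ideal by $J_e \subseteq B_e$ (since $J_t = B_t J_e$ from the previous step), whence graded by \eqref{induced ideals are graded}. The round-trip identity $J \cap B_t = J_t$ is then a consequence of the ternary argument applied once more to the Fell-bundle ideal $\{J \cap B_t\}$, whose core $J \cap B_e$ coincides with $J_e$.

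The main obstacle is the symmetry lemma $B_t I = I B_t$: $\B$-invariance gives only the one-sided containment $B_t I B_{t^{-1}} \subseteq I$, and upgrading it to a two-sided equality of $B_e$-bimodules requires simultaneously invoking the ternary identity \eqref{ternary relation} and approximate units in the ideals $B_{t^{-1}} B_t, B_t B_{t^{-1}} \subseteq B_e$.
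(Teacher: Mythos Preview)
Your proof is correct and follows the same outline as the paper, but is considerably more detailed: the paper's proof simply cites equation \eqref{induced ideals are graded} for (i)$\leftrightarrow$(ii) and \cite[Proposition~3.6]{AA} for (ii)$\leftrightarrow$(iii), whereas you unpack the latter completely. Your symmetry lemma $B_tI = IB_t$ and the ternary identity $J_t = J_tJ_{t^{-1}}J_t \subseteq B_tJ_e$ are exactly the ingredients used in \cite{AA}, so the arguments match.

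One point deserves a word more. In the (ii)$\to$(i)$\to$(ii) round trip you assert that for $J := \overline{\bigoplus_t J_t}$ the core $J\cap B_e$ ``coincides with $J_e$'', but this is the only step that is not fully justified. For a \emph{topologically} graded $B$ (and in particular for $C^*(\B)$ and $C^*_r(\B)$, which are the only cases used later in the paper) it follows in one line: if $a \in J\cap B_e$ then $a = F_e(a) = \lim_n F_e(a_n)$ with $a_n \in \bigoplus_t J_t$, and $F_e(a_n) = (a_n)_e \in J_e$. For an arbitrary graded $B$ there is no conditional expectation available, and the identity $\overline{BIB}\cap B_e = I$ is not automatic for a $C^*$-subalgebra $B_e \subseteq B$; the paper does not supply this detail either, deferring implicitly to \cite{exel-book}. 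You may simply note the topologically graded case explicitly, since that is all the applications require.
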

\begin{proof}
The correspondence between objects in items (i) and (ii) was  in essence already discussed and follows easily from \eqref{induced ideals are graded}. The correspondence between objects in items (ii) and (iii) is proved in \cite[Proposition 3.6]{AA}. 	
\end{proof}

\begin{cor} For any Fell bundle $\B=\{B_t\}_{t\in G}$,  we have  a  surjective  map
\begin{equation}\label{mapping to become a homeomorphism}
\I(C^*_r(\B)) \ni  J \to J \cap B_e \in \I^\B(B_e),
 \end{equation}
which becomes a homeomorphism when restricted to graded ideals in $C^*_r(\B)$. 
\end{cor}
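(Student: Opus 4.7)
\emph{Plan.} The plan is to reduce the corollary to Proposition \ref{proposition on induced ideals} applied to the topologically graded $C^*$-algebra $C^*_r(\B)$, combined with the fact recalled in the preliminaries that a bijection of ideal lattices is a Fell-topology homeomorphism if and only if it preserves inclusion of ideals.

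First I would check that the map is well-defined, i.e.\ that $J\cap B_e \in \I^\B(B_e)$ for every $J\in \I(C^*_r(\B))$. This is a one-line verification: for $t\in G$ and $a\in J\cap B_e$, any product $bac$ with $b\in B_t$, $c\in B_{t^{-1}}$ lies in $B_tB_eB_{t^{-1}}\subseteq B_e$ by the Fell-bundle axioms and simultaneously in $J$, because $B_t,B_{t^{-1}}$ are closed subspaces of $C^*_r(\B)$ and $J$ is a two-sided ideal; hence $B_t(J\cap B_e)B_{t^{-1}}\subseteq J\cap B_e$. Next, to obtain surjectivity of \eqref{mapping to become a homeomorphism} I would apply Proposition \ref{proposition on induced ideals} with $B=C^*_r(\B)$: every $I\in \I^\B(B_e)$ appears as $J\cap B_e$, where $J:=\overline{\bigoplus_{t\in G} B_tI}$ is the associated graded ideal of $C^*_r(\B)$. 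This already shows that the map is surjective when restricted to graded ideals.

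The bijectivity on graded ideals is then immediate from the same Proposition, the inverse being $I\mapsto \overline{\bigoplus_{t\in G} B_tI}$. To upgrade this bijection to a homeomorphism I would only need to observe that both directions are order-preserving: $J_1\subseteq J_2$ trivially yields $J_1\cap B_e\subseteq J_2\cap B_e$, and conversely $I_1\subseteq I_2$ yields $B_tI_1\subseteq B_tI_2$ for every $t$, whence the corresponding graded ideals are nested. Equipping the set of graded ideals with the subspace topology from $\I(C^*_r(\B))$ and $\I^\B(B_e)$ with the topology from $\I(B_e)$, the criterion from the preliminaries converts this order-isomorphism into a Fell-topology homeomorphism.

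I do not foresee any genuine obstacle: the heavy lifting is already accomplished by Proposition \ref{proposition on induced ideals}, and the only ingredient not contained there, namely the $\B$-invariance of $J\cap B_e$ for an arbitrary ideal $J$, reduces to the elementary Fell-bundle computation above. If any step deserves attention it is this preliminary check, since it is the one thing that justifies writing the target of the map as $\I^\B(B_e)$ rather than just $\I(B_e)$.
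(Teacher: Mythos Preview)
Your proposal is correct and follows essentially the same approach as the paper: the paper's proof simply notes that the map is well defined and preserves inclusions, then invokes Proposition~\ref{proposition on induced ideals}. You have merely spelled out in more detail the well-definedness check and the order-preservation of the inverse, both of which the paper leaves implicit.
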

\begin{proof} Clearly, the mapping \eqref{mapping to become a homeomorphism} is well defined and preserves inclusions. Thus the assertion follows from Proposition \ref{proposition on induced ideals}.
\end{proof}
 
Note that Proposition \ref{proposition on induced ideals} (and injectivity of $\iota_r$ in \eqref{sequence to be exact}) implies that for any Fell bundle  $B=\{\B_g\}_{g\in G}$ and  any graded ideal $J$  in $C^*_r(\B)$, we have 
$$
J\cong C^*_r(\J)
$$
where $\J=\{J_t\},\,\, J_t=J\cap B_t, \,\,t\in G$. In particular, we get  a similar isomorphism for the quotient $C^*_r(\B)/J$ provided the  sequence  \eqref{sequence to be exact} is exact. The following definition  generalizes the notion of exactness  for group (partial) actions  introduced in \cite[Definition 1.5]{s}, \cite[Definition 3.1(ii)]{gs}. 
  \begin{defn}[Definition 3.14 in \cite{AA}]
We say that a Fell bundle $\B=\{B_g\}_{g\in G}$ is \emph{exact} if the sequence  \eqref{sequence to be exact} is exact for every  ideal $\J$ in $\B$. 
  \end{defn}
\begin{rem}
In view of  \cite[Proposition 2.2]{Exel1} a discrete group $G$ is exact if and only if any Fell bundle over $G$  is exact.
\end{rem}

\begin{cor}\label{corollary on quotients} If  $B=\{\B_g\}_{g\in G}$ is an exact Fell bundle and
 $J$ is a graded  ideal in $C^*_r(\B)$, then
$$
 C^*_r(\B)/J\cong C^*_r(\B/\J) 
$$
where $\J=\{J_t\},\,\, J_t=J\cap B_t, \,\,t\in G$. 
\end{cor}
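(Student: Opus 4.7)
The plan is to invoke directly the definition of exactness together with the identification of graded ideals established in Proposition \ref{proposition on induced ideals}. Since $J$ is graded, Proposition \ref{proposition on induced ideals} yields that $\J = \{J\cap B_t\}_{t\in G}$ is an ideal in the Fell bundle $\B$ and that $J = \overline{\bigoplus_{t\in G}(J\cap B_t)}$. Thus the natural map $\iota_r\colon C^*_r(\J)\to C^*_r(\B)$ from sequence \eqref{sequence to be exact} has image precisely $J$, because $\iota_r$ sends the algebraic direct sum $\bigoplus_{t\in G} J_t$ onto the dense $*$-subalgebra of $J$ spanned by the fibers, and $\iota_r$ is isometric (being injective between $C^*$-algebras with a faithful conditional expectation compatible with the Fourier coefficients).

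With this identification in place, the corollary is immediate from exactness of $\B$: the hypothesis says that the sequence
\begin{equation*}
 0 \longrightarrow C^*_r(\J) \stackrel{\iota_r}{\longrightarrow} C^*_r(\B) \stackrel{\kappa_r}{\longrightarrow} C^*_r(\B/\J) \longrightarrow 0
\end{equation*}
is exact, so $\ker \kappa_r = \iota_r(C^*_r(\J)) = J$, and since $\kappa_r$ is surjective the first isomorphism theorem for $C^*$-algebras yields the desired $*$-isomorphism
\begin{equation*}
C^*_r(\B)/J \;\cong\; C^*_r(\B/\J).
\end{equation*}

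The only step requiring a small verification is the identification $\iota_r(C^*_r(\J)) = J$. One could argue it elementwise: for each $t$, the inclusion $J_t \hookrightarrow B_t$ is just the inclusion of a closed subspace, so on finite sums $\sum_t a_t$ with $a_t\in J_t$, the map $\iota_r$ coincides with the inclusion $\bigoplus_{t} J_t \hookrightarrow \bigoplus_t B_t$; taking closures in the respective reduced norms gives $\iota_r(C^*_r(\J))=\overline{\bigoplus_{t}J_t}=J$, where in the last equality we use the graded description of $J$ from Proposition \ref{proposition on induced ideals}. I do not expect any genuine obstacle here: once exactness of \eqref{sequence to be exact} is granted by hypothesis and graded ideals are identified with ideals in the bundle by Proposition \ref{proposition on induced ideals}, the corollary is a one-line consequence.
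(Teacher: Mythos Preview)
Your proof is correct and follows essentially the same approach as the paper: the paper's proof is the one-liner ``Apply the correspondence between objects in (i) and (ii) in Proposition \ref{proposition on induced ideals} and exactness of the sequence \eqref{sequence to be exact}.'' Your additional verification that $\iota_r(C^*_r(\J))=J$ is precisely the content of the remark immediately preceding the corollary in the paper (which notes that Proposition \ref{proposition on induced ideals} together with injectivity of $\iota_r$ gives $J\cong C^*_r(\J)$), so you have simply unpacked what the paper takes as already established.
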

\begin{proof}
Apply the correspondence between objects in (i) and (ii) in Proposition \ref{proposition on induced ideals} and exactness of the sequence \eqref{sequence to be exact}. 
\end{proof}
We have the following characterization of  exactness of Fell bundles in terms of the structure of Fourier ideals in $C^*_r(\B)$.
\begin{prop}\label{characterization of exactness}
A Fell bundle $\B=\{B_g\}_{g\in G}$ is exact if and only if every Fourier ideal in $C^*_r(\B)$ is graded.  
 \end{prop}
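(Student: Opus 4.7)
The plan is to exploit Proposition \ref{remark on Exel's thm} together with the faithfulness of $F_e$ on the reduced cross-sectional algebra, using the fact that Fourier coefficient operators commute with the natural quotient map $\kappa_r : C^*_r(\B) \to C^*_r(\B/\J)$. Throughout, let $\J=\{J_t\}$ denote an ideal in $\B$, let $I := J_e \in \I^\B(B_e)$ be the associated $\B$-invariant ideal, and let $J_{\mathrm{gr}} := \iota_r(C^*_r(\J)) = \overline{\bigoplus_{t} J_t}$ be the corresponding graded ideal in $C^*_r(\B)$ (Proposition \ref{proposition on induced ideals}).

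For the forward direction, assume $\B$ is exact and let $J$ be a Fourier ideal in $C^*_r(\B)$. First, I would consider $I := J \cap B_e$, which is $\B$-invariant, and let $J_{\mathrm{gr}}$ denote the graded ideal it generates. By Proposition \ref{remark on Exel's thm}, $J_{\mathrm{gr}} \subseteq J$, so it suffices to show the reverse inclusion. Exactness yields $C^*_r(\B)/J_{\mathrm{gr}} \cong C^*_r(\B/\J)$ via $\kappa_r$, and the canonical Fourier coefficient operators $\bar F_t$ on the quotient satisfy $\bar F_t \circ \kappa_r = \kappa_r \circ F_t$ (check this on the dense subalgebra $\bigoplus B_t$ and extend by contractivity). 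Thus $\kappa_r(J)$ is a Fourier ideal in $C^*_r(\B/\J)$, and one verifies $\kappa_r(J) \cap B_e/I = 0$: if $\bar a \in \kappa_r(J)\cap B_e/I$, lift $\bar a = \kappa_r(b)$ with $b \in J$, apply $\bar F_e$ to obtain $\bar a = \kappa_r(F_e(b))$, and note $F_e(b) \in J\cap B_e = I$, so $\bar a = 0$. Finally, for any $\bar b \in \kappa_r(J)$ the Fourier property gives $\bar F_e(\bar b^*\bar b) \in \kappa_r(J) \cap B_e/I = 0$, and since $\bar F_e$ is faithful on $C^*_r(\B/\J)$ by \cite[Proposition 2.12]{Exel}, we get $\bar b = 0$. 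Hence $J \subseteq J_{\mathrm{gr}}$, and $J$ is graded.

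For the reverse direction, assume every Fourier ideal in $C^*_r(\B)$ is graded, and let $\J$ be an ideal in $\B$ with associated graded ideal $J_{\mathrm{gr}}$. The sequence \eqref{sequence to be exact} is exact on the ends, so it suffices to show $\ker \kappa_r \subseteq J_{\mathrm{gr}}$. Set $J := \ker \kappa_r$; the intertwining relation $\bar F_t \circ \kappa_r = \kappa_r \circ F_t$ immediately gives $F_t(J) \subseteq J$, so $J$ is Fourier, hence graded by hypothesis. Since $\kappa_r$ restricts to the quotient map $B_e \to B_e/I$, we have $J \cap B_e = I = J_{\mathrm{gr}} \cap B_e$. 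By the bijective correspondence between $\B$-invariant ideals of $B_e$ and graded ideals of $C^*_r(\B)$ (Proposition \ref{proposition on induced ideals}), two graded ideals with the same core intersection coincide, so $J = J_{\mathrm{gr}}$. This establishes exactness of \eqref{sequence to be exact}.

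The main technical step is the commutation relation $\bar F_t \circ \kappa_r = \kappa_r \circ F_t$ together with the faithfulness of $F_e$ on $C^*_r(\B/\J)$; once these are in hand, both implications reduce to the Rieffel-style uniqueness statement in Proposition \ref{proposition on induced ideals}. The subtle point worth double-checking is that $\kappa_r$, viewed as a homomorphism between two reduced cross-sectional algebras, is compatible with the Fourier coefficients of both sides — but this is transparent on $\bigoplus B_t$ where $\kappa_r$ acts fiberwise, and extends by continuity since Fourier coefficient operators are contractive on any topologically graded $C^*$-algebra.
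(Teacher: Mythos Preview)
Your proof is correct and follows essentially the same approach as the paper: both hinge on the observation that $\ker\kappa_r$ is a Fourier ideal whose intersection with $B_e$ equals $J_e$. The paper streamlines the forward direction by directly writing $\ker\kappa_r=\{b:F_t(b)\in J_t,\ t\in G\}$ and then sandwiching $J_{\mathrm{gr}}\subseteq J\subseteq\ker\kappa_r$, whereas you pass to the quotient and invoke faithfulness of $\bar F_e$ on $C^*_r(\B/\J)$; this is a slightly longer route to the same inclusion $J\subseteq\ker\kappa_r$, but entirely valid.
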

 \begin{proof}
Let $\J=\{J_t\}_{t\in G}$ be an ideal in $\B$ and put $J^{(1)}=\iota_r(C^*_r(\J))$  and $J^{(2)}=\ker(\kappa_r)$ where $\iota_r$ and $\kappa_r$ are mappings appearing in the  sequence \eqref{sequence to be exact}. It follows  from the construction of $\iota_r$ and $\kappa_r$ that
\begin{equation}\label{inclusion for exactness}
J^{(1)}=\overline{\bigoplus_{t\in G}J_t} \subseteq J^{(2)}=\{b\in B: F_t(b)\in J_t, \,\, t\in G\}.
\end{equation}
Hence $J^{(2)}$ is Fourier, and since $J_t=J_tJ_t^*J_t\subseteq J_eJ_t$, for all $t\in G$, one concludes that $J^{(1)}$ is an induced ideal. In particular, \eqref{inclusion for exactness} implies that
$
J_t=J^{(i)}\cap B_t=F_t(J^{(i)})$ for all $t \in G, \,\, i=1,2$.

Suppose now $J$ is a Fourier ideal in $C^*_r(\B)$. For each $t\in G$, let  $J_t=J\cap B_t$ so that $\J=\{J_t\}_{t\in G}$ is an ideal in $\B$.  Consider the ideals $J^{(1)}$ and $J^{(2)}$ associated to $\J=\{J_t\}_{t\in G}$ as above. By the Fourier property of  $J$, we conclude from  \eqref{inclusion for exactness} that
$$
J^{(1)} \subseteq J \subseteq J^{(2)} .
$$
Thus if \eqref{sequence to be exact} is exact then $J=J^{(1)} =J^{(2)} $ is a graded ideal. If $J$ is not graded then $J_1\neq J$ and \eqref{sequence to be exact} is not exact. 
\end{proof}
Standard arguments show that exactness passes to ideals and quotients. 
\begin{lem}\label{permanence of exactness}
Let $\J$ be an ideal in an exact Fell bundle $\B$. Then  both $\J$  and $\B/\J$  are exact.
\end{lem}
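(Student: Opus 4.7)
The plan is to apply Proposition \ref{characterization of exactness}, which reduces exactness of a Fell bundle to the statement that every Fourier ideal in its reduced cross-sectional algebra is graded. Thus, for each of $\J$ and $\B/\J$, I would start with an arbitrary Fourier ideal in the corresponding reduced cross-sectional algebra and exhibit it as graded, using the known exactness of $\B$ as the key input.

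For the ideal $\J$: let $J$ be a Fourier ideal in $C^*_r(\J)$. Via $\iota_r$ from \eqref{sequence to be exact}, identify $C^*_r(\J)$ with the graded ideal of $C^*_r(\B)$ associated with $\J$ by Proposition \ref{proposition on induced ideals}. A standard approximate-unit argument then shows that $J$ is itself an ideal in $C^*_r(\B)$. The next key point is to verify that the restriction of the Fourier coefficient operator $F_t$ on $C^*_r(\B)$ to the subalgebra $C^*_r(\J)$ coincides with the Fourier operator of $\J$; this follows by continuity, since both maps agree with the canonical projection onto $J_t$ on the dense algebraic direct sum $\bigoplus_{s\in G} J_s$. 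Consequently, the Fourier property of $J$ in $C^*_r(\J)$ lifts to the Fourier property in $C^*_r(\B)$. Exactness of $\B$ and Proposition \ref{characterization of exactness} now give $J=\overline{\bigoplus_{t\in G}(J\cap B_t)}$, and since $J\subseteq C^*_r(\J)$ forces $J\cap B_t\subseteq J_t$, this is precisely the statement that $J$ is graded as an ideal of $C^*_r(\J)$.

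For the quotient $\B/\J$: let $J'$ be a Fourier ideal in $C^*_r(\B/\J)$ and set $\widetilde{J} := \kappa_r^{-1}(J')$, an ideal of $C^*_r(\B)$. The key compatibility to establish is that the Fourier operators on $C^*_r(\B)$ and on $C^*_r(\B/\J)$ intertwine $\kappa_r$, i.e.\ the quotient-level Fourier operator composed with $\kappa_r$ equals $\kappa_r$ composed with $F_t$ on $C^*_r(\B)$. This is immediate on the dense subalgebra $\bigoplus_{s\in G} B_s$ and extends by continuity. From this and the Fourier property of $J'$, $\widetilde{J}$ is Fourier in $C^*_r(\B)$; exactness of $\B$ then yields $\widetilde{J} = \overline{\bigoplus_{t\in G}(\widetilde{J}\cap B_t)}$. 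Applying the surjection $\kappa_r$ and observing that $\kappa_r(\widetilde{J}\cap B_t) = J'\cap(B_t/J_t)$ — the non-trivial inclusion using that $\kappa_r$ maps $B_t$ onto $B_t/J_t$, so any preimage of an element in $J'\cap(B_t/J_t)$ lies in $\widetilde{J}\cap B_t$ — gives $J' = \overline{\bigoplus_{t\in G} J'\cap (B_t/J_t)}$, so $J'$ is graded.

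The main obstacle is not an isolated difficult step but rather the bookkeeping needed to transport the Fourier property through the embedding $\iota_r$ and the surjection $\kappa_r$ in \eqref{sequence to be exact}. Once the two compatibilities of Fourier operators have been verified — and each reduces to a direct check on the dense $*$-subalgebra of finite sums — everything else is a formal consequence of Proposition \ref{characterization of exactness} applied to the assumed exactness of $\B$.
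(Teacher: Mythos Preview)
Your proof is correct, but it takes a genuinely different route from the paper's. You work through the characterization in Proposition \ref{characterization of exactness}: starting with a Fourier ideal in $C^*_r(\J)$ (resp.\ $C^*_r(\B/\J)$), you transport the Fourier property through $\iota_r$ (resp.\ $\kappa_r$) to $C^*_r(\B)$, invoke exactness of $\B$ to get gradedness there, and then transport gradedness back. The paper instead argues directly from the definition of exactness (exactness of the sequence \eqref{sequence to be exact}): for $\J$, any ideal $\I$ in $\J$ is also an ideal in $\B$, and the exact sequence $0\to C^*_r(\I)\to C^*_r(\B)\to C^*_r(\B/\I)\to 0$ restricts to the desired exact sequence for $\J$; for $\B/\J$, one pulls back an ideal $\widetilde{\I}$ in $\B/\J$ to an ideal $\I$ in $\B$ containing $\J$, uses $(\B/\J)/\widetilde{\I}\cong\B/\I$, and checks exactness of the lower row in a commutative diagram whose upper row is the known exact sequence for $\I$ in $\B$. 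The paper's approach stays entirely at the bundle level and avoids the Fourier-operator compatibility checks; yours illustrates the usefulness of Proposition \ref{characterization of exactness} but costs a bit more bookkeeping to verify that Fourier coefficient operators commute with $\iota_r$ and $\kappa_r$.
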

\begin{proof}
If $\I$ is an ideal in $\J$, then treating it as an ideal in $\B$ we get the exact sequence
$$
 0 \longrightarrow C^*_r(\I) \stackrel{\iota_r}{\longrightarrow} C^*_r(\B) \stackrel{\kappa_r}{\longrightarrow} C^*_r(\B/\I) \longrightarrow 0 
$$
which restricts to the exact sequence 
$$
 0 \longrightarrow C^*_r(\I) \stackrel{\iota_r}{\longrightarrow} C^*_r(\J) \stackrel{\kappa_r}{\longrightarrow} C^*_r(\J/\I) \longrightarrow 0.
$$
Hence $\J$ is exact.  Now suppose that $\widetilde{\I}$ is an ideal in $\B/\J$ and let $\I$ be the preimage   of $\widetilde{\I}$  under the quotient map. Then $\I$ is an ideal in $\B$, containing $\J$,
and we have a natural isomorphism $(\B/\J)/\widetilde{\I}\cong \B/\I$. 
In particular, we have a commutative diagram
$$
\begin{xy}
\xymatrix{
0 \ar[r] &  C^*_r(\I) \ar[r] \ar[d] & C^*_r(\B) \ar[r] \ar[d]&  C^*_r(\B/\I) \ar[r] \ar[d]_{\cong} &  0 
  \\
	0 \ar[r] &  C^*_r(\widetilde{\I}) \ar[r] & C^*_r(\B/\J) \ar[r] &  C^*_r((\B/\J)/\widetilde{\I}) \ar[r] &  0 
			} 
  \end{xy}, 
$$
where the upper row is exact and the two left most vertical maps are surjective (they are induced by the quotient map $\B\to   \B/\J$). Using this, one readily gets that the lower row is also exact. Hence $\B/\J$ is exact.
\end{proof}

As one would expect, amenability of a Fell bundle implies exactness. 
\begin{lem}\label{permanence of amenability}
Let $\J$ be an ideal in a Fell bundle $\B$. Then 
$$
 \B \textrm{ is amenable } \,\,  \Longleftrightarrow \,\,  \J \textrm{ and } \B/\J \textrm{ are amenable}.
$$
\end{lem}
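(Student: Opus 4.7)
The plan is to use the characterization of amenability via the faithfulness of the canonical conditional expectation $E_\B\colon C^*(\B)\to B_e$: since $F_e^\B\circ\lambda_\B=E_\B$ and $F_e^\B$ on $C^*_r(\B)$ is always faithful by \cite[Proposition 2.12]{Exel}, $\B$ is amenable iff $E_\B$ is faithful. The backward implication is a short five-lemma-style diagram chase on
\[
\begin{CD}
0 @>>> C^*(\J) @>\iota>> C^*(\B) @>\kappa>> C^*(\B/\J) @>>> 0 \\
@. @VV\lambda_\J V @VV\lambda_\B V @VV\lambda_{\B/\J} V \\
0 @>>> C^*_r(\J) @>\iota_r>> C^*_r(\B) @>\kappa_r>> C^*_r(\B/\J) @>>> 0
\end{CD}
\]
whose top row is exact by \eqref{sequence which is always exact} and whose $\iota_r$ is injective: given $\lambda_\J$ and $\lambda_{\B/\J}$ isomorphisms, any $x\in\ker\lambda_\B$ yields $\kappa(x)\in\ker\lambda_{\B/\J}=0$, hence $x=\iota(y)$, and then $\iota_r(\lambda_\J(y))=\lambda_\B(x)=0$ forces $y=0$, so $x=0$; surjectivity of $\lambda_\B$ is automatic.

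For the forward direction, amenability of $\J$ is immediate from commutativity $\iota_r\circ\lambda_\J=\lambda_\B\circ\iota$: the right-hand side is a composition of injective maps, so $\lambda_\J$ is injective and hence an isomorphism. For amenability of $\B/\J$, I would verify that $E_{\B/\J}$ is faithful. Take $\xi\in C^*(\B/\J)^+$ with $E_{\B/\J}(\xi)=0$; writing $\xi=\alpha^*\alpha$ and lifting $\alpha=\kappa(\beta)$ with $\beta\in C^*(\B)$, set $\eta:=\beta^*\beta$. The compatibility $E_{\B/\J}\circ\kappa=\pi\circ E_\B$, with $\pi\colon B_e\to B_e/J_e$ the quotient, gives $E_\B(\eta)\in J_e$. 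Viewing $C^*(\B)$ as a right pre-Hilbert $B_e$-module via $\langle a,b\rangle:=E_\B(a^*b)$ and using the orthogonality $E_\B(B_s^*B_t)=0$ for $s\ne t$, a Bessel-type inequality
\[
\sum_{t\in F}F_t(\beta)^*F_t(\beta)\le E_\B(\beta^*\beta)=E_\B(\eta)\in J_e
\]
holds for every finite $F\subseteq G$. Hereditarity of $J_e$ in $B_e$ forces each $F_t(\beta)^*F_t(\beta)\in J_e$, and an approximate-unit / polar decomposition argument combined with the identity $B_tJ_e=J_t$ (the $\B$-invariance of $J_e$, cf.\ Proposition \ref{proposition on induced ideals}) yields $F_t(\beta)\in J_t$ for every $t\in G$.

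The main obstacle is the concluding step: upgrading the Fourier-coefficient information $F_t(\beta)\in J_t$ for all $t$ to genuine $C^*$-norm membership $\beta\in C^*(\J)$, so that $\eta=\beta^*\beta\in C^*(\J)$ and $\xi=\kappa(\eta)=0$. I would address this by exploiting amenability of $\B$ to identify the full and reduced completions of $\bigoplus_{t\in G}B_t$ and then to control the approximation of $\beta$ by the partial Fourier sums $S_F=\sum_{t\in F}F_t(\beta)\in\bigoplus_{t\in G}J_t$: the Bessel identity $E_\B((\beta-S_F)^*(\beta-S_F))=E_\B(\eta)-\sum_{t\in F}F_t(\beta)^*F_t(\beta)$ shows the residues become arbitrarily small inside $J_e$, and amenability supplies the bridge needed to translate this module-norm decay into $C^*$-norm convergence of $S_F$ to $\beta$ in the graded ideal $C^*(\J)$.
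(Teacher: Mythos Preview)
Your backward implication and the amenability of $\J$ are both correct and follow the same diagram-chase logic as the paper. The difficulty is entirely in the final step for $\B/\J$, and your proposed resolution does not work.

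The specific failure is the claim that amenability of $\B$ lets you upgrade the module-norm decay $\|E_\B((\beta-S_F)^*(\beta-S_F))\|\to 0$ to $C^*$-norm convergence $\|\beta-S_F\|\to 0$. This is false already for the trivial Fell bundle over $\Z$ with $B_e=\C$: then $C^*(\B)=C^*_r(\B)=C(\T)$ is amenable, $\|\cdot\|_2$ is the $L^2$-norm, and partial Fourier sums of a generic continuous function converge in $L^2$ but not uniformly. So there is no bridge of the kind you describe. Your Parseval computation $E_\B(\beta^*\beta)=\sum_t F_t(\beta)^*F_t(\beta)$ and the conclusion $F_t(\beta)\in J_t$ for all $t$ are correct, but that information by itself only places $\beta$ in the \emph{Fourier} ideal $\{b:F_t(b)\in J_t\ \forall t\}$, not in the \emph{graded} ideal $C^*(\J)$; passing from one to the other is exactly the exactness issue (Proposition~\ref{characterization of exactness}) that you cannot resolve by summation.

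The paper organises the argument differently: rather than chasing faithfulness of $E_{\B/\J}$ directly, it restricts the exact sequence $0\to C^*(\J)\to C^*(\B)\to C^*(\B/\J)\to 0$ to the kernels $\ker\Lambda_\bullet=\{a:E_\bullet(a^*a)=0\}$, obtaining the sequence \eqref{radical exact sequence}, and argues that this restricted sequence is itself exact. The biconditional then drops out at once, since amenability of a bundle is precisely the vanishing of the corresponding $\ker\Lambda$. Your computations effectively establish injectivity and middle-exactness of this kernel sequence; what you are missing is the right-hand surjectivity, and that is not obtainable from the Fourier-sum approximation you propose.
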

\begin{proof}
Denote by $\Lambda_\B:C^*(\B)\to C^*_r(\B)$ the canonical epimorphism. By \cite[Proposition 3.1]{Exel} we have  $\ker(\Lambda_\B)=\{a \in  C^*(\B): E_\B(a^*a)=0\}$ where $E_\B: C^*(\B) \to B_e$ is the canonical conditional expectation. Note that, for any Fell bundles $\B$ and $\B'$ and any homomorphism $\Phi:C^*(\B)\to C^*(\B')$ that preserves the gradings we have $\Phi(E_\B(a^*a))=E_{\B'}(\Phi(a)^*\Phi(a))$, $a\in C^*(\B)$,  and therefore $\Phi(\ker(\Lambda_\B))\subseteq \ker(\Lambda_{\B'})$.
Thus the exact  sequence \eqref{sequence which is always exact} restricts to the sequence 
\begin{equation}\label{radical exact sequence}
 0 \longrightarrow \ker(\Lambda_\J) \stackrel{\iota}{\longrightarrow} \ker(\Lambda_\B) \stackrel{\kappa}{\longrightarrow} \ker(\Lambda_{\B/\J}) \longrightarrow 0.
\end{equation}
It is not hard to see that  \eqref{radical exact sequence} is also exact. Indeed, $\iota$ is injective and clearly we have  $\iota(\ker(\Lambda_\J))=\ker(\Lambda_\B)\cap \iota(C^*(\J))$. Hence the isomorphism $C^*(\B)/\iota(C^*(\J))\cong C^*({\B/\J})$, induced by the epimorphism  $\kappa:C^*(\B)\to  C^*({\B/\J}))$, `restricts' to the isomorphism  $\ker(\Lambda_\B)/\iota(\ker(\Lambda_\J))\cong \ker(\Lambda_{\B/\J})$. Thus  \eqref{radical exact sequence} is  exact.

Now, since $\B$ is amenable if and only if $\ker(\Lambda_\B)=\{0\}$, the assertion follows from exactness of the sequence \eqref{radical exact sequence}.   
\end{proof}
\begin{cor}
Every amenable Fell bundle is exact. 
\end{cor}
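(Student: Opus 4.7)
The plan is extremely short: amenability of $\B$ is defined precisely so that $C^*_r(\B)=C^*(\B)$, i.e.\ $\Lambda_\B$ is an isomorphism, and the full cross-sectional sequence \eqref{sequence which is always exact} is exact for \emph{any} ideal $\J$ in $\B$. So exactness of $\B$ reduces to showing that when we pass to the reduced sequence \eqref{sequence to be exact} we do not lose exactness, which will follow immediately once we know $\J$ and $\B/\J$ are also amenable.

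First I would fix an arbitrary ideal $\J$ in $\B$ and invoke Lemma \ref{permanence of amenability}: since $\B$ is amenable, both $\J$ and $\B/\J$ are amenable as well. Hence the three canonical surjections $\Lambda_\J\colon C^*(\J)\to C^*_r(\J)$, $\Lambda_\B\colon C^*(\B)\to C^*_r(\B)$, and $\Lambda_{\B/\J}\colon C^*(\B/\J)\to C^*_r(\B/\J)$ are all isomorphisms.

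Next, I would place the full sequence \eqref{sequence which is always exact}, which is always exact, and the reduced sequence \eqref{sequence to be exact} into a commutative diagram connected by the grading-preserving maps $\Lambda_\J$, $\Lambda_\B$, $\Lambda_{\B/\J}$ (commutativity is immediate because all these arise from the identity on $\bigoplus_{t\in G} B_t$ and its image in the quotient). Since all three vertical arrows are isomorphisms, the lower row inherits exactness from the upper one. As $\J$ was arbitrary, $\B$ is exact by definition.

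There is essentially no obstacle here; the only thing to keep straight is the identification of the reduced sequence with the full one under the three isomorphisms, which is a formal consequence of the naturality used in the proof of Lemma \ref{permanence of amenability}. In particular, no further appeal to Proposition \ref{characterization of exactness} or to the structure of Fourier ideals is necessary.
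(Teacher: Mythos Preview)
Your argument is correct and is essentially the same as the paper's proof: by Lemma \ref{permanence of amenability}, amenability passes to $\J$ and $\B/\J$, so the three $\Lambda$-maps are isomorphisms and the reduced sequence \eqref{sequence to be exact} coincides with the always-exact full sequence \eqref{sequence which is always exact}. The paper just states this in one line.
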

\begin{proof}
In view of Lemma \ref{permanence of amenability}, if $\B$ is amenable,  the sequences   \eqref{sequence which is always exact} and \eqref{sequence to be exact} coincide.  
\end{proof}

The notion of (residual) intersection property, in the context of crossed products was introduced in  \cite[Definition 1.9]{s}, see also \cite[Definition 3.1(iii)]{gs}. 
\begin{defn}[Definition 3.14 in \cite{AA}]\label{residual definition for Fell bundles} 
We say that a Fell bundle $\B=\{B_t\}_{t\in G}$ has the \emph{intersection property} if every non-zero ideal in $C^*_r(\B)$ has a non-zero intersection with $B_e$. We say that a Fell bundle $\B=\{B_t\}_{t\in G}$ has the \emph{residual intersection property} if $\B/\J$ has the intersection property for every ideal $\J$ in $\B$. 
\end{defn}
The following theorem is essence reformulation of \cite[Theorem 3.19]{AA}.  It is   a generalization of  \cite[Theorem 3.2]{gs}, cf. also \cite[Theorem 1.13]{s}. 
\begin{thm}\label{Sierakowski's ;) theorem}
 Let $\B=\{B_t\}_{t\in G}$ be a Fell bundle. The following statements are equivalent:
\begin{itemize}
\item[(i)] The map \eqref{mapping to become a homeomorphism} establishes a homeomorphism $\I(C^*_r(\B))\cong \I^\B(B_e)$.
\item[(ii)] All ideals in $C^*_r(\B)$ are graded.
\item[(iii)]  $\B$ is exact and has the residual intersection property.
\end{itemize}
\end{thm}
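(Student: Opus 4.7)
The plan is to deduce (i) $\Leftrightarrow$ (ii) directly from Proposition \ref{proposition on induced ideals}, then to prove (ii) $\Leftrightarrow$ (iii) by repeatedly exploiting the fact that a graded ideal of $C^*_r(\B)$ is uniquely determined by its intersection with the core.

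For (i) $\Leftrightarrow$ (ii): By Proposition \ref{proposition on induced ideals} the map $J\mapsto J\cap B_e$ is an inclusion-preserving bijection (hence a homeomorphism for the Fell topologies) from the set of graded ideals of $C^*_r(\B)$ onto $\I^\B(B_e)$. Therefore the global map \eqref{mapping to become a homeomorphism} is a homeomorphism between $\I(C^*_r(\B))$ and $\I^\B(B_e)$ precisely when every ideal of $C^*_r(\B)$ is graded.

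For (ii) $\Rightarrow$ (iii) I would first establish exactness. Given an ideal $\J$ of $\B$, the argument in the proof of Proposition \ref{characterization of exactness} shows that $J^{(1)}:=\iota_r(C^*_r(\J))$ and $J^{(2)}:=\ker\kappa_r$ both satisfy $J^{(i)}\cap B_t = J_t$ for every $t\in G$. Assuming (ii), both are graded, so Proposition \ref{proposition on induced ideals} forces $J^{(1)}=J^{(2)}$, and \eqref{sequence to be exact} is exact. For the residual intersection property, a non-zero ideal $K\subseteq C^*_r(\B/\J)$ pulls back, via the isomorphism $C^*_r(\B/\J)\cong C^*_r(\B)/J^{(1)}$ of Corollary \ref{corollary on quotients}, to a graded ideal $\widetilde K$ of $C^*_r(\B)$ strictly containing $J^{(1)}$. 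Proposition \ref{proposition on induced ideals} then yields $\widetilde K\cap B_e \supsetneq J_e$, so $K\cap (B_e/J_e)\neq 0$.

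For (iii) $\Rightarrow$ (ii) let $J$ be an arbitrary ideal of $C^*_r(\B)$, and set $J_t:=J\cap B_t$, which by Proposition \ref{proposition on induced ideals} gives a Fell-bundle ideal $\J=\{J_t\}_{t\in G}$ and a graded ideal $J^{(1)}:=\overline{\bigoplus_{t\in G} J_t}\subseteq J$. Using exactness through Corollary \ref{corollary on quotients}, identify $C^*_r(\B)/J^{(1)}\cong C^*_r(\B/\J)$; under this identification $J/J^{(1)}$ is an ideal of $C^*_r(\B/\J)$ whose intersection with the unit fibre $B_e/J_e$ is trivial, because any $a\in J\cap B_e$ already lies in $J_e=J^{(1)}\cap B_e$. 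The residual intersection property for $\B/\J$ then forces $J/J^{(1)}=0$, so $J=J^{(1)}$ is graded. The main point requiring care, in both directions of the second equivalence, is the compatibility of the isomorphism of Corollary \ref{corollary on quotients} with the operation of intersecting with the core: one must verify that it sends the unit fibre of $C^*_r(\B/\J)$ onto $B_e/J_e$. This is essentially built into the construction of the quotient Fell bundle, but deserves explicit mention; everything else is bookkeeping against the already-established correspondence of Proposition \ref{proposition on induced ideals}.
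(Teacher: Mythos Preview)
Your argument is correct. The paper itself does not spell out the proof: it notes that (i)$\Leftrightarrow$(ii) is clear and then invokes \cite[Theorem 3.19]{AA} for (i)$\Leftrightarrow$(iii). Your proposal, by contrast, is self-contained within the paper: you recover (ii)$\Leftrightarrow$(iii) directly from Proposition~\ref{proposition on induced ideals}, the computation of $J^{(1)}\cap B_t$ and $J^{(2)}\cap B_t$ carried out in the proof of Proposition~\ref{characterization of exactness}, and Corollary~\ref{corollary on quotients}. The only delicate step you flag---that the isomorphism $C^*_r(\B)/J^{(1)}\cong C^*_r(\B/\J)$ of Corollary~\ref{corollary on quotients} carries the image of $B_e$ onto the unit fibre $B_e/J_e$---is indeed built into the construction of $\kappa_r$, and once that is granted your bookkeeping is sound (in particular the verification that $(J/J^{(1)})\cap(B_e/J_e)=0$ reduces to $J\cap B_e\subseteq J_e$). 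The advantage of your route is that it avoids the external reference to \cite{AA} and makes transparent how exactness and the residual intersection property each play their role; the paper's citation is of course shorter.
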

\begin{proof} Equivalence (i)$\Longleftrightarrow$(ii) is clear. Equivalence (i)$\Longleftrightarrow$(iii) follows from \cite[Theorem 3.19]{AA} (note that for every ideal $\J\in \I^\B(B_e)$ the $C^*$-algebra $C^*_r(\J)$ embeds into $C^*_r(\B)$ as a graded ideal).
 \end{proof}
\begin{cor}\label{simplicity for Fell bundles}
 Let $\B=\{B_t\}_{t\in G}$ be a  Fell bundle with the intersection property. Then  $C^*_r(\B)$ is simple if and only if there are no non-trivial  $\B$-invariant ideals in $B_e$.
 \end{cor}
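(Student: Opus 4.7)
The plan is to combine the bijective correspondence from Proposition \ref{proposition on induced ideals} between $\B$-invariant ideals of $B_e$ and graded ideals of $C^*_r(\B)$ with the intersection property hypothesis. Both directions reduce essentially to tracking this correspondence.

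For the forward direction, the intersection property is not actually needed. If $I$ is a non-trivial $\B$-invariant ideal in $B_e$, Proposition \ref{proposition on induced ideals} yields a graded ideal $J = \overline{\bigoplus_{t\in G} B_t I}$ in $C^*_r(\B)$ satisfying $J\cap B_e = I$. Since the correspondence is bijective and sends $B_e$ to $C^*_r(\B)$, the ideal $J$ is non-trivial, contradicting simplicity.

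For the converse, I would start with an arbitrary non-zero ideal $J\trianglelefteq C^*_r(\B)$ and show $J = C^*_r(\B)$. The intersection property gives $J\cap B_e \neq 0$, and a short computation verifies that $J\cap B_e$ is $\B$-invariant: for each $t\in G$ the Fell bundle relations yield
\[
B_t(J\cap B_e)B_{t^{-1}} \subseteq B_t B_e B_{t^{-1}} \cap B_t J B_{t^{-1}} \subseteq B_e \cap J.
\]
By hypothesis, therefore, $J\cap B_e = B_e$. To promote this to $J = C^*_r(\B)$ I would invoke the ternary relation \eqref{ternary relation}: since $B_{t^{-1}}B_t \subseteq B_e \subseteq J$ and $J$ is an ideal,
\[
B_t = B_t B_{t^{-1}} B_t \subseteq B_t J \subseteq J
\]
for every $t\in G$. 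Hence $\bigoplus_{t\in G} B_t \subseteq J$, and passing to the closure gives $J = C^*_r(\B)$.

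No real obstacle arises: the argument is a direct extraction of Proposition \ref{proposition on induced ideals} and the Fell bundle axioms, the only point needing a few words being the $\B$-invariance of $J\cap B_e$, which is immediate from $B_t B_e B_{t^{-1}}\subseteq B_e$. One could alternatively phrase the whole statement in the language of Theorem \ref{Sierakowski's ;) theorem}, observing that under the intersection property the map \eqref{mapping to become a homeomorphism} restricted to graded ideals already captures the full ideal lattice in the sense relevant for simplicity, so triviality of $\I^\B(B_e)$ matches triviality of $\I(C^*_r(\B))$.
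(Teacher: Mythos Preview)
Your proof is correct. The paper's own argument is shorter but less self-contained: for the `if' direction it observes that when $\I^\B(B_e)$ is trivial the bundle $\B$ is \emph{trivially} exact (the only Fell-bundle ideals are $0$ and $\B$, so the sequence \eqref{sequence to be exact} is automatically exact) and the residual intersection property reduces to the intersection property for $\B$ itself; it then invokes Theorem \ref{Sierakowski's ;) theorem} to conclude that $\I(C^*_r(\B))\cong\I^\B(B_e)$ is trivial.

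Your route is genuinely different in that it bypasses Theorem \ref{Sierakowski's ;) theorem} altogether. Instead you argue directly: from the intersection property $J\cap B_e\neq 0$, from the Fell-bundle relations $J\cap B_e\in\I^\B(B_e)$, hence $J\cap B_e=B_e$, and then the ternary identity \eqref{ternary relation} forces $B_t\subseteq J$ for all $t$. This is more elementary and makes the proof independent of the exactness discussion; the paper's approach, on the other hand, exhibits the corollary as an immediate specialisation of the general lattice isomorphism, which is conceptually cleaner once Theorem \ref{Sierakowski's ;) theorem} is in place. Your closing remark already notes this alternative, so nothing is missing.
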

\begin{proof}
The `only if' part is clear. For the `if' part note  that if there are no non-trivial  $\B$-invariant ideals in $B_e$, then $\B$ is trivially an exact Fell bundle. Thus it suffices to apply Theorem \ref{Sierakowski's ;) theorem}. 
\end{proof}

\begin{cor}\label{core for the ideal property}
 Let $\B=\{B_t\}_{t\in G}$ be an exact  Fell bundle with the residual intersection property. If $B_e$ has the ideal property then $C^*_r(\B)$ has the ideal property.
 \end{cor}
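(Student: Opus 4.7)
The plan is to combine Theorem \ref{Sierakowski's ;) theorem}, which under the stated hypotheses forces every ideal of $C^*_r(\B)$ to be graded and uniquely determined by its intersection with the core, with the ideal property of $B_e$, which supplies enough projections inside the core.

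First, I would take an arbitrary ideal $J \in \I(C^*_r(\B))$. By Theorem \ref{Sierakowski's ;) theorem}, applied to the exact Fell bundle $\B$ with the residual intersection property, $J$ is automatically graded, and the map $J \mapsto J\cap B_e$ is a homeomorphism onto $\I^\B(B_e)$. Set $I := J\cap B_e$. Then $I$ is a $\B$-invariant ideal in $B_e$, and in particular an ordinary ideal in $B_e$.

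Next, using the fact that $B_e$ has the ideal property, I would choose a family of projections $\{p_\lambda\}_{\lambda\in\Lambda}\subseteq I$ which generates $I$ as an ideal in $B_e$. Each $p_\lambda$ is also a projection in $C^*_r(\B)$ (since $B_e$ is a $C^*$-subalgebra of $C^*_r(\B)$), and $p_\lambda\in I\subseteq J$. Let $J'$ denote the ideal of $C^*_r(\B)$ generated by $\{p_\lambda\}_{\lambda\in\Lambda}$. Obviously $J'\subseteq J$, so $J'\cap B_e\subseteq J\cap B_e = I$. Conversely, the ideal of $B_e$ generated by the $p_\lambda$'s equals $I$ and is contained in $J'\cap B_e$, so $I\subseteq J'\cap B_e$. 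Hence $J'\cap B_e = I = J\cap B_e$.

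Finally, I would invoke once more the injectivity part of Theorem \ref{Sierakowski's ;) theorem}: since both $J$ and $J'$ are graded ideals of $C^*_r(\B)$ (the latter because \emph{every} ideal is graded under our hypotheses), the equality $J'\cap B_e = J\cap B_e$ forces $J' = J$. Thus $J$ is generated, as an ideal of $C^*_r(\B)$, by the projections $\{p_\lambda\}\subseteq J$, proving that $C^*_r(\B)$ has the ideal property. The only subtle point — and it is not really an obstacle — is remembering that Theorem \ref{Sierakowski's ;) theorem} applies to $J'$ as well as to $J$, which is what allows us to bypass any direct computation inside $C^*_r(\B)$ and reduce the argument to a bijection statement between ideals.
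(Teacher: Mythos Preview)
Your proof is correct and follows essentially the same approach as the paper: both use Theorem \ref{Sierakowski's ;) theorem} to reduce to the core $B_e$ and then invoke the ideal property of $B_e$. The only cosmetic difference is that the paper concludes via the direct computation $C^*_r(\B)P(I)C^*_r(\B)=C^*_r(\B)B_eP(I)B_eC^*_r(\B)=C^*_r(\B)IC^*_r(\B)=J$, whereas you reach $J'=J$ by appealing to the injectivity of $J\mapsto J\cap B_e$; these are equivalent ways of using the same theorem.
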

\begin{proof}
By Theorem \ref{Sierakowski's ;) theorem},   every  ideal $J\in \I( C^*_r(\B))$ is generated by  $I=B_e\cap J$. Denoting by $P(J)$ and $P(I)$ respectively the sets of projections in $J$ and $I$, we see that the ideal generated by $P(J)$ in $C^*_r(\B)$ contains 
$$
C^*_r(\B)P(I)C^*_r(\B)=C^*_r(\B)B_eP(I)B_eC^*_r(\B)=C^*_r(\B)IC^*_r(\B)=J.   
$$
Hence $C^*_r(\B)P(J)C^*_r(\B)=J$, which shows the ideal property for $C^*_r(\B)$.
\end{proof}
 We have  the following characterization of the intersection property in terms of graded $C^*$-algebras.

\begin{prop}\label{intersection and uniqueness for Fell bundles}
Let  $\B=\{B_t\}_{t\in G}$ be  a Fell bundle. The following conditions are equivalent:
\begin{itemize}
\item[(i)] $\B$ has the intersection property,
\item[(ii)] every graded $C^*$-algebra $B=\overline{\bigoplus_{t\in G}B_t}$ is automatically topologically graded, 
\item[(iii)] for every graded $B=\overline{\bigoplus_{t\in G}B_t}$ and any positive element $b=\oplus_{t\in G} b_t$  in $\bigoplus_{t\in G}B_t$ we have 
$$
\|b_e\|\leq \|b\|_{B}.
$$
\end{itemize}  
\end{prop}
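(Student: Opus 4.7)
The plan is to prove the equivalence via the chain (ii) $\Leftrightarrow$ (iii), (ii) $\Rightarrow$ (i), and (i) $\Rightarrow$ (ii). First I would dispatch the routine equivalence (ii) $\Leftrightarrow$ (iii): the implication $(ii) \Rightarrow (iii)$ is immediate, and for the reverse I would apply (iii) to $b := a^{*} a$ for an arbitrary $a = \sum_{t} a_{t}$ in the algebraic direct sum, noting that $b$ is positive with $b_{e} = \sum_{t} a_{t}^{*} a_{t} \ge a_{e}^{*} a_{e}$, so $\|a_{e}\|^{2} = \|a_{e}^{*} a_{e}\| \le \|b_{e}\| \le \|b\|_{B} = \|a\|_{B}^{2}$.

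For (ii) $\Rightarrow$ (i), suppose for contradiction that $J \subseteq C^{*}_{r}(\B)$ is a non-zero ideal with $J \cap B_{e} = 0$. Since any $x \in J \cap B_{t}$ forces $x^{*} x \in J \cap B_{e} = 0$, one gets $J \cap B_{t} = 0$ for every $t \in G$, so $B := C^{*}_{r}(\B)/J$ is graded by $\{B_{t}\}$. By (ii), $B$ is topologically graded and admits a Fourier coefficient $F_{e}^{B} : B \to B_{e}$. Composing with the quotient map $C^{*}_{r}(\B) \to B$ yields a contractive conditional expectation $C^{*}_{r}(\B) \to B_{e}$ that agrees with the canonical expectation $E : C^{*}_{r}(\B) \to B_{e}$ on the algebraic direct sum, hence everywhere by density and continuity. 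Thus $E$ vanishes on $J$, contradicting the faithfulness of $E$ (from \cite[Proposition 2.12]{Exel}) applied to any $a \in J^{+} \setminus \{0\}$.

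For (i) $\Rightarrow$ (ii), let $B$ be graded and let $q : C^{*}(\B) \twoheadrightarrow B$ be the canonical surjection arising from the universal property, with kernel $K$. Since $q|_{B_{t}}$ is isometric, $K \cap B_{t} = 0$ for every $t$. My plan is to establish $K \subseteq \ker \Lambda$ for the canonical surjection $\Lambda : C^{*}(\B) \twoheadrightarrow C^{*}_{r}(\B)$. Once this inclusion is in place, $q$ descends to a surjection $\tilde{q} : C^{*}_{r}(\B) \twoheadrightarrow B$ restricting to the identity on the algebraic direct sum; its kernel satisfies $\ker \tilde{q} \cap B_{e} = 0$, so (i) forces $\ker \tilde{q} = 0$, whence $B \cong C^{*}_{r}(\B)$ is topologically graded.

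The main obstacle will be the inclusion $K \subseteq \ker \Lambda$. Assuming $\Lambda(K) \neq 0$, by (i) there exists $0 \neq y \in \Lambda(K) \cap B_{e}$; using the relation $E \circ \Lambda = \Lambda \circ F_{e}^{C^{*}(\B)}$ (verified on the algebraic direct sum, hence globally by continuity) together with $\Lambda|_{B_{e}} = \mathrm{id}$, I would obtain $y = F_{e}^{C^{*}(\B)}(k)$ for some $k \in K$. I expect the desired contradiction with $K \cap B_{e} = 0$ to emerge by analysing the $\B$-invariant $B_{e}$-bimodule $F_{e}^{C^{*}(\B)}(K) \subseteq B_{e}$: its norm-closure is a $\B$-invariant ideal in $B_{e}$, whose associated graded ideal in $C^{*}_{r}(\B)$ (via Proposition \ref{proposition on induced ideals}) combined with the faithfulness of $E$ should pin $y$ down to zero.
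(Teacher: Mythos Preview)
Your arguments for (ii)$\Leftrightarrow$(iii) and (ii)$\Rightarrow$(i) are correct; the latter is essentially the contrapositive of the paper's (iii)$\Rightarrow$(i), carried out with the same ingredients (the quotient $C^*_r(\B)/J$ is $\B$-graded, and faithfulness of $E$ provides the contradiction).

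Your (i)$\Rightarrow$(ii), however, contains two genuine errors. First, the factoring goes the wrong way. From $K\subseteq\ker\Lambda$ (with $q:C^*(\B)\to B$, $K=\ker q$, $\Lambda:C^*(\B)\to C^*_r(\B)$) it is $\Lambda$, not $q$, that factors: one obtains a surjection $\Phi:B\to C^*_r(\B)$ which is the identity on $\bigoplus_t B_t$, and from this one reads off $\|b_e\|\le\|\Phi(b)\|\le\|b\|_B$ for $b\in\bigoplus_t B_t$, so $B$ is topologically graded. This is exactly the paper's route. There is no map $\tilde q:C^*_r(\B)\to B$, and your conclusion $B\cong C^*_r(\B)$ is false in general (take $B=C^*(\B)$ for a non-amenable bundle with the intersection property).

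Second, your justification of $K\subseteq\ker\Lambda$ does not close. You correctly obtain that any $y\in\Lambda(K)\cap B_e$ satisfies $y=F_e^{C^*(\B)}(k)$ for some $k\in K$, but there is no reason $F_e(K)$ should vanish or lie in $K$: the ideal $K$ need not be Fourier. Passing to the $\B$-invariant ideal $\overline{F_e(K)}$ in $B_e$ and its associated graded ideal in $C^*_r(\B)$ gives no purchase, because faithfulness of $E$ on $C^*_r(\B)$ says nothing about whether $F_e(k)$ vanishes for $k$ in an arbitrary ideal of $C^*(\B)$. The paper proceeds differently here: it asserts directly that $\Lambda_\B(J)\cap B_e=0$ and then invokes the intersection property to force $\Lambda_\B(J)=0$; your attempt inverts the order (apply (i) first, then seek a contradiction), and that inversion is precisely what leaves you without a way to finish.
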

\begin{proof} (i)$\Rightarrow$(ii) 
Suppose  $\B=\{B_t\}_{t\in G}$ has the intersection property and  $B=\overline{\bigoplus_{t\in G}B_t}$ is a graded  $C^*$-algebra. We have two surjective homomorphisms $\Psi:C^*(\B)\to B$ and $\Lambda_{\B}:C^*(\B)\to C^*_r(\B)$ which are identities on $\bigoplus_{t\in G}B_t$. The image of $J:=\ker\Psi$ under $\Lambda_{\B}$ is an ideal in $C^*_r(\B)$ whose intersection with $B_e$ is zero. Thus (by the intersection property) $J\subseteq \ker\Lambda_{\B}$. Therefore  $\Lambda_{\B}$ factors through to the epimorphism $\Phi$ from $B$, identified with $C^*(\B)/J$, onto $C^*_r(\B)$ which is injective on $\bigoplus_{t\in G}B_t$. Since $C^*_r(\B)$ is topologically graded it follows that  $B=\overline{\bigoplus_{t\in G}B_t}$ is  topologically graded. Indeed, if $b=\oplus_{t\in G} b_t$ is in  $\bigoplus_{t\in G}B_t\subseteq B$, then
$
\|b_e\|=\|\Phi(b_e)\|\leq \|\Phi(b)\|\leq \|b\|.
$

(ii)$\Rightarrow$(iii) is trivial. To show (iii)$\Rightarrow$(i), assume on the contrary that there is a non-zero ideal $J$ in   $C^*_r(\B)$ such that $J\cap B_e=\{0\}$. 
Then $J$ has trivial intersection with all the spaces $B_t$, $t\in G$. Hence $B:=C^*_r(\B)/J$ is  graded by the Fell bundle $\{B_t\}_{t\in G}$ and the quotient map  $q: C^*_r(\B)\to B=C^*_r(\B)/J$ is injective on  $\bigoplus_{t\in G}B_t$.
Since the conditional expectation $E:C^*_r(\B) \to B_e$ is faithful and $J\neq\{0\}$, there is a positive element $a\in J$ with $\|E(a)\|=1$. As $\bigoplus_{t\in G}B_t$ is a dense $*$-algebra in $C^*_r(\B)$, we may find a positive element $b=\oplus_{t\in G} b_t$  in $\bigoplus_{t\in G}B_t$ such that $\|a-b\|_{C^*_r(\B)}<1/3$.
Then $\|b_e-E(a)\|_{C^*_r(\B)}=\|E(b- a)\|_{C^*_r(\B)}<1/3$, which implies $\|b_e\|>2/3$, and $\|q(b)\|_{B}=\|q(b-a)\|_B<1/3$. Thus if we assume (iii), we get
$$
2/3<\|b_e\|=\|q(b_e)\|\leq \|q(b)\|_B<1/3,
$$
a contradiction.
\end{proof}
A useful condition that implies the intersection property is topological freeness of a dual system. A dynamical system dual to a saturated Fell bundle was considered in \cite[Corollary 6.5]{kwa-szym} (it is a special case of a dual semigroup associated to a product system in \cite[Definition 4.8]{kwa-szym}.   A partial dynamical system dual to a semi-saturated Fell bundle over $\Z$ was studied in \cite{kwa}. A partial dynamical system dual to an arbitrary Fell bundle over a discrete group was defined in \cite[Definition 2.3]{AA}. Let us now recall the relevant constructions and facts.

Let  $\B=\{B_t\}_{t\in G}$ be a Fell bundle over a discrete group $G$, and put 
\begin{equation}\label{domain ideals}
D_t:=B_t B_{t^{-1}} \qquad\textrm{ for all } t\in G.
\end{equation}
Then  $D_t$ is an ideal in  $B_e$ and  we may treat $B_t$ as a Morita-Rieffel $D_{t}$-$D_{t^{-1}}$-bimodule. Thus we get a partial homeomorphism $\h_t:\widehat{D}_{t^{-1}} \to \widehat{D}_t$ of $\widehat{B_e}$, where
\begin{equation}\label{formula defining partial action}
\h_t:=[B_t\dashind^{D_{t^{-1}}}_{D_t}] \qquad  \textrm{ for all } t\in G.
\end{equation}
Recall that  $\h_t$ is a lift of a  partial homeomorphism $
h_t : \Prim(D_{t^{-1}}) \to \Prim(D_t)$ of $\Prim(B_e)$, and the latter  is  a restriction of the Rieffel homeomorphism $h_t:\I(D_{t^{-1}}) \to \I(D_{t})$, which in this case is given by the formula
\begin{equation}\label{Rieffel homeomorphism}
 \I(D_{t^{-1}})\ni I \longmapsto h_t(I) =B_t I B_{t^{-1}}\in \I(D_{t}),
\end{equation}
 cf. also \cite[Remark 2.3]{kwa}.

\begin{prop}\label{proposition on dual partial systems}
Formulas \eqref{domain ideals} and \eqref{formula defining partial action} define a partial action $(\{\widehat{D}_t\}_{t\in G}, \{\h_t\}_{t\in G})$ of $G$ on $\widehat{B_e}$.
It is a lift of a partial action $(\{\Prim(D_t)\}_{t\in G}, \{h_t\}_{t\in G})$ of $G$ on $\Prim(B_e)$.
\end{prop}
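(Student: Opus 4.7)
My plan is to verify the axioms of a partial action in turn, using the bimodule picture of a Fell bundle together with the Rieffel correspondence recalled in Section~2.

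First, I would observe that for each $t\in G$ the fibre $B_t$ carries the structure of a Morita-Rieffel $D_t$-$D_{t^{-1}}$-imprimitivity bimodule. The inner products ${}_{B_e}\langle x,y\rangle=xy^*$ and $\langle x,y\rangle_{B_e}=x^*y$ take values in $D_t=B_tB_{t^{-1}}$ and $D_{t^{-1}}=B_{t^{-1}}B_t$ by definition, and the spans of their ranges are full in these ideals; the imprimitivity identity ${}_{B_e}\langle x,y\rangle z=x\langle y,z\rangle_{B_e}$ is just associativity in the ambient $C^*$-algebra; and the non-degeneracy of both actions is encoded in the ternary relation~\eqref{ternary relation}. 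Rieffel correspondence then supplies homeomorphisms $\h_t:\widehat{D}_{t^{-1}}\to\widehat{D}_t$ between open subsets of $\widehat{B_e}$, each of which covers the Rieffel homeomorphism $h_t$ on primitive ideals whose concrete formula is~\eqref{Rieffel homeomorphism}. In particular, $\{\h_t\}_{t\in G}$ already lifts $\{h_t\}_{t\in G}$ by construction, which handles the second assertion of the proposition at the end.

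Next I would verify the unit and composition axioms. For $t=e$ one has $D_e=B_e$, and $B_e$ is the identity Morita equivalence bimodule over itself, so $B_e\dashind^{B_e}_{B_e}\pi\cong\pi$ for every representation $\pi$, whence $\h_e=\mathrm{id}$. The composition axiom, that $\h_{st}$ extends $\h_s\circ\h_t$ on the natural common domain, is most efficiently proved at the level of ideals, where by~\eqref{Rieffel homeomorphism} it reduces to the equality
$$
B_s\bigl(B_tIB_{t^{-1}}\bigr)B_{s^{-1}}=B_{st}\, I\, B_{(st)^{-1}}
$$
for every ideal $I$ of $B_e$ contained in $D_{t^{-1}}\cap D_{(st)^{-1}}$. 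The inclusion $\subseteq$ is immediate from $B_sB_t\subseteq B_{st}$ and its adjoint. I expect the reverse inclusion to be the main obstacle, because in a non-saturated Fell bundle $\overline{B_sB_t}$ may be properly contained in $B_{st}$; to overcome it, I would use the factorizations $B_{st}=B_{st}D_{(st)^{-1}}$ and $I=ID_{(st)^{-1}}$ together with the expansion $D_{(st)^{-1}}=B_{(st)^{-1}}B_{st}$ and the ternary relation for $B_{st}$, and then absorb the surplus factors via the Morita fullness identities $B_sD_{s^{-1}}=B_s$ and $D_tB_t=B_t$, forcing the gap between $\overline{B_sB_t}$ and $B_{st}$ to disappear once the expression is restricted to the chosen domain.

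Once this ideal identity is available, the corresponding equality of induced representations on $\widehat{B_e}$ follows by the standard composition-of-bimodules rule for induction, and the domain compatibility $\h_t(\widehat{D}_{t^{-1}}\cap\widehat{D}_{(st)^{-1}})\subseteq \widehat{D}_t\cap\widehat{D}_{s^{-1}}$ falls out of the same identity. The partial action on $\Prim(B_e)$ is then obtained by passing to kernels of irreducible representations, with the lifting statement already built in by the first step.
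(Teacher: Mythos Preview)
Your approach is correct and essentially reproduces what is done in \cite[Proposition 2.2]{AA}, which the paper simply cites for the first assertion. The paper's own proof is two lines: it invokes \cite{AA} for the partial action on $\widehat{B_e}$, and then observes that the canonical surjection $[\pi]\mapsto\ker\pi$ intertwines $\h_t$ with $h_t$, from which the partial-action axioms on $\Prim(B_e)$ follow immediately.

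One remark on your sketch: the ``reverse inclusion'' you flag as the main obstacle is in fact easier than you suggest, and does not require the factorizations through $D_{(st)^{-1}}$ that you outline. Since $I\subseteq D_{t^{-1}}=B_{t^{-1}}B_t$, one has directly
\[
B_{st}\,I\,B_{(st)^{-1}}=B_{st}\,D_{t^{-1}}\,I\,D_{t^{-1}}\,B_{(st)^{-1}}=(B_{st}B_{t^{-1}})\,B_t\,I\,B_{t^{-1}}\,(B_tB_{(st)^{-1}})\subseteq B_s\,B_t\,I\,B_{t^{-1}}\,B_{s^{-1}},
\]
because $B_{st}B_{t^{-1}}\subseteq B_s$ and $B_tB_{(st)^{-1}}\subseteq B_{s^{-1}}$. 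So the hypothesis $I\subseteq D_{t^{-1}}$ alone yields the identity, and non-saturation causes no difficulty. Your treatment of the second assertion (passing to kernels) agrees with the paper's.
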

\begin{proof}
The first part follows from \cite[Proposition 2.2]{AA}.
Now, since the maps $\h_t:\widehat{D}_{t^{-1}} \to \widehat{D}_t$ and $
h_t : \Prim(D_{t^{-1}}) \to \Prim(D_t)$ are intertwined by the surjection $\widehat{B_e} \ni [\pi] \to \ker\pi \in \Prim(B_e)$, one readily concludes that $(\{\Prim(D_t)\}_{t\in G}, \{h_t\}_{t\in G})$ is also a  partial action. 
\end{proof}
\begin{rem}\label{Remark on dual systems} If $\B$ is the Fell bundle of a partial action $\alpha=(\{D_t\}_{t \in G}, \{\alpha_t\}_{t \in G})$ on a $C^*$-algebra $A$ then the ideals $D_t$, $t\in G$, coincide with those given by \eqref{domain ideals}. Modifying slightly the proof of \cite[Lemma 6.7]{kwa-szym} or \cite[Proposition 2.18]{kwa-interact}, see also the proof of Lemma \ref{lemma for duals to interactions}  below, one sees that the dual partial dynamical system  $(\{\widehat{D}_g\}_{g\in G}, \{\h_g\}_{g\in G})$ is given by the formula
$$
\h_t([\pi])=[\pi\circ \alpha_{t^{-1}}], \qquad [\pi]\in \widehat{D}_{t^{-1}},\,\, t\in G.
$$
The Rieffel homeomorphism is given by $
h_t(I)=\alpha_t(I)\in \I (D_{t})$ for $I \in \I (D_{t^{-1}})$.  In particular, if $A=C_0(\Omega)$ is commutative and $(\{\Omega_g\}_{g\in G}, \{\theta_g\}_{g\in G})$ is the system that determines $\alpha$ by \eqref{partial action commutative case}, then $(\{\Omega_g\}_{g\in G}, \{\theta_g\}_{g\in G})$ can be identified with $(\{\widehat{D}_g\}_{g\in G}, \{\h_g\}_{g\in G})$, cf. \cite[Subsection 4.1]{AA}.
\end{rem}
\begin{defn}\label{definition of a partial dual system}
We call both of the systems    $(\{\widehat{D}_t\}_{t\in G}, \{\h_t\}_{t\in G})$  and $(\{\Prim(D_t)\}_{t\in G}, \{h_t\}_{t\in G})$ described above  \emph{partial dynamical systems dual to the Fell bundle} $\B$. 
\end{defn}
\begin{rem}
The authors of \cite{AA} consider only the system $(\{\widehat{D}_t\}_{t\in G}, \{\h_t\}_{t\in G})$ and call  it, \cite[Definition 2.3]{AA}, the partial action associated to $\B$. 
\end{rem}
The  following theorem can be proved by a straightforward  adaptation of the argument leading to the main result of \cite{kwa}.
In a slightly different way, it was proved in \cite{AA}.

\begin{thm}[Corollary 3.4.(ii) in \cite{AA}]\label{uniqueness theorem for fell bundles}
If the partial dynamical system  $(\{\widehat{D}_g\}_{g\in G}, \{\h_g\}_{g\in G})$ dual to a Fell bundle $\B$ is topologically free,  then $\B$ has the intersection property.
\end{thm}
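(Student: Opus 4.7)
The plan is to adapt the classical Archbold--Spielberg--Kishimoto strategy, as used in \cite{kwa} for semi-saturated Fell bundles over $\Z$, to the general setting at hand. I argue by contradiction: suppose $J \triangleleft C^*_r(\B)$ is a non-zero ideal with $J \cap B_e = \{0\}$. Since $q|_{B_e}$ is then injective, where $q: C^*_r(\B) \to C^*_r(\B)/J$ is the quotient map, it is automatically isometric on $B_e$. So a contradiction follows if I can exhibit a positive element of $B_e$ whose distance to $J$ is strictly smaller than its norm.

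To produce such an element, begin by choosing, via faithfulness of the canonical conditional expectation $E: C^*_r(\B) \to B_e$, a positive $a \in J$ with $\|E(a)\| = 1$. Approximate $a$ within $\varepsilon$ by a self-adjoint finite sum $b = \sum_{t\in F} b_t$, $b_t \in B_t$, so that $b_e \in B_e^+$ satisfies $\|b_e\| > 1 - \varepsilon$. The candidate element will be $c := x b_e x$ for an appropriate positive contraction $x \in B_e$ manufactured in the next step.

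The heart of the argument is a Kishimoto-type lemma: for any prescribed $\eta > 0$, topological freeness of $(\{\widehat{D}_t\}, \{\h_t\})$ furnishes a positive contraction $x \in B_e$ with
$$\|x b_e x\| > \|b_e\| - \eta, \qquad \|x b_t x\| < \eta \text{ for every } t \in F \setminus \{e\}.$$
The mechanism is as follows. Density in $\widehat{B_e}$ of classes $[\pi]$ fixed by no $\h_t$, $t \in F \setminus \{e\}$, means that for each such $[\pi]$ and each $t \in F\setminus\{e\}$, either $[\pi] \notin \widehat{D}_{t^{-1}}$, in which case $\pi$ annihilates the ideal $B_{t^{-1}} B_t$, or else $\h_t([\pi]) \neq [\pi]$, in which case the induced representation $B_t \dashind \pi$ and $\pi$ are inequivalent irreducible representations of $B_e$. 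In either case, a Glimm-type argument combined with functional calculus on $\pi(b_e)$ produces an $x$ spectrally supported near $[\pi]$ that preserves the norm contribution of $b_e$ while driving the off-diagonal norms $\|x b_t x\|$ to zero. This is the main technical obstacle: the non-commutativity of $B_e$ forces a genuine use of the Morita--Rieffel bimodule structure of each $B_t$, rather than the mere commutative dynamics on a spectrum available in the classical crossed-product setting.

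Assembling the pieces with $\eta := \varepsilon/(1+|F|)$, the triangle inequality yields
$$\|x a x - x b_e x\| \leq \|x(a-b)x\| + \sum_{t \in F \setminus \{e\}} \|x b_t x\| < \varepsilon + |F|\eta,$$
so the element $x b_e x \in B_e$ lies within $\varepsilon + |F|\eta$ of $xax \in J$. Using isometry of $q|_{B_e}$ and $q(xax) = 0$,
$$\|x b_e x\| = \|q(x b_e x)\| = \|q(x b_e x) - q(xax)\| \leq \|x b_e x - xax\| < \varepsilon + |F|\eta,$$
contradicting $\|x b_e x\| > \|b_e\| - \eta > 1 - \varepsilon - \eta$ for $\varepsilon$ and $\eta$ sufficiently small. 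Thus the hard step is the Kishimoto-type lemma in Step three; everything else is a soft approximation argument wrapped around it.
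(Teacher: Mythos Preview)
Your proposal is correct and matches exactly the approach the paper endorses: the paper does not supply its own detailed proof, instead citing \cite[Corollary~3.4(ii)]{AA} and remarking that the result ``can be proved by a straightforward adaptation of the argument leading to the main result of \cite{kwa},'' which is precisely the Archbold--Spielberg--Kishimoto strategy you outline. The only minor slip is the claim that $b_e$ is automatically positive when $b$ is merely self-adjoint---but one can arrange $b$ itself to be positive (as the paper does in the proof of Proposition~\ref{intersection and uniqueness for Fell bundles}), and this does not affect the argument.
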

\begin{rem} In view of Remark \ref{Remark on dual systems} and Proposition \ref{intersection and uniqueness for Fell bundles}, the above theorem is a generalization of similar results obtained earlier for  saturated Fell bundles \cite[Corollary 6.5(i)]{kwa-szym}, classical crossed products \cite[Theorem 1]{Arch_Spiel},  partial crossed products \cite[Theorem 2.4]{Leb} and semigroup crossed products of corner endomorphisms \cite[Theorem 6.5]{kwa-demos}, cf. Section \ref{Crossed products by a semigroup of corner systems}. Actually, Theorem \ref{uniqueness theorem for fell bundles} is much stronger than the last mentioned two results which concern  topological freeness of the system $(\{\Prim(D_g)\}_{g\in G}, \{h_g\}_{g\in G})$. Clearly, the latter  implies topological freeness of $(\{\widehat{D}_g\}_{g\in G}, \{\h_g\}_{g\in G})$, but  the converse  fails drastically already for the Cuntz algebra $\OO_n$, cf.  example after \cite[Proposition 3.16]{kwa-interact}, or Proposition \ref{aperiodicity for graph algebras} below. 
\end{rem}

In order to  get a description of all ideals in $C^*_r(\B)$, we need the following lemma.
\begin{lem}\label{lemma on invariance} If $I$ is an ideal in $B_e$, then  the following conditions are equivalent:
\begin{itemize}
\item[(i)] $I$ is $\B$-invariant,
\item[(ii)] $\widehat{I}\subseteq \widehat{B}_e$ is invariant under the partial action  $(\{\widehat{D}_t\}_{t\in G}, \{\h_t\}_{t\in G})$ dual to $\B$,
\item[(iii)] $\hull(I)$ is invariant under the partial action  $(\{\Prim(D_t)\}_{t\in G}, \{h_t\}_{t\in G})$.
\end{itemize}
\end{lem}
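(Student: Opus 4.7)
The plan is to translate condition (i) to a statement at the level of ideals in the subalgebras $D_t$ and then transport it through the Rieffel correspondence to the spectra. The key underlying fact, established in Proposition \ref{proposition on dual partial systems} and formula \eqref{Rieffel homeomorphism}, is that both $\h_t$ on $\widehat B_e$ and $h_t$ on $\Prim(B_e)$ are induced from the Morita--Rieffel bimodule $B_t$ between $D_{t^{-1}}$ and $D_t$, whose ideal-level Rieffel correspondence is given by $J\mapsto B_t J B_{t^{-1}}$.

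First I would show that (i) is equivalent to the seemingly stronger equality
$$
B_t I B_{t^{-1}}=I\cap D_t \qquad \text{for every } t\in G.
$$
The inclusion $B_t I B_{t^{-1}}\subseteq D_t$ is automatic (since $B_t B_e B_{t^{-1}}\subseteq B_t B_{t^{-1}}=D_t$), so (i) at once gives $\subseteq$. For the reverse inclusion, apply (i) with $t^{-1}$ in place of $t$ to obtain $B_{t^{-1}} I B_t\subseteq I$, and multiply on the left by $B_t$ and on the right by $B_{t^{-1}}$; since an approximate-unit argument in $B_e$ yields $D_t I D_t=I\cap D_t$, this gives $I\cap D_t\subseteq B_t I B_{t^{-1}}$. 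Hence $I$ is $\B$-invariant if and only if $B_t(I\cap D_{t^{-1}})B_{t^{-1}}=I\cap D_t$ for all $t\in G$, i.e.\ the family $\{I\cap D_t\}_{t\in G}$ is invariant under the Rieffel correspondence.

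Passing to the spectrum via the bijective correspondence $J\mapsto\widehat J$ between ideals and open subsets, together with the routine identification $\widehat{I\cap D_t}=\widehat I\cap\widehat D_t$ and the definition of $\h_t$ as the lift of the Rieffel ideal correspondence to open subsets, the preceding equality reformulates as
$$
\h_t(\widehat I\cap\widehat D_{t^{-1}})=\widehat I\cap\widehat D_t \qquad \text{for all } t\in G,
$$
which is precisely invariance of $\widehat I$ under $(\{\widehat D_t\},\{\h_t\})$; this gives (i)$\Leftrightarrow$(ii). The equivalence (ii)$\Leftrightarrow$(iii) then follows from Proposition \ref{proposition on dual partial systems}: the canonical surjection $\widehat B_e\to \Prim(B_e)$, $[\pi]\mapsto\ker\pi$, intertwines $\h_t$ with $h_t$ and sends the open set $\widehat I$ onto $\Prim(B_e)\setminus\hull(I)$, so invariance transfers to $\Prim(B_e)\setminus\hull(I)$, and by the complementation observation for partial actions recalled in the preliminaries this is equivalent to $h$-invariance of $\hull(I)$. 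The only delicate point is bookkeeping of the spectral identifications, which is routine from the Rieffel correspondence theory recalled in Section 2; no serious obstacle is anticipated.
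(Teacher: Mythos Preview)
Your proposal is correct and follows essentially the same route as the paper. The paper simply cites \cite[Proposition 3.10]{AA} for (i)$\Leftrightarrow$(ii), whereas you supply the underlying Rieffel-correspondence argument explicitly (the key reformulation $B_t(I\cap D_{t^{-1}})B_{t^{-1}}=I\cap D_t$); for (ii)$\Leftrightarrow$(iii) both arguments use that $(\{\widehat D_t\},\{\h_t\})$ is a lift of $(\{\Prim(D_t)\},\{h_t\})$ together with complementation, so your version is just a more self-contained rendering of the same proof.
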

\begin{proof} The equivalence 
(i)$\Leftrightarrow$(ii) was proved in \cite[Proposition 3.10]{AA}.  Since $\widehat{I}$ is invariant if and only if its complement $\widehat{B}_e\setminus \widehat{I}$ is invariant and $(\{\widehat{D}_t\}_{t\in G}, \{\h_t\}_{t\in G})$ is a lift of $(\{\Prim(D_t)\}_{t\in G}, \{h_t\}_{t\in G})$, we conclude that $\hull(I)=\{\ker\pi : [\pi]\in \widehat{B}_e\setminus \widehat{I}\}$ is invariant
if and only if  $\widehat{I}$ is invariant. Hence (ii)$\Leftrightarrow$(iii).
\end{proof}

\begin{cor}\label{residual topological freeness corollary}
Suppose that the partial dynamical system  $(\{\widehat{D}_t\}_{t\in G}, \{\h_t\}_{t\in G})$ dual to $\B$ is residually topologically free.
Then $\B$ has the residual intersection property. In particular, 
\begin{itemize}
\item[(i)] if $\B$ is exact then 
$
J \to \widehat{J \cap B_e
}$
 is a lattice isomorphism from  $\I(C^*_r(\B))$ onto the set of all open invariant subsets in $\widehat{B_e}$.

\item[(ii)] $C^*_r(\B)$ is simple if and only if  $(\{\widehat{D}_t\}_{t\in G}, \{\h_t\}_{t\in G})$ is minimal (and then $\B$ is exact).
\end{itemize}

\end{cor}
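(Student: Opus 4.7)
\emph{Proof proposal.} The plan is to first establish the residual intersection property by showing that, for each ideal $\J$ in $\B$, the dual partial action of the quotient bundle $\B/\J$ can be identified with the restriction of $(\{\widehat{D}_t\}_{t\in G}, \{\h_t\}_{t\in G})$ to the closed invariant subset of $\widehat{B_e}$ complementary to $\widehat{J_e}$. Once this identification is in place, residual topological freeness of the original dual system passes to topological freeness of the dual system of each $\B/\J$, and Theorem \ref{uniqueness theorem for fell bundles} yields the intersection property of $\B/\J$. Parts (i) and (ii) will then be immediate consequences of Theorem \ref{Sierakowski's ;) theorem}, Corollary \ref{simplicity for Fell bundles}, and Lemma \ref{lemma on invariance}.

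For the key identification, I would fix an ideal $\J=\{J_t\}_{t\in G}$ in $\B$ and set $I:=J_e$. By Proposition \ref{proposition on induced ideals} the ideal $I$ is $\B$-invariant, so by Lemma \ref{lemma on invariance} the open set $\widehat{I}\subseteq\widehat{B_e}$ is invariant, and its complement $F:=\widehat{B_e}\setminus\widehat{I}$, which one canonically identifies with $\widehat{B_e/I}$, is closed and invariant. The unit fiber of $\B/\J$ equals $B_e/I$, and its fibers are the Hilbert bimodules $B_t/J_t \cong B_t/I B_t$ over $B_e/I$. Comparing induced representations via Lemma \ref{coincidence of restrictions and quotients} shows that the Rieffel homeomorphism associated with $B_t/J_t$ is precisely the restriction of $\h_t$ to $\widehat{D_{t^{-1}}}\cap F$, so the dual action of $\B/\J$ is the restriction of $(\{\widehat{D}_t\}_{t\in G}, \{\h_t\}_{t\in G})$ to the closed invariant set $F$. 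Residual topological freeness thus gives topological freeness of this restricted system, whence Theorem \ref{uniqueness theorem for fell bundles} yields that $\B/\J$ has the intersection property. This proves that $\B$ has the residual intersection property.

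Assuming additionally that $\B$ is exact, assertion (i) follows by combining Theorem \ref{Sierakowski's ;) theorem}, which gives the homeomorphism $\I(C^*_r(\B))\cong \I^\B(B_e)$ via $J\mapsto J\cap B_e$, with Lemma \ref{lemma on invariance}, which implements a bijection between $\I^\B(B_e)$ and the lattice of open invariant subsets of $\widehat{B_e}$ via $I\mapsto \widehat{I}$. For assertion (ii), suppose first that the dual system is minimal; then $\widehat{B_e}$ has no non-trivial open invariant subsets, so by Lemma \ref{lemma on invariance} there are no non-trivial $\B$-invariant ideals in $B_e$. Consequently $\B$ is trivially exact, and Corollary \ref{simplicity for Fell bundles}, whose intersection-property hypothesis is provided by the residual intersection property already established, gives simplicity of $C^*_r(\B)$. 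The converse direction follows by running the same argument backwards, using Corollary \ref{simplicity for Fell bundles} and Lemma \ref{lemma on invariance}.

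The step that demands most care is the identification of the dual partial action of $\B/\J$ with the restriction of $(\{\widehat{D}_t\}_{t\in G}, \{\h_t\}_{t\in G})$ to $F$: one must verify that the ideals $(B_t/J_t)(B_{t^{-1}}/J_{t^{-1}})$ of $B_e/I$ correspond under $\widehat{B_e/I}\cong F$ to $\widehat{D_t}\cap F$, and that inducing along $B_t/J_t$ implements $\h_t|_F$. Lemma \ref{coincidence of restrictions and quotients} is exactly the tool that makes this compatible with the quotient $B_e\to B_e/I$, and so the remainder of the argument reduces to assembling already-established results.
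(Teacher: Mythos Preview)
Your proposal is correct and follows essentially the same approach as the paper's own proof: identify the dual partial action of $\B/\J$ with the restriction of $(\{\widehat{D}_t\}_{t\in G},\{\h_t\}_{t\in G})$ to the closed invariant set $\widehat{B_e}\setminus\widehat{J_e}$ via Lemma~\ref{coincidence of restrictions and quotients}, then invoke Theorem~\ref{uniqueness theorem for fell bundles}, and finish (i) and (ii) with Theorem~\ref{Sierakowski's ;) theorem}, Lemma~\ref{lemma on invariance}, and Corollary~\ref{simplicity for Fell bundles}. Your write-up is slightly more explicit about the converse direction in (ii), but the argument is the same.
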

\begin{proof}
Let $\J$ be an ideal in $\B$. Let $t\in G$ and treat $B_t$ as a Hilbert bimodule over $B_e$. By  Proposition \ref{proposition on induced ideals}, we see that  $I:=J_e$ is a $B_t$-invariant ideal and $B_t/J_t=B_t/B_tI$ is the quotient Hilbert bimodule. By Lemma \ref{lemma on invariance}, the closed set $Y:=\widehat{A}\setminus \widehat{I}$ is invariant under $(\{\widehat{D}_t\}_{t\in G}, \{\h_t\}_{t\in G})$. Using  Lemma \ref{coincidence of restrictions and quotients}, we conclude that the restricted partial dynamical system $(\{\widehat{D}_g\cap Y\}_{g\in G}, \{\h_g|_Y\}_{g\in G})$ can be naturally identified with the partial dynamical system dual to the quotient bundle $\B/\J$. Thus by Theorem \ref{uniqueness theorem for fell bundles}, $\B/\J$ has the intersection property. Accordingly, $\B$ has the residual intersection property. Now, part (i)  follows from  Theorem \ref{Sierakowski's ;) theorem} and Lemma \ref{lemma on invariance}. Part (ii) is a consequence of Corollary \ref{simplicity for Fell bundles}.
\end{proof}
\begin{rem} Items (i) and (ii) in Corollary \ref{residual topological freeness corollary} are in essence a content of the second part of \cite[Corollary 3.20]{AA} and of \cite[Corollary 3.12]{AA}, respectively. The first part of the assertion  in Corollary \ref{residual topological freeness corollary} was stated without a proof in \cite[Corollary 3.16]{AA}.
\end{rem}

%%%%%%%%%%%%%%%%%%%%%%%%%%%%%%%%%%%%%%%%%%%%%%%%%%
%%%%%%%%%%%%%%%%%%%%%%%%%%%%%%%%%%%%%% 2.5

%%%%%%%%%%%%%%%%%%%%%%%%%%%%%%%%%%%%%%%%%%%%%%%%%%%%%%%%%%%%%%%%%%3

\section{Aperiodicity for Fell bundles and  criteria of pure infiniteness}
 Muhly and Solel introduced   a notion  of aperiodicity for $C^*$-correspondences, \cite[Definition 5.1]{MS}, which was in turn inspired by the results of Kishimoto, see \cite[Lemma 1.1]{kishimoto}, and Olesen and Pedersen, see \cite[Theorems 6.6 and 10.4]{OlPe}. In the context of   partial group actions, a similar condition was exploited in \cite{gs}. We formulate it for Fell bundles as follows.
\begin{defn}
A Fell bundle $\B=\{B_g\}_{g\in G}$ is \emph{aperiodic} if for  each $t\in G\setminus\{e\}$, each $b_t\in B_t$ and every non-zero hereditary
subalgebra $D$ of $B_e$,
\begin{equation}\label{aperiodicity condition}
\inf \{\|ab_ta\| : a\in D^+,\,\, \|a\|=1\}=0.
\end{equation}
\end{defn} 
In other words, $\B$ is aperiodic if for each $t\in G\setminus\{e\}$ the Hilbert $B_e$-bimodule $B_t$ is aperiodic in the sense of  \cite[Definition 5.1]{MS}.
In particular, reinterpreting \cite[Lemma 5.2]{MS} we get the following lemma.
\begin{lem}\label{Muhly Solel lemma} Let $\B=\{B_g\}_{g\in G}$ be a Fell bundle and $B=\overline{\bigoplus_{g\in G} B_g}$ a $\B$-graded $C^*$-algebra.
The Fell bundle $\B$ is aperiodic if and only if for every element $b=\oplus_{g\in G} b_g$ in $\bigoplus_{g\in G} B_g$, with $b_e > 0$, and every $\varepsilon >0$ there exists $a$ in the hereditary subalgebra $\overline{b_eB_e b_e}$ of $B_e$, with $a\geq 0$ and $\|a\|=1$, such that 
$$
 \|ab_ea-aba\|_B <\varepsilon, \qquad \|ab_ea\| > \|b_e\|-\varepsilon.
 $$
 \end{lem}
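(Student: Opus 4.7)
For the ``if'' direction, given $t\in G\setminus\{e\}$, $b_t\in B_t$, a non-zero hereditary $C^*$-subalgebra $D\subseteq B_e$, and $\varepsilon>0$, I would pick any $b_e\in D^+\setminus\{0\}$ and apply the hypothesis to $b:=b_e+b_t$, viewed as an element of $\bigoplus_{g\in G}B_g\subseteq B$. This produces $a\in\overline{b_eB_eb_e}^+$ of norm one with $\|ab_ea-aba\|_B<\varepsilon$. Since $a\in B_e$ acts fiberwise, a direct computation yields $ab_ea-aba=-ab_ta\in B_t$; by the isometric embedding $B_t\hookrightarrow B$ inherent in the definition of $\B$-grading one then has $\|ab_ta\|<\varepsilon$. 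Because $D$ is hereditary and contains $b_e$, we have $\overline{b_eB_eb_e}\subseteq D$, so $a\in D^+$, which witnesses that the infimum in \eqref{aperiodicity condition} vanishes.

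For the ``only if'' direction, I would write $b=b_e+\sum_{i=1}^n b_{g_i}$ with the $g_i\in G\setminus\{e\}$ pairwise distinct and $b_{g_i}\in B_{g_i}$. The identity $ab_ea-aba=-\sum_i ab_{g_i}a$ combined with the isometric embedding of each $B_{g_i}$ into $B$ gives $\|ab_ea-aba\|_B\leq\sum_i\|ab_{g_i}a\|$, so it suffices to establish the following claim: assuming $\B$ aperiodic, for any $b_e\in B_e^+\setminus\{0\}$, any $b_{g_1},\ldots,b_{g_n}$ with $g_i\neq e$ and $b_{g_i}\in B_{g_i}$, and any $\delta>0$, there exists $a\in\overline{b_eB_eb_e}^+$ of norm one with $\|ab_ea\|>\|b_e\|-\delta$ and $\|ab_{g_i}a\|<\delta$ for each $i$. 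I would prove this by induction on $n$. The base case $n=0$ is handled by a suitably normalised element of an approximate unit of $\overline{b_eB_eb_e}$, using that $e_\lambda b_e e_\lambda\to b_e$ in norm.

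In the inductive step, I would first apply the inductive hypothesis to $b_{g_1},\ldots,b_{g_{n-1}}$ with a smaller tolerance, producing $a_0\in\overline{b_eB_eb_e}^+$ satisfying the desired bounds for $i<n$. Setting $b_e':=a_0b_ea_0$ (non-zero by the lower bound on $\|a_0b_ea_0\|$), aperiodicity applied to $b_{g_n}$ and to the hereditary subalgebra $\overline{b_e'B_eb_e'}$ then yields $a\in\overline{b_e'B_eb_e'}^+\subseteq\overline{b_eB_eb_e}^+$ of norm one with $\|ab_{g_n}a\|$ arbitrarily small. The main obstacle is preserving the earlier bounds $\|ab_{g_i}a\|<\delta$ for $i<n$ and the lower bound $\|ab_ea\|>\|b_e\|-\delta$ after this further descent into a smaller hereditary subalgebra. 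This is the Kishimoto-style technicality \cite{kishimoto} dealt with in the proof of \cite[Lemma 5.2]{MS}: positive norm-one elements of $\overline{b_e'B_eb_e'}$ can be approximated by compressions of the form $(b_e')^{1/k}y(b_e')^{1/k}$, which lets the estimates on $a_0$ transfer to $a$ after carefully tuning $k$ and the error parameters at each step. The argument of \cite{MS} adapts verbatim once each fiber $B_g$ is viewed as a Hilbert $B_e$-bimodule.
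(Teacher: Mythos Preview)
Your proposal is correct and follows essentially the same route as the paper: the paper also reduces the ``only if'' direction, via the triangle inequality $\|ab_ea-aba\|_B\leq\sum_i\|ab_{g_i}a\|$, to the intermediate statement that one can find $a\in\overline{b_eB_eb_e}^+$ of norm one with $\|ab_ea\|>\|b_e\|-\epsilon$ and $\|ab_ga\|<\epsilon$ for each $g\neq e$, and obtains that statement by invoking (a straightforward generalisation of) \cite[Lemma~5.2]{MS} rather than reproving it. Your ``if'' direction and the paper's coincide once one unwinds the intermediate equivalence, so the only difference is that you sketch the Kishimoto-style induction behind \cite[Lemma~5.2]{MS} instead of citing it.
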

\begin{proof}
By Lemma  5.2 of \cite{MS}, or more precisely very straightforward generalization of its proof,   $\B=\{B_g\}_{g\in G}$ is aperiodic if and only if for every element $b=\oplus_{g\in G} b_g$ in $\bigoplus_{g\in G} B_g$, with $b_e> 0$, and every $\epsilon >0$ there is  $a\in\overline{b_eB_e b_e}$, with $a\geq 0$ and $\|a\|=1$, such that 
$$
\|ab_ea\|> \|b_e\| - \epsilon\,\,\,\, \textrm{ and }\,\,\,\, \|ab_ta\|< \epsilon \,\,\, \textrm{ for }\,\, t\in G\setminus\{e\}.
$$
The assertion follows from the above if one puts  $\epsilon=\varepsilon/n$ where $n$ is the number of elements in the set $\{t\in G: b_t\neq 0\}$ (if $n=0$ the assertion is trivial).
    \end{proof}
    \begin{cor}\label{aperiodicity imply intersection property}
    If the Fell bundle $\B$ is aperiodic then it has the intersection property.
    \end{cor}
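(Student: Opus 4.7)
The plan is to establish the intersection property via a classical Kishimoto-style compression argument, using Lemma \ref{Muhly Solel lemma} as the crucial technical tool. Let $J \subseteq C^*_r(\B)$ be a non-zero ideal; I will produce a non-zero element of $J \cap B_e$. Pick a positive $y \in J$ and, using faithfulness of the canonical conditional expectation $F_e : C^*_r(\B) \to B_e$, normalize so that $\|F_e(y)\| = 1$.

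Fix a small $\varepsilon > 0$. First approximate $y$ in $C^*_r(\B)$ by a finite sum $z' = \bigoplus_{t \in F} z'_t \in \bigoplus_{t \in G} B_t$ with $\|y - z'\| < \varepsilon$. Since $F_e(y) \in B_e$ itself belongs to the algebraic direct sum, replace $z'$ by $z := z' - z'_e + F_e(y)$; the substitution stays inside $\bigoplus_t B_t$, costs at most $\|F_e(y) - z'_e\| = \|F_e(y - z')\| < \varepsilon$, and ensures that the diagonal part $z_e = F_e(y)$ is positive with $\|z_e\| = 1$. Thus the hypothesis of Lemma \ref{Muhly Solel lemma} is fulfilled, while $\|y - z\| < 2\varepsilon$.

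Applying that lemma (with $B = C^*_r(\B)$) produces $c \in \overline{z_e B_e z_e} \subseteq B_e^+$ with $\|c\| = 1$, $\|czc - c z_e c\| < \varepsilon$, and $\|c z_e c\| > 1 - \varepsilon$. Since $c \in B_e$ and $y \in J$, the compression $cyc$ lies in $J$, and by the triangle inequality
$$
\|cyc - c z_e c\| \leq \|c(y - z)c\| + \|czc - c z_e c\| < 3\varepsilon.
$$

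To conclude, let $q : C^*_r(\B) \to C^*_r(\B)/J$ be the quotient map. Then $q(c z_e c) = q(c z_e c - cyc)$ has norm less than $3\varepsilon$, so the spectrum of the positive element $q(c z_e c)$ lies in $[0, 3\varepsilon]$. Choose a continuous $f : [0, \infty) \to [0, 1]$ vanishing on $[0, 3\varepsilon]$ and strictly positive on $(3\varepsilon, \infty)$; for $\varepsilon$ sufficiently small (say $\varepsilon < 1/4$), the bound $\|c z_e c\| > 1 - \varepsilon > 3\varepsilon$ forces $f(c z_e c) \in B_e$ to be non-zero, while $q(f(c z_e c)) = f(q(c z_e c)) = 0$ forces $f(c z_e c) \in J$. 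Hence $J \cap B_e \neq 0$. The only mildly subtle step is arranging $z_e \geq 0$ so that Lemma \ref{Muhly Solel lemma} applies; this is handled by the harmless trick of directly substituting $F_e(y)$ for $z'_e$, which is legitimate because the $e$-fiber $B_e$ already sits inside the algebraic direct sum $\bigoplus_{t \in G} B_t$.
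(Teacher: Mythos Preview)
Your proof is correct. Both your argument and the paper's rely on Lemma~\ref{Muhly Solel lemma} as the essential technical input, but the packaging differs. The paper simply observes that Lemma~\ref{Muhly Solel lemma} immediately verifies condition~(iii) of Proposition~\ref{intersection and uniqueness for Fell bundles} (for any positive $b=\oplus_t b_t$ in any $\B$-graded $B$, the compression by $a$ yields $\|b_e\|-\varepsilon<\|ab_ea\|\le\|aba\|+\varepsilon\le\|b\|_B+\varepsilon$), and then invokes the already-established equivalence (iii)$\Leftrightarrow$(i) of that proposition. Your argument instead works directly in $C^*_r(\B)$: you compress a positive element of $J$ by the element $c$ produced by Lemma~\ref{Muhly Solel lemma} and then use functional calculus to extract a nonzero element of $J\cap B_e$. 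This is precisely the classical Kishimoto route, and it is in fact the same mechanism hidden inside the paper's proof of (iii)$\Rightarrow$(i) in Proposition~\ref{intersection and uniqueness for Fell bundles} (done there by contraposition). The paper's route is shorter here only because it cashes in on that earlier proposition; your route is self-contained and avoids that detour, at the cost of a little more writing. The substitution trick $z=z'-z'_e+F_e(y)$ to force $z_e\ge 0$ is entirely legitimate and is the only place your argument needs any care.
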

    \begin{proof} By Lemma \ref{Muhly Solel lemma} condition (iii) in Proposition \ref{intersection and uniqueness for Fell bundles} is satisfied.
    \end{proof}
		 \begin{cor}\label{aperiodicity imply compression}
    If the Fell bundle $\B$ is aperiodic then for any $b\in C^*_r(\B)^+\setminus\{0\}$ there is $a\in C^*_r(\B)^+\setminus\{0\}$ such that $a \precsim b$.
    \end{cor}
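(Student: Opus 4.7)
The plan is to exploit the aperiodicity hypothesis, packaged as Lemma \ref{Muhly Solel lemma}, to compress $b$ to a non-zero positive element of the core $B_e$; once such a compression is produced, a routine application of R{\o}rdam's lemma furnishes the desired Cuntz-subequivalence. (The natural reading of the conclusion is that $a$ may be chosen inside $B_e^+\setminus\{0\}$, which is what this strategy achieves.)

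The first step is to approximate $b$ by a positive element of the algebraic direct sum $\bigoplus_{t\in G}B_t$ whose $e$-coefficient is itself positive. The standard device is to pick a finitely supported element $c=\sum_{t\in F}c_t$ with $c_t\in B_t$ approximating $b^{1/2}$ in norm, and then to set $b':=c^*c$. Then $b'$ is norm-close to $b$, lies in the algebraic direct sum, and its $e$-coefficient $b_e':=\sum_{t\in F}c_t^*c_t$ is automatically positive. Since the canonical conditional expectation $F_e\colon C^*_r(\B)\to B_e$ is faithful, $F_e(b)\neq 0$, and $\|b_e'-F_e(b)\|=\|F_e(b'-b)\|$ is small; hence $b_e'\neq 0$ as soon as the approximation is sharp enough.

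The second step is to apply Lemma \ref{Muhly Solel lemma} to $b'$ with a small parameter $\varepsilon_1>0$, producing a contraction $a_0\in\overline{b_e'B_eb_e'}\subseteq B_e^+$ satisfying $\|a_0b_e'a_0-a_0b'a_0\|<\varepsilon_1$ and $\|a_0b_e'a_0\|>\|b_e'\|-\varepsilon_1$. Combined with $b'\approx b$, the triangle inequality yields $a_0ba_0\approx a_0b_e'a_0$, where the element $a_0b_e'a_0$ is a genuinely non-zero positive element of $B_e$.

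The final step is an invocation of R{\o}rdam's lemma (cf.\ \cite[Lemma 2.5]{kr}): choosing $\eta$ strictly between $\|a_0ba_0-a_0b_e'a_0\|$ and $\|a_0b_e'a_0\|$, one obtains $(a_0b_e'a_0-\eta)_+\precsim a_0ba_0\precsim b$, and the left-hand side is a non-zero element of $B_e^+$; taking this element as the desired $a$ finishes the proof. The main obstacle is purely the bookkeeping of the three parameters (the approximation parameter $\delta$ controlling $\|b^{1/2}-c\|$, the aperiodicity parameter $\varepsilon_1$, and the cut-off $\eta$), all of which must be chosen small with respect to $\|F_e(b)\|$; this is mildly tedious but entirely routine.
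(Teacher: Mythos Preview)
Your proof is correct and follows essentially the same approach as the paper: compress $b$ via Lemma \ref{Muhly Solel lemma} to get close to a non-zero positive element of $B_e$, then apply a R{\o}rdam-type cut-off $(\,\cdot\,-\eta)_+$ to land exactly in $B_e^+\setminus\{0\}$ and conclude Cuntz subequivalence. The paper's proof normalises to $\|E(b)\|=1$, invokes Lemma \ref{Muhly Solel lemma} directly with fixed constants $\varepsilon=1/4$, $\eta=1/2$, and cites \cite[Lemma 3.2]{rordam_sier} for the final step; you are in fact more careful than the paper in making explicit the passage from an arbitrary $b\in C^*_r(\B)^+$ to a positive element $b'=c^*c$ of the algebraic direct sum before Lemma \ref{Muhly Solel lemma} can be applied.
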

    \begin{proof} Let $b\in C^*_r(\B)^+\setminus\{0\}$. Lemma \ref{Muhly Solel lemma} implies that there exists a positive contraction $h $ in $B_e$ such that 
$$
 \|hE(b)h -hbh\|\leq 1/4, \qquad \|hE(b)h\| \geq \|E(b)\|-1/4=3/4,
$$
where $E$ is the conditional expectation from $C^*_r(\B)$ onto $B_e$. Putting $a:=(hE(b)h -1/2)_+\in B_e^+$ we see that $a\neq 0$. We conclude that $a \precsim b$, exactly as in the proof of \cite[Lemma 3.2]{rordam_sier}.
    \end{proof}
  Corollary \ref{aperiodicity imply intersection property} and Theorem \ref{uniqueness theorem for fell bundles} indicate that notions of  aperiodicity and topological freeness are closely related. The general relationship is rather mysterious, cf. Remark \ref{remark on aperiodicity and freeness} below. Nevertheless, we have the following:
		\begin{prop}\label{twierdzenie do sprawdzenia}
 Suppose that the unit fiber $B_e$ in the Fell bundle $\B=\{B_g\}_{g\in G}$ has a Hausdorff primitive ideal space. If the  partial action $(\{\Prim(D_t)\}_{t\in G}, \{h_t\}_{t\in G})$ dual to $\B$ is topologically free then $\B$ is aperiodic. 
	\end{prop}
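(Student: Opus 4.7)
The plan is to fix $t \in G\setminus\{e\}$, $b_t \in B_t$, a non-zero hereditary subalgebra $D \subseteq B_e$ and $\varepsilon > 0$, and to produce $a \in D^+$ of norm $1$ with $\|a b_t a\| < \varepsilon$. Set $U_D := \{P \in \Prim(B_e) : D \not\subseteq P\}$, a non-empty open subset of $\Prim(B_e)$. Topological freeness of $(\{\Prim(D_s)\}, \{h_s\})$ applied to the singleton $\{t\}$ says that the complement of the $h_t$-fixed point set $\{P \in \Prim(D_{t^{-1}}) : h_t(P) = P\}$ is dense in $\Prim(B_e)$. Hence one can choose $P_0 \in U_D$ satisfying either \textbf{(A)} $D_{t^{-1}} \subseteq P_0$, or \textbf{(B)} $P_0 \in \Prim(D_{t^{-1}})$ with $h_t(P_0) \neq P_0$ in $\Prim(B_e)$.

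In case (A), the Dauns--Hofmann theorem (applicable since $\Prim(B_e)$ is Hausdorff) makes $P \mapsto \|b_t^* b_t + P\|$ continuous, and this function vanishes at $P_0$. Thus there is an open set $V \subseteq U_D$ containing $P_0$ on which $\|b_t^* b_t + P\| < \varepsilon^2$. Denoting by $I_V$ the ideal of $B_e$ with $\Prim(I_V) = V$, the hereditary subalgebra $D \cap I_V$ is non-zero (since $V \cap U_D \ni P_0$), so I pick $a \in (D \cap I_V)^+$ with $\|a\|=1$. Using the operator inequality $b_t^* a^2 b_t \leq \|a\|^2\, b_t^* b_t$, for every $P \in \Prim(B_e)$ the norm $\|a b_t^* a^2 b_t a + P\|$ vanishes when $P \notin V$ (as then $a \in I_V \subseteq P$), and is bounded by $\|b_t^* b_t + P\| < \varepsilon^2$ when $P \in V$. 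Therefore $\|a b_t a\|^2 = \|a b_t^* a^2 b_t a\|_{B_e} < \varepsilon^2$.

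In case (B), Hausdorffness of $\Prim(B_e)$ yields disjoint open neighborhoods $V_0 \ni P_0$ and $V_1 \ni h_t(P_0)$. By continuity of $h_t$ on $\Prim(D_{t^{-1}})$, I shrink to the open set $W := V_0 \cap h_t^{-1}(V_1) \cap \Prim(D_{t^{-1}}) \cap U_D$, which still contains $P_0$. Denoting the associated ideal by $I_W$, I pick $a \in (D \cap I_W)^+$ with $\|a\| = 1$. For each $P \in W$, the induced representation $\widetilde\pi_P := B_t \dashind^{D_t}_{D_{t^{-1}}}(\pi_P|_{D_{t^{-1}}})$, extended to all of $B_e$ via the natural left action of $B_e$ on $B_t \otimes_{\pi_P} H_{\pi_P}$, is irreducible with kernel $h_t(P) \in V_1$. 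Since $V_0 \cap V_1 = \emptyset$, one has $a \in I_W \subseteq I_{V_0} \subseteq h_t(P)$, and thus $\widetilde\pi_P(a) = 0$. The intertwining identity
\begin{equation*}
V_P^*\, \widetilde\pi_P(x)\, V_P = \pi_P(b_t^* x b_t), \qquad x \in B_e,
\end{equation*}
for the bounded map $V_P \colon H_{\pi_P} \to B_t \otimes_{\pi_P} H_{\pi_P}$, $h \mapsto b_t \otimes h$ (which satisfies $V_P^* V_P = \pi_P(b_t^* b_t)$), then gives $\pi_P(b_t^* a^2 b_t) = 0$ and hence $\pi_P(a b_t^* a^2 b_t a) = 0$. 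For $P \notin W$ this is trivial since $a \in P$. Thus $a b_t^* a^2 b_t a = 0$ in $B_e$, and therefore $a b_t a = 0$.

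The principal difficulty is case (B), where the mere distinctness $P_0 \neq h_t(P_0)$ of primitive ideals must be converted into the algebraic annihilation $a b_t a = 0$. The Hausdorff hypothesis is indispensable here in two ways: geometrically, to separate $P_0$ and $h_t(P_0)$ by disjoint open sets, and algebraically, to identify $\ker \widetilde\pi_P$ with the primitive ideal $h_t(P)$ of $B_e$ via the Rieffel homeomorphism extended along the left $B_e$-action on $B_t$. In case (A), Hausdorffness enters through Dauns--Hofmann, guaranteeing continuity of the fiberwise norm function $P \mapsto \|b_t^*b_t + P\|$.
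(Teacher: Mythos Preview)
Your proof is correct and follows essentially the same route as the paper: find a primitive ideal $P_0$ in the open set corresponding to $D$ that is not fixed by $h_t$, use Hausdorffness to separate $P_0$ from $h_t(P_0)$, and pick $a$ supported near $P_0$ so that $ab_ta=0$. The only cosmetic differences are that the paper dispatches the degenerate situation via the one-line algebraic observation that $D\cap D_{t^{-1}}=\{0\}$ forces $ab_ta=0$ for every $a\in D$ (so your Case~(A) never arises), and in your Case~(B) it works at the ideal level through the Rieffel correspondence $b_t I_V b_t^{*}\subseteq B_tI_VB_{t^{-1}}=h_t(I_V)$ rather than representation-by-representation via your intertwiner identity $V_P^{*}\widetilde\pi_P(a^2)V_P=\pi_P(b_t^{*}a^2b_t)$---the same computation read globally instead of fiberwise.
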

\begin{proof}
Take any $b_t\in B_t$, where $t\in G\setminus\{e\}$,  and any hereditary
subalgebra $D$ of $B_e$. Let $U=\{x\in \Prim(B_e): x\nsupseteq D\}$ be the  open subset of $\Prim(B_e)$ corresponding to $D$.  If $D\cap D_{t^{-1}}=\{0\}$ then $DD_{t^{-1}}=\{0\}$ and since $B_t=B_tD_{t^{-1}}$ we get  $ab_ta=0$ for every $a\in D$. Hence we may assume that $D\cap D_{t^{-1}}\neq \{0\}$. Then $U\cap \Prim(D_{t^{-1}})$ is a non-empty open subset of $\Prim(B_e)$. By topological freeness there exists $x\in U\cap \Prim(D_{t^{-1}})$ such that $h_t(x)\neq x$. Since $\Prim(B_e)$ is Hausdorff we can actually find an open set $V\subseteq U\cap \Prim(D_{t^{-1}})$ such that   $h_t(V)\cap  V=\emptyset$.

Now we exploit the `$C_0(X)$-picture' of $B_e$. For each $x\in \Prim(B_e)$ and $a\in B_e$ we denote by $a(x)$ the image of  $A$ in the quotient $A/x$. It is a consequence of the Dauns-Hofmann theorem, see for instance \cite[Theorem A.34]{morita}, that the formula $(f\cdot a)=f(x)a(x)$ defines a module action of $C_0(\Prim(B_e))$ on $B_e$ via central elements in $M(B_e)$. In particular, since $D$ is hereditary, we have  $f\cdot a \in D$ for any  $a\in D$ and $f\in C_0(\Prim(B_e))$.
Using this fact, we may find an element $a\in D^+$, $\|a\|=1$, such that  $a(x)=0$ if $x\notin V$. The latter property means that  $a\in \bigcap_{x\notin V}x$. Thus we have 
$$
b_taab_t^*\in  b_t \left(\bigcap_{x\notin V}  x\right) b_t^*\subseteq \bigcap_{x\notin V,\,\,  x\in  \Prim(D_{t})}  h_t(x)\subseteq   \bigcap_{x\notin h_t(V)}  x.
$$
Since $h_t(V)$ and $V$ are disjoint,   we get $\bigcap_{x\notin V}x \cap \bigcap_{x\notin h_t(V)}  x=\{0\}$. Therefore  
$$
\|ab_ta\|^2=\|(ab_ta) (ab_ta)^*\|= \|a (b_taab_t^*) a\|=0.
$$
\end{proof}
\begin{rem}\label{remark on aperiodicity and freeness} If $\B$ is the Fell bundle associated to a partial action  $\alpha= (\{D_t\}_{t \in G}, \{\alpha_t\}_{t \in G}) $
 on a commutative $C^*$-algebra $A$, then  both aperiodicity of $\B$ and topological freeness of the dual action are equivalent to the intersection property, see \cite[Proposition A.7]{gs}.  If additionally $\alpha$ is a  global action, these notions are also known to be equivalent to pointwise proper outerness or pointwise spectral non-triviality, see respectively  \cite[Proposition A.7]{gs}  and \cite[Lemma 1.8]{pp}.  For global actions  on a separable (not necessarily commutative) $C^*$-algebra $A$, \cite[Theorem 6.6 and Lemma 7.1]{OlPe} imply that topological freeness of the dual system on $\SA$ is equivalent  to  aperiodicity of the associated bundle, and also to pointwise proper outerness. In particular, all the aforementioned notions are closely related to the Connes spectrum and the  Borcher's spectrum, cf.  \cite{OlPe}, \cite{pp}.
\end{rem}
Before we proceed we need the following definition.
\begin{defn}
We say that a Fell bundle $\B=\{B_g\}_{g\in G}$ is \emph{residually aperiodic} if $\B/\J$ is aperiodic for any ideal $\J$ in $\B$.
\end{defn} 
\begin{cor}\label{core for aperiodicity and ideal structure}
If $\B=\{B_g\}_{g\in G}$ is residually aperiodic then it has the residual intersection property, and thus if additionally $\B$ is exact then  $J \to J \cap B_e$ is a homeomorphism  from $\I(C^*_r(\B))$ onto $\I^\B(B_e)$.
\end{cor}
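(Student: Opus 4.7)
The plan is to combine the two ingredients already prepared in the preceding material. For the first assertion, I would simply observe that residual aperiodicity is, by definition, aperiodicity of every quotient $\B/\J$ as $\J$ ranges over ideals of $\B$, and by Corollary \ref{aperiodicity imply intersection property} aperiodicity implies the intersection property. Hence every quotient $\B/\J$ has the intersection property, which is exactly the residual intersection property for $\B$ in the sense of Definition \ref{residual definition for Fell bundles}.

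For the second assertion, with exactness of $\B$ now also available, I would invoke Theorem \ref{Sierakowski's ;) theorem}: the equivalence (i)$\Longleftrightarrow$(iii) there gives immediately that the map
\[
\I(C^*_r(\B)) \ni J \longmapsto J \cap B_e \in \I^\B(B_e)
\]
is a homeomorphism under exactness together with the residual intersection property.

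There is no real obstacle in this argument; everything is a direct consequence of the results cited. The only thing one might wish to double-check is that Corollary \ref{aperiodicity imply intersection property} is available quotient-by-quotient, but this is automatic since the statement applies to any Fell bundle, hence in particular to each $\B/\J$.
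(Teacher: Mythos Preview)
Your proposal is correct and follows essentially the same approach as the paper, which simply says to apply Corollary \ref{aperiodicity imply intersection property} and Theorem \ref{Sierakowski's ;) theorem}. You have merely spelled out explicitly how those two results are combined, which is exactly what the terse proof in the paper intends.
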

\begin{proof}
Apply Corollary \ref{aperiodicity imply intersection property} and Theorem \ref{Sierakowski's ;) theorem}.
\end{proof}
The following theorem is  a generalization of \cite[Theorem 3.3]{rordam_sier}, \cite[Theorem 4.2]{gs}, and  \cite[Proposition 2.46]{kwa-endo}, proved respectively for crossed products by group actions, partial actions, and single endomorphisms (see  Section \ref{Crossed products by a semigroup of corner systems} below, for more information on  latter crossed products). In order to prove it we need a lemma which is interesting in its own right.
\begin{lem}\label{compact ideals versus ideals generated by projections}
Suppose that $A$ is a purely infinite $C^*$-algebra with finitely many ideals. Then $A$ has the ideal property.
\end{lem}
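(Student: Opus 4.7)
My plan is to argue by induction on the number $n$ of ideals of $A$. The base case is $n = 2$, when $A$ is simple and purely infinite; the classical result that every nonzero hereditary subalgebra of such an $A$ contains a nonzero infinite projection produces a nonzero projection in $A$ which, by simplicity, generates $A$ as an ideal, so the ideal property holds trivially. For the inductive step, assume the lemma for PI $C^*$-algebras with fewer than $n$ ideals, and let $A$ be PI with $n > 2$ ideals. I pick a minimal nonzero ideal $I \subseteq A$; then $I$ is simple (any ideal of $I$ is an ideal of $A$ contained in $I$ and so is $0$ or $I$) and purely infinite by \cite[Proposition 4.3]{kr}, while $A/I$ is PI with strictly fewer ideals than $A$ (since $0$ does not contain $I$), and hence has the ideal property by induction.

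Fix an ideal $J \subseteq A$. Minimality of $I$ forces either $I \cap J = 0$ or $I \subseteq J$. In the first case the quotient map $\pi \colon A \to A/I$ restricts to a $*$-isomorphism of $J$ onto the ideal $\pi(J) \subseteq A/I$; by induction $\pi(J)$ is generated by its projections $\{q_\lambda\}$, and setting $p_\lambda = (\pi|_J)^{-1}(q_\lambda)$ gives projections in $J$. The ideal $K \subseteq A$ they generate satisfies $K \subseteq J$ and $\pi(K) = \pi(J)$, i.e.\ $K + I = J + I$, and since $J \cap I = 0$ a short diagonal argument forces $K = J$. In the second case, by induction $J/I$ is generated as an ideal of $A/I$ by projections $\{q_\lambda\} \subseteq J/I$. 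I plan to lift each $q_\lambda$ to a projection $p_\lambda \in J$ and combine with a nonzero projection $p_0 \in I$ (which generates $I$ as an ideal of $A$, by simplicity and minimality); the ideal $K$ of $A$ generated by $\{p_0\} \cup \{p_\lambda\}$ then contains $I$ and satisfies $\pi(K) = J/I$, so $K + I = J$, and combined with $I \subseteq K$ this yields $K = J$.

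The principal obstacle will be the projection-lifting in the second case: given a projection $q$ in the quotient $J/I$ of the PI $C^*$-algebra $J$, producing a projection $p \in J$ with $\pi(p) = q$. My plan is to start from any positive contraction $b \in J$ lifting $q$, observe that $q$ is automatically properly infinite in the PI algebra $J/I$, and then upgrade $b$ to an honest projection by combining the Kirchberg--R\o rdam structure theory for properly infinite positive elements (cf.\ \cite{kr}) with an appropriate functional-calculus argument. I would isolate this step as a short auxiliary lemma on projection lifting in purely infinite $C^*$-algebras and invoke it at the relevant moment in the induction.
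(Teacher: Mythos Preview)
Your inductive strategy differs genuinely from the paper's argument, and the base case together with the sub-case $J\cap I=0$ is handled correctly. The gap is precisely where you place it: lifting the projections $q_\lambda\in J/I$ to projections $p_\lambda\in J$ in the sub-case $I\subseteq J$. The justification you offer---``Kirchberg--R\o rdam structure theory for properly infinite positive elements'' plus ``an appropriate functional-calculus argument''---is not a proof. Proper infiniteness of a positive lift $b$ of $q_\lambda$ does give projections in $\overline{bAb}$ via \cite{kr} or \cite{Pas-Ror}, but you have no control over their images in $J/I$, and functional calculus on $b$ alone cannot manufacture a spectral gap that is not already there. Lifting projections through an ideal is in general obstructed; the fact that your $I$ is simple purely infinite (hence of real rank zero, hence with an approximate unit of projections) makes the lifting available, but that is a different argument from the one you sketch and would still need to be supplied. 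Incidentally, for $J\subsetneq A$ you can bypass the issue entirely by applying the inductive hypothesis directly to $J$, which has strictly fewer ideals than $A$; only the case $J=A$ genuinely forces you to produce new projections.

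The paper avoids lifting altogether. It first observes that, since $A$ has only finitely many ideals, every ideal $J$ is singly generated: take positive generators of the finitely many singly-generated ideals contained in $J$ and add them; hereditarity forces each summand back into the ideal generated by the sum, so the sum generates $J$. This positive generator is properly infinite since $A$ is, and then the implication (i)$\Rightarrow$(ii) of \cite[Proposition~2.7]{Pas-Ror} converts it into a single projection generating $J$. So the paper trades your induction-plus-lifting for one direct application of \cite{Pas-Ror}, and in fact obtains the stronger conclusion that every ideal of $A$ is generated by a single projection.
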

\begin{proof} It suffices to prove that every ideal $J$ in $A$ is generated (as an ideal) by a projection. To this end, we first note that $J$ is singly generated. Indeed, let $\{J_k\}_{k=1}^n$ be a family of all ideals in  $J$ with the property that every $J_k$ is singly generated. Clearly,  the ideal  generated by the ideals $\{J_k\}_{k=1}^n$ is  equal to $J$. To see that $J$ is also singly generated, for every $k$, pick an  element $a_k \in A^+$, that generates $J_k$. Put $a=\sum_{k=1}^n a_k$ and denote by $I$ the ideal in $A$ generated by $a$. Clearly, $I\subseteq J$. Conversely, for any $k$ we have $a_k \leq  a$ and therefore $a_k \in I$ (because ideals are hereditary $C^*$-subalgebras). It follows that $J=I=AaA$ is generated by $a$. 
This implies that $J$ is also generated by $|a|:=(a^*a)^{1/2}\in  A^+\setminus \{0\}$. Indeed, writing $a=u|a|$ where $u$ is the partial isometry in $A^{**}$, and denoting by $\{\mu_\lambda\}$ an approximate unit in $\overline{a^*Aa}$, we get that $u \mu_\lambda$ is in $A$ and $u \mu_\lambda|a|$ converges to $a$. Thus $a\in \overline{A|a|A}$ and $J=\overline{A|a|A}$.  Now, since $A$ is purely infinite, $|a|$ is  a properly infinite element and  the proof of implication (i)$\Rightarrow$(ii) in \cite[Proposition 2.7]{Pas-Ror} produces from $|a|$  a projection $p\in J$ that generates $J$. 
\end{proof}

\begin{thm}\label{pure infiniteness for general Fell bundles} Suppose that $\B=\{B_g\}_{g\in G}$ is an exact, residually aperiodic Fell bundle. Assume that either  $B_e$ has the ideal property or that $B_e$ contains finitely many $\B$-invariant ideals. Then the following statements are equivalent:
\begin{itemize}
\item[(i)] Every non-zero positive element in $B_e$ is properly infinite in $C^*_r(\B)$.
\item[(ii)] $C^*_r(\B)$ is purely infinite.
\item[(iii)] $C^*_r(\B)$ is purely infinite   and  has the ideal property.
\item[(iv)] Every non-zero hereditary $C^*$-subalgebra in any quotient $C^*_r(\B)$ contains an infinite projection. 
\end{itemize}
If $B_e$ is of real rank zero, then each of the above conditions is equivalent to
\begin{itemize}
\item[(i')] Every non-zero projection in $B_e$ is properly infinite in $C^*_r(\B)$.
\end{itemize}

\end{thm}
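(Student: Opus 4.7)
The proof is organised around the ideal bijection from Corollary \ref{core for aperiodicity and ideal structure}: under exactness and residual aperiodicity of $\B$, the map $J\mapsto J\cap B_e$ gives a lattice isomorphism $\I(C^*_r(\B))\cong\I^\B(B_e)$, and by Corollary \ref{corollary on quotients} every quotient $C^*_r(\B)/J$ is naturally $C^*_r(\B/\J)$. Both auxiliary hypotheses on $B_e$—the ideal property, or finiteness of $\I^\B(B_e)$—are preserved under passage to quotient bundles, and Lemma \ref{permanence of exactness} together with the definition of residual aperiodicity shows the same for the bundle-level hypotheses. This permits systematic reduction of any claim to the corresponding quotient situation.

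The heart of the theorem is (i)$\Rightarrow$(ii). Given $a\in C^*_r(\B)^+\setminus\{0\}$, \cite[Prop.~3.14]{kr} characterises proper infiniteness of $a$ by requiring $a+J$ to be zero or infinite in $C^*_r(\B)/J$ for every ideal $J$. After passing to the quotient bundle, which inherits all standing hypotheses including~(i), this reduces to showing that every non-zero positive element of $C^*_r(\B)$ is infinite. For such $a$, Corollary \ref{aperiodicity imply compression} produces $b\in B_e^+\setminus\{0\}$ with $b\precsim a$, and~(i) makes $b$ properly infinite. A Zorn's lemma argument inside the ideal lattice of $C^*_r(\B)$ (which is either finite or has the ideal property) selects an ideal $J^*$ maximal with respect to $a\notin J^*$; in the quotient $C^*_r(\B)/J^*$ the element $a+J^*$ is then contained in every non-zero ideal, and a second application of Corollary \ref{aperiodicity imply compression} followed by (i) yields a properly infinite $b^*$ in the quotient core with $b^*\precsim a+J^*$ whose closed ideal necessarily contains $a+J^*$. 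The standard characterisation (cf.~\cite[Prop.~3.5]{kr}) that every element in the closed ideal generated by a properly infinite element is Cuntz subequivalent to it gives $a+J^*\precsim b^*$, hence $a+J^*\sim b^*$ is properly infinite in $C^*_r(\B)/J^*$; lifting the resulting Cuntz-semigroup relations then yields infiniteness of $a$ in $C^*_r(\B)$.

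For the remaining steps: (ii)$\Rightarrow$(i) is immediate from $B_e\subseteq C^*_r(\B)$. The implication (ii)$\Rightarrow$(iii) follows from Corollary \ref{core for the ideal property} when $B_e$ has the ideal property, and from Lemma \ref{compact ideals versus ideals generated by projections} when $\I^\B(B_e)$ is finite, since the ideal bijection then makes $\I(C^*_r(\B))$ finite. The equivalence (iii)$\Leftrightarrow$(iv) is a direct application of \cite[Prop.~2.11]{Pas-Ror}, which characterises pure infiniteness of an ideal-property algebra by the existence of infinite projections in every non-zero hereditary subalgebra of every quotient; note also that (iv) forces the ideal property on $C^*_r(\B)$, since a non-zero projection in every hereditary subalgebra ensures that every ideal is generated by its projections, so the cycle closes unambiguously in both cases of the auxiliary hypothesis. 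Under real rank zero, (i)$\Rightarrow$(i') is trivial; for (i')$\Rightarrow$(i), for $a\in B_e^+\setminus\{0\}$ and small $\varepsilon>0$ the element $(a-\varepsilon)_+$ is Cuntz equivalent in $B_e$ to a non-zero projection $p$, so (i') makes $p$, and thus $(a-\varepsilon)_+$, properly infinite in $C^*_r(\B)$; sending $\varepsilon\downarrow 0$ transfers proper infiniteness to $a$.

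The principal technical hurdle is the Cuntz-semigroup manipulation in (i)$\Rightarrow$(ii): transferring proper infiniteness of a dominated element across the Cuntz relation $b\precsim a$ and then across the quotient by $J^*$. This step cannot be carried out directly in $C^*_r(\B)$; it is precisely the Zorn-type selection of a maximal $J^*$ followed by the fullness condition in the quotient $C^*_r(\B)/J^*$ that enables \cite[Prop.~3.5]{kr} to close the loop, and where both standing hypotheses on $B_e$ are essential to ensure that the iteration across the ideal lattice terminates.
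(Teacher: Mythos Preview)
Your reduction at the start of (i)$\Rightarrow$(ii) is correct: by \cite[Prop.~3.14]{kr} and the fact that all quotient bundles inherit the hypotheses (including~(i)), it suffices to prove that, assuming~(i), every non-zero positive $a\in C^*_r(\B)$ is infinite. The gap is in the execution of this last step. Having selected a maximal $J^*$ with $a\notin J^*$ and shown that $a+J^*$ is properly infinite in $C^*_r(\B)/J^*$, you write that ``lifting the resulting Cuntz-semigroup relations then yields infiniteness of $a$ in $C^*_r(\B)$''. This is not justified and is in general false: neither Cuntz subequivalence nor (proper) infiniteness lifts from quotients. What you have actually established is that the image of $a$ is infinite in \emph{one particular} quotient, whereas what you need is that $a$ itself is infinite. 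A secondary issue is that the Zorn step is unproblematic when $\I^\B(B_e)$ is finite, but under the ideal property alone there is no reason why an increasing chain of ideals missing $a$ should have its closed union miss $a$, so the existence of $J^*$ is not guaranteed.

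The paper avoids both problems by treating the two auxiliary hypotheses separately and never attempting to lift Cuntz relations. When $B_e$ has the ideal property one proves~(iv) directly: given a non-zero hereditary $D$ in a quotient $C^*_r(\B/\J)$ and $b\in D^+\setminus\{0\}$, Corollary~\ref{aperiodicity imply compression} produces $a\in (B_e/J_e)^+$ with $a\precsim b$; the ideal property of $B_e$ then supplies a projection $q\in B_e$ lying in the ideal generated by a preimage of $a$ but not in $J_e$, and one manufactures a projection $p\in D$ Murray--von Neumann equivalent to $q+J_e$, which is properly infinite by~(i). When $\I^\B(B_e)$ is finite one proves~(ii) by induction on $|\I^\B(B_e)|$: in the simple base case any $b\precsim a$ from Corollary~\ref{aperiodicity imply compression} is automatically full, so \cite[Prop.~3.5]{kr} gives $a\precsim b$ and hence $a$ is properly infinite; for the inductive step one picks any non-trivial graded ideal $J$, applies the inductive hypothesis to both $\J$ and $\B/\J$ (which have strictly fewer invariant ideals), and invokes the fact that pure infiniteness is closed under extensions \cite[Thm.~4.19]{kr}. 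Your fullness argument in the quotient by $J^*$ is exactly the base case of this induction, but the passage to general $a$ requires the extension machinery rather than an unsupported lifting.
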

\begin{proof}
Implications (iv)$\Leftrightarrow$(iii)$\Rightarrow$(ii)$\Rightarrow$(i) are general facts, see respectively \cite[Propositions 2.11]{Pas-Ror}, \cite[Proposition 4.7]{kr} and \cite[Theorem 4.16]{kr}. If $A$ is if real rank zero the equivalence (i)$\Leftrightarrow$(i') is ensured by \cite[Lemma 2.44]{kwa-endo}. Thus it suffices to show that (i) implies (iii) or (iv). Let us then assume that every   element in $B_e^+\setminus\{0\}$ is properly infinite in $C^*_r(\B)$. We note that, in view of Corollary \ref{core for aperiodicity and ideal structure},  the equivalent conditions in Theorem \ref{Sierakowski's ;) theorem} hold.

Suppose first that $B_e$ has the ideal property. We will show (iv). Let  $J$ be an ideal in $C^*_r(\B)$  and $D$ be a non-zero hereditary $C^*$-subalgebra  in the quotient $C^*_r(\B)/J$. We need to show that $D$ contains an infinite projection.
By Corollary \ref{corollary on quotients}, we have $
C^*_r(\B)/J\cong C^*_r(\B/\J)$,  where $\J=\{J_t\},\, J_t=J\cap B_t, \,\,t\in G$. 
 Fix a non-zero positive element $b$ in $D$. By Corollary \ref{aperiodicity imply compression}  there exists a non-zero positive element $a$ in $B_e/J_e$ such that $a \precsim b$ relative to $ C^*_r(\B/\J)$. Note that $a$ is properly infinite in  $C^*_r(\B/\J)$ as a non-zero homomorphic image of a properly infinite positive element in  $C^*_r(\B)$, by the assumption in (i). Since $B_e$ has the ideal property we can find  a projection $q\in B_e$ that belongs to the ideal in $B_e$  generated by the preimage of $a$ in $B_e$ but not to $J_e$. Then $q+J_e$ is a non-zero projection that belongs to the ideal in $B_e/J_e$ generated by $a$, whence $q+J_e  \precsim a \precsim b$, by \cite[Proposition 3.5(ii)]{kr}. From the comment after \cite[Proposition 2.6]{kr} we can find $z\in C^*_r(\B/\J)$ such that $q+J_e=z^*bz$. With $v:=b^{\frac{1}{2}}z$ it follows that $v^*v=q+J_e$, whence $p:=vv^*=b^{\frac{1}{2}}zz^*b^{\frac{1}{2}}$ is a projection in $B$,
 which is equivalent to $q$. By the assumption in (i), $q$ and hence also $p$ is properly infinite.

Suppose now that $B_e$ has finitely many, say $n$, $\B$-invariant ideals.  Since they are in one-to-one correspondence with ideals in  $C^*_r(\B)$ (recall Corollary \ref{core for aperiodicity and ideal structure})  Lemma \ref{compact ideals versus ideals generated by projections} implies that the conditions (ii) and (iii) are equivalent.  We will prove (ii). The proof goes by induction on $n$.

 Assume first that $n=2$ so that $C^*_r(\B)$ is simple. Take any $b\in C^*_r(\B)^+\setminus\{0\}$. By Corollary \ref{corollary on quotients} there is $a\in B_e^+\setminus\{0\}$ such that $a\precsim b$. Then $b\in \overline{C^*_r(\B)aC^*_r(\B)}=C^*_r(\B)$ and as $a$ is properly infinite we get $b\precsim a$ by \cite[Proposition 3.5]{kr}. Hence $b$ is properly infinite as it is Cuntz equivalent to $a$. Thus  $C^*_r(\B)$  purely infinite. 

Now suppose that our claim holds for any $k<n$. Let $J$ be any non-trivial ideal in  $C^*_r(\B)$. By Corollary \ref{corollary on quotients}, we have $J\cong C^*_r(\J)$ and $C^*_r(\B)/J\cong C^*_r(\B/\J)$,  where $\J=\{J_t\},\, J_t=J\cap B_t, \,\,t\in G$.  By Lemma \ref{permanence of exactness}, exactness passes to ideals and quotients, and clearly the same holds for residual aperiodicity. 
Thus both  $\J$ and $\B/\J$ satisfy the assumptions of the assertion and the corresponding unit fibers have less than  $n$ invariant ideals. Moreover, both  $\J$ and $\B/\J$ satisfy condition (i). Indeed, for $\B/\J$ it is clear, as proper infiniteness passes to quotients, and for $\J$ it follows from the fact that proper infiniteness of $a\in J_e^+\setminus \{0\}$ in $C^*_r(\B)$ imply proper infiniteness of $a$ in $J$, by \cite[Proposition 3.3]{kr}. Concluding, by induction hypotheses, both  $J$ and $C^*_r(\B)/J$ are purely infinite, and since pure infiniteness is closed under extensions \cite[Theorem 4.19]{kr} we get that  $C^*_r(\B)$ is purely infinite.
\end{proof}
\begin{rem}\label{remark on strongly pure infiniteness}
We recall, see  \cite[Propositions 2.11, 2.14]{Pas-Ror}, that in the presence of the ideal property  pure infiniteness of a $C^*$-algebra is equivalent to strong pure infiniteness, weak  pure infiniteness, and many other notions of infiniteness appearing in the literature. Thus the list of equivalent conditions in Theorem \ref{pure infiniteness for general Fell bundles} could  be considerably extended.
\end{rem}

\section{Paradoxicality, residual infiniteness and pure infiniteness}

 Now, we give and study a   noncommutative, algebraic version of  the notion of paradoxical sets, cf. Definition \ref{paradoxical definition of Sierakowski and Rordam},  phrased in terms of Fell bundles. We also introduce a notion of residually $\B$-infinite elements, which we think is a good alternative for $\B$-paradoxical elements. In particular,  the former elements seem to be more  convenient to work with in practice, cf. Remark \ref{Remark below} below.

\begin{defn}\label{paradoxical definition for B}
Let $\B=\{B_g\}_{g\in G}$ be a Fell bundle and let $a\in B_e^+\setminus\{0\}$. We say that:
\begin{itemize} 
\item[(i)]
$a$ is \emph{$\B$-paradoxical} if for every $\varepsilon >0$ there are elements  $a_i\in aB_{t_i}$, where $t_i\in G$ for $i=1,...,n+m$, such that
\begin{equation}\label{relations for paradoxicality}
a \approx_\varepsilon \sum_{i=1}^n a_i^*a_i, \quad a\approx_\varepsilon\sum_{i=n+1}^{n+m}a_i^*a_i,\quad \textrm{and} \quad \|a_i^* a_j\| < \frac{\varepsilon}{\max\{n^2, m^2\}}\,\, \textrm{ for } i\neq j.
\end{equation}
If the elements $a_i$, $i=1,...,n+m$, above can be chosen so that  
\begin{equation}\label{relations for projections}
 a = \sum_{i=1}^n a_i^*a_i=\sum_{i=n+1}^{n+m}a_i^*a_i \quad \textrm{ and }\quad  a_i^* a_j=0\,\,\textrm{ for }\,\, i\neq j,
\end{equation}
we call $a$ \emph{strictly $\B$-paradoxical}.
\item[(ii)] $a$ is \emph{$\B$-infinite} if there is $b\in B_e^+\setminus\{0\}$ such that for every $\varepsilon >0$ there are elements  $a_i\in aB_{t_i}$ where $t_i\in G$ for $i=1,...,n+m$,  such that 
$$ %\begin{equation}\label{relations for infiniteness}
a \approx_\varepsilon \sum_{i=1}^n a_i^*a_i, \quad b\approx_\varepsilon\sum_{i=n+1}^{n+m}a_i^*a_i,\quad \textrm{and} \quad \|a_i^* a_j\| < \frac{\varepsilon}{\max\{n^2, m^2\}}\,\, \textrm{ for } i\neq j.
$$% \end{equation}
We say  $a$ is \emph{strictly $\B$-infinite} if there is a non-zero positive element $b\in aB_ea$ and  elements  $a_i\in aB_{t_i}$ where $t_i\in G$ for $i=1,...,n$,  such that
\begin{equation}\label{relations for infiniteness}
a =\sum_{i=1}^n a_i^*a_i, \qquad  a_i^* a_j=0 \quad \textrm{ for } i\neq j,\quad \textrm{and} \quad a_i^*b=0\quad  \textrm{ for } i=1,...,n.
 \end{equation}
\item[(iii)]  $a$ is \emph{residually  $\B$-infinite} if for every ideal $\J=\{J_g\}_{g\in G}$ in $\B$ the element $a+J_e$ is either zero in $B_e/J_e$ or it is $\B/\J$-infinite. We say that $a$ is \emph{residually strictly $\B$-infinite} if for every ideal $\J=\{J_g\}_{g\in G}$ in $\B$ the element $a+J_e$ is either zero in $B_e/J_e$ or it is strictly $\B/\J$-infinite
\end{itemize}
\end{defn}
\begin{rem}
If $a$ is strictly $\B$-infinite it is $\B$-infinite; take $m=1$, $t_{n+1}=e$ and $a_{n+1}=\sqrt{b}$. Moreover, if there are elements $a_i\in aB_{t_i}$ where $t_i\in G$ for $i=1,...,n+m$,  such that 
$$
a =\sum_{i=1}^n a_i^*a_i, \qquad  \sum_{i=n+1}^{n+m}a_i^*a_i\neq 0 \quad \textrm{ and }\quad  a_i^* a_j=0 \quad \textrm{for }i\neq j,
$$ 
then putting $b=\sum_{i=n+1}^{n+m}a_ia_i^*$, we see that $a$ is  strictly $\B$-infinite. In particular, it follows that  every strictly $\B$-paradoxical element is strictly $\B$-infinite and actually  residually strictly $\B$-infinite. Also it is readily seen that every  $\B$-paradoxical element is   residually  $\B$-infinite. Whether the converse holds is an open problem.
\end{rem}
The following Proposition \ref{paradoxality imply infiniteness} provides a motivation for this definition. It also implies that both paradoxicality and residual $\B$-infiniteness can be viewed  as  generalizations of proper infiniteness.
\begin{prop}\label{paradoxality imply infiniteness}
Let $B=\overline{\bigoplus_{g\in G}B_g}$ be a $\B$-graded $C^*$-algebra and let $a\in B_e^+\setminus\{0\}$.
\begin{itemize} 
\item[(i)]  If $a$ is  $\B$-paradoxical then  $a$   is properly infinite in  $B$.
\item[(ii)] If $a$ is  $\B$-infinite then $a$  is  infinite in $B$.
\item[(iii)] If $a$ is residually $\B$-infinite then for any graded ideal $J$ in $B$ the image of $a$ in $B/J$ is either zero or infinite.
\end{itemize}
\end{prop}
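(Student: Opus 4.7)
The plan is to extract, from the $\B$-paradoxicality/$\B$-infiniteness data, elements $x,y\in aB$ that fit the hypotheses of the characterisations of (proper) infiniteness proved in \eqref{infiniteness characterisation} and \eqref{proper infiniteness}. So parts (i) and (ii) reduce to a single direct computation with the same ``$x$ and $y$''.

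For (i), fix $\varepsilon>0$ and take the elements $a_1,\dots,a_{n+m}\in aB_{t_i}$ from Definition \ref{paradoxical definition for B}(i). Set
\[
x:=\sum_{i=1}^{n}a_i\in aB,\qquad y:=\sum_{i=n+1}^{n+m}a_i\in aB.
\]
Expanding $x^*x$ separates the diagonal sum, which is $\varepsilon$-close to $a$, from the $n^2-n$ cross terms, each of norm $<\varepsilon/\max\{n^2,m^2\}\le \varepsilon/n^2$, so $\|x^*x-a\|<2\varepsilon$. The same bookkeeping gives $\|y^*y-a\|<2\varepsilon$, and $x^*y$ has $nm\le \max\{n^2,m^2\}$ summands of norm $<\varepsilon/\max\{n^2,m^2\}$, hence $\|x^*y\|<\varepsilon$. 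Rescaling $\varepsilon$ at the outset produces, for any prescribed tolerance, a pair $x,y\in aB$ verifying the right-hand side of \eqref{proper infiniteness}, so $a$ is properly infinite in $B$. Part (ii) is proved identically: with the $\B$-infiniteness data one obtains $y^*y\approx_{2\varepsilon}b$ instead, yielding \eqref{infiniteness characterisation} with witness $b\in B_e^+\setminus\{0\}\subseteq B^+\setminus\{0\}$.

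For (iii), let $J$ be a graded ideal in $B$. By Proposition \ref{proposition on induced ideals}, $\J=\{J\cap B_t\}_{t\in G}$ is an ideal in $\B$ and $J=\overline{\bigoplus_{t\in G}J\cap B_t}$; consequently the quotient $B/J$ is naturally a $\B/\J$-graded $C^*$-algebra, with unit fibre $B_e/J_e$. If the image of $a$ in $B_e/J_e$ is zero we are done. Otherwise, residual $\B$-infiniteness says that $a+J_e$ is $\B/\J$-infinite; applying part (ii) to the graded algebra $B/J$ with Fell bundle $\B/\J$ shows that the image of $a$ is infinite in $B/J$.

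I expect no serious obstacle; the main subtlety is just the normalisation in \eqref{relations for paradoxicality}, which is precisely engineered so that the cross-term bounds $\|a_i^*a_j\|<\varepsilon/\max\{n^2,m^2\}$ absorb both the $(n^2-n)$ same-side and the $nm$ mixed-side products when one forms $x^*x$, $y^*y$ and $x^*y$. The only point that requires a line of justification is that $x,y\in aB$, which is immediate since each $a_i\in aB_{t_i}\subseteq aB$.
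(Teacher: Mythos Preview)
Your proof is correct and follows essentially the same approach as the paper: you form $x=\sum_{i=1}^n a_i$ and $y=\sum_{i=n+1}^{n+m}a_i$, bound the cross terms using the normalisation $\varepsilon/\max\{n^2,m^2\}$, and invoke the characterisations \eqref{infiniteness characterisation} and \eqref{proper infiniteness}; part (iii) is likewise reduced to (ii) via Proposition~\ref{proposition on induced ideals} and the $\B/\J$-grading of $B/J$, exactly as in the paper.
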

\begin{proof}
(i). Let $\varepsilon >0$ and choose    elements $a_i\in aB_{t_i}$, $t_i\in G$, $i=1,...,n+m$, witnessing $\varepsilon$-paradoxicality of $a$, that is assume \eqref{relations for paradoxicality} holds. Putting
$
x=\sum_{i=1}^n a_i$  and $y=\sum_{i=n+1}^{n+m} a_i$ we immediately get that $x, y \in aB$. Using that  $ \|a_i^* a_j\| < \varepsilon/\max\{n^2, m^2\}$  for  $i\neq j$ we get 
 $$
\left\|\sum_{i,j=1 \atop i\neq j}^n a_i^*a_j\right\| < \varepsilon, \qquad \left\|\sum_{i,j=n+1 \atop i\neq j}^{n+m} a_i^*a_j\right\| < \varepsilon, \qquad \left\|\sum_{i=1}^n \sum_{j=n+1}^{n+m} a_i^*a_j\right\| < \varepsilon.
$$
The above inequalities imply respectively that $x^*x\approx_{\varepsilon} a$,  $y^*y\approx_{\varepsilon} a$ and $x^*y\approx_{\varepsilon} 0$. Hence $a$ is properly infinite in $B$ by \eqref{proper infiniteness}.

(ii). Follow the above argument, where  instead of \eqref{proper infiniteness} use \eqref{infiniteness characterisation}.

(iii). By Proposition \ref{proposition on induced ideals} graded ideals $J$ in $B$ are in one-to-one correspondence with ideals $\J=\{J_g\}_{g\in G}$ in $\B$. For any such pair the $C^*$-algebra $B/J$ is $\B/\J$-graded. Hence the assertion follows from part (ii).
\end{proof}
\begin{cor}\label{corollary for residual infinite}
Suppose that $\B=\{B_g\}_{g\in G}$ is an exact Fell bundle with the residual intersection property. Then  any residually $\B$-infinite  $a\in B_e^+\setminus\{0\}$ is properly infinite in $C^*_r(\B)$. 
\end{cor}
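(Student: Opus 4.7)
The plan is to reduce proper infiniteness in $C^*_r(\B)$ to checking the infiniteness condition in every graded quotient, which is exactly what residual $\B$-infiniteness provides. The key fact I would invoke is \cite[Proposition 3.14]{kr}: an element $a\in A^+\setminus\{0\}$ is properly infinite in $A$ if and only if, for every ideal $I$ of $A$, the image $a+I$ is either zero or infinite in $A/I$. So the goal is to verify this dichotomy for $a$ viewed inside $C^*_r(\B)$ against an arbitrary ideal $I$ of $C^*_r(\B)$.

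First I would use the structural hypotheses on $\B$ (exactness plus the residual intersection property) to classify all the ideals of $C^*_r(\B)$. By Theorem \ref{Sierakowski's ;) theorem}, every ideal $I$ in $C^*_r(\B)$ is graded, hence of the form $I=\overline{\bigoplus_{t\in G} J_t}$ for a unique ideal $\J=\{J_t\}_{t\in G}$ in $\B$, and $I\cap B_e=J_e$. By Corollary \ref{corollary on quotients}, exactness gives the identification
\[
C^*_r(\B)/I\;\cong\;C^*_r(\B/\J),
\]
under which the image of $a\in B_e$ corresponds to $a+J_e\in B_e/J_e\subseteq C^*_r(\B/\J)$.

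Now I would appeal to the hypothesis that $a$ is residually $\B$-infinite: for the particular ideal $\J$ above, either $a+J_e=0$ in $B_e/J_e$ (in which case $a+I=0$ in the quotient and there is nothing to check), or $a+J_e$ is $\B/\J$-infinite. In the latter case, applying Proposition \ref{paradoxality imply infiniteness}(ii) to the Fell bundle $\B/\J$ and to its graded $C^*$-algebra $C^*_r(\B/\J)$, we conclude that $a+J_e$ is infinite in $C^*_r(\B/\J)$, equivalently that $a+I$ is infinite in $C^*_r(\B)/I$.

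Since $I$ was arbitrary, the dichotomy ``$a+I$ is either zero or infinite'' holds for every ideal $I$ of $C^*_r(\B)$, and \cite[Proposition 3.14]{kr} then yields that $a$ is properly infinite in $C^*_r(\B)$. There is no real obstacle here beyond lining up the three ingredients correctly: Theorem \ref{Sierakowski's ;) theorem} to parametrize ideals, Corollary \ref{corollary on quotients} to identify quotients, and Proposition \ref{paradoxality imply infiniteness}(ii) to upgrade $\B/\J$-infiniteness at the bundle level to infiniteness at the $C^*$-algebra level.
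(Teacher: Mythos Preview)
Your argument is correct and follows essentially the same route as the paper: invoke Theorem \ref{Sierakowski's ;) theorem} to know all ideals are graded, then use \cite[Proposition 3.14]{kr} together with the fact that residually $\B$-infinite elements become infinite in every graded quotient. The only cosmetic difference is that the paper appeals directly to Proposition \ref{paradoxality imply infiniteness}(iii), whereas you unfold that step by identifying $C^*_r(\B)/I\cong C^*_r(\B/\J)$ via Corollary \ref{corollary on quotients} and then applying part (ii); this is exactly how (iii) is proved anyway, so no genuinely different idea is involved.
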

\begin{proof} By Theorem \ref{Sierakowski's ;) theorem} every ideal in $C^*_r(\B)$ is graded. Hence the assertion follows from \cite[Proposition 3.14]{kr} and Proposition \ref{paradoxality imply infiniteness}(iii).
\end{proof}

\begin{cor}
Let $A$  be a $C^*$-algebra and let $a\in A^+\setminus\{0\}$. The following statements are equivalent 
\begin{itemize} 
\item[(i)]  $a$ is properly infinite in  $A$.
\item[(ii)] $a$ is $\B$-paradoxical for every Fell bundle $\B=\{B_g\}_{g\in G}$ with $B_e=A$.
\item[(iii)] $a$ is residually $\B$-infinite for every Fell bundle $\B=\{B_g\}_{g\in G}$ with $B_e=A$.
\end{itemize}
\end{cor}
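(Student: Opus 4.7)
The plan is to prove the cycle (i)$\Rightarrow$(ii)$\Rightarrow$(iii)$\Rightarrow$(i), with the principal device being the \emph{trivial Fell bundle} $\B_0=\{A\}$ over the trivial group $G=\{e\}$, whose ideals correspond bijectively to ideals of $A$. For (i)$\Rightarrow$(ii), fix any Fell bundle $\B=\{B_g\}_{g\in G}$ with $B_e=A$; given $\varepsilon>0$, apply the characterization \eqref{proper infiniteness} to choose $x,y\in aA$ with $x^*x\approx_{\varepsilon} a$, $y^*y\approx_{\varepsilon} a$, and $x^*y\approx_{\varepsilon} 0$, and then set $n=m=1$, $t_1=t_2=e$, $a_1:=x$, $a_2:=y$. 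Since $a_1,a_2\in aB_e$ and $\max\{n^2,m^2\}=1$, this verifies Definition~\ref{paradoxical definition for B}(i).

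Implication (ii)$\Rightarrow$(iii) is essentially contained in the remark following Definition~\ref{paradoxical definition for B}: for any ideal $\J=\{J_g\}_{g\in G}$ in $\B$, the quotient maps $B_{t_i}\to B_{t_i}/J_{t_i}$ are norm-decreasing, so the approximate relations \eqref{relations for paradoxicality} transfer to the images; if $a+J_e\neq 0$ then $a+J_e$ is $\B/\J$-paradoxical, and taking $b:=a+J_e$ in Definition~\ref{paradoxical definition for B}(ii) yields $\B/\J$-infiniteness directly.

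For (iii)$\Rightarrow$(i), apply the hypothesis to $\B_0$. The first task is to check that for the trivial Fell bundle, $\B_0$-infiniteness coincides with ordinary infiniteness: given data $\{a_i\}_{i=1}^{n+m}\subseteq aA$ and $b\in A^+\setminus\{0\}$ witnessing $\B_0$-infiniteness of $a$ at scale $\varepsilon$, pool the summands into $x:=\sum_{i=1}^n a_i$ and $y:=\sum_{i=n+1}^{n+m} a_i$; the cross-term bound $\|a_i^*a_j\|<\varepsilon/\max\{n^2,m^2\}$ then gives $x^*x\approx a$, $y^*y\approx b$, and $x^*y\approx 0$ with errors linear in $\varepsilon$, which is exactly the characterization \eqref{infiniteness characterisation}, while the converse is immediate by taking $n=m=1$. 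Consequently, residual $\B_0$-infiniteness of $a$ asserts that $a+I$ is either zero or infinite in $A/I$ for every ideal $I$ of $A$, and by \cite[Proposition 3.14]{kr} this is precisely proper infiniteness of $a$ in $A$.

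The only point requiring real care is the $\varepsilon$-bookkeeping in the last paragraph, where one translates the multi-summand $\varepsilon$-approximate definition of $\B_0$-infiniteness into the two-element form \eqref{infiniteness characterisation}; however, that computation is entirely parallel to the one already carried out in the proof of Proposition~\ref{paradoxality imply infiniteness}(ii), so no new ideas are needed. The conceptual content of the corollary is thus that proper infiniteness in $A$ is intrinsically characterized by the Fell-bundle conditions (ii) or (iii) applied uniformly to every $\B$ with unit fiber $A$, the trivial bundle already carrying all the information.
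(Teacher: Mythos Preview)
Your proof is correct and follows essentially the same route as the paper. The only cosmetic difference is in (iii)$\Rightarrow$(i): the paper applies Corollary~\ref{corollary for residual infinite} to the bundle with $B_e=A$ and $B_g=\{0\}$ for $g\neq e$, whereas you work over the trivial group and unpack that corollary's content directly (showing $\B_0$-infiniteness coincides with ordinary infiniteness, then invoking \cite[Proposition 3.14]{kr}); both arguments amount to the same thing.
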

\begin{proof}
(i)$\Rightarrow$(ii). Let $a \in A^{+}\setminus\{0\}$ be properly infinite. By \eqref{proper infiniteness} for any $\varepsilon >0$ there are $x,y\in aAa$ with $x^*x\approx_\varepsilon a$, $y^*y\approx_\varepsilon a$, and $x^*y  \approx_\varepsilon 0$. Thus for any Fell bundle $\B=\{B_g\}_{g\in G}$ with $B_e=A$ condition \eqref{relations for paradoxicality} is satisfied with $n=m=1$, $t_1=t_2=e$ and $a_1=x$, $a_2=y$.

(ii)$\Rightarrow$(iii). It is clear, since  $\B$-paradoxicality implies  residual $\B$-infiniteness.

(iii)$\Rightarrow$(i). Apply Corollary \ref{corollary for residual infinite} to the Fell bundle $\B=\{B_g\}_{g\in G}$ with $B_e=A$ and $B_g=\{0\}$ for $g\in G\setminus\{e\}$.
\end{proof}

A set-theoretic counterpart of the notion of a $\B$-infinite element is defined as follows.

\begin{defn}\label{residual infiniteness definition}
Let $(\{\Omega_t\}_{t\in G}, \{\theta_t\}_{t\in G}) $ be a partial action on a topological space $\Omega$. An  open set $V\subseteq \Omega$ is called \emph{$G$-infinite} if there are  open sets $V_1,...,V_{n}$ and elements  $t_1,...,t_{n}\in G$, $n\geq 1$, such that 
\begin{itemize}
\item[(1)] $V=\bigcup_{i=1}^n V_i$,
\item[(2)] $V_{i}\subseteq \Omega_{t_i^{-1}}$ for all $i=1,...,n$ and  $\overline{\bigcup_{i=1}^n \theta_{t_i}(V_i)}\subsetneq V$,
\item[(3)] $\theta_{t_i}(V_{t_i})\cap \theta_{t_j}(V_{t_j})=\emptyset$ for all $i\neq j$.
\end{itemize}
Non-empty open set $V\subseteq \Omega$ is called \emph{residually $G$-infinite} if for every closed invariant subset $Y\subseteq \Omega$ the set $V\cap Y$ is either empty or $G$-infinite for the partial action  $(\{\Omega_t\cap Y\}_{t\in G}, \{\theta_t|_{\Omega_{t^{-1}}\cap Y }\}_{t\in G})$. 
\end{defn}
\begin{rem}
Clearly, for strong boundary actions of discrete groups on  compact spaces considered by Laca and Spielberg in \cite{Laca-Spiel} every open set is residually $G$-infinite (note that strong boundary actions are necessarily minimal). Actually, we will show below a much more general fact. We will prove that for any $n$-filling action, notion introduced in \cite{Joli-Robert}, on a unital $C^*$-algebra $A$ without finite-dimensional corners, every element in $A^{+}\setminus\{0\}$ is residually  $\B$-infinite, for the corresponding Fell bundle $\B$.
\end{rem}

The following two propositions shed more light on the relationship between Definitions \ref{paradoxical definition of Sierakowski and Rordam} and \ref{paradoxical definition for B}.
\begin{prop}\label{paradoxicality of partial action lemma}
Let $\B=\{B_g\}_{g\in G}$ be the Fell bundle of a partial action $\alpha$ on $C_0(\Omega)$ which is induced  by a partial action $(\{\Omega_g\}_{g\in G}, \{\theta_g\}_{g\in G}) $ of $G$ on a locally compact Hausdorff space $\Omega$, see \eqref{partial action commutative case}. Let $a \in B_e^+=C_0(\Omega)^+$ be a non-zero element and put $V:=\{x\in \Omega: a(x) >0\}$.     
\begin{itemize}
\item[(i)]  $V$  is $G$-paradoxical if and only if $a$ is strictly $\B$-paradoxical.
\item[(ii)] $V$ is $G$-infinite if and only if $a$ is strictly $\B$-infinite.
\end{itemize}

\end{prop}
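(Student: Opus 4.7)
The plan is to translate bundle operations into operations on functions. Writing every element of $B_t$ as $f_t\delta_t$ with $f_t\in D_t=C_0(\Omega_t)$, the formulas of Example~\ref{delpartialsystem} yield
\[
a_i^*a_j=\alpha_{t_i^{-1}}(\overline{f_i}f_j)\,\delta_{t_i^{-1}t_j},\qquad (a_i^*a_i)(x)=|f_i(\theta_{t_i}(x))|^2,
\]
so in particular $a_i^*a_j=0$ is equivalent to $\{f_i\neq 0\}\cap\{f_j\neq 0\}=\emptyset$ in $\Omega$. Under the identifications $aB_{t_i}=\{h\delta_{t_i}:h\in C_0(V\cap\Omega_{t_i})\}$ and $aB_ea=C_0(V)$, a factor $a_i=f_i\delta_{t_i}\in aB_{t_i}$ corresponds to an open set $U_i:=\{f_i\neq 0\}\subseteq V\cap\Omega_{t_i}$, and $V_i:=\theta_{t_i^{-1}}(U_i)=\{a_i^*a_i>0\}$ is an open subset of $\Omega_{t_i^{-1}}$ with $\theta_{t_i}(V_i)=U_i\subseteq V$.

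For part (i), suppose first that $a$ is strictly $\B$-paradoxical and set $V_i:=\{a_i^*a_i>0\}$. The equations $a=\sum_{i=1}^n a_i^*a_i=\sum_{i=n+1}^{n+m}a_i^*a_i$ yield the two coverings of $V$, the memberships $a_i\in aB_{t_i}$ give $\theta_{t_i}(V_i)\subseteq V\cap \Omega_{t_i}$, and $a_i^*a_j=0$ for $i\neq j$ gives the pairwise disjointness $\theta_{t_i}(V_i)\cap\theta_{t_j}(V_j)=\emptyset$; hence $V$ is $G$-paradoxical. Conversely, given $G$-paradoxical data $\{(V_i,t_i)\}_{i=1}^{n+m}$, choose continuous partitions of unity $\{\phi_i\}_{i=1}^n$ and $\{\phi_i\}_{i=n+1}^{n+m}$ on $V$ subordinate to the respective covers, and define $a_i:=\alpha_{t_i}(a\phi_i)^{1/2}\delta_{t_i}$ (note $a\phi_i\in C_0(V_i)\subseteq D_{t_i^{-1}}$). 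A direct computation gives $a_i^*a_i=a\phi_i$, so both sums equal $a$, and $\{f_i>0\}\subseteq\theta_{t_i}(V_i)$ combined with the disjointness of the $\theta_{t_i}(V_i)$ yields $a_i^*a_j=0$.

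Part (ii) follows the same blueprint with the additional ingredient $b$. From strictly $\B$-infinite data, define $V_i$ as above; then $b\in aB_ea=C_0(V)$ is a non-zero positive function supported in $V$, and $a_i^*b=0$ is equivalent to $\{b>0\}\cap\theta_{t_i}(V_i)=\emptyset$. Since $\{b>0\}$ is a non-empty open subset of $V$ disjoint from the open union $\bigcup_i\theta_{t_i}(V_i)$, it is also disjoint from its closure, witnessing $\overline{\bigcup_i\theta_{t_i}(V_i)}\subsetneq V$. Conversely, from $G$-infinite data construct the $a_i$ as in (i), and use Urysohn's lemma to pick a non-zero positive $b\in C_c\bigl(V\setminus\overline{\bigcup_i\theta_{t_i}(V_i)}\bigr)\subseteq C_0(V)=aB_ea$; the relations $a_i^*b=0$ then follow from disjointness of supports. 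The main technical point is the elementary fact that an open set disjoint from an open set is also disjoint from its closure, which is what converts the algebraic condition $a_i^*b=0$ into the closure condition in Definition~\ref{residual infiniteness definition}; the rest is a direct dictionary between bundle-theoretic relations in $\B$ and set-theoretic relations for $\theta$.
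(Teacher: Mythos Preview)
Your proof is correct and follows essentially the same approach as the paper's own proof: in both directions you use the dictionary $a_i=f_i\delta_{t_i}\leftrightarrow U_i=\{f_i\neq 0\}$, build the $a_i$ from a partition of unity subordinate to the cover $\{V_i\}$ via $a_i=\alpha_{t_i}(a\phi_i)^{1/2}\delta_{t_i}$ (which is exactly the paper's $((ah_i)\circ\theta_{t_i^{-1}})^{1/2}\delta_{t_i}$), and for part (ii) you pick $b$ supported in $V\setminus\overline{\bigcup_i\theta_{t_i}(V_i)}$. Your explicit remark that an open set disjoint from an open set is disjoint from its closure makes transparent a step the paper leaves implicit.
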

\begin{proof}
\emph{`Only if' parts.}  Let $V_1,...,V_{n}$ be open sets and $t_1,...,t_{n}\in G$ be such that $V=\bigcup_{i=1}^n V_i$,  
$V_{i}\subseteq \Omega_{t_i^{-1}}$ and $\theta_{t_i}(V_i)\subseteq V$ for all $i=1,...,n$, and $\theta_{t_i}(V_{t_i})\cap \theta_{t_j}(V_{t_j})=\emptyset$ for all $i\neq j$.  Let $\{h_i\}_{i=1}^{n}$ be a partition of unity for $V$ relative to the open cover $\{V_i\}_{i=1}^{n}$. Let $i$ be fixed for a while.  Since $ah_i\in C_0(\Omega)$ is supported on $V_{i}\subseteq \Omega_{t_i^{-1}}$ we can treat $(ah_i)\circ \theta_{t_i^{-1}}$ as a continuous function on $\Omega$ supported on $\theta_{t_i}(V_i)\subseteq V$. By definition of $V$  we actually get  $(ah_i)\circ \theta_{t_i^{-1}} \in aC_0(\Omega)$. Hence
$$
a_i:=\left((ah_i)\circ \theta_{t_i^{-1}} \right)^{1/2}\,\delta_{t_i}
$$
is an element of  $a_i\in a B_{t_i}$, and we claim that 
\begin{equation}\label{strict relations short}
a=\sum_{i=1}^n a_i^*a_i \quad \textrm{ and }\quad  a_i^* a_j=0\,\,\textrm{ for }\,\, i\neq j.
\end{equation}
Indeed, for any $i,j$ we have 
$
a_i^* a_j=\left( (ah_i)\circ \theta_{t_i^{-1}} \cdot(ah_j)\circ \theta_{t_j^{-1}}  \right)^{1/2}\circ \theta_{t_i} \, \delta_{t_i^{-1}t_j}.
$
For $i\neq j$ the functions $h_i\circ \theta_{t_i}^{-1}$ and $h_j\circ \theta_{t_j}^{-1}$ are supported on disjoint sets $\theta_{t_i}(V_{t_i})$ and $\theta_{t_j}(V_{t_j})$. Hence
$
a_i^* a_j=0.
$
On the other hand,  $a_i^* a_i= ah_i$ and consequently
$
\sum_{i=1}^n a_i^*a_i= \sum_{i=1}^n  ah_i=a$.

Now, if $\overline{\bigcup_{i=1}^n \theta_{t_i}(V_i)}\subsetneq V$, that is if $V$ is $G$-infinite, then taking  $b\in B_e$ to be any non-zero positive function supported on the non-empty open set $V\setminus \overline{\bigcup_{i=1}^n \theta_{t_i}(V_i)}$ we infer that $a$ is strictly $\B$-infinite.
\\
If  $V_{n+1},...,V_{n+m}$ are open sets and  $t_{n+1},...,t_{n+m}\in G$ are such that $V=\bigcup_{i=n+1}^{n+m} V_i$,  $V_{i}\subseteq \Omega_{t_i^{-1}}$ and $\theta_{t_i}(V_i)\subseteq V$ for all $i=n+1,...,n+m$, and $\theta_{t_i}(V_{t_i})\cap \theta_{t_j}(V_{t_j})=\emptyset$ for all $i\neq j$, $i,j=1,...,n+m$,  then constructing elements $a_i\in a B_{t_i}$ for $i=n+1,...,n+m$ exactly as we did for $i=1,...,n$ we get relations  \eqref{relations for projections}. Hence if $V$ is $G$-paradoxical, then $a$ is strictly $\B$-paradoxical.

\emph{`If' parts.}
Let $a_i\in aB_{t_i}$,   $t_i\in G$, $i=1,...,n$, be such that \eqref{strict relations short} holds. For any  $i=1,...,n$ we have   $a_i=b_i\delta_{t_i}$ where $b_i\in aD_{t_i}=C_0(\Omega_{t_i}\cap V)$. We put
$$
V_i:=\{x\in \Omega: a_i^*a_i(x) >0\}=\{x\in \Omega: |b_i|^2(\theta_{t_i}(x)) \neq 0\}=\theta_{t_i}^{-1}(\{x\in \Omega: b_i(x)\neq 0\}).
$$
Thus   $V_i$ is an open subset of $\Omega_{t_i^{-1}}$ and $\theta_{t_i}(V_i) \subseteq V$. Moreover,
$$
V=\{ x\in \Omega: a^*a(x) >0\}=\{ x\in \Omega: \sum_{i=1}^n a_i^*a_i(x) >0\}=\bigcup_{i=1}^n V_i,
$$
and 
$$
\Big(a_i^*a_j= 0\Big) \,\, \Longleftrightarrow \,\, \Big(b_ib_j=0\Big)\,\, \Longrightarrow\,\, \Big(\theta_{t_i}(V_{t_i})\cap \theta_{t_j}(V_{t_j})=\emptyset\Big).
$$
Using this one readily sees that if $a$ is strictly $\B$-paradoxical then $V$ is $G$-paradoxical. Moreover, if there is a non-zero positive $b\in aB_ea$ such that $a_i^*b=0$ for  $i=1,...,n$, then  $W:=\{x\in \Omega: b(x) >0\}$ is a non-empty open subset of $V$ and 
$$
\Big(a_i^*b= 0 \Big)\,\, \Longleftrightarrow \,\, \Big(b_ib=0\Big)\,\, \Longrightarrow\,\, \Big(\theta_{t_i}(V_{t_i})\cap W=\emptyset\Big).
$$
Hence $\overline{\bigcup_{i=1}^n \theta_{t_i}(V_i)}\subsetneq V$ and $V$ is $G$-infinite.
\end{proof}
It is not clear whether a version of the following proposition for residually infinite elements hold.
\begin{prop} Retain the assumptions of Proposition \ref{paradoxicality of partial action lemma}.
 If $\Omega$ is totally disconnected and every open compact subset of $\Omega$ is $G$-paradoxical, then every element in $B_e^{+}\setminus\{0\}$ is $\B$-paradoxical.
\end{prop}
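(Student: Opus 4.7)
Fix $a \in C_0(\Omega)^+ \setminus \{0\}$ and $\varepsilon > 0$. The strategy is to approximate $a$ in supremum norm by a simple function supported on finitely many disjoint compact-open subsets of $U := \{x : a(x) > 0\}$, invoke the hypothesis that each such compact-open set is $G$-paradoxical (hence, by the `only if' direction of Proposition \ref{paradoxicality of partial action lemma}, its characteristic function is strictly $\B$-paradoxical), and amalgamate the resulting decompositions. Disjointness of supports will force every cross-product in the amalgamated family to vanish, so the norm approximation automatically upgrades to an $\varepsilon$-witness of paradoxicality of $a$ in the sense of Definition \ref{paradoxical definition for B}(i).

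For the approximation, since $\Omega$ is totally disconnected, locally compact, and Hausdorff, compact-open sets form a basis for its topology. The set $L := \{x \in \Omega : a(x) \geq \varepsilon/2\}$ is compact, and for each $x \in L$ one can choose a compact-open neighborhood $V_x \subseteq U$ on which $a$ oscillates by less than $\varepsilon/2$. Extracting a finite subcover $V_{x_1}, \dots, V_{x_M}$ of $L$ and disjointifying, I would obtain disjoint compact-open sets $K_1,\dots,K_M \subseteq U$ with $\bigcup_k K_k \supseteq L$. Choosing points $y_k \in K_k$ and setting $c_k := a(y_k) > 0$, the simple function $a' := \sum_{k=1}^{M} c_k \chi_{K_k}$ then satisfies $\|a - a'\|_\infty < \varepsilon$.

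For each $k$, $G$-paradoxicality of $K_k$ together with Proposition \ref{paradoxicality of partial action lemma} yields elements $a_i^k \in \chi_{K_k} B_{t_i^k}$ with
$$\chi_{K_k} = \sum_{i=1}^{n_k}(a_i^k)^* a_i^k = \sum_{i=n_k+1}^{n_k+m_k}(a_i^k)^* a_i^k, \qquad (a_i^k)^*(a_j^k) = 0 \text{ for } i \neq j.$$
Since $K_k \subseteq U$ is clopen, the function $\chi_{K_k}/a$ (extended by $0$ off $K_k$) lies in $C_0(\Omega)$, whence $\chi_{K_k} \in aC_0(\Omega) = aB_e$ and therefore $a_i^k \in aB_{t_i^k}$. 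Setting $\tilde a_i^k := \sqrt{c_k}\, a_i^k$ and concatenating across $k$ the two groups $i \leq n_k$ and $n_k < i \leq n_k + m_k$, I claim the resulting family witnesses $\varepsilon$-paradoxicality of $a$: each of the two grouped sums of $(\tilde a_i^k)^* \tilde a_i^k$ equals $a'$, hence is within $\varepsilon$ of $a$. For off-diagonal terms with $k \neq k'$, a short computation in $\B_\alpha$ using the product formula of Example \ref{delpartialsystem} reduces the vanishing of $(a_i^k)^* a_j^{k'}$ to vanishing of the pointwise product of functions supported in the disjoint sets $K_k$ and $K_{k'}$; for $k = k'$, $i \neq j$, orthogonality is built into the decomposition of $\chi_{K_k}$.

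The main obstacle is largely bookkeeping rather than conceptual: verifying that the combined decomposition lands in the correct submodules $aB_{t_i^k}$ and that cross-products across distinct $K_k$ vanish in the Fell bundle. Both rely on the clopen (rather than merely open) nature of each $K_k$, which is exactly what total disconnectedness of $\Omega$ supplies. Since $\varepsilon > 0$ was arbitrary, this shows that $a$ is $\B$-paradoxical, completing the proposition.
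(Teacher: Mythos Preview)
Your argument is correct, but it takes a longer path than the paper's. The paper observes that for any $\varepsilon>0$ one can find a \emph{single} compact-open set $V$ with $\{x:a(x)\geq\varepsilon\}\subseteq V\subseteq\{x:a(x)>0\}$, defines $a_V\in C_0(\Omega)^+$ to be $a$ on $V$ and $0$ off $V$, and then applies Proposition~\ref{paradoxicality of partial action lemma}(i) directly to $a_V$ (whose support is exactly $V$). Since $V$ is $G$-paradoxical, $a_V$ is \emph{strictly} $\B$-paradoxical, and the witnessing elements lie in $a_VB_{t_i}\subseteq aB_{t_i}$ because $V$ is clopen in $\{a>0\}$. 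As $\|a-a_V\|<\varepsilon$, these same elements witness $\varepsilon$-paradoxicality of $a$.

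The difference is that you approximate $a$ by a step function $\sum_k c_k\chi_{K_k}$ on several disjoint compact-open sets and invoke the proposition on each $\chi_{K_k}$, whereas the paper exploits the full strength of Proposition~\ref{paradoxicality of partial action lemma}(i): it applies to \emph{any} positive function with support $V$, not only to $\chi_V$. This lets the paper avoid the disjointification and the verification of cross-block orthogonality. Your cross-block step is fine (indeed $(a_i^k)^*a_j^{k'}=(c_i^k)^*\chi_{K_k}\chi_{K_{k'}}c_j^{k'}=0$ since $a_i^k=\chi_{K_k}c_i^k$ and the $K_k$ are disjoint), but it is extra work that the one-set approach sidesteps entirely.
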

\begin{proof}
Let $a\in C_0(\Omega)^+\setminus\{0\}=B_e^+\setminus\{0\}$. For any $\varepsilon >0$ there exists an open compact set $V$ such that
$$
\{x\in \Omega: a(x) \geq \varepsilon \}\subseteq V  \subseteq \{x\in \Omega: a(x) >0  \}.
$$ 
Let $a_V\in C_0(\Omega)$ denote the function such that $a_V(x)=a(x)$ for $x\in V$ and $a_V(x)=0$ outside $V$.  Using paradoxicality of $V$, by Proposition  \ref{paradoxicality of partial action lemma} there are elements $a_i\in a_V B_{t_i} \subseteq a B_{t_i} $,   $t_i\in G$, $i=1,...,n+m$, such that 
$
 a_V = \sum_{i=1}^n a_i^*a_i=\sum_{i=n+1}^{n+m}a_i^*a_i$ and $a_i^* a_j=0$ for  $i\neq j$.

Since $a_V\approx_\varepsilon a$ we see that  relations \eqref{relations for paradoxicality} hold.
\end{proof}
We now turn to the promised relationship between residual $\B$-infiniteness and $n$-filling actions introduced by Jolissaint and Robertson \cite{Joli-Robert}.
\begin{defn}[Definition 0.1 in \cite{Joli-Robert}] A global action $\alpha= (A, \{\alpha_t\}_{t \in G}) $ on a unital $C^*$-algebra $A$ is called $n$-filling,  for $n\geq 2$, if, for all elements $b_1,...,b_n\in A^{+}$ of norm one, and for all $\varepsilon >0$, there exist $g_1,...,g_n\in G$ such that $\sum_{i=1}^n\alpha_{g_i}(b_i) \geq 1-\varepsilon$.  
\end{defn}
\begin{rem} A $2$-filling action on a commutative $C^*$-algebra is equivalent to what is
called a \emph{strong boundary action} in \cite{Laca-Spiel}, see \cite{Joli-Robert}.  
\end{rem}
\begin{lem} Suppose that $\alpha$ is an $n$-filling action on a unital $C^*$-algebra $A$ such that for every nonzero projection $e\in A$ the algebra $eAe$ is infinite dimensional. Let $\B$ be the  Fell bundle corresponding to $\alpha$. Then the  action $\alpha$ is minimal and any element $a\in A^+\setminus\{0\}$ is strictly (residually) $\B$-infinite.
\end{lem}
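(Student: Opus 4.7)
My plan proceeds in three stages: establish minimality, upgrade the projection hypothesis to positive elements, and then construct the strict $\B$-infinite decomposition via a single application of the $n$-filling axiom.

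For minimality, suppose $J \subsetneq A$ were a non-zero $\alpha$-invariant ideal and pick $b \in J^+$ with $\|b\|=1$. Applying $n$-filling to the constant list $b_1=\cdots=b_n=b$ with some $\varepsilon \in (0,1)$ yields $g_1,\ldots,g_n \in G$ with $\sum_i \alpha_{g_i}(b) \geq 1-\varepsilon$; invariance of $J$ puts this sum in $J$, but it is bounded below by a positive scalar hence invertible, forcing $J=A$. I would then upgrade the hypothesis from projections to arbitrary $a \in A^+\setminus\{0\}$: if $aAa$ were finite-dimensional it would be unital, its unit being a projection $p \in aAa \subseteq A$; hereditarity of $aAa$ then forces $pAp = aAa$, contradicting the assumption. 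So $aAa$ is infinite-dimensional and thus contains $n+1$ mutually orthogonal norm-one positive elements $e_1,\ldots,e_{n+1}$.

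For the main step, assume $\|a\|=1$. Applying $n$-filling to $e_1,\ldots,e_n$ with some $\varepsilon \in (0,1)$ produces $g_1,\ldots,g_n \in G$ such that $y := \sum_{i=1}^n \alpha_{g_i}(e_i) \geq 1-\varepsilon$, whence $y$ is invertible in $A$. In the Fell bundle $\B=\{A\delta_t\}_{t\in G}$ of $\alpha$, I set
\begin{equation*}
c_i \;:=\; e_i^{1/2}\,\alpha_{g_i^{-1}}\!\bigl(y^{-1/2}a^{1/2}\bigr)\,\delta_{g_i^{-1}} \qquad (i=1,\ldots,n),\qquad b\;:=\;e_{n+1}\in aAa\setminus\{0\}.
\end{equation*}
Since $e_i \in aAa$ a standard functional-calculus approximation gives $e_i^{1/2} \in \overline{aA}$, whence $c_i \in aB_{g_i^{-1}}$ with the closure convention. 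A direct computation using the Fell-bundle formulae $(a_t\delta_t)(a_s\delta_s)=\alpha_t(\alpha_{t^{-1}}(a_t)a_s)\delta_{ts}$ and $(a_t\delta_t)^*=\alpha_{t^{-1}}(a_t^*)\delta_{t^{-1}}$ shows that $c_j^* c_i$ carries a factor $e_j^{1/2}e_i^{1/2}$ that vanishes for $i\neq j$ by orthogonality, while
\begin{equation*}
\sum_{i=1}^n c_i^* c_i \;=\; a^{1/2}y^{-1/2}\Bigl(\sum_i\alpha_{g_i}(e_i)\Bigr)y^{-1/2}a^{1/2}\;=\;a^{1/2}y^{-1/2}\,y\,y^{-1/2}a^{1/2}\;=\;a.
\end{equation*}
The same vanishing $e_i^{1/2}e_{n+1}=0$ delivers $c_i^* b = 0$, verifying Definition~\ref{paradoxical definition for B}(ii). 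Residual strict $\B$-infiniteness is then immediate: by Proposition~\ref{proposition on induced ideals} ideals in $\B$ correspond to $\alpha$-invariant ideals of $A$, and minimality leaves only $\{0\}$ and $A$, so the residual condition reduces to the case already handled.

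I expect the conceptual obstacle to be isolating the correct ansatz in the main step. The key insight is that orthogonality of the $e_i$ inside $A$ lifts to Fell-bundle orthogonality of the partial-isometry-like elements $e_i^{1/2}\delta_{g_i^{-1}}$ \emph{regardless} of the group elements $g_i$ supplied by the $n$-filling axiom; the left-correction $\alpha_{g_i^{-1}}(y^{-1/2}a^{1/2})$ is then a normalizing cocycle factor whose sole role is to convert the $n$-filling lower bound $\sum_i \alpha_{g_i}(e_i) \geq 1-\varepsilon$ into the exact equality $\sum_i c_i^* c_i = a$ required by strict (as opposed to merely approximate) $\B$-infiniteness. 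Everything else is bookkeeping with the Fell-bundle product.
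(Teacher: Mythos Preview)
Your proof is correct and follows essentially the same route as the paper's: both find $n+1$ mutually orthogonal norm-one positive elements in the hereditary subalgebra generated by $a$, apply the $n$-filling axiom to the first $n$ of them to obtain an invertible element, and then define the $c_i$ (the paper's $a_i$) by exactly the same formula $e_i^{1/2}\alpha_{g_i^{-1}}(y^{-1/2}a^{1/2})\delta_{g_i^{-1}}$, with $b=e_{n+1}$. The only cosmetic differences are that you spell out the minimality argument and the passage from projections to positive elements explicitly, whereas the paper dispatches these by a reference to \cite[Lemma~1.4]{Joli-Robert}.
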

\begin{proof}
That $\alpha$ does not admit non-trivial invariant ideals  is clear. Let $a\in A^+\setminus\{0\}$ and  $0<\varepsilon <1$ be smaller than $1$. We may suppose that $\|a\|=1$. Arguing in a similar way as in the proof of \cite[Lemma 1.4]{Joli-Robert}, we see that there are  normalized positive elements $b_1,...,b_{n+1}\in aA$ such that $b_i b_j=0$ for $i\neq j$. By $n$-filling property, there are elements $g_1,...,g_n\in G$ such that $h:=\sum_{i=1}^n\alpha_{g_i}(b_i) \geq 1-\varepsilon$. Hence $h$ is positive and invertible. Put
$$
a_i:=\sqrt{b_i }\alpha_{g_i^{-1}}(\sqrt{h^{-1}})\alpha_{g_i^{-1}}(\sqrt{a})u_{g_i}, \quad i=1,...,n,\quad \textrm{ and }\quad  b:=b_n+1.
$$
Then clearly $a_i^*a_j=0$ for $i\neq j$ and $a_i^*b=0$ for all $i,j=1,...,n$. Moreover,
$$
\sum_{i=1}^n a_i^*a_i=\sum_{i=1}^n \sqrt{a}  \sqrt{h^{-1}}\alpha_{g_i}(b_i )\sqrt{h^{-1}}\sqrt{a}=\sqrt{a}  \sqrt{h^{-1}}h\sqrt{h^{-1}}\sqrt{a}=a.
$$
 Hence $a\in A^+\setminus\{0\}$ is strictly  $\B$-infinite, and as $\alpha$ is minimal, actually strictly residually $\B$-infinite.
\end{proof}

We state the main result of this section using the notion of residual $\B$-infiniteness. As noted above this is  a weaker condition than  dynamical conditions implying pure infiniteness that appear in \cite{Laca-Spiel}, \cite{Joli-Robert},  \cite{rordam_sier} or  \cite{gs}. 
\begin{thm}\label{pure infiniteness for paradoxical Fell bundles} Suppose that $\B=\{B_g\}_{g\in G}$ is an exact, residually aperiodic Fell bundle and one of the following conditions holds 
\begin{itemize}
\item[(i)] $B_e$ contains finitely many $\B$-invariant ideals and every element in $B_e^+\setminus\{0\}$ is  Cuntz equivalent to a residually $\B$-infinite element,
\item[(ii)] $B_e$ has the ideal property and every element in $B_e^+\setminus\{0\}$ is  Cuntz equivalent to a residually $\B$-infinite element,
\item[(iii)] $B_e$ is of real rank zero  and every non-zero projection in $B_e$ is Cuntz equivalent to a residually $\B$-infinite element.
\end{itemize}
Then  $C^*_r(\B)$ has an ideal property and is purely infinite. 
\end{thm}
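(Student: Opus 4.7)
The plan is to reduce each of the three cases to Theorem \ref{pure infiniteness for general Fell bundles} by verifying its hypothesis (i), namely that every non-zero positive element of $B_e$ is properly infinite when regarded as an element of $C^*_r(\B)$ (respectively, condition (i') for case (iii)). Once this is done, Theorem \ref{pure infiniteness for general Fell bundles} immediately delivers both pure infiniteness and the ideal property of $C^*_r(\B)$, since those are among the equivalent conditions listed there.

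First I would invoke Corollary \ref{core for aperiodicity and ideal structure}: residual aperiodicity of $\B$ guarantees the residual intersection property. Combined with the standing exactness assumption, this allows me to apply Corollary \ref{corollary for residual infinite}, which says that every residually $\B$-infinite element of $B_e^+\setminus\{0\}$ is properly infinite in $C^*_r(\B)$. The second ingredient is the elementary fact that proper infiniteness is invariant under Cuntz equivalence: if $a \precsim b$, $b \precsim a$ and $b$ is properly infinite, then
\[
a \oplus a \;\precsim\; b \oplus b \;\precsim\; b \;\precsim\; a,
\]
so that $a$ is properly infinite as well. This uses only transitivity of $\precsim$ and its compatibility with direct sums in $M_2$.

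Putting these together, in cases (i) and (ii) every $a \in B_e^+\setminus\{0\}$ is Cuntz equivalent to some residually $\B$-infinite element, which is properly infinite in $C^*_r(\B)$, and therefore $a$ itself is properly infinite in $C^*_r(\B)$. Under the hypothesis that $B_e$ has either finitely many $\B$-invariant ideals (case (i)) or the ideal property (case (ii)), Theorem \ref{pure infiniteness for general Fell bundles} then applies, yielding both the ideal property and pure infiniteness of $C^*_r(\B)$. For case (iii) I would begin by observing that real rank zero of $B_e$ implies the ideal property (each ideal has an approximate unit of projections, hence is generated by its projections), so the hypotheses of Theorem \ref{pure infiniteness for general Fell bundles} are again in force; by the same Cuntz-equivalence argument as above, every non-zero projection in $B_e$ is properly infinite in $C^*_r(\B)$, i.e.\ condition (i') of that theorem holds, and one concludes via the real rank zero clause.

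There is no real obstacle here: the theorem is essentially a packaging of Corollary \ref{corollary for residual infinite} and the Cuntz-invariance of proper infiniteness, plugged into Theorem \ref{pure infiniteness for general Fell bundles}. The only point requiring a moment of care is the Cuntz-equivalence transport step, and in case (iii) the (standard) implication from real rank zero to the ideal property so that the appropriate branch of Theorem \ref{pure infiniteness for general Fell bundles} is available.
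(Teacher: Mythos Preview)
Your proposal is correct and follows essentially the same route as the paper: apply Corollary \ref{corollary for residual infinite} (using that residual aperiodicity gives the residual intersection property) to see that residually $\B$-infinite elements are properly infinite in $C^*_r(\B)$, transport this along Cuntz equivalence, and then invoke Theorem \ref{pure infiniteness for general Fell bundles}. The paper's proof is terser but identical in content, including the implicit use of real rank zero $\Rightarrow$ ideal property to bring case (iii) under the hypotheses of Theorem \ref{pure infiniteness for general Fell bundles}.
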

\begin{proof}
Note that in each of the cases (i)-(iii) Theorem \ref{pure infiniteness for general Fell bundles} applies. By Corollary \ref{corollary for residual infinite} every residually $\B$-infinite element in $B_e^+\setminus\{0\}$ is properly infinite in $C^*_r(\B)$. Since an element equivalent to a properly infinite one is properly infinite, the assertion holds. 
 \end{proof}

As a consequence of Theorem \ref{pure infiniteness for paradoxical Fell bundles} we get the following strengthening and unification of the following results   \cite[Theorem 5]{Laca-Spiel}, \cite[Theorem 1.2]{Joli-Robert}(in the commutative case), \cite[Corollary 4.4]{rordam_sier} and \cite[Theorem 4.4]{gs}.

\begin{cor} Suppose that $\alpha$  is an exact  partial action on $C_0(\Omega)$  induced  by residually topologically free partial action $(\{\Omega_g\}_{g\in G}, \{\theta_g\}_{g\in G}) $ of $G$ on a locally compact space $\Omega$. Assume that one of the following conditions holds 
\begin{itemize}
\item[(i)] $\Omega$ contains finitely many $\theta$-invariant closed sets and  and every non-empty  open set is residually $G$-infinite,
\item[(ii)] $\Omega$ is totally disconnected space and every non-empty compact and open set is residually $G$-infinite.
\end{itemize} 
 Then  $A\rtimes_{\alpha,r}G$ has the ideal property and is purely infinite.
\end{cor}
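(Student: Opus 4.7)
The plan is to reduce to Theorem~\ref{pure infiniteness for paradoxical Fell bundles} applied to the Fell bundle $\B = \B_\alpha$ of Example~\ref{delpartialsystem}, so that $C^*_r(\B_\alpha) = A\rtimes_{\alpha,r}G$. First I would verify the two common hypotheses, namely exactness and residual aperiodicity of $\B$. Exactness of $\B$ is just exactness of $\alpha$, since the partial crossed product sequence used to define the latter coincides with the sequence \eqref{sequence to be exact} for $\B_\alpha$. For residual aperiodicity, note that $B_e = C_0(\Omega)$ has Hausdorff primitive ideal space $\Omega$, and by Remark~\ref{Remark on dual systems} the dual partial action of $\B_\alpha$ on $\Prim(B_e)$ identifies with $\theta$. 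By Proposition~\ref{proposition on induced ideals} and Lemma~\ref{lemma on invariance}, ideals $\J$ of $\B$ correspond bijectively to open $\theta$-invariant sets $U \subseteq \Omega$ via $J_e = C_0(U)$, and each quotient $\B/\J$ is the Fell bundle of the restriction of $\theta$ to the closed invariant set $Y = \Omega \setminus U$. Residual topological freeness of $\theta$ then says each such restriction is topologically free, so Proposition~\ref{twierdzenie do sprawdzenia} applied to $\B/\J$ (whose unit fiber $C_0(Y)$ has Hausdorff spectrum $Y$) gives aperiodicity of $\B/\J$, i.e.\ residual aperiodicity of $\B$.

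Second, I would translate the set-theoretic input of (i) or (ii) into the algebraic hypothesis of Theorem~\ref{pure infiniteness for paradoxical Fell bundles} via Proposition~\ref{paradoxicality of partial action lemma}(ii). In case (i), the $\B$-invariant ideals of $C_0(\Omega)$ are in bijection with the $\theta$-invariant closed subsets of $\Omega$, so there are finitely many of them. Given $a \in C_0(\Omega)^+ \setminus \{0\}$ with open support $V = \{x : a(x) > 0\}$, for each ideal $\J$ of $\B$ corresponding to a closed invariant $Y$, the image $a|_Y \in C_0(Y)^+$ has support $V \cap Y$. By residual $G$-infiniteness of $V$, the set $V \cap Y$ is either empty or $G$-infinite for the restricted action; hence by Proposition~\ref{paradoxicality of partial action lemma}(ii) applied to $\B/\J$, the element $a|_Y$ is either zero or strictly $\B/\J$-infinite. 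Thus $a$ is residually $\B$-infinite, hence Cuntz equivalent to itself, and condition~(i) of Theorem~\ref{pure infiniteness for paradoxical Fell bundles} holds.

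For case (ii), $\Omega$ totally disconnected forces $C_0(\Omega)$ to have real rank zero, and its non-zero projections are exactly the characteristic functions $\chi_V$ of non-empty compact open sets $V \subseteq \Omega$. By hypothesis each such $V$ is residually $G$-infinite, and the argument of the preceding paragraph (applied with $a = \chi_V$, whose support is $V$) shows that $\chi_V$ is residually $\B$-infinite, hence Cuntz equivalent to itself. Therefore condition~(iii) of Theorem~\ref{pure infiniteness for paradoxical Fell bundles} holds, and the theorem concludes in both cases that $C^*_r(\B_\alpha) = A\rtimes_{\alpha,r}G$ has the ideal property and is purely infinite.

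The only subtlety worth flagging is the identification, for an ideal $\J$ corresponding to $Y$, of the quotient bundle $\B/\J$ with the Fell bundle of the restricted partial action on $C_0(Y)$, which is what promotes the absolute equivalence in Proposition~\ref{paradoxicality of partial action lemma}(ii) to the residual one needed here. This identification is already implicit in the machinery developed around Corollary~\ref{residual topological freeness corollary} and Remark~\ref{Remark on dual systems}, so once it is invoked the remainder of the argument is routine matching of combinatorial data.
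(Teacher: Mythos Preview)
Your proof is correct and follows the same approach as the paper's own proof, which simply reads ``Apply Theorem~\ref{pure infiniteness for paradoxical Fell bundles} and Proposition~\ref{paradoxicality of partial action lemma}(ii).'' You have spelled out the details the paper leaves implicit: deducing residual aperiodicity from residual topological freeness via Proposition~\ref{twierdzenie do sprawdzenia}, identifying quotient bundles with Fell bundles of restricted actions, and matching the two cases to parts~(i) and~(iii) of Theorem~\ref{pure infiniteness for paradoxical Fell bundles}.
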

\begin{proof} Apply Theorem \ref{pure infiniteness for paradoxical Fell bundles} and Proposition \ref{paradoxicality of partial action lemma}(ii).
\end{proof}

In connection with the Theorem \ref{pure infiniteness for paradoxical Fell bundles}, it is useful to make the following  observation.
\begin{lem}\label{sums and equivalence for infiniteness}
A sum of orthogonal  (residually) strictly  $\B$-infinite elements is  (residually) strictly  $\B$-infinite. Any projection in $B_e$ which is Murray-von Neumann equivalent to a (residually) strictly  $\B$-infinite projection in $B_e$ is  (residually) strictly  $\B$-infinite. 
\end{lem}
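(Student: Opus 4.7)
The plan is to establish both claims by direct construction of the witnessing families in Definition \ref{paradoxical definition for B}. For the sum statement it suffices, by induction, to treat a sum of two orthogonal strictly $\B$-infinite elements $a^{(1)},a^{(2)}\in B_e^+$, so put $a:=a^{(1)}+a^{(2)}$. Let $\{a_i^{(k)}\}_{i=1}^{n_k}\subseteq a^{(k)}B_{t_i^{(k)}}$ together with $b^{(k)}\in a^{(k)}B_ea^{(k)}\setminus\{0\}$ witness strict $\B$-infiniteness of $a^{(k)}$ in the sense of \eqref{relations for infiniteness}, for $k=1,2$. Since $a^{(1)}a^{(2)}=0$, both $a^{(k)}$ lie in the hereditary $C^*$-subalgebra $\overline{aB_ea}$, which yields the inclusions $a^{(k)}B_t\subseteq aB_t$ and $a^{(k)}B_ea^{(k)}\subseteq aB_ea$ for every $t\in G$ (approximations $a^{(k)}\approx ax_na$ give $a^{(k)}c\approx a(x_nac)\in aB_t$ for $c\in B_t$). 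I would then take as combined data the whole family $\{a_i^{(k)}\}_{k=1,2,\ i\leq n_k}$ together with $b:=b^{(1)}+b^{(2)}\in aB_ea\setminus\{0\}$. The identity $a=\sum_{k,i}(a_i^{(k)})^*a_i^{(k)}$ is immediate; all ``old'' orthogonalities $(a_i^{(k)})^*a_j^{(k)}=0$ and $(a_i^{(k)})^*b^{(k)}=0$ are inherited from the hypotheses; and the ``new'' cross-orthogonalities $(a_i^{(1)})^*a_j^{(2)}=0$ and $(a_i^{(k)})^*b^{(l)}=0$ for $k\neq l$ reduce, by a limit argument, to $(a^{(1)}c)^*(a^{(2)}d)=c^*a^{(1)}a^{(2)}d=0$.

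For the residual version, fix an ideal $\J$ in $\B$ with $a+J_e\neq 0$. If both $a^{(k)}+J_e$ are nonzero, they are strictly $\B/\J$-infinite by hypothesis and remain orthogonal in $B_e/J_e$, so the sum case applied to $\B/\J$ concludes the argument; if only one, say $a^{(1)}+J_e$, is nonzero, then $a+J_e=a^{(1)}+J_e$ is strictly $\B/\J$-infinite by the residual hypothesis on $a^{(1)}$.

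For the Murray--von Neumann claim, let $p,q\in B_e$ be projections with $p$ strictly $\B$-infinite and $v\in B_e$ satisfying $v^*v=p$ and $vv^*=q$. Given witnesses $\{a_i\}_{i=1}^n\subseteq pB_{t_i}$ and $b\in pB_ep\setminus\{0\}$ for $p$, I define
\[
a_i':=va_iv^*,\qquad b':=vbv^*.
\]
Using $vp=qv$ and the Fell bundle identity $B_eB_tB_e\subseteq B_t$, one verifies $a_i'\in qB_{t_i}$ and $b'\in qB_eq$. The relations $a_i^*p=a_i^*$ and $pa_j=a_j$ (consequences of $a_i,a_j\in pB_{t_i},pB_{t_j}$) then give
\[
\sum_{i=1}^n(a_i')^*a_i'=v\Big(\sum_{i=1}^na_i^*a_i\Big)v^*=vpv^*=q,
\]
$(a_i')^*a_j'=va_i^*a_jv^*=0$ for $i\neq j$, and $(a_i')^*b'=va_i^*bv^*=0$; finally $b'\neq 0$ because $v^*b'v=pbp=b\neq 0$. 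For the residual refinement, note that $q+J_e\neq 0$ forces $p+J_e\neq 0$ (since $q=vpv^*$ and $J_e$ is an ideal), and $v+J_e$ implements Murray--von Neumann equivalence of $p+J_e$ and $q+J_e$ in $B_e/J_e$, so the nonresidual case applied to $\B/\J$ gives the conclusion.

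The main obstacle is essentially bookkeeping: one must verify carefully that the newly constructed witnesses actually lie in the prescribed subspaces $aB_t$ (sum case) or $qB_t$ (MvN case). These inclusions rest on the Fell bundle relation $B_eB_tB_e\subseteq B_t$ combined with either the hereditary containment $a^{(k)}\in\overline{aB_ea}$ or the identity $vp=qv$; once these are in place the remaining orthogonality and normalization computations are routine.
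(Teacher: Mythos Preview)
Your proof is correct and follows essentially the same approach as the paper's: combine the two witnessing families for the sum and conjugate by the partial isometry for the Murray--von Neumann statement, with the residual versions handled by passing to quotients. You are in fact somewhat more careful than the paper in spelling out why $a^{(k)}B_t\subseteq aB_t$ and in making explicit the case split (both summands nonzero in the quotient versus only one) for the residual sum statement; the paper compresses these into single sentences.
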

\begin{proof} Suppose that $a^{(1)}, a^{(2)}$ are strictly  $\B$-infinite elements and $a^{(1)}a^{(2)}=0$. For each $j=1,2$, let  $b^{(j)}\in a^{(j)}B_ea^{(j)}\setminus\{0\}$  and   $a_i^{(j)}\in a^{(j)}B_{t_i}$, $t_i\in G$, $i=1,...,n_{j}$, be elements satisfying the counterpart of \eqref{relations for infiniteness}. Putting  $a=\sum_{j=1}^{2} a^{(j)}$,   $b=\sum_{j=1}^{2} b^{(j)}$,   $a_i:=a_i^{(1)}$ for $i=1,...,n_{1}$,   and $a_{n_{1}+i}:=a_i^{(2)}$ for $i=1,...,n_{2}$ we see that $b\in aB_ea$ and   $a_i\in aB_{t_i}$ for $i=1,..., n:=n_{1}+ n_{2}$ satisfy \eqref{relations for infiniteness}. Hence $a$ is strictly  $\B$-infinite. Since for any quotient map $q$, $q(a)=0$ if and only if $q(a^{(1)})=q(a^{(2)})=0$, we see that if $a^{(1)}, a^{(2)}$ are  residually strictly  $\B$-infinite  then $a$ is residually strictly  $\B$-infinite.

Suppose that $p=w^*w$ and $p'=ww^*$ are projections in $B_e$, $w\in B_e$, and let  $b\in (pB_ep)\setminus\{0\}$  and   $a_i\in pB_{t_i}$, $t_i\in G$, $i=1,...,n$, satisfy  \eqref{relations for infiniteness}, with  $p$ in place of $a$.  Putting $b':=wbw^*$ and $a_i':=wa_iw^*$, $i=1,...,n$ we get  $b'\in (p'B_ep')\setminus\{0\}$  and   $a_i'\in p'B_{t_i}$ such that $p' =\sum_{i=1}^n (a_i')^{*}a_i'$,  $(a_i')^{*} a_j'=0$   for $i\neq j$,  and  $(a_i')^{*}b'=0$  for  $i=1,...,n$. Hence $p'$ is strictly  $\B$-infinite. Since for any quotient map $q$, $q(p)=0$   if and only if $q(p')=0$, it follows that   $p$ is residually strictly  $\B$-infinite  if and only if  $p'$ is residually strictly  $\B$-infinite.
\end{proof}

\section{Primitive ideal space of the reduced cross-sectional algebra}
In this section, we describe the space of prime ideals in $C^*_r(\B)$, which in the separable case will lead us to a description of the primitive spectrum of $C^*_r(\B)$, via a quasi-orbit space for the dual partial action introduced in Definition  \ref{definition of a partial dual system}.

Throughout we fix a Fell bundle $\B=\{B_t\}_{t\in G}$. The following notions and results are generalizations of classical facts for crossed products \cite{green}, see also \cite{EchLac}. 

\begin{defn}
We  say that  a $\B$-invariant ideal $I\in \I^\B(B_e)$  is \emph{$\B$-prime}  if  for any pair of $\B$-invariant ideals $J_1,J_2\in\I^\B(B_e)$ with $J_1\cap J_2 \subseteq I$ we have either $J_1\subseteq I$ or $J_2\subseteq I$. 
We equip the  set of $\B$-prime ideals with Fell topology and denote it by  $\Prime^\B(B_e)$. 
\end{defn}

We get the following generalization of \cite[Proposition 2.3]{EchLac}.
\begin{prop}\label{starting propostion}
Let $\B$ be an exact Fell bundle. Suppose  that $\B$ has the residual intersection property, which holds for instance if one of the following conditions is satisfied:
\begin{itemize}
\item[(i)] the system $(\{\widehat{D}_g\}_{g\in G}, \{\h_g\}_{g\in G})$  dual to $\B$ is residually topologically free,
\item[(ii)] $\B$ is residually aperiodic.
\end{itemize}
Then the map $\I(C^*_r(\B)) \ni  J \to J \cap B_e \in \I^\B(B_e)$ restricts to a homeomorphism 
$$
\Prime(C^*_r(\B))\cong \Prime^\B(B_e).
$$
\end{prop}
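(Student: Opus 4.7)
First observe that the hypothesis of residual intersection property is automatic under either (i) or (ii): case (i) follows from Corollary \ref{residual topological freeness corollary}, and case (ii) from Corollary \ref{core for aperiodicity and ideal structure}. Granted exactness and the residual intersection property, Theorem \ref{Sierakowski's ;) theorem} supplies a homeomorphism
\[
\Phi:\I(C^*_r(\B)) \to \I^\B(B_e), \qquad J \mapsto J\cap B_e,
\]
for the Fell topologies. My plan is to show that $\Phi$ restricts to a bijection $\Prime(C^*_r(\B))\to\Prime^\B(B_e)$; since both spaces of prime ideals carry the subspace Fell topology inherited from their respective ambient ideal lattices, any restriction of a homeomorphism to matching subsets is automatically a homeomorphism, so only the set-theoretic bijection has to be established.

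The main tool is that $\Phi$ is a lattice isomorphism, i.e.\ it preserves and reflects finite intersections. The forward direction is immediate from the set-theoretic identity $(J_1\cap J_2)\cap B_e = (J_1\cap B_e)\cap(J_2\cap B_e)$. For the backward direction, if $I_1,I_2\in\I^\B(B_e)$ and $J_i:=\Phi^{-1}(I_i)$, then $J_1\cap J_2$ is an ideal of $C^*_r(\B)$ whose trace on $B_e$ equals $I_1\cap I_2$, so the uniqueness part of $\Phi$ being a bijection gives $\Phi^{-1}(I_1\cap I_2)=J_1\cap J_2$. Crucially, because under our hypotheses every ideal in $C^*_r(\B)$ is graded (Theorem \ref{Sierakowski's ;) theorem}), the definitions of prime and $\B$-prime run over lattices that $\Phi$ and $\Phi^{-1}$ exhaust completely.

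With that in hand, transferring primeness is formal. If $J\in\Prime(C^*_r(\B))$ and $I_1\cap I_2\subseteq\Phi(J)$ for $I_1,I_2\in\I^\B(B_e)$, put $J_i:=\Phi^{-1}(I_i)$; inclusion-preservation of $\Phi^{-1}$ gives $J_1\cap J_2\subseteq J$, primeness of $J$ yields $J_i\subseteq J$ for some $i$, and applying $\Phi$ gives $I_i\subseteq\Phi(J)$. Conversely, if $\Phi(J)\in\Prime^\B(B_e)$ and $J_1\cap J_2\subseteq J$ in $\I(C^*_r(\B))$, apply $\Phi$ to obtain $\Phi(J_1)\cap\Phi(J_2)\subseteq\Phi(J)$; $\B$-primeness forces some $\Phi(J_i)\subseteq\Phi(J)$, and $\Phi^{-1}$ returns $J_i\subseteq J$. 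Thus $\Phi$ restricts to a bijection as desired.

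The only nontrivial input is Theorem \ref{Sierakowski's ;) theorem} itself (together with the two corollaries that verify the residual intersection property in the cases (i) and (ii)); once the ambient lattice homeomorphism is available, the primeness condition, being stated purely in terms of the lattice operations of intersection and inclusion, transfers automatically. Hence no further obstacle arises and the proof is essentially a verification.
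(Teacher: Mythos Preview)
Your proof is correct and follows essentially the same approach as the paper: invoke Theorem \ref{Sierakowski's ;) theorem} (together with Corollaries \ref{residual topological freeness corollary} and \ref{core for aperiodicity and ideal structure} for the sufficient conditions (i) and (ii)) to obtain the lattice homeomorphism $\I(C^*_r(\B))\cong\I^\B(B_e)$, and then observe that primeness, being a purely order-theoretic notion, transfers along it. The paper's own argument is a two-sentence sketch that leaves implicit exactly the details you have spelled out (that $\Phi$ preserves intersections and that primeness is tested against the full respective lattices, which $\Phi$ and $\Phi^{-1}$ exhaust).
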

\begin{proof}
By Theorem \ref{Sierakowski's ;) theorem}, see also Corollaries \ref{residual topological freeness corollary} and \ref{core for aperiodicity and ideal structure}, the map $\I(C^*_r(\B)) \ni  J \to J \cap B_e \in \I^\B(B_e)$ is a homeomorphism. It restricts to a homeomorphism from $\Prime(C^*_r(\B))$ onto $\Prime^\B(B_e)$.
\end{proof}

  For any ideal $J\in \I(B_e)$ we have a natural continuous embedding of $\I(J)$ in $\I(B_e)$ given by
$$
\I(J) \ni I \longmapsto I_J:=\{a\in B_e: a\cdot J\subseteq I\} \in \I(B_e).
$$
This map has a  left  inverse given by $\I(B_e) \ni K \longmapsto I:=K\cap J \in \I(J)$. Note that we use the mapping $\Prim(D_{t})\ni   P \longmapsto P_{D_{t}}\in \Prim(B_e)$ to identify the space $\Prim(D_{t})$ with an open subset of  $\Prim(B_e)$, $t\in G$. Thus formally,  the dual partial action on $\Prim(B_e)$, given by restriction of the Rieffel homeomorphism \eqref{Rieffel homeomorphism},  should be defined by $h_t(P_{D_{t^{-1}}}):=h_t(P)_{D_{t}}$ where $P\in \Prim(D_{t^{-1}})$, $t\in G$.

\begin{defn}\label{B-primitive}
For any $P \in \Prim(B_e)$ we put 
$$
P_\B:=\bigcap_{t\in G} h_t(P\cap D_{t^{-1}})_{D_t}=\bigcap_{t\in G }\{a\in B_e: a D_{t}\subseteq B_t P B_{t^{-1}}\}
$$
and call it a \emph{$\B$-primitive} ideal in $B_e$. We denote the space of $\B$-primitive ideals by
$$
\Prim^\B(B_e):=\{P_\B: P \in \Prim(B_e)\}
$$ 
and endow it with the Fell-topology inherited from $\I(B_e)$.
\end{defn}
\begin{lem}\label{G-kernel lemma}
For any $P\in \Prim(B_e)$,  $P_\B$ is the largest $\B$-invariant ideal contained in $P$. 
\end{lem}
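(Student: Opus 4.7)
The plan is to introduce the equivalent description
\[
P_\B \;=\; \{a \in B_e : B_t\, a\, B_{t^{-1}} \subseteq P \text{ for every } t \in G\},
\]
from which all three required properties will follow by routine manipulations. To deduce this description from the definition, I fix $t \in G$ and unfold the condition $a D_t \subseteq B_t P B_{t^{-1}}$ appearing in the second formula for $P_\B$. Multiplying on the left by $B_{t^{-1}}$ and on the right by $B_t$, the ternary relation \eqref{ternary relation} collapses $B_{t^{-1}} a D_t B_t$ to $B_{t^{-1}} a B_t$, while the right-hand side is absorbed into $D_{t^{-1}} P D_{t^{-1}} \subseteq P$; this yields one implication. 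Conversely, multiplying $B_{t^{-1}} a B_t \subseteq P$ by $B_t$ on the left and $B_{t^{-1}}$ on the right gives $D_t a D_t \subseteq B_t P B_{t^{-1}}$, and an approximate unit of $D_t$ then absorbs $a D_t$ into $D_t a D_t$, using that $D_t a \subseteq D_t$ since $D_t$ is an ideal of $B_e$. Interchanging $t$ with $t^{-1}$ produces the symmetric form stated above.

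With the reformulation in hand, the three properties follow. First, $P_\B$ is a closed, self-adjoint, two-sided ideal of $B_e$, being an intersection of such. Second, the inclusion $P_\B \subseteq P$ is the case $t = e$, since then $B_e a B_e$ is the two-sided ideal generated by $a$ and is contained in $P$ precisely when $a \in P$. Third, for $\B$-invariance, given $r \in G$ and $a \in P_\B$, the element $B_r a B_{r^{-1}} \subseteq D_r \subseteq B_e$ satisfies
\[
B_t\, (B_r\, a\, B_{r^{-1}})\, B_{t^{-1}} \subseteq B_{tr}\, a\, B_{(tr)^{-1}} \subseteq P
\]
for every $t \in G$, so $B_r a B_{r^{-1}} \subseteq P_\B$. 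Finally, for maximality, any $\B$-invariant ideal $I$ of $B_e$ with $I \subseteq P$ automatically satisfies $B_t I B_{t^{-1}} \subseteq I \subseteq P$ for every $t \in G$, so $I \subseteq P_\B$ by the reformulation.

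The step I expect to require the most care is the backward direction of the reformulation, where passing from $D_t a D_t \subseteq B_t P B_{t^{-1}}$ back to $a D_t \subseteq B_t P B_{t^{-1}}$ depends on an approximate unit of $D_t$ together with the fact that $D_t$ is an ideal of $B_e$; strictly speaking one uses the paper's convention \eqref{stupid convention for lazy people}, so that $a D_t$ denotes the closed linear span. Every other manipulation is purely algebraic, relying only on the ternary relation \eqref{ternary relation} and the inclusions $B_s B_t \subseteq B_{st}$.
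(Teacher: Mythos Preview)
Your proof is correct. The key reformulation
\[
P_\B = \{a \in B_e : B_t\, a\, B_{t^{-1}} \subseteq P \text{ for every } t \in G\}
\]
is verified carefully (the approximate-unit step in the backward direction is indeed the only nontrivial point, and you handle it correctly using that $aD_t \subseteq D_t$), and once this is in place the three desired properties are immediate.

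The paper proceeds differently. Instead of your algebraic reformulation, it establishes the \emph{orbit description}
\[
P_\B = \bigcap_{Q \in GP} Q,
\]
where $GP$ is the orbit of $P$ under the dual partial action $(\{\Prim(D_t)\}_{t\in G}, \{h_t\}_{t\in G})$, and then deduces $\B$-invariance from the Rieffel-homeomorphism relation $B_t P_\B B_{t^{-1}} = h_t(P_\B \cap D_{t^{-1}})$ together with the fact that $h_{ts}$ extends $h_t \circ h_s$. The maximality argument in the paper is essentially the same as yours, working directly from the defining condition $a D_t \subseteq B_t P B_{t^{-1}}$. Your route is more elementary and self-contained---it avoids invoking the partial action machinery and relies only on the Fell-bundle axioms $B_s B_t \subseteq B_{st}$ and the ternary relation---so for this lemma in isolation it is arguably cleaner. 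The paper's orbit description, on the other hand, is not just a device for this proof: it reappears explicitly in the subsequent Lemma~\ref{open projection map} and Lemma~\ref{lemma on orbit spaces}, where the identification of $\Prim^\B(B_e)$ with the quasi-orbit space is made. So the paper's approach does double duty, while yours optimizes the local argument.
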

\begin{proof}
Let $ GP:=\bigcup_{t\in G, P\in \Prim(D_{t^{-1}})} \{h_t(P)_{D_t}\}$ be the orbit of $P\in \Prim(B_e)$ under the partial action $(\{\Prim(D_t)\}_{t\in G}$, $ \{h_t\}_{t\in G})$. Since $h_t(D_{t^{-1}})_{D_t}=A$ we see that 
\begin{equation}\label{orbit description of B-primitive guys}
P_\B=\bigcap_{t\in G} h_t(P\cap D_{t^{-1}})_{D_t}=\bigcap_{t\in G  \atop P \nsupseteq D_{t^{-1}}} h_t(P\cap D_{t^{-1}})_{D_t} =\bigcap_{Q\in GP }Q.
\end{equation}
Thus 
$$
B_t P_\B B_{t^{-1}}=h_t(P_\B \cap D_{t^{-1}}) \subseteq \bigcap_{Q\in GP }h_t(Q \cap D_{t^{-1}})\subseteq\bigcap_{Q\in GP }Q=P_\B,
$$ 
where in the last inclusion we used the fact that $h_{ts}$ extends $h_{t}\circ h_{s}$ (as elements of the partial action on $\Prim(B_e)$). Hence $P_\B$ is $\B$-invariant and clearly $P_\B\subseteq P$. Now suppose that  $I$ is $\B$-invariant ideal contained in $P$. Then
$$
I D_t=D_t ID_t= B_tB_{t^{-1}}IB_tB_{t^{-1}} \subseteq B_t IB_{t^{-1}}\subseteq B_t P B_{t^{-1}}, \qquad t\in G.
$$
Thus $I\subseteq P_\B$.
\end{proof}

\begin{lem}\label{open projection map}
The mapping $\Prim(B_e) \ni P\longmapsto P_\B \in \Prim^\B(B_e)$ is continuous and open.
\end{lem}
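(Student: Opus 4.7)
The plan is to reduce the problem to analyzing images and preimages of subbasic Fell-open sets. These have the form $\{P : P \nsupseteq I\}$ as $I$ ranges over ideals in $B_e$, both for $\Prim(B_e)$ (where Fell coincides with Jacobson) and for $\Prim^\B(B_e)$ (with its topology inherited from $\I(B_e)$). The key ingredient will be the orbit description $P_\B=\bigcap_{Q \in GP} Q$ recorded in \eqref{orbit description of B-primitive guys}, together with the Rieffel correspondence for the bimodules $B_t$.

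For continuity, fix $I \in \I(B_e)$ and rewrite the preimage. By \eqref{orbit description of B-primitive guys}, $P_\B \nsupseteq I$ if and only if there is some $t \in G$ with $P \in \Prim(D_{t^{-1}})$ and $h_t(P \cap D_{t^{-1}})_{D_t} \nsupseteq I$. Using the Rieffel homeomorphism $J \mapsto B_t J B_{t^{-1}}$ from $\I(D_{t^{-1}})$ onto $\I(D_t)$, with inverse $K \mapsto B_{t^{-1}} K B_t$, the latter condition translates to $P \nsupseteq B_{t^{-1}} I B_t$ (the inclusion in $D_{t^{-1}}$ being automatic, since $B_{t^{-1}} I B_t \subseteq D_{t^{-1}}$). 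Hence
$$\{P \in \Prim(B_e) : P_\B \nsupseteq I\} = \bigcup_{t \in G}\{P \in \Prim(B_e) : P \nsupseteq B_{t^{-1}} I B_t\},$$
which is a union of subbasic Jacobson-open sets and therefore open.

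For openness, I will show that the image of $\{P : P \nsupseteq I\}$ under $P \mapsto P_\B$ equals precisely $\{Q \in \Prim^\B(B_e) : Q \nsupseteq I\}$. The inclusion $\subseteq$ is immediate from $P_\B \subseteq P$. For the reverse inclusion, take $Q=P_\B$ with $Q \nsupseteq I$; by \eqref{orbit description of B-primitive guys} there exists $P' \in GP$ with $P' \nsupseteq I$. Since orbits under the partial dynamical system are symmetric in the sense that $P' \in GP$ implies $GP' = GP$, one concludes from \eqref{orbit description of B-primitive guys} that $P'_\B = Q$, so $Q$ is indeed the image of such a $P'$.

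The only step that requires any verification beyond Lemma \ref{G-kernel lemma} is the orbit symmetry $GP' = GP$ for $P' \in GP$, which is a routine consequence of the partial action axioms $h_{st} \supseteq h_s \circ h_t$ together with the fact that $h_t$ and $h_{t^{-1}}$ are mutually inverse on their respective domains. Everything else reduces to the orbit formula already secured.
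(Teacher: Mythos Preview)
Your proof is correct and follows essentially the same strategy as the paper's: both identify the image of a subbasic open set $\{P: P \nsupseteq J\}$ with $\{P_\B: P_\B \nsupseteq J\}$ via the orbit description \eqref{orbit description of B-primitive guys}, and both rely on the orbit invariance $P_\B=(P')_\B$ for $P'\in GP$ to secure openness. The only cosmetic difference is in the continuity step: the paper introduces the smallest $\B$-invariant ideal $J^\B$ containing $J$ and records the single equivalence $P_\B\nsupseteq J \Leftrightarrow P\nsupseteq J^\B$, whereas you unwind this directly via the Rieffel correspondence into the explicit union $\bigcup_{t\in G}\{P: P\nsupseteq B_{t^{-1}} I B_t\}$---which is exactly the same thing, since $J^\B$ is generated by the ideals $B_t J B_{t^{-1}}$.
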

\begin{proof} 
Let $J$ be an ideal in $\B_e$. Since intersection of $\B$-invariant ideals is a $\B$-invariant ideal, there exists   the smallest $\B$-invariant ideal in $B_e$ containing $J$. We denote it by $J^\B$. Using Lemma \ref{G-kernel lemma} twice  we get 
\begin{equation}\label{equivalent inclusions}
 P_\B \nsupseteq J \,\, \Longleftrightarrow \,\,   P_\B \nsupseteq J^\B \,\, \Longleftrightarrow \,\, P \nsupseteq  J^\B,
 \end{equation}
 for any $P\in \Prim(B_e)$. Thus  any open set $U\subseteq \Prim^\B(B_e)$ is of the form 
\begin{equation}\label{form of U}
U=\{P_\B: P_\B \nsupseteq J, \, P\in \Prim(B_e)\}=\{P_\B: P \nsupseteq J^\B, \, P\in \Prim(B_e)\},
\end{equation}
where $J\in \I(B_e)$. It follows that  the map  $P\longmapsto P_\B$ is continuous. To verify openness, choose an  open  set $V=\{P\in \Prim(B_e): P \nsupseteq J\}$, $J\in \I(B_e)$, and  let $U=\{P_\B: P_\B \nsupseteq J, \, P\in \Prim(B_e)\}$. 
Clearly, if $P\in V$ then $P_\B\in U$.  On the other hand, if $P_\B \in U$ then $h_t(P\cap D_{t^{-1}})_{D_t} \nsupseteq J$ for a certain $t\in G$ such that  $P\nsupseteq  D_{t^{-1}}$.  Since $h_t(P\cap D_{t^{-1}})_{D_t}\in V$ and $P_\B=(h_t(P\cap D_{t^{-1}})_{D_t})_\B$, cf. \eqref{orbit description of B-primitive guys},  it follows that the image of $V$ is equal to $U$.

\end{proof}

We can use the above lemma to identify the  space $\Prim^\B(B_e)$ with the quasi-orbit space   for the  partial action $(\{\Prim(D_g)\}_{g\in G}$, $ \{h_g\}_{g\in G})$   on $\Prim(B_e)$ dual to  $\B$.

\begin{lem}\label{lemma on orbit spaces}
Let $\OO(\Prim(B_e))$ be the quasi-orbit space associated to the partial action $(\{\Prim(D_t)\}_{t\in G}$, $ \{h_t\}_{t\in G})$ of $G$  on $\Prim(B_e)$, cf. Proposition \ref{proposition on dual partial systems}. Then the map
\begin{equation}\label{quasi-orbit homeomorphism}
\OO(\Prim (B_e))\ni \OO(P) \longmapsto P_\B \in \Prim^\B(B_e), \qquad P\in \Prim(B_e),
\end{equation}
is a homeomorphism. In particular,  the closure of a point in $\OO(\Prim B_e)$ is given by
$$
\overline{\{\OO(P)\}}=\{\OO(Q): Q \supseteq P_\B\}.
$$
\end{lem}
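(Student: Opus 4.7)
The plan is to apply Lemma \ref{open projection map} in conjunction with the universal property of the quotient topology. Consider the map $\varphi : \Prim(B_e) \to \Prim^\B(B_e)$ given by $\varphi(P) := P_\B$; by Lemma \ref{open projection map} it is continuous, open and surjective. The first step is to show that $\varphi$ is constant on quasi-orbits, so that it descends to a well-defined map $\tilde\varphi : \OO(\Prim(B_e)) \to \Prim^\B(B_e)$. From \eqref{orbit description of B-primitive guys} one has $P_\B = \bigcap_{Q \in GP} Q$, and by the standard correspondence between closed subsets of $\Prim(B_e)$ and ideals of $B_e$ (namely $\hull(\bigcap_{Q \in S} Q) = \overline{S}$ for every $S \subseteq \Prim(B_e)$), we obtain $\hull(P_\B) = \overline{GP}$. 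Hence $P_\B$ is determined solely by $\overline{GP}$, i.e., by $\OO(P)$, yielding the factorization $\varphi = \tilde\varphi \circ \pi$ where $\pi : \Prim(B_e) \to \OO(\Prim(B_e))$ is the quotient map.

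Next I would verify that $\tilde\varphi$ is a bijective homeomorphism. Surjectivity is immediate from the definition of $\Prim^\B(B_e)$. For injectivity, the identity $P_\B = Q_\B$ forces $\overline{GP} = \hull(P_\B) = \hull(Q_\B) = \overline{GQ}$, hence $\OO(P) = \OO(Q)$. Continuity of $\tilde\varphi$ follows from the universal property of the quotient topology together with continuity of $\varphi$. To establish that $\tilde\varphi$ is open, take any open $U \subseteq \OO(\Prim(B_e))$; then $\pi^{-1}(U)$ is open in $\Prim(B_e)$, and since $\pi$ is surjective, $\tilde\varphi(U) = \varphi(\pi^{-1}(U))$, which is open by Lemma \ref{open projection map}.

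For the closure formula, I would use that the Fell topology on $\Prim^\B(B_e)$ inherited from $\I(B_e)$ has the property that the closure of a singleton $\{P_\B\}$ equals $\{Q_\B : Q_\B \supseteq P_\B\}$ (basic open sets in $\I(B_e)$ are of the form $U_I$, so a point $J_0$ is in the closure of $\{J\}$ iff $J_0 \supseteq J$). Now Lemma \ref{G-kernel lemma} asserts that $Q_\B$ is the largest $\B$-invariant ideal contained in $Q$; since $P_\B$ is itself $\B$-invariant, the inclusion $Q \supseteq P_\B$ is equivalent to $Q_\B \supseteq P_\B$. Transferring through the homeomorphism $\tilde\varphi$ yields the desired identity $\overline{\{\OO(P)\}} = \{\OO(Q) : Q \supseteq P_\B\}$.

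The only mildly delicate point in the whole argument is the justification that intersecting over $GP$ and over $\overline{GP}$ gives the same ideal, since it is precisely this fact that lets $\varphi$ descend through the quasi-orbit relation; everything afterwards is formal, relying on Lemmas \ref{G-kernel lemma} and \ref{open projection map}.
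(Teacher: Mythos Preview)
Your proof is correct and follows essentially the same approach as the paper: both arguments use \eqref{orbit description of B-primitive guys} to obtain $\hull(P_\B)=\overline{GP}$ (equivalently $\overline{GP}=\overline{GQ}\Leftrightarrow P_\B=Q_\B$), and then invoke Lemma \ref{open projection map} to identify the quasi-orbit space with the quotient by the open continuous surjection $P\mapsto P_\B$. Your treatment of the closure formula, via Lemma \ref{G-kernel lemma} to pass between $Q\supseteq P_\B$ and $Q_\B\supseteq P_\B$, is also exactly what the paper does (implicitly).
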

\begin{proof}
Let $P, Q\in \Prim(B_e)$. By \eqref{orbit description of B-primitive guys}, we get $\overline{GP}=\{K\in \Prim(B_e): K \supseteq P_\B\}$.  Therefore
$$
\overline{GP}=\overline{GQ} \,\,\, \Longleftrightarrow\,\,\, P_\B=Q_\B.
$$
Thus \eqref{quasi-orbit homeomorphism} is a well-defined  bijection, and we see that the quasi-orbit space $\OO(\Prim (B_e))$ may be identified with the quotient space $\Prim(B_e)/\sim$ arising from the open continuous map $\Prim(B_e) \ni P\longmapsto P_\B \in \Prim^\B(B_e)$, see Lemma \ref{open projection map}. Hence \eqref{quasi-orbit homeomorphism} is a homeomorphism. For the last part of the assertion, note that the closure  of $\{P_\B\}$ in $\Prim^\B(B_e)$ is $\{Q_\B: Q_\B \supseteq P_\B\}$. 
\end{proof}
\begin{lem}\label{lemma on orbit spaces2}
Let $\B=\{B_t\}_{t\in G}$ be a Fell bundle with $B_e$ separable. Then  
$$
\Prime^\B(B_e)=\Prim^\B(B_e)\cong \OO(\Prim(B_e)),
$$
where $\OO(\Prim(B_e))$ is the quasi-orbit space for  $(\{\Prim(D_t)\}_{t\in G}$, $ \{h_t\}_{t\in G})$.
\end{lem}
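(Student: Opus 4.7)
The plan is to reduce everything to proving $\Prime^\B(B_e)=\Prim^\B(B_e)$, since the homeomorphism $\Prim^\B(B_e)\cong\OO(\Prim(B_e))$ has already been established in Lemma \ref{lemma on orbit spaces}. The inclusion $\Prim^\B(B_e)\subseteq\Prime^\B(B_e)$ requires no separability: given $P\in\Prim(B_e)$ and $\B$-invariant ideals $J_1,J_2$ with $J_1\cap J_2\subseteq P_\B$, the containment $P_\B\subseteq P$ together with primeness of $P$ (every primitive ideal is prime) yields $J_i\subseteq P$ for some $i$, and then Lemma \ref{G-kernel lemma} forces $J_i\subseteq P_\B$.

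For the reverse inclusion I would pass through the geometric picture. Given $I\in\Prime^\B(B_e)$, set $Y:=\hull(I)\subseteq\Prim(B_e)$. Using Lemma \ref{lemma on invariance} and the order-reversing bijection between ideals and hulls (which converts intersection of ideals to union of closed sets), $\B$-primeness of $I$ translates precisely to the condition of \emph{$G$-irreducibility} of $Y$: $Y$ is not the union of two proper closed $G$-invariant subsets. The goal is to find $P\in Y$ whose orbit $GP$ is dense in $Y$, for then \eqref{orbit description of B-primitive guys} gives $P_\B=\bigcap_{Q\in\overline{GP}}Q=\bigcap_{Q\in Y}Q=I$, exhibiting $I$ as $\B$-primitive.

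To produce such a $P$, separability enters: if $B_e$ is separable then $\Prim(B_e)$ is second countable. Fixing a countable basis $\{U_n\}$ of $\Prim(B_e)$ and setting $\mathcal{N}:=\{n:U_n\cap Y\neq\emptyset\}$, I form the $G$-saturations $W_n:=\bigcup_{t\in G}h_t(U_n\cap\Prim(D_{t^{-1}}))$, which are open and $G$-invariant, and let $F_n:=Y\setminus W_n$. The key claim is that each $F_n$ is nowhere dense in $Y$: otherwise some basic open $U_m$ with $m\in\mathcal{N}$ would satisfy $U_m\cap Y\subseteq F_n$, and passing to saturations would give $W_m\cap W_n\cap Y=\emptyset$, i.e.\ $Y=(Y\setminus W_m)\cup(Y\setminus W_n)$, a decomposition forbidden by $G$-irreducibility. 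Since the primitive spectrum of any $C^*$-algebra is a Baire space and $Y$ is closed in $\Prim(B_e)$, $Y$ is itself Baire, so $Y\neq\bigcup_{n\in\mathcal{N}}F_n$; any $P$ in the complement has $GP$ meeting every basic open set of $Y$, whence $\overline{GP}=Y$ as required. The main obstacle is the nowhere-density step, where the abstract lattice-theoretic condition of $\B$-primeness must be converted into the topological statement needed for Baire category; the rest is bookkeeping and an appeal to classical properties of primitive spectra.
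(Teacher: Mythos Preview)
Your argument is correct and is essentially a direct, unpacked version of the paper's proof. The paper passes to the quasi-orbit space $\OO(\Prim(B_e))$, observes that it is second countable and \emph{totally Baire} (as the open image of the totally Baire space $\Prim(B_e)$ via Lemma~\ref{open projection map}), and then invokes Green's lemma \cite[p.~222]{green}: in a second countable totally Baire space, any closed set that is not the union of two proper closed subsets is the closure of a point. The $\B$-primeness of $I$ is used exactly to verify this hypothesis for the set $C=\{\OO(Q):Q\in\hull(I)\}$. You instead stay in $\Prim(B_e)$, translate $\B$-primeness into $G$-irreducibility of $Y=\hull(I)$, and run the Baire category argument by hand: saturating basic open sets, showing each complement $F_n$ is nowhere dense in $Y$, and extracting a point with dense orbit. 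What you gain is self-containment (no need for the ``totally Baire'' notion or the external lemma); what the paper gains is brevity.

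One phrasing to tighten: the sentence ``$Y$ is closed in $\Prim(B_e)$, $Y$ is itself Baire'' is not a valid deduction as written, since closed subspaces of Baire spaces need not be Baire. The correct reason, which your first clause hints at, is that $Y=\hull(I)$ is canonically homeomorphic to $\Prim(B_e/I)$ and primitive ideal spaces of $C^*$-algebras are Baire. With that clarification the argument is complete.
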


\begin{proof} Since $B_e$ is separable we have $\Prim(B_e)=\Prime(B_e)$.
Let $P\in \Prim(B_e)$. If $J_1, J_2\in \I^\B(B_e)$ are such that $J_1\cap J_2\subseteq P_\B$, then  $J_1\cap J_2 \subseteq P$, and hence $J_i\subseteq P$ for some $i\in \{1,2\}$. But then $J_i\subseteq P_\B$ by Lemma \ref{G-kernel lemma}. Hence $P_\B$ is $\B$-prime.

Conversely, let $I\in \Prime^B(B_e)$. We need to show that there is a $P\in \Prim(B_e)$ with $I=P_\B$. For this it suffices to show that $\hull(I)=\{Q\in \Prim(B_e): Q \supseteq I\}$ coincides with the closure $\overline{GP}=\{Q\in \Prim(B_e): Q \supseteq P_\B\}$ of the orbit of some $P\in \Prim(B_e)$. 
To this end, note that  $\hull(I)$ is closed and invariant, cf. Lemma \ref{lemma on invariance}. 
Thus  $P\in \hull(I)$ implies $\overline{GP}\subseteq \hull(I)$.
 Accordingly, the equality $\overline{GP}= \hull(I)$ is satisfied if and only if the set 
$$
C:=\{\OO(Q): Q\in \hull(I)\}
$$ is equal to 
the closure $\{\OO(Q): Q\supseteq P_\B\}$ of the point $\OO(P)$ in $\OO(\Prim(B_e))$. 
It is known that $\Prim(B_e)$ is  a totally Baire space and that an image of a totally Baire space under an open map is a  totally Baire space, cf. discussion preceding \cite[Lemma on page 222]{green}.   Thus  $\OO(\Prim(B_e))$ is a totally Baire space by Lemma \ref{open projection map}. Also it is second countable because $B_e$ is separable. Thus,  by virtue of  \cite[Lemma on page 222]{green}, to conclude that $\overline{GP}= \hull(I)$ it suffices to show that $C$ is  not a union of two proper closed subsets. So assume that $C_1$, $C_2$ are closed subsets of $\OO(\Prim(B_e))$ with $C=C_1\cup C_2$. Let $F_1$, $F_2$ denote their inverse images in $\Prim(B_e)$ and let $J_i:=\bigcap_{Q\in F_i}Q$ for $i=1,2$. Then  $\hull(I)=F_1\cup F_2$ which implies $J_1\cap J_2=I$. Since $I$ is $\B$-prime we have $J_i\subseteq I$ for some $i\in\{1,2\}$. This implies that $F_i=\hull(J_i)=\{Q\in \Prim(B_e): Q \supseteq J_i\}$ contains  $\hull(I)$, and this completes the proof. 
\end{proof}
We recall \cite[Definition 27.5]{exel-book} that a Fell bundle $\B=\{B_t\}_{t\in G}$ is \emph{separable}  if $G$ is countable and each space $B_t$, $t\in G$, is separable. Of course, $\B$ is separable if and only if $C^*_r(\B)$ (or any other  $\B$-graded $C^*$-algebra) is separable.
The following result can be viewed as a generalization of \cite[Corollaries 2.6 and 2.7]{EchLac} and  \cite[Corollary 3.8]{gs} proved respectively for global actions on $C^*$-algebras and for  partial actions on topological spaces.
\begin{thm}\label{Primitive ideal space description}
Retain the assumptions of Proposition \ref{starting propostion} and additionally suppose that $\B$ is separable. Then we have natural homeomorphisms
$$
\Prim(C^*_r(\B))\cong \Prime^\B(B_e)=\Prim^\B(B_e)\cong \OO(\Prim B_e).
$$ 
\end{thm}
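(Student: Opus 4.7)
The plan is to assemble the theorem from three separate pieces, each of which has essentially been established earlier in the section. The separability hypothesis on $\B$ enters exactly once, namely to pass from prime to primitive ideals on the cross-sectional side.

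First, I would observe that since $\B$ is separable, the $\B$-graded $C^*$-algebra $C^*_r(\B)$ is separable, and therefore $\Prim(C^*_r(\B)) = \Prime(C^*_r(\B))$. Combined with Proposition \ref{starting propostion}, which supplies the homeomorphism $\Prime(C^*_r(\B))\cong \Prime^\B(B_e)$ under the standing hypotheses (exactness plus the residual intersection property, which holds under either (i) or (ii)), this yields the first homeomorphism in the chain.

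Next, Lemma \ref{lemma on orbit spaces2} identifies $\Prime^\B(B_e)$ with $\Prim^\B(B_e)$ whenever $B_e$ is separable — and this separability is inherited from the separability of $\B$. This furnishes the middle equality. Finally, Lemma \ref{lemma on orbit spaces} gives the homeomorphism $\Prim^\B(B_e)\cong \OO(\Prim(B_e))$ via the map $\OO(P)\mapsto P_\B$, with no further hypotheses needed beyond what we already have. Composing the three identifications produces the required chain of homeomorphisms.

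Since every ingredient is already in place, the proof consists almost entirely of invocation; there is no real obstacle to overcome. The only point that deserves a brief verification is that the resulting composite map is \emph{natural} in the sense of the statement, i.e.\ that it agrees with the explicit description one reads off the constructions: a primitive ideal $Q\in \Prim(C^*_r(\B))$ corresponds to $Q\cap B_e\in \Prime^\B(B_e)$ by Proposition \ref{starting propostion}, this equals $P_\B$ for some $P\in \Prim(B_e)$ by Lemma \ref{lemma on orbit spaces2}, and in turn corresponds to the quasi-orbit $\OO(P)$ by Lemma \ref{lemma on orbit spaces}. Tracing through these bijections is straightforward, so the proof can be concluded with a single short paragraph referring to the three cited results.
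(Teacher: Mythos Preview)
Your proposal is correct and matches the paper's own proof almost verbatim: the paper observes that separability of $\B$ gives $\Prim(C^*_r(\B))=\Prime(C^*_r(\B))$ and then simply combines Proposition~\ref{starting propostion} with Lemmas~\ref{lemma on orbit spaces} and~\ref{lemma on orbit spaces2}. Your additional remark about naturality is a reasonable gloss but not something the paper spells out.
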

\begin{proof} Since $\B$ is separable,  $C^*_r(\B)$ is separable  and $\Prim(C^*_r(\B))=\Prime(C^*_r(\B))$. Now it suffices to combine Proposition \ref{starting propostion} and Lemmas \ref{lemma on orbit spaces} and \ref{lemma on orbit spaces2}.
\end{proof}

It is natural to treat  the space $\Prim^\B(B_e)$ (or equivalently $\OO(\Prim B_e)$, see Lemma \ref{lemma on orbit spaces}) as a `primitive spectrum' of the Fell bundle $\B$. For instance, by Proposition \ref{proposition on induced ideals}, ideals  in $\B$ correspond to elements in $\I^\B(B_e)$  and we have the following simple fact.
\begin{lem}\label{intersection of prime ideals lemma}
For every $I\in \I^\B(B_e)$ we have $\displaystyle{I=\bigcap_{P\in \Prim^\B(B_e) \atop I\subseteq P} P}$.
\end{lem}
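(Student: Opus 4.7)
The plan is to reduce this to the standard fact that every ideal in a $C^*$-algebra is the intersection of the primitive ideals containing it, and then upgrade each such primitive ideal to its $\B$-primitive counterpart using Lemma \ref{G-kernel lemma}.

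First I would observe that the inclusion $I \subseteq \bigcap_{P \in \Prim^\B(B_e),\, I \subseteq P} P$ is immediate from the definition of the intersection.

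For the reverse inclusion, I would start from the well-known identity
\[
I = \bigcap_{Q \in \Prim(B_e),\, I \subseteq Q} Q,
\]
valid for every ideal of $B_e$. Now fix any $Q \in \Prim(B_e)$ with $I \subseteq Q$. Since $I$ is $\B$-invariant, Lemma \ref{G-kernel lemma} (which characterizes $Q_\B$ as the \emph{largest} $\B$-invariant ideal contained in $Q$) forces $I \subseteq Q_\B$. On the other hand, $Q_\B \subseteq Q$ by definition, and $Q_\B \in \Prim^\B(B_e)$. Consequently,
\[
\bigcap_{P \in \Prim^\B(B_e),\, I \subseteq P} P \;\subseteq\; \bigcap_{Q \in \Prim(B_e),\, I \subseteq Q} Q_\B \;\subseteq\; \bigcap_{Q \in \Prim(B_e),\, I \subseteq Q} Q \;=\; I,
\]
which closes the chain of inclusions and yields the claim.

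There is really no main obstacle here: the whole argument is an application of Lemma \ref{G-kernel lemma} combined with the standard primitive-intersection representation of ideals; the only point that requires a moment's care is verifying that the $\B$-invariance of $I$ is genuinely used (it is what allows the passage from $I \subseteq Q$ to $I \subseteq Q_\B$), so that the $\B$-primitive ideals $Q_\B$ appearing on the right-hand side indeed contain $I$ and thus belong to the index set of the intersection.
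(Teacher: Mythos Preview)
Your proof is correct and follows essentially the same route as the paper's: both arguments start from the standard representation $I=\bigcap_{Q\in\Prim(B_e),\,I\subseteq Q}Q$ and then invoke Lemma \ref{G-kernel lemma} to pass from $I\subseteq Q$ to $I\subseteq Q_\B$, using that $Q_\B\subseteq Q$. The paper's proof is simply a more compressed version of exactly the chain of inclusions you wrote out.
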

\begin{proof}
Since $I=\bigcap_{P\in \Prim(B_e)} P$, it suffices to note, using Lemma \ref{G-kernel lemma}, that  $I\subseteq P$  implies $I\subseteq P_\B$ for every $I\in \I^\B(B_e)$ and $P\in \Prim(B_e)$.
\end{proof}

\section{Graph algebras}\label{Graph algebras}
Throughout this section, we fix a directed graph $E = (E^0,E^1, r, s)$. Hence   $E^0$ and $E^1$ are countable sets  and $r,s:E^{1}\to E^0$ are range and source maps. For graphs and their $C^*$-algebras we use the notation and conventions of \cite{Raeburn}. Thus $E^\infty$ is the set of infinite paths and  $E^n$, $n>0$, stands for the set of finite paths $\mu=\mu_1...\mu_n$, satisfying $s(\mu_i)=r(\mu_{i+1})$ for all $i$, where   $|\mu|=n$ is the length of $\mu$.  If additionally $r(\mu_1)=s(\mu_n)$ we say that $\mu$ is a \emph{cycle}. The \emph{cycle $\mu$  have an entrance} if there is an edge $e$ such that $r(e)=r(\mu_k)$ and $e \neq \mu_k$, for some  $k=1,...,n$. We say that $E$ \emph{satisfies Condition (L)} if every cycle  in  $E$ has an entrance. A graph is said to \emph{satisfy Condition (K)} if for every vertex $v\in E^0$, either there are no cycles based at $v$, or  there are two distinct cycles $\alpha$ and $\mu$ such that $v=s(\alpha)=s(\mu)$ and neither $\alpha$ nor $\mu$ is an initial subpath of the other. 

The $C^*$-algebra $C^*(E)$ is generated by a universal Cuntz-Krieger $E$-family consisting of partial isometries $\{s_e: e\in E^1\}$ and mutually  orthogonal projections $\{p_v: v\in E^1\}$ such that  $s_e^*s_e=p_{s(e)}$, $s_e s_e^*\leq p_{r(e)}$   and $p_v=\sum_{r(e)=v} s_e s_e^*$ whenever the sum is finite (i.e. whenever $v$ is a  finite receiver). It follows that 
$$
C^*(E)=\clsp\{s_\mu s_\nu^*: \mu, \nu \in E^*\},
$$ 
where $E^*=\bigcup_{n\in \N} E^n$, $s_\mu=s_{\mu_1} s_{\mu_2}....s_{\mu_n}$ for $\mu=\mu_1...\mu_n\in E^n$, $n>0$, and $s_\mu=p_{\mu}$ for $\mu\in E^0$. We extend the maps $r$ and $s$ onto $E^*$ in the obvious way. The algebra $C^*(E)$ is graded by the subspaces
$$
B_{n}:=\clsp\{s_\mu s_\nu^*: \mu, \nu \in E^*, |\nu|-|\mu|=n \}, \qquad n \in \Z. 
$$
The only Fell bundle considered in this section  will be $\B=\{B_n\}_{n \in \Z}$ defined above. The gauge-uniqueness theorem  readily implies that $C^*(E)=C^*(\B)$. In particular, graded ideals in $C^*(E)$ coincide with gauge-invariant ideals in $C^*(E)$ and their description is known. We now use  it to describe the corresponding  $\B$-invariant ideals in $B_0$, cf. Proposition \ref{proposition on induced ideals}. 

For every $v$, $w\in E^0$ we write $v\geq w$ if there is a path from $w$ to $v$. A subset $H$ of $E^0$ is \emph{hereditary} if $v\in H$ and $v\geq w$ 
imply $w\in H$. A subset $H$ of $E^0$ is \emph{saturated} if every vertex $v$ which satisfies $0<|r^{-1}(v)|<\infty$ and $r(e)=v\, \Rightarrow\, s(e)\in H$ itself belongs to $H$. Given a saturated hereditary subset $H\subseteq E^0$, define
$$
H_\infty^{\fin}:=\{v\in E^0\setminus H: |r^{-1}(v)|=\infty\textrm{ and } 0<|r^{-1}(v)\cap s^{-1}(E^0\setminus H)|<\infty\}.
$$
If $v\in H_\infty^{\fin}$ we write 
$$
p_{v, H}:=\sum_{r(e)=v, s(e)\notin H} s_es_e^*.
$$
For any $B\subseteq H_\infty^{\fin}$ we put  
$$
I_{H,B}:=\clsp\{s_\alpha p_v s^*_\beta, s_\mu (p_w-p_{w, H})s^*_\nu: v\in H, w\in  B, \alpha,\beta,\mu,\nu \in E^{n}, \, n\in \N\}.
$$ 
Clearly, $I_{H,B}$ is an ideal in $B_0$ (in fact, it is a $\B$-invariant ideal).
Let us consider the set  
$$
\mathcal{H}_E:=\{(H,B): H \textrm{ is saturated and hereditary, } B \subseteq H_\infty^{\fin}\}.
$$
We equip it with a partial order (actually a lattice) structure by writing 
$$
(H,B)\leq   (H',B') \,\,\,\, \stackrel{def}{\Longleftrightarrow} \,\,\,\, H\subseteq H'  \,\,\textrm{ and }  \,\,  B\subseteq H'\cup B'.
$$
The following description follows readily from the analysis in \cite{bhrs}. 
\begin{prop}\label{description of B-invariant ideals for graph algebras}
The mapping  $\mathcal{H}_E \ni (H,B) \mapsto I_{H,B}\in \I^\B(B_e)$ is a well-defined order preserving bijection (hence a lattice isomorphism). 
\end{prop}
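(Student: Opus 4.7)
The plan is to reduce the statement to the well-known classification of gauge-invariant ideals of graph $C^*$-algebras from \cite{bhrs}, and then identify the core-intersection of each such ideal with the prescribed subspace $I_{H,B}$.

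First, I would apply the bijection (ii)$\Longleftrightarrow$(iii) in Proposition \ref{proposition on induced ideals} (with $B=C^*(E)=C^*(\B)$) to reduce the problem to the following: the map $(H,B)\mapsto J_{H,B}$ is an order-preserving bijection from $\mathcal{H}_E$ onto the lattice of graded ideals of $C^*(E)$, where $J_{H,B}$ is the ideal generated by $\{p_v:v\in H\}\cup\{p_w-p_{w,H}:w\in B\}$; and furthermore $J_{H,B}\cap B_0=I_{H,B}$. Since the $\Z$-grading $\B=\{B_n\}_{n\in\Z}$ of $C^*(E)$ is induced by the canonical gauge action of $\T$, graded ideals in $C^*(E)$ coincide with gauge-invariant ideals. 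Thus the first part of the reduction is exactly the content of the Bates--Hong--Raeburn--Szyma\'nski theorem, \cite[Theorem 3.6]{bhrs}, which asserts that the map $(H,B)\mapsto J_{H,B}$ is a bijection of $\mathcal{H}_E$ onto the set of gauge-invariant ideals of $C^*(E)$. Order preservation in both directions is part of that result (it is transparent from the defining generators that $(H,B)\leq(H',B')$ implies $J_{H,B}\subseteq J_{H',B'}$, and a short computation with the generators $p_w-p_{w,H}$ and the saturation step shows the converse).

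It remains to verify $J_{H,B}\cap B_0=I_{H,B}$. The inclusion $I_{H,B}\subseteq J_{H,B}\cap B_0$ is immediate: every element $s_\alpha p_v s_\beta^*$ with $v\in H$, $|\alpha|=|\beta|$, and every $s_\mu(p_w-p_{w,H})s_\nu^*$ with $w\in B$, $|\mu|=|\nu|$, lies both in $B_0$ and in the ideal $J_{H,B}$. For the reverse inclusion I would use that, by Proposition \ref{proposition on induced ideals} applied to $J_{H,B}$, one has $J_{H,B}=\overline{\bigoplus_{n\in\Z}J_{H,B}\cap B_n}$; hence $J_{H,B}\cap B_0$ is the closure of the linear span of all products of the form $x \cdot g \cdot y$ that land in $B_0$, where $g$ runs over the generators $p_v$ ($v\in H$) and $p_w-p_{w,H}$ ($w\in B$), and $x,y$ are finite linear combinations of monomials $s_\mu s_\nu^*$. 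Using the Cuntz--Krieger relations (in particular $s_\nu^* s_\mu=0$ unless one of $\mu,\nu$ is an initial subpath of the other, and the identities $p_v s_\mu=s_\mu$ iff $r(\mu)=v$) together with the observation that $p_v$ for $v\in H$ is absorbed when multiplied by $s_\mu s_\nu^*$ on either side (hereditarity of $H$ ensures $s(\mu)\in H$ whenever this is nonzero), one rewrites each such product as a finite linear combination of the declared spanning elements of $I_{H,B}$. A parallel but slightly more delicate argument handles the generators $p_w-p_{w,H}$, using that $H\cup B$ sits inside the breaking-vertex structure controlled by $(H,B)$. This gives $J_{H,B}\cap B_0\subseteq I_{H,B}$.

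The main obstacle is the bookkeeping in the last step, namely carrying out the explicit rewriting of $s_\mu s_\nu^*\cdot(p_w-p_{w,H})\cdot s_\alpha s_\beta^*$ landing in $B_0$ as a combination of the listed spanning vectors of $I_{H,B}$ while tracking when the $p_w-p_{w,H}$ factor survives versus being absorbed into an element of the $H$-part. Everything else is essentially a transcription of the classification in \cite{bhrs} through the bijection of Proposition \ref{proposition on induced ideals}.
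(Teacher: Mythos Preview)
Your approach is correct and is essentially the same as the paper's, which simply cites \cite[Theorem 3.6]{bhrs} for the bijection and \cite[Corollary 3.10]{bhrs} for the order equivalence $I_{H,B}\subseteq I_{H',B'}\Leftrightarrow (H,B)\leq(H',B')$, leaving implicit both the passage through Proposition \ref{proposition on induced ideals} and the identification $J_{H,B}\cap B_0=I_{H,B}$. The only substantive difference is that you choose to verify $J_{H,B}\cap B_0=I_{H,B}$ by hand via Cuntz--Krieger manipulations, whereas the paper (and \cite{bhrs}) effectively obtains it from the explicit spanning description $J_{H,B}=\clsp\{s_\alpha p_v s_\beta^*,\, s_\mu(p_w-p_{w,H})s_\nu^*\}$ established in \cite{bhrs}; intersecting that span with $B_0$ immediately yields $I_{H,B}$ and bypasses the ``bookkeeping'' you flag as the main obstacle.
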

\begin{proof}
In view of \cite[Theorem 3.6]{bhrs} we see that the mapping under consideration is a bijection. Using  \cite[Corollary 3.10]{bhrs} we get that inclusion $I_{H,B}\subseteq I_{H',B'}$ holds if and only if $(H,B)\leq  (H',B')$.
\end{proof}
\begin{rem}\label{meet for J-pairs} It follows that $\mathcal{H}_E$ is a lattice and the meet operation is given by the formula 
$$
(H,B)\wedge   (H',B') =(H\cap H', (H\cap B') \cup (B\cap H') \cup (B\cap B'))
,
$$ cf. \cite[Proposition 3.9]{bhrs}. 
\end{rem}
Having the above description of $\B$-invariant ideals in hand, we can show that the conditions introduced in the present paper are natural,  can be verified in practice, and in general cannot be weakened.  Let us start with the notion of residual aperiodicity. 
\begin{prop}\label{aperiodicity for graph algebras}  The following statements are equivalent:
\begin{itemize}
\item[(i)] $\B$ is aperiodic.
\item[(ii)]  $\B$ has the intersection property.
\item[(iii)] $E$ satisfies Condition (L). 
\end{itemize}
If $E$ is finite, i.e. both $E^0$ and $E^1$ are finite, then  the above conditions are equivalent to
\begin{itemize}
\item[(iv)] the  partial dynamical system $(\{\widehat{D}_n\}_{n\in Z}, \{\h_n\}_{n\in Z})$  on $\widehat{B}_0$ dual to $\B$ is topologically free.
\end{itemize}
\end{prop}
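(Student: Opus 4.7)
The plan is to prove (i) $\Rightarrow$ (ii) $\Rightarrow$ (iii) $\Rightarrow$ (i) in general and to append the equivalences (iv) $\Rightarrow$ (ii) and (iii) $\Rightarrow$ (iv) under the finiteness hypothesis on $E$. The implication (i) $\Rightarrow$ (ii) is immediate from Corollary \ref{aperiodicity imply intersection property}, and (iv) $\Rightarrow$ (ii) is a direct application of Theorem \ref{uniqueness theorem for fell bundles}.

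For (ii) $\Rightarrow$ (iii), I argue by contrapositive. Suppose $E$ admits a cycle $\mu = \mu_1 \cdots \mu_n$ without entrance at $v := r(\mu_1)$. The absence of entrance forces $s_{\mu_k} s_{\mu_k}^* = p_{r(\mu_k)}$ for each $k$ via the Cuntz-Krieger relation, so $s_\mu$ is a unitary in the corner $p_v C^*(E) p_v$ and the assignment $s_\mu \mapsto z$ extends to an isomorphism $p_v C^*(E) p_v \cong C(\T)$, under which the degree-zero part $p_v B_0 p_v$ corresponds to $\C \cdot 1$. A proper non-zero ideal of $C(\T)$ (say $\ker \operatorname{ev}_1$) then has trivial intersection with $\C$, and the Rieffel correspondence for the full corner inside $\overline{C^*(E) p_v C^*(E)}$ produces a non-zero ideal $J$ of $C^*(E)$ with $J \cap B_0 = 0$; equivalently one cites the failure of the Cuntz-Krieger uniqueness theorem outside Condition (L). Hence $\B$ lacks the intersection property.

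The implication (iii) $\Rightarrow$ (i) is the main technical step and is expected to be the principal obstacle. Fix $m \in \Z \setminus \{0\}$, a nonzero hereditary subalgebra $D \subseteq B_0$, and $b \in B_m$ which, after density reduction, may be taken as a finite sum $b = \sum_{i=1}^N \lambda_i s_{\alpha_i} s_{\beta_i}^*$ with $|\beta_i| - |\alpha_i| = m$. Exploiting the AF-filtration $B_0 = \overline{\bigcup_k \mathcal{F}_k}$ with $\mathcal{F}_k = \spane\{s_\sigma s_\tau^* : |\sigma|=|\tau|=k\}$, every such $D$ contains an approximate rank-one projection of the form $s_\gamma s_\gamma^*$ for $\gamma$ a path of length exceeding $\max_i(|\alpha_i|, |\beta_i|)$. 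Conjugating $s_{\alpha_i} s_{\beta_i}^*$ by $s_\gamma s_\gamma^*$ gives a non-zero result only when $\gamma = \alpha_i \gamma' = \beta_i \gamma''$ with $|\gamma'| \neq |\gamma''|$, i.e.\ when the long tail of $\gamma$ is forced to be periodic along a cycle traversed by $\alpha_i$ and $\beta_i$. Condition (L) provides an entrance at each such cycle, allowing one to deflect $\gamma$ off the periodic orbit for every one of the finitely many offending pairs $(\alpha_i, \beta_i)$. The resulting $\gamma$ yields $s_\gamma s_\gamma^* \cdot b \cdot s_\gamma s_\gamma^* = 0$, and taking $a \in D$ sufficiently close to $s_\gamma s_\gamma^*$ gives the required smallness of $\|aba\|$. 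The delicate part is arranging the branching simultaneously for all $N$ terms while keeping $\gamma$ in the support of $D$.

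Finally, for (iii) $\Rightarrow$ (iv) when $E$ is finite: under this assumption $B_0$ is an AF algebra whose Bratteli diagram reflects the adjacency structure of $E$, and its spectrum $\widehat{B_0}$ admits a concrete description in terms of tail-equivalence classes of right-infinite paths, together with finitely many singular points arising from non-receivers. The dual partial $\Z$-action on $\widehat{B_0}$ is then the shift on paths. Fixed points of the $k$-th iterate correspond to eventually periodic paths of period dividing $|k|$; Condition (L) guarantees every cycle has an entrance, so such paths have dense complement in $\widehat{B_0}$ and the shift is topologically free. The hypothesis that $E$ be finite is used only to secure the clean path-space description of $\widehat{B_0}$; and, as stressed in the introduction, it is essential that one works with $\widehat{B_0}$ rather than $\Prim(B_0)$, since topological freeness on the latter is strictly stronger and fails already for Cuntz algebras.
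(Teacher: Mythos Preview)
Your proposal is essentially correct and follows the paper's strategy for (i)$\Leftrightarrow$(ii)$\Leftrightarrow$(iii). The implication (i)$\Rightarrow$(ii) is identical; for (ii)$\Rightarrow$(iii) the paper simply cites the folklore argument you spell out; and for (iii)$\Rightarrow$(i) both you and the paper reduce to a finite sum $b=\sum \lambda_i s_{\alpha_i}s_{\beta_i}^*$ and produce a path projection annihilating $b$ by using an entrance to break periodicity. One small inaccuracy: your claim that ``every such $D$ contains an approximate rank-one projection of the form $s_\gamma s_\gamma^*$'' is not literally true, since a hereditary subalgebra of $B_0$ need not contain any standard path projection. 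The paper handles this by reducing directly to corners $s_\mu s_\mu^* B_0 s_\mu s_\mu^*$, which suffices because any minimal projection in $\mathcal{F}_k$ is Murray--von Neumann equivalent (within $\mathcal{F}_k\subseteq B_0$) to some $s_\mu s_\mu^*$, and conjugating by the implementing partial isometry in $B_0$ sends $b\in B_n$ to another finite sum in $B_n$.

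The genuine divergence is in the finite-$E$ clause. The paper does not argue (iii)$\Rightarrow$(iv) directly; instead it quotes \cite[Theorem 3.19(I), Propositions 2.19 and 3.2]{kwa-interact} to obtain (iv)$\Leftrightarrow$(ii) as a black box, using that the dual partial action is generated by the single partial homeomorphism $\h_1$. Your approach---identifying $\widehat{B_0}$ with a path space on which the dual action becomes the shift, and then reading off topological freeness from Condition (L)---is more transparent and self-contained, but it is only sketched. Making it rigorous requires establishing that identification and verifying that the Rieffel homeomorphisms really do act as the shift; this is nontrivial and is in effect the content of the results the paper cites. So your route buys insight at the cost of having to redo work already packaged in \cite{kwa-interact}, whereas the paper's route is shorter but opaque.
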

\begin{proof} By Corollary \ref{aperiodicity imply intersection property}, we have  (i)$\Rightarrow$ (ii). Implication  (ii)$\Rightarrow$ (iii) belongs to the folklore in the field, see, for instance,  the proof of \cite[Lemma 2.1]{hs}.

(iii)$\Rightarrow$ (i).
Let  $b=\sum_{\alpha, \beta \in E^*}\lambda_{\alpha, \beta} s_{\alpha}s_{\beta}^*$ be an element of  $B_n$, where $n\neq 0$, and let $A$ be a  hereditary $C^*$-subalgebra of $B_0$. A moment of thought yields that to show  condition  \eqref{aperiodicity condition} it suffices to consider the case when  $n>0$, the number of non-zero coefficients $\lambda_{\alpha, \beta}\in\C$ is finite and $A$ is a corner of the form
$$
(s_\mu s_\mu^*) B_0 (s_\mu s_\mu^*)=\clsp\{s_{\mu\eta} s_{\mu\nu}^*: \eta, \nu \in E^*, \, | \eta|=|\nu|\}
$$
where $\mu\in E^n$. In particular, we may further assume that all $\alpha$'s and $\beta$'s appearing in the sum $b=\sum_{\alpha, \beta \in E^*}\lambda_{\alpha, \beta} s_{\alpha}s_{\beta}^*$ start with the path $\mu$, that is there is a finite set 
$$
F\subseteq \{(\alpha,\beta) \in \bigcup_{k \geq |\mu|} E^k\times E^{k+n}: s(\alpha)=s(\beta),\,\, \alpha=\mu\alpha',\,\, \beta=\mu\beta'\}
$$ 
such that $b=\sum_{(\alpha, \beta) \in F}\lambda_{\alpha, \beta} s_{\alpha}s_{\beta}^*$.  Let $(\alpha_0,\beta_0)\in F$ be such that   $(\alpha,\beta)\in F$ implies  $|\beta|\leq |\beta_0|$. If the vertex $s(\alpha_0)=s(\beta_0)$ does not lie on a cycle (of length $n$) then 
$$
s_{\beta_0}^* s_\alpha  =0 \qquad \textrm{for all }(\alpha, \beta) \in F.
$$
 Thus $ab=0$ with  $a=s_{\beta_0}s_{\beta_0}^*\in A$. If $s(\alpha_0)=s(\beta_0)$ lies on a cycle, then using the fact that this cycle has an entrance one can construct a path $\eta=\eta_1....\eta_{|\eta|}$ with $|\eta|\geq |\beta_0|$ such that   $\eta_1...\eta_{|\beta_0|}=\beta_0$ and   
\begin{equation}\label{equation not equality}
\eta_{|\beta_0|+1}...\eta_{|\eta|}\neq  \eta_{|\alpha_0|+1}...\eta_{|\eta|-n}.
\end{equation}
Then for all $(\alpha, \beta) \in F$ we see that  $s_{\eta}^* (s_\alpha s_\beta^*) s_{\eta}$ is either zero, when $\alpha\neq \eta_{1}...\eta_{|\alpha|}$ or  $\beta\neq \eta_{1}...\eta_{|\beta|}$, or it is equal to  $s_{\eta_{|\alpha|+1}...\eta_{|\eta|}}^*  s_{\eta_{|\beta|+1}...\eta_{|\eta|}}$, which is also  zero by \eqref{equation not equality}.
Thus $aba=0$ with  $a=s_{\eta}s_{\eta}^*\in A$.

If  $E$ is finite then we have (iv)$\Leftrightarrow$ (ii) by \cite[Theorem 3.19(I)]{kwa-interact} modulo \cite[Propositions 2.19 and 3.2]{kwa-interact} and the fact that the  partial dynamical system $(\{\widehat{D}_n\}_{n\in Z}, \{\h_n\}_{n\in Z})$ is generated by the single partial homeomorphism $h_1$, cf. \cite{kwa}.
\end{proof}
\begin{rem} By Theorem \ref{uniqueness theorem for fell bundles}, the implication   (iv)$\Rightarrow$ [(i)$\Leftrightarrow$(ii) $\Leftrightarrow$(iii)] in the above proposition is valid for an arbitrary graph $E$. We suspect that the converse implication is also true and  the proof   would in essence require generalizing \cite[Theorem 3.19]{kwa-interact}, however this is beyond the scope of the present paper.\end{rem}
\begin{cor}\label{residual aperiodicity corollary} 
 The following statements are equivalent:
\begin{itemize}
\item[(i)] $\B$ is residually aperiodic.
\item[(ii)]  $\B$ has the residual intersection property.
\item[(iii)] $E$ satisfies Condition (K). 
\end{itemize}
If $E$ is finite, that is both $E^0$ and $E^1$ are finite, then  the above conditions are equivalent to
\begin{itemize}
\item[(iv)] the  partial dynamical system $(\{\widehat{D}_n\}_{n\in Z}, \{\h_n\}_{n\in Z})$  dual to $\B$ is residually topologically free.
\end{itemize}
\end{cor}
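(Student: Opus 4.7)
The plan is to prove the three equivalences by combining the non-residual statement from Proposition \ref{aperiodicity for graph algebras} applied to quotient bundles together with the classification of $\B$-invariant ideals given in Proposition \ref{description of B-invariant ideals for graph algebras}. The key bridge is the well-known observation that quotients of graph $C^*$-algebras by gauge-invariant ideals are again graph $C^*$-algebras, so quotient Fell bundles of $\B$ are still Fell bundles associated to graphs.

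First I would dispose of (i)$\Rightarrow$(ii), which is immediate from Corollary \ref{core for aperiodicity and ideal structure}, since residual aperiodicity of $\B$ means each quotient $\B/\J$ is aperiodic, and each aperiodic quotient has the intersection property by Corollary \ref{aperiodicity imply intersection property}. Next, I would invoke Proposition \ref{description of B-invariant ideals for graph algebras} to put ideals $\J$ in $\B$ in bijection with admissible pairs $(H,B)\in \mathcal H_E$. The main structural step is then to identify, for each such pair, the quotient bundle $\B/\J$ with the canonical $\Z$-grading of a graph $C^*$-algebra $C^*(F_{(H,B)})$ for an explicit quotient graph $F_{(H,B)}$. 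This uses the standard Bates--Hong--Raeburn--Szyma\'nski quotient construction (in which vertices in $H$ are deleted and breaking vertices in $B$ are suitably resolved) together with Corollary \ref{corollary on quotients}; note that since $\Z$ is amenable the reduced and full cross-sectional algebras coincide for $\B$, $\J$ and $\B/\J$, so the quotient bundle is unambiguous.

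Having this identification, both equivalences (i)$\Leftrightarrow$(iii) and (ii)$\Leftrightarrow$(iii) reduce to:
\begin{equation*}
E\text{ satisfies Condition (K)}\ \Longleftrightarrow\ F_{(H,B)}\text{ satisfies Condition (L) for every }(H,B)\in\mathcal H_E.
\end{equation*}
This is a standard combinatorial fact about graphs, traceable to \cite{bhrs}: Condition (K) says no vertex is the base of \emph{exactly one} cycle (up to rotation), which is precisely what is needed to guarantee that every vertex surviving in a quotient graph still has some entrance to any cycle it lies on. With this equivalence, Proposition \ref{aperiodicity for graph algebras} applied to the Fell bundle of $F_{(H,B)}$ gives (iii)$\Rightarrow$(i) (Condition (K) forces each $\B/\J$ to be aperiodic) and, contrapositively, if Condition (K) fails, one finds $(H,B)$ with $F_{(H,B)}$ violating Condition (L), so the corresponding $\B/\J$ has no intersection property, yielding $\neg$(iii)$\Rightarrow\neg$(ii) and hence (ii)$\Rightarrow$(iii).

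Finally, when $E$ is finite, every quotient graph $F_{(H,B)}$ is also finite, so applying the additional equivalence (ii)$\Leftrightarrow$(iv) of Proposition \ref{aperiodicity for graph algebras} to each quotient gives (iv)$\Leftrightarrow$(iii), completing the proof. The main obstacle I anticipate is the careful verification that the quotient Fell bundle $\B/\J$ really is graded-isomorphic to the canonical $\Z$-grading of $C^*(F_{(H,B)})$, especially when $B\neq\emptyset$ so that breaking vertices must be replaced by fresh sources; this is bookkeeping on generators rather than a conceptual difficulty, but must be set up carefully so that the Fell bundle grading is preserved.
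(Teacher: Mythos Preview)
Your proposal is correct and follows essentially the same route as the paper's proof: both arguments parametrize the ideals in $\B$ by pairs $(H,B)\in\mathcal H_E$, identify the quotient $C^*(E)/J_{H,B}$ (and hence the quotient Fell bundle) with the graph $C^*$-algebra of the quotient graph $(E/H)\setminus\beta(B)$ from \cite{bhrs}, invoke the standard fact that Condition~(K) on $E$ is equivalent to Condition~(L) on every such quotient graph, and then apply Proposition~\ref{aperiodicity for graph algebras} fibrewise. The paper simply cites \cite[Corollary 3.6]{bhrs} for the quotient identification you flag as the ``main obstacle'', so your anticipated bookkeeping is indeed absorbed into that reference.
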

\begin{proof} For any $(H,B)\in \mathcal{H}_E$ let $J_{H,B}$ denote the ideal in $C^*(E^*)$ generated by $I_{H,B}$. By Proposition \ref{description of B-invariant ideals for graph algebras}, every graded ideal in $C^*(E^*)$ is of this form. By \cite[Corollary  3.6]{bhrs} the quotient $C^*(E^*)/J_{H,B}$ is naturally isomorphic to the graph $C^*$-algebra $C^*((E/H)\setminus \beta(B))$ of a certain graph $(E/H)\setminus \beta(B)$, see \cite{bhrs}. Moreover, it is well known, and follows, for instance, from the proof of \cite[Corollary 3.8]{bhrs}, that $E$ satisfies Condition (K) if and only if every graph $(E/H)\setminus \beta(B)$ satisfies Condition (L). Now  the assertion  follows from Proposition \ref{aperiodicity for graph algebras}.
\end{proof}

By Lemma \ref{lemma on orbit spaces2}, we have $\Prime^\B(B_0)=\Prim^\B(B_0)$. We now turn to a description of this set. This will allow us, in the presence of condition (K), to deduce from Theorem \ref{Primitive ideal space description} description of $\Prim(C^*(E)) $  originally obtained using a different approach in \cite[Corollary  4.8]{bhrs}. 
\begin{lem}\label{maximal tails lemma}
Suppose that $I_{H,B}\in \I^\B(B_0)$,  $(H,B)\in \mathcal{H}_E$, belongs to $\Prime^\B(B_0)$. Then $M:=E^0\setminus H$ satisfies:
\begin{itemize}
\item[(a)] If $v\in E^0$, $w\in M$, and $v\geq w$ in $E$, then  $v\in M$,
\item[(b)] If $v\in M$ and $0<|r^{-1}(v)|<\infty$, then there exists  $e\in E^1$ with $r(e)=v$ and  $s(e)\in M$,
\item[(c)] For every  $v,w\in M$ there exists $y\in M$ such that   $v \geq y$ and  $w \geq y$.
\end{itemize}
\end{lem}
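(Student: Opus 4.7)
Parts (a) and (b) follow directly from the conditions built into $(H,B)\in\mathcal{H}_E$, independently of primality. For (a), if $w\in M$ and $v\geq w$, then $v\in H$ would force $w\in H$ by the hereditary property of $H$, contradicting $w\in M$; so $v\in M$. For (b), if $v\in M$ with $0<|r^{-1}(v)|<\infty$ had all its incoming sources in $H$, saturation of $H$ would put $v\in H$; hence at least one incoming edge has its source in $M$.

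For (c) we argue by contradiction using $\B$-primality of $(H,B)$. Assume that there exist $v,w\in M$ with no $y\in M$ satisfying both $v\geq y$ and $w\geq y$. Writing $D(z):=\{y\in E^0:z\geq y\}$ for the set of ancestors of $z$, this reads $D(v)\cap D(w)\cap M=\emptyset$. Each $D(z)$ is hereditary (by transitivity of $\geq$), so $X_v:=H\cup D(v)$ and $X_w:=H\cup D(w)$ are hereditary, with $X_v\cap X_w=H$. Let $H_1,H_2$ be the smallest saturated hereditary sets containing $X_v,X_w$ respectively; concretely, $H_1=\bigcup_k \Sigma^{(k)}(X_v)$ where $\Sigma^{(0)}(X_v):=X_v$ and $\Sigma^{(k+1)}$ adjoins every finite nonzero-receiver all of whose incoming sources lie in $\Sigma^{(k)}$ (and analogously for $X_w$). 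Each $\Sigma^{(k)}$ remains hereditary, since the ancestors of a newly added receiver are exactly the ancestors of its sources, which are already present. Since $v\in H_1\setminus H$ and $w\in H_2\setminus H$, neither $(H_i,\emptyset)$ lies $\leq (H,B)$ in $\mathcal{H}_E$, while the meet formula of Remark~\ref{meet for J-pairs} gives $(H_1,\emptyset)\wedge (H_2,\emptyset)=(H_1\cap H_2,\emptyset)$. Thus establishing the key claim $H_1\cap H_2=H$ will contradict the $\B$-primality of $(H,B)$.

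The claim $H_1\cap H_2\subseteq H$ is proved by minimal counterexample. For $u\in E^0$ let $k_v(u),k_w(u)$ denote the first stages at which $u$ appears in the respective saturations, and suppose for contradiction there exists $u\in H_1\cap H_2\cap M$; pick such $u$ minimizing $k_v(u)+k_w(u)$. If $k_v(u)=k_w(u)=0$, then $u\in X_v\cap X_w=H$, contradicting $u\in M$. Otherwise $u$ is a finite nonzero-receiver, and since placing all its incoming sources in $H$ would yield $u\in H$ by saturation of $H$, at least one source $s$ lies in $M$. When $k_v(u)=0$, so $u\in D(v)\cap M$, transitivity of $\geq$ along the edge $s\to u$ together with a path from $u$ to $v$ forces $s\in D(v)\subseteq X_v$; hence $k_v(s)=0$ and $k_w(s)\leq k_w(u)-1$. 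The case $k_w(u)=0$ is symmetric. When both $k_v(u),k_w(u)\geq 1$, each source of $u$ lies simultaneously in $\Sigma^{(k_v(u)-1)}(X_v)$ and $\Sigma^{(k_w(u)-1)}(X_w)$, so $k_v(s)+k_w(s)\leq k_v(u)+k_w(u)-2$. In every case $s\in H_1\cap H_2\cap M$ has strictly smaller stage sum, contradicting minimality. The main obstacle is precisely the saturation step, which can add to $H_i$ vertices outside $D(v)\cup D(w)$; the saturation-layer induction is designed to circumvent this subtlety by tracing any putative shared vertex back through its incoming sources.
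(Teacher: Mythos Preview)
Your proof is correct. Parts (a) and (b) are immediate from $H$ being hereditary and saturated, exactly as you say and as the paper notes. For part (c), the paper's own proof simply defers to the argument in \cite[Lemma~4.1]{bhrs}; your argument is a fully self-contained version of that standard construction---build the saturations $H_1,H_2$ of $H\cup D(v)$ and $H\cup D(w)$, check that $(H_i,\emptyset)\not\le(H,B)$ while $(H_1,\emptyset)\wedge(H_2,\emptyset)=(H_1\cap H_2,\emptyset)\le(H,B)$, and verify the key equality $H_1\cap H_2=H$. Your stage-sum induction for the latter is clean and handles the one genuine subtlety, namely that saturation can in principle add vertices outside $D(v)\cup D(w)$; the minimal-counterexample descent through an incoming source $s\in M$ correctly disposes of this. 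The observation that each $\Sigma^{(k)}$ remains hereditary is what guarantees $(H_i,\emptyset)\in\mathcal{H}_E$, so it is not decorative. In short, you have supplied the details the paper outsources to the reference, using the same underlying idea.
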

\begin{proof} Conditions (a), (b) say that   $H$ is  hereditary and saturated. In the proof of \cite[Lemma  4.1]{bhrs} it was shown that if condition (c) is not satisfied then $I_{H,B}\notin \Prime^\B(B_0)$. 
\end{proof}
A non-empty  subset $M$ of $E^0$ satisfying conditions (a), (b), (c) of Lemma \ref{maximal tails lemma} is called a \emph{maximal tail} in $E$. For any non-empty subset $X$ of $E^0$ we write 
$$
\Omega(X):=\{w\in E^0\setminus X: w \ngeq v \textrm{ for all }v\in X\}.
$$
We also put $\Omega(v):=\Omega(\{v\})$.
Note that  $\Omega(M)=E^0\setminus M$ for any maximal tail $M$. Moreover, for any $v\in E^0$ that receives infinitely many edges, the set $\Omega(v)$ is hereditary and saturated; actually  $\Omega(v)$ is a complement of a maximal tail. Any such vertex with the property that $v\in \Omega(v)_\infty^{\fin}$ is called a \emph{breaking vertex}. In other words, $v\in E^0$ is a breaking vertex if and only if $|r^{-1}(v)|=\infty$ and $0<|r^{-1}(v)\cap  s^{-1}(\Omega(v))|<\infty$.  
\begin{prop}\label{Prime description for graphs}
The set  $\Prime^\B(B_0)=\Prim^\B(B_0)$ consists of  ideals $I_{\Omega(M),\Omega(M)_\infty^{\fin}}$ associated to maximal tails $M$, and  ideals  $I_{\Omega(v),\Omega(v)_\infty^{\fin}\setminus{\{v\}}}$ associated to breaking vertices $v$.
\end{prop}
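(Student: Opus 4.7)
My plan is to exploit the lattice isomorphism $\mathcal{H}_E\cong \I^\B(B_0)$ of Proposition \ref{description of B-invariant ideals for graph algebras} in order to translate $\B$-primality of $I_{H,B}$ into a combinatorial condition on the pair $(H,B)\in\mathcal{H}_E$. Since $E$ is countable, $B_0$ is separable, so Lemma \ref{lemma on orbit spaces2} gives $\Prime^\B(B_0)=\Prim^\B(B_0)$, and it is enough to pin down the meet-irreducible elements of $\mathcal{H}_E$.

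By Lemma \ref{maximal tails lemma}, any $\B$-prime $(H,B)$ has $M:=E^0\setminus H$ a maximal tail and $H=\Omega(M)$. To identify $B$, I restrict primality to the sublattice $\{(H,B'):B'\subseteq H_\infty^{\fin}\}$. Since every $B'$ is disjoint from $H$, the formula of Remark \ref{meet for J-pairs} collapses to $(H,B_1'\cap B_2')$, turning this sublattice into a Boolean algebra whose meet-irreducible elements are the top $H_\infty^{\fin}$ and the coatoms $H_\infty^{\fin}\setminus\{v\}$ for $v\in H_\infty^{\fin}$. Hence every candidate prime is parametrised by a maximal tail $M$ together with an optional distinguished vertex $v\in \Omega(M)_\infty^{\fin}$.

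In the coatom case, $v\in M$ and condition (a) force $M_v:=\{w:w\geq v\}\subseteq M$, hence $\Omega(v)\supseteq H$. I claim $M=M_v$, which is exactly the breaking-vertex condition $v\in\Omega(v)_\infty^{\fin}$. Suppose for contradiction $M\supsetneq M_v$ and pick $u\in M\setminus M_v$. Using (c) iteratively on pairs drawn from $M\setminus\{v\}$, one extracts a maximal tail $M'\subseteq M$ containing $u$ but not $v$. The two pairs $(\Omega(M_v),\Omega(M_v)_\infty^{\fin})$ and $(\Omega(M'),\Omega(M')_\infty^{\fin})$ then give $H_1\cap H_2\subseteq H$ and, after a routine check of the $B$-coordinate, their meet lies below $(H,H_\infty^{\fin}\setminus\{v\})$; yet neither pair itself is $\leq(H,H_\infty^{\fin}\setminus\{v\})$ because $M_v\subsetneq M$ and $v\notin M'$. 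This contradicts primality, forcing $M=M_v$.

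For sufficiency one verifies directly that the listed pairs are prime. Given $(H_1,B_1)\wedge(H_2,B_2)\leq(\Omega(M),\Omega(M)_\infty^{\fin})$, the inclusion $H_1\cap H_2\subseteq H$ together with condition (c) of $M$ forces one $H_i\subseteq H$ (the argument already inside Lemma \ref{maximal tails lemma}); one then traces how $v\in B_i$ with $v\notin H$ automatically lands in $H_\infty^{\fin}$ via $|r^{-1}(v)\cap s^{-1}(E^0\setminus H)|\leq|r^{-1}(v)\cap s^{-1}(E^0\setminus H_i)|$, the positivity being inherited from $B_i\subseteq (H_i)_\infty^{\fin}$ through careful bookkeeping of the finite cut. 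For $(\Omega(v),\Omega(v)_\infty^{\fin}\setminus\{v\})$ the same argument applies, the exclusion of $v$ from $B$ being compensated by the breaking-vertex hypothesis $v\in\Omega(v)_\infty^{\fin}$. The main obstacle is the construction of $M'$ in the coatom necessity step, which must be done so as not to reintroduce $v$ when extracting common descendants via condition (c).
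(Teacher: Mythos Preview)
Your overall strategy—translating $\B$-primality into meet-irreducibility in $\mathcal{H}_E$ and using the Boolean sublattice $\{(H,B'):B'\subseteq H_\infty^{\fin}\}$ to pin down $B$—is sound and matches the paper's approach in spirit. The reduction to $B=\Omega(M)_\infty^{\fin}$ or $B=\Omega(M)_\infty^{\fin}\setminus\{v\}$ is correct.

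The genuine gap is in your necessity argument for the coatom case, specifically the construction of $M'$. You assert that from $u\in M\setminus M_v$ one can ``using (c) iteratively'' extract a maximal tail $M'\subseteq M$ containing $u$ but not $v$. This fails in general. Consider the graph with vertices $\{v,u,x\}\cup\{w_i:i\geq 1\}$ and edges: a loop $\ell$ at $v$, edges $e_i$ from $w_i$ to $v$, an edge $f$ from $x$ to $v$, and an edge $g$ from $x$ to $u$ (so $r(g)=u$, $s(g)=x$). Then $M=\{v,u,x\}$ is a maximal tail, $M_v=\{v\}$, $v\in\Omega(M)_\infty^{\fin}$, and $v$ is a breaking vertex. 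But any maximal tail containing $u$ must contain $x$ by condition (b), and must then contain $v$ by condition (a) since $v\geq x$. So no $M'$ as you describe exists. You flag this as ``the main obstacle'' but do not resolve it; in fact it cannot be resolved along these lines. Even if such an $M'$ existed, your meet computation would further require $M\subseteq M_v\cup M'$, which you do not verify.

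The paper avoids this entirely: rather than searching for an auxiliary maximal tail, it tests primality against the two canonical pairs $(\Omega(v),\Omega(v)_\infty^{\fin})$ and $(\Omega(M),\Omega(M)_\infty^{\fin})$, whose meet can be computed directly from Remark~\ref{meet for J-pairs}. This is both simpler and robust. Your sufficiency sketch is also too thin: the positivity $|r^{-1}(w)\cap s^{-1}(M)|>0$ for $w\in B_i\setminus H$ does \emph{not} follow from $H_i\subseteq H$ and $w\in(H_i)_\infty^{\fin}$ alone; the paper handles the case where it fails by a separate argument showing that then $M=\{w'\in E^0:w'\geq w\}$, which forces the other factor below $(\Omega(M),B)$.
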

\begin{proof}
Suppose that $I_{H,B}$ is in  $\Prime^\B(B_0)$. By Lemma \ref{maximal tails lemma},  we have  $H=\Omega(M)$ where  $M$ is a maximal tail. Note that the set  $\Omega(M)_\infty^{\fin}\setminus B$ can not contain more than one vertex. Indeed, if we had two distinct vertices $v_1,v_2  \in \Omega(M)_\infty^{\fin}\setminus B$, then we would get two $\B$-invariant ideals $I_i:=I_{\Omega(M), B\cup \{v_i\}}$, $i=1,2$,  such that $I_1\cap I_2\subseteq I_{\Omega(M),B}$ and $I_i \nsubseteq I_{\Omega(M),B}$ for $i=1,2$. Suppose then that $B=\Omega(M)_\infty^{\fin}\setminus\{v\}$ for a vertex $v\in \Omega(M)_\infty^{\fin}$. If $v$ is not a breaking vertex then we get two $\B$-invariant ideals $I_1:=I_{\Omega(v),\Omega(v)_\infty^{\fin}}$ and  $I_2:=I_{\Omega(M),\Omega(M)_\infty^{\fin}}$ such that $I_1\cap I_2\subseteq I_{\Omega(M),B}$ and $I_i \nsubseteq I_{\Omega(M),B}$ for $i=1,2$, a contradiction. Hence $v$ must be a breaking vertex.

Let us now consider an ideal $I_{\Omega(M),B}$ where $M$ is a  maximal tail and $B=\Omega(M)_\infty^{\fin}$ or $B=\Omega(M)_\infty^{\fin}\setminus\{v\}$  where $v$ is a breaking vertex such that $M=\Omega(v)$. We need to show that  $I_{\Omega(M),B}$ is  $\B$-prime. 
Suppose that   $I_{H_1,B_1}$, $I_{H_2,B_2}\in \I^\B(B_0)$ are such that $I_{H_1,B_1}\cap I_{H_2,B_2}\subseteq I_{\Omega(M),B}$. By Proposition \ref{description of B-invariant ideals for graph algebras} and Remark \ref{meet for J-pairs} this is equivalent to saying that  $H_1\cap H_2 \subseteq \Omega(M)=E^0\setminus M$ and 
\begin{equation}\label{inclusion leading to primeness} 
H_1\cap B_2 \cup B_1\cap H_2\cup B_1\cap B_2  \subseteq  \Omega(M)\cup  B.
\end{equation} 
We claim that  either $H_1\subseteq \Omega(M)$ or $H_2\subseteq \Omega(M)$. Indeed, if we assume on the contrary that there is $v\in H_1\cap M$ and $w\in H_2\cap M$, then  taking $y\in M$ such that $v \geq y$ and  $w \geq y$ we get $y\in H_1\cap H_2\cap M$, a contradiction.  

Thus we may assume that $H_1\subseteq \Omega(M)$.  If $(H_1,B_1) \leq (\Omega(M),B)$, then $I_{H_1,B_1}\subseteq I_{\Omega(M),B}$.  Suppose  then that $(H_1,B_1) \nleq (\Omega(M),B)$, and consider two cases:

1). Let $B=\Omega(M)_\infty^{\fin}$. Since  $H_1\subseteq \Omega(M)$, $(H_1,B_1) \nleq (\Omega(M),B)$ implies that there is $v\in B_1\cap M$ such that $|r^{-1}(v)\cap s^{-1}(M)|=0$. By  properties (a) and (c) in Lemma \ref{maximal tails lemma}, we see that $M=\{w\in E^0: w \geq v \}$ (since $v$ is a `source' in $M$, $v$ is a unique vertex with this property). In particular, if $H_2\cap M\neq \emptyset$ then $v\in H_2$.  However, in view of \eqref{inclusion leading to primeness} we see that $v$ cannot belong neither to $H_2$ nor to $B_2$. Hence $H_2\subseteq \Omega(M)$ and since $v\notin B_2$ we  must have  $(H_2,B_2) \leq (\Omega(M),B)$, that is   $I_{H_2,B_2}\subseteq I_{\Omega(M),B}$.

2). Let $B=\Omega(M)_\infty^{\fin}\setminus\{v\}$,  where $v$ is a breaking vertex and $\Omega(M)=\Omega(v)$. The last relation implies that $M=\{w\in E^0: w \geq v \}$. Note that $B_1$ must contain $v$. Indeed, if $v\notin B_1$ then arguing as in  case 1) we see that there is a unique $v'\in B_1\cap M$ with the property that $M=\{w\in E^0: w \geq v' \}$. This leads to a contradiction as $v\neq v'$. Thus $v\in B_1$. It follows from \eqref{inclusion leading to primeness}  that $v\notin H_2$ and $v\notin B_2$. Since $M=\{w\in E^0: w \geq v' \}$ and $H_2$ is hereditary, we conclude that $H_2\subseteq \Omega(M)$. Combining the last inclusion with  $v\notin B_2$ we get that  $(H_2,B_2) \leq (\Omega(M),B)$. Hence   $I_{H_2,B_2}\subseteq I_{\Omega(M),B}$.
\end{proof}
\begin{cor}\label{primitives for graphs}
If the equivalent conditions in Corollary \ref{residual aperiodicity corollary} hold,  then the elements in the primitive spectrum $\Prim(C^*(E))$ of the graph algebra $C^*(E)$ are in a bijective correspondence with maximal tails and breaking vertices in the graph $E$.
\end{cor}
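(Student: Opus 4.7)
My plan is to deduce the corollary by combining Theorem \ref{Primitive ideal space description} with the explicit enumeration in Proposition \ref{Prime description for graphs}, so the task is essentially verifying hypotheses and then checking the assignment is a bijection.

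First I would verify that all hypotheses of Theorem \ref{Primitive ideal space description} are satisfied for the Fell bundle $\B=\{B_n\}_{n\in\Z}$. Since $\Z$ is amenable, $\B$ is amenable (by \cite[Theorem 20.7]{exel-book}), hence exact and satisfying $C^*(\B)=C^*_r(\B)$; as already observed, $C^*(E)=C^*(\B)$, so $C^*(E)=C^*_r(\B)$. Since $E^0$ and $E^1$ are countable, each $B_n$ is separable, so $\B$ is separable. Finally, the assumed Condition (K) on $E$ gives residual aperiodicity of $\B$ by Corollary \ref{residual aperiodicity corollary}, which in turn gives the residual intersection property via Corollary \ref{core for aperiodicity and ideal structure}. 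Therefore Theorem \ref{Primitive ideal space description} applies and yields
$$\Prim(C^*(E))\cong \Prim^\B(B_0).$$

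Next I would invoke Proposition \ref{Prime description for graphs}, which describes $\Prim^\B(B_0)$ explicitly as the set consisting of the ideals $I_{\Omega(M),\Omega(M)_\infty^{\fin}}$ indexed by maximal tails $M$ and the ideals $I_{\Omega(v),\Omega(v)_\infty^{\fin}\setminus\{v\}}$ indexed by breaking vertices $v$. Composing with the homeomorphism above then produces an assignment from the disjoint union of the set of maximal tails and the set of breaking vertices onto $\Prim(C^*(E))$.

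To finish, I would check that this assignment is injective. By Proposition \ref{description of B-invariant ideals for graph algebras}, the map $(H,B)\mapsto I_{H,B}$ is a bijection $\mathcal{H}_E\to \I^\B(B_0)$, so it suffices to verify that the corresponding pairs in $\mathcal{H}_E$ are distinct. Distinct maximal tails $M\neq M'$ yield distinct hereditary saturated sets $\Omega(M)\neq \Omega(M')$ and hence distinct pairs. Distinct breaking vertices $v\neq v'$ yield distinct pairs, since either $\Omega(v)\neq \Omega(v')$ or, if these coincide, the $B$-components differ in exactly the singletons $\{v\}$ and $\{v'\}$. Finally, a pair coming from a maximal tail $M$ and a pair coming from a breaking vertex $v$ are always distinct: if $\Omega(M)=\Omega(v)$ then the $B$-components are $\Omega(M)_\infty^{\fin}$ and $\Omega(M)_\infty^{\fin}\setminus\{v\}$, which differ because $v\in \Omega(v)_\infty^{\fin}$ by definition of a breaking vertex.

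There is no serious obstacle left: the heavy lifting is already done in Theorem \ref{Primitive ideal space description} and Proposition \ref{Prime description for graphs}, and the remaining work is the routine bookkeeping of hypotheses and the injectivity check outlined above.
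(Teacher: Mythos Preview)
Your proposal is correct and follows exactly the same approach as the paper, which simply says to combine Proposition \ref{Prime description for graphs} with Theorem \ref{Primitive ideal space description}. You have merely spelled out the hypothesis verification and the injectivity check that the paper leaves implicit.
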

\begin{proof}
Combine  Proposition \ref{Prime description for graphs} with Theorem \ref{Primitive ideal space description}. 
\end{proof}
Now, we show that Theorem \ref{pure infiniteness for paradoxical Fell bundles} is optimal in the sense that when applied to graph $C^*$-algebras our conditions implying pure infiniteness are not only sufficient but also necessary.
\begin{thm}\label{pure infiniteness of graph algebras} 
The $C^*$-algebra $C^*(E)$ of a directed graph $E$ is purely infinite if and only if   the associated Fell bundle $\B=\{B_n\}_{n\in \Z}$  is residually aperiodic  and every non-zero projection in   is residually strictly $\B$-infinite.
\end{thm}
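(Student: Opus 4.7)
The plan for the backward implication is to apply Theorem \ref{pure infiniteness for paradoxical Fell bundles}(iii). Since $\Z$ is amenable, the Fell bundle $\B$ is amenable by \cite[Theorem 20.7]{exel-book}, hence exact, and $C^*(E) = C^*_r(\B)$. The core $B_0$ is AF as a union of the finite-dimensional subalgebras $\clsp\{s_\mu s_\nu^* : |\mu|=|\nu|\leq n\}$, so $B_0$ has real rank zero. A residually strictly $\B$-infinite projection is in particular residually $\B$-infinite and Cuntz-equivalent to itself, so condition (iii) of Theorem \ref{pure infiniteness for paradoxical Fell bundles} is satisfied and $C^*(E)$ is purely infinite.

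For the forward direction I assume $C^*(E)$ is purely infinite. Residual aperiodicity will come by contradiction: if $E$ failed Condition (K), then the argument in Corollary \ref{residual aperiodicity corollary} would yield a graded quotient $C^*(E)/J\cong C^*(E')$ whose graph $E'$ contains a cycle of some length $n$ with no entrance. Such a cycle carves out a hereditary subalgebra of $C^*(E')$ isomorphic to $M_n(C(\mathbb{T}))$, which carries a tracial state and so cannot be purely infinite. Since pure infiniteness passes to quotients and to hereditary subalgebras \cite[Proposition 4.17 and Theorem 4.19]{kr}, this contradicts pure infiniteness of $C^*(E)$, forcing $E$ to satisfy (K), whence $\B$ is residually aperiodic by Corollary \ref{residual aperiodicity corollary}.

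For residual strict $\B$-infiniteness of projections in $B_0$, I would first reduce to vertex projections via Lemma \ref{sums and equivalence for infiniteness}: every projection in the AF algebra $B_0$ is Murray--von Neumann equivalent to a finite orthogonal sum of projections of the form $s_\mu s_\mu^*\sim p_{s(\mu)}$, and this reduction is preserved under graded quotients. Fixing $v\in E^0$ and a graded ideal $J$ with $p_v+J\neq 0$, the quotient $C^*(E)/J\cong C^*(E')$ (by \cite{bhrs}) is again purely infinite and $E'$ satisfies (K). Using the graph-theoretic characterization of pure infiniteness for graph $C^*$-algebras (cf.\ \cite{hs}), I would then produce a finite family of paths $\beta_1,\ldots,\beta_m$ with $r(\beta_i)=v'$ (the image of $v$), pairwise orthogonal range projections $\{s_{\beta_i}s_{\beta_i}^*\}$ summing to $p_{v'}$, and endpoints $s(\beta_i)$ on cycles $\mu_i$ in $E'$ with entrances $e_i$ at positions $k_i$. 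Setting $a_i:= s_{\beta_i} s_{\mu_i} s_{\beta_i}^* \in p_{v'} B'_{-|\mu_i|}$ and $b:= s_{\beta_1} s_{\gamma_1} s_{\gamma_1}^* s_{\beta_1}^*$ with $\gamma_1 := \mu_{1,1}\cdots\mu_{1,k_1-1}\, e_1$, a direct calculation using $s_{\beta_i}^* s_{\beta_j}=0$ for $i\neq j$ and $s_{\mu_{1,k_1}}^* s_{e_1}=0$ verifies $p_{v'}=\sum_i a_i^* a_i$, $a_i^* a_j=0$ for $i\neq j$, $b\in p_{v'} B'_0 p_{v'}\setminus\{0\}$, and $a_i^* b=0$; hence $p_{v'}$ is strictly $\B'$-infinite.

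The hard part of the argument will be producing the path family $\{\beta_i\}$ uniformly across all graded ideals $J$, especially when $v$ is an infinite receiver in $E$ or becomes a breaking vertex in some quotient under the construction $(H,B)\mapsto (E/H)\setminus\beta(B)$ of \cite{bhrs}. In those cases one must iterate the Cuntz--Krieger relation down to a truncation level at which every branch from $v'$ lands on a cycle with entrance in $E'$, adjusting for the modified breaking-vertex relations along the way; the feasibility of this construction is exactly the combinatorial content of pure infiniteness for non-simple graph $C^*$-algebras.
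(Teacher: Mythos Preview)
Your backward implication and the residual-aperiodicity half of the forward implication are fine and match the paper's strategy (the paper cites \cite{hs} rather than spelling out the $M_n(C(\T))$ argument, but it is the same content).

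The gap is in your argument for residual strict $\B$-infiniteness. You propose to write $p_{v'}=\sum_{i=1}^m s_{\beta_i}s_{\beta_i}^*$ with each $s(\beta_i)$ on a cycle with entrance, and you correctly flag infinite receivers and breaking vertices as the obstruction. That obstruction is fatal to this plan: when $v'$ is an infinite receiver there is no Cuntz--Krieger relation at $v'$, so no finite family of paths of positive length can have range projections summing to $p_{v'}$, and the ``iteration'' you describe cannot terminate.

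The paper sidesteps this entirely by observing that strict $\B$-infiniteness does \emph{not} require a full path decomposition. One only needs a \emph{single} path $\alpha$ from $v=s(\mu)$ to a cycle $\gamma$ with an entrance inside the relevant maximal tail (such a path exists by \cite[Theorem 2.3]{hs} applied to the quotient). Then set
\[
a_1:=s_{\mu\alpha\gamma}s_{\mu\alpha}^*\in (s_\mu s_\mu^*)B_{-|\gamma|},\qquad
a_2:=s_\mu s_\mu^*-s_{\mu\alpha}s_{\mu\alpha}^*\in (s_\mu s_\mu^*)B_0,
\]
and $b:=s_{\mu\alpha}s_{\mu\alpha}^*-s_{\mu\alpha\gamma}s_{\mu\alpha\gamma}^*$. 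One checks directly that $a_1^*a_1+a_2^*a_2=s_\mu s_\mu^*$, $a_1^*a_2=0$, $a_i^*b=0$, and $b\notin I_{H,B}$ because $\gamma$ has an entrance in the maximal tail. The whole ``remainder'' of $s_\mu s_\mu^*$ is absorbed into the single $B_0$-element $a_2$, so nothing about the valency of $v$ matters. With this two-element trick the argument becomes uniform over all $\B$-invariant ideals and the proof closes.
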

\begin{proof}
The `if' part follows from  Theorem \ref{pure infiniteness for paradoxical Fell bundles}(ii). To show the `only if' part, suppose that $C^*(E)$  is purely infinite.  Every projection in   $B_0$ is Murray-von Neumann equivalent   to a projection of the form $\sum_{\alpha, \beta \in F}\lambda_{\alpha, \beta} s_{\alpha}s_{\beta}^*$ where $F\subseteq E^*$ is finite. The latter is in turn a finite sum of mutually orthogonal projections each of which is Murray-von Neumann equivalent to a projection of the form $s_\mu s_\mu^*$ with $\mu \in E^*$. Thus, by Lemma \ref{sums and equivalence for infiniteness}, it suffices to show that $a:=s_\mu s_\mu^*$ is residually strictly $\B$-infinite. Let then $ I_{H,B}$, $(H,B)\in \mathcal{H}_E$, be a  $\B$-invariant ideal in $B_0$.  Suppose  that $a\notin I_{H,B}$. Then   $v:=s(\mu)\in E^0\setminus H$. Since  $I_{H,B}$ is an intersection of $\B$-prime ideals, see Lemma \ref{intersection of prime ideals lemma}, we see that $E^0\setminus H$ is a sum of maximal tails, cf. Remark \ref{meet for J-pairs}. Thus   $v\in M$ for a certain maximal tail $M$ contained in $E^0\setminus H$. By \cite[Theorem 2.3]{hs}, there is a path $\alpha$ in $M$ which connects a loop $\gamma$ in $M$ to  $v$   and $\gamma$ has  an entrance in $M$. This implies that  $s_{\mu\alpha}  s_{\mu\alpha}^* -  s_{\mu\alpha \gamma}  s_{\mu\alpha\gamma}^*\notin  I_{H,B}$. Moreover, putting 
$$
a_1=s_{\mu\alpha \gamma}  s_{\mu\alpha}^*,\qquad a_2=s_\mu s_\mu^*- s_{\mu\alpha} s_{\mu\alpha}^*, \qquad b=s_{\mu\alpha}  s_{\mu\alpha}^* -  s_{\mu\alpha \gamma}  s_{\mu\alpha\gamma}^*, 
$$ 
one readily sees that  $b\in aB_0 a$,  $a_1\in aB_{-|\gamma|}$,   $a_2\in aB_0$ and 
$$
s_\mu s_\mu^*=a_1^*a_1+ a_2^*a_2, \qquad a_1^*a_2=a_1^*b=a_2^*b=0.
$$
Hence the image of $a$ in the quotient $B_0/I_{H,B}$ is $\B/\J$-infinite where $\J:=\{B_n I_{H,B}\}_{n\in \Z}$.
 \end{proof}
 \begin{rem}\label{Remark below}
 One could conjecture that  if $C^*(E)$ is purely infinite then every projection in  $B_e$ is $\B$-paradoxical (or at least is equivalent to a $\B$-paradoxical one). However, even for finite graphs this conjecture is very hard to verify. In particular, the above theorem illustrates the practical advantage of  residual $\B$-infiniteness over $\B$-paradoxicality.
 \end{rem}

\section{Crossed products by a semigroup of corner systems}\label{Crossed products by a semigroup of corner systems}
In this section, we  consider systems  $(A,G^+,\alpha,L)$ studied  in \cite{Kwa-Leb} when $A$ is a unital $C^*$-algebra. We will generalize concepts of \cite{Kwa-Leb} to non-unital case. Combining them with the findings of \cite{kwa-exel} we reveal their internal structure and  connections with other construction. Then  we apply the results of the present paper to $C^*$-algebras associated to $(A,G^+,\alpha,L)$.

\subsection{Various pictures of semigroup corner systems} Throughout this subsection,   we let
$G^+$ be the positive cone of a totally ordered abelian group
$G$ with the identity $0$, that is we have:
$$
G^+\cap (-G^+)=\{0\}, \qquad G=G^+ \cup (-G^+),
$$
and we write $g \geq h$ if $g-h\in G^+$, $g,h\in G$.
We  fix a $C^*$-algebra  $ A$. We denote by $\End(A)$ the set of endomorphisms of $A$ and by $\Pos(A)$ the space of positive maps on $A$. Composition of mappings yields a structure  of a unital semigroup on $\End(A)$  and $\Pos(A)$, where the unit is the identity map.  We always assume that  a homomorphism between two semigroups with unit preserves the units.  We recall  that a \emph{multiplicative domain} of $\morp\in \Pos(A)$ is a $C^*$-algebra given by 
$$
MD(\morp)=\{a\in A: \morp(ba)=\morp(b)\morp(a), \quad  \morp(ab)=\morp(a)\morp(b), \quad \textrm{for all }b\in A\},
$$
cf.,  for instance, \cite{kwa-exel} and references therein.
We say that $\morp:A\to A$ is \emph{extendible} if it extends to a strictly continuous map $\overline{\morp}:M(A)\to M(A)$.

An \emph{Exel system}, originally defined in \cite{exel3},      is a  triple  $(A,\alpha,L)$ where $\alpha\in  \End(A)$ and $L\in \Pos(A)$ are such that $
L(\alpha(a)b)=aL(b)$,  for all $a,b\in A$. Then $L$ is called a \emph{transfer operator} for $\alpha$, and as shown in \cite[Proposition 4.2]{kwa-exel}, $L$ is automatically extendible.  In \cite[Definition 4.19]{kwa-exel}, an Exel system $(A,\alpha,L)$ is called a \emph{corner  system} if $E:=\alpha \circ L$ is a conditional expectation onto a hereditary  $C^*$-subalgebra $\alpha(A)$ of $A$. By  \cite[Lemma 4.20]{kwa-exel}, an Exel system $(A,\alpha,L)$ is a corner system if and only if $\alpha$ is extendible and 
\begin{equation}\label{relation for complete operators}
\alpha(L(a))=\overline{\alpha}(1) a \overline{\alpha}(1), \qquad\textrm{ for all }  a\in A.
\end{equation}
Note that then both  maps $\alpha$ and $L$ are extendible  and $(M(A),\overline{\alpha},\overline{L})$ is again a corner system. In unital case, systems $(A,\alpha,L)$ satisfying \eqref{relation for complete operators} were called \emph{complete} in \cite{Ant-Bakht-Leb}. One readily sees that an iteration  $(A,\alpha^n,L^n)$,  $n\in\N$, of  an Exel system $(A,\alpha,L)$ is an Exel system, and  if  $(A,\alpha,L)$ is a corner system then $(A,\alpha^n,L^n)$ is a corner system (one can check \eqref{relation for complete operators} inductively). Thus these systems can be treated as semigroup dynamical systems with the underlying semigroup $\N$. We will  consider  systems over the more general semigroup  $G^+$. We will use \cite[Lemma 4.20]{kwa-exel} to show that the following three `corner systems' are actually  different sides of the same `coin'. They form a subclass of Exel-Larsen systems introduced  in \cite{Larsen}, and they may be viewed as generalizations to the non-unital case   of (finely representable) $C^*$-dynamical systems considered in \cite{Kwa-Leb}.

 Recall that a $C^*$-subalgebra $B$ of a $C^*$-algebra $A$ is a corner in $A$ if there is a projection $p\in M(A)$ such that $B=pAp$. In particular, an ideal $I$ in a $C^*$-algebra is a corner in $A$ if and only if it is complemented in $A$, that is if $A$ is a direct sum of $I$ and $I^\bot$.
\begin{defn} Consider two semigroup homomorphisms $\al:G^+\rightarrow \End(A)$ and $L:G^+\rightarrow \Pos(A)$. We say that:
\begin{itemize}
\item[(i)] $(A,G^+,\alpha,L)$ is a \emph{semigroup corner (Exel) system} if for each $ t\in G^+$, $(A,\alpha_t,L_t)$ is a corner system.
\item[(ii)] $\al$ is a \emph{semigroup of corner endomorphisms} if each $\alpha_t$, $t\in G^+$,  has a corner range and a complemented kernel (note that then $\alpha_t$ is necessarily extendible and $\alpha_t(A)=\overline{\alpha_t}(1)A\overline{\alpha_t}(1)$).
\item[(iii)]  $L$ is a \emph{semigroup of corner retractions} if for every $t\in G^+$, $L_t(A)$ is a complemented ideal in $A$ and the annihilator $(\ker L_t|_{MD(L_t)})^{\bot}$ of the  kernel of  $L_t:MD(L_t)\to L_t(A)$ is mapped by $L_t$ onto $L_t(A)$. 
\end{itemize}
\end{defn}
\begin{rem} Suppose that $(A,G^+,\alpha,L)$ is a  semigroup corner system. Clearly $t \mapsto \overline{\alpha}_t$  and $t\mapsto \overline{L}_t$ define  semigroup homomorphisms $\overline{\al}:G^+\rightarrow \End(M(A))$ and $\overline{L}:G^+\rightarrow \Pos(M(A))$.   Thus $(M(A),G^+,\overline{\alpha},\overline{L})$ is a semigroup corner system.  In particular, $(A,G^+,\alpha,L)$ is a $C^*$-dynamical system in the sense of \cite{Larsen} and  $(M(A),G^+,\overline{\alpha},\overline{L})$  is a dynamical system  in the sense of \cite{Kwa-Leb}. Note also, cf. \cite[2.2]{Kwa-Leb}, that 
\begin{equation}\label{families of projections}
\{\overline{\alpha}_t(1)\}_{t\in G^+}\subseteq M(A)\qquad  \textrm{ and }\qquad \{\overline{L}_t(1)\}_{t\in G^+}\subseteq Z(M(A))
\end{equation}
are  non-increasing families of projections (the latter are central).
\end{rem}

\begin{prop}  Let $\al:G^+\rightarrow \End(A)$ and $L:G^+\rightarrow \Pos(A)$ be two maps. 
The following statements are equivalent:
\begin{itemize}
\item[(i)] $(A,G^+,\alpha,L)$ is a  semigroup corner system;
\item[(ii)] $\al$ is a semigroup of corner endomorphisms, and for every  $t\in G^+$
\begin{equation}\label{corner transfer formula}
 L_t(a)=\alpha_t^{-1}(\overline{\alpha}_t(1) a \overline{\alpha}_t(1)), \qquad a\in A,
\end{equation}
where $\alpha_t^{-1}$ is the inverse to the isomorphism $\alpha_t:(\ker\alpha_t)^{\bot}\to \overline{\alpha_t}(1)A\overline{\alpha_t}(1)$;
\item[(iii)]  $L$ is semigroup of corner retractions, and for every $t\in G^+$
\begin{equation}\label{corner endomorphism formula}
\alpha|_{L_t(A)^{\bot}}\equiv 0   \quad \textrm{ and  }\quad \alpha_t(a)=L_t^{-1}(a), \,\, \textrm{ for } a\in L(A),
\end{equation}
where $L_t^{-1}$ is the inverse to the isomorphism $L_t:(\ker L_t|_{MD(L_t)})^{\bot}\to L_t(A)$.
\end{itemize}
In particular, if the above equivalent conditions hold, then  $\alpha$ and $L$ determine each other uniquely, and
\begin{equation}\label{relation for domain predomain}
\alpha_t(A)=\overline{\alpha_t}(1)A\overline{\alpha_t}(1)=(\ker L_t|_{MD(L_t)})^{\bot}, \qquad L_t(A)=\overline{L}_t(1)A=(\ker\alpha_t)^{\bot},
\end{equation}
for each $t\in G^+$.
\end{prop}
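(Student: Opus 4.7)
My plan is to reduce the entire proposition to its single-index counterpart, namely \cite[Lemma 4.20]{kwa-exel}, applied pointwise for each $t \in G^+$. That lemma gives the three-way equivalence at a single index: $(A,\alpha_t,L_t)$ is a corner system iff $\alpha_t$ is a corner endomorphism with $L_t$ recovered via \eqref{corner transfer formula}, iff $L_t$ is a corner retraction with $\alpha_t$ recovered via \eqref{corner endomorphism formula}; it also delivers the identities \eqref{relation for domain predomain} at each fixed $t$. Consequently the implications (i)$\Rightarrow$(ii) and (i)$\Rightarrow$(iii), the mutual determination of $\alpha$ and $L$, and the identities \eqref{relation for domain predomain} all follow by direct pointwise application, because the semigroup structure of $\alpha$ (respectively $L$) is already built into hypothesis (i).

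The real content of the converses is to promote the pointwise data to a semigroup of maps. I do this for (ii)$\Rightarrow$(i); the argument for (iii)$\Rightarrow$(i) is symmetric. By the single-index lemma, each $(A,\alpha_t,L_t)$ is a corner system; the task is to show that $t\mapsto L_t$ is a semigroup homomorphism. The natural candidate for $L_{s+t}$ is $L_s\circ L_t$, and I verify two things. First, $L_s\circ L_t$ is a transfer operator for $\alpha_{s+t}=\alpha_s\circ\alpha_t$, by the one-line computation
\[
L_sL_t\bigl(\alpha_s\alpha_t(a)\cdot b\bigr) = L_s\bigl(\alpha_s(a)\,L_t(b)\bigr) = a\,L_sL_t(b).
\]
Second, the composition $\alpha_{s+t}\circ(L_s\circ L_t)$ equals the compression $\overline{\alpha}_{s+t}(1)(\,\cdot\,)\overline{\alpha}_{s+t}(1)$, by telescoping: $\alpha_tL_t$ is already such a compression by $\overline{\alpha}_t(1)$ (by formula \eqref{relation for complete operators} at $t$), and pushing through $\alpha_s$ and then using the same identity at $s$ together with $\overline{\alpha}_{s+t}=\overline{\alpha}_s\circ\overline{\alpha}_t$ yields the claimed expression. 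Thus $(A,\alpha_{s+t},L_s\circ L_t)$ is itself a corner system, and the uniqueness clause of \cite[Lemma 4.20]{kwa-exel} (a given corner endomorphism admits only one transfer operator making the pair a corner system) forces $L_s\circ L_t = L_{s+t}$.

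The implication (iii)$\Rightarrow$(i) proceeds by the symmetric route: given that $L$ is a semigroup of corner retractions, one builds $\alpha_s\circ\alpha_t$ as a candidate for $\alpha_{s+t}$, verifies it is the corner endomorphism associated with the corner retraction $L_{s+t}=L_s\circ L_t$, and invokes the symmetric uniqueness assertion from the single-index lemma. The one step I expect to require care is the telescoping computation in the second paragraph: keeping track of extendibility of $\alpha_t$ and $L_t$ on $M(A)$, tracking that $L_t(a)$ lies in $L_t(A)=(\ker\alpha_t)^{\perp}$ (via \eqref{relation for domain predomain} at $t$) so that $\alpha_s$ acts correctly on it, and exploiting the non-increasing families of projections in \eqref{families of projections}. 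These verifications are routine but delicate, and once organized correctly they yield both the semigroup property and the remaining assertions of the proposition.
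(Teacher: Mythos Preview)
Your strategy is essentially the same as the paper's: reduce to the pointwise equivalence in \cite[Lemma~4.20]{kwa-exel}, which immediately gives (i)$\Rightarrow$(ii), (i)$\Rightarrow$(iii), and the identities \eqref{relation for domain predomain}; then for the converses verify the semigroup law for the map not assumed to be a homomorphism. The difference is purely in the packaging of that verification. The paper computes $L_s\circ L_t=L_{s+t}$ directly from the explicit formula \eqref{corner transfer formula}, using that $\{\overline{\alpha}_t(1)\}$ is non-increasing; you instead check that $(A,\alpha_{s+t},L_s\circ L_t)$ is itself a corner system and invoke uniqueness. Both routes rest on the same telescoping computation (your step (b)), which indeed works once one writes $\alpha_{s+t}=\alpha_t\alpha_s$ so that the pair $\alpha_sL_s$ collapses first.

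One caveat: your claim that (iii)$\Rightarrow$(i) is ``symmetric'' is a bit too quick. The paper's proof of that direction is visibly different from (ii)$\Rightarrow$(i): it uses that $\overline{L}_t(1)$ is central (which has no analogue for $\overline{\alpha}_t(1)$) and an auxiliary identity $\overline{\alpha}_t(\overline{L}_{s+t}(1))=\overline{\alpha}_t(1)\overline{L}_s(1)$. Your uniqueness approach does still go through---writing $L_{s+t}=L_tL_s$ (abelianness of $G$) allows one to verify that $(A,\alpha_s\alpha_t,L_{s+t})$ is a corner system and then match $\alpha_s\alpha_t$ with $\alpha_{s+t}$---but the details are not a mirror image of the (ii)$\Rightarrow$(i) case and deserve to be written out rather than waved off.
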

\begin{proof} By \cite[Lemma 4.20]{kwa-exel}, for each $t\in G^+$ the following statements  are equivalent:
\begin{itemize}
\item $(A,\alpha_t,L_t)$ is a corner system; 
\item $\alpha_t$ has a corner range,   complemented kernel and $L_t$ is given by \eqref{corner transfer formula};
\item  $L_t(A)$ is a complemented ideal in $A$,   $L_t((\ker L_t|_{MD(L_t)})^{\bot})=L_t(A)$ and $\alpha_t$ is given by \eqref{corner endomorphism formula};
\end{itemize}
and if these conditions are satisfied then \eqref{relation for domain predomain} holds.  Thus item (i) implies (ii) and (iii). To show  (ii)$\Rightarrow$ (i), we need to check that $L_t$'s given by \eqref{corner transfer formula} satisfy the semigroup law. To this end, note that $\{\overline{\alpha}_t\}_{t\in G^+}$ form a semigroup of endomorphisms of $M(A)$, and in particular $\{\overline{\alpha}_t(1)\}_{t\in G^+}$ is  a  non-increasing family of projections, cf. \cite[Page 405]{Kwa-Leb}. Thus for any $t,s\in G^+$ and $a\in A$
we have
\begin{align*}
L_s(L_t(a))&=\alpha_s^{-1}\left(\overline{\alpha}_s(1) L_t(a) \overline{\alpha}_s(1)\right)=\alpha_s^{-1}\left( L_t(\overline{\alpha}_t(\overline{\alpha}_s(1))a\overline{\alpha}_t(\overline{\alpha}_s(1))) \right)\\
&=\alpha_s^{-1}\left(\alpha_t^{-1}(\overline{\alpha}_{s+t}(1)a\overline{\alpha}_{s+t}(1)) \right)=\alpha_{s+t}^{-1}\left(\overline{\alpha}_{s+t}(1)a\overline{\alpha}_{s+t}(1) \right)\\
&=L_{t+s}(a).
\end{align*}
Similarly, to show  (iii)$\Rightarrow$ (i) we need to check that $\al_t$'s given by \eqref{corner endomorphism formula} satisfy the semigroup law. To this end, note that  $\overline{L}_t(1)$ is a central projection in $M(A)$ such that $\overline{L}_t(1)A=L_t(A)$. In particular,  $\alpha_t$ is  given by the formula 
$$
\alpha_t(a)=L_t^{-1}(\overline{L}_t(1)a), \qquad a\in A.
$$
Moreover, since $L_{s+t}(A)=L_{t}(L_s(A))\subseteq L_{t}(A)$ we conclude that  the family  $\{\overline{L}_t(1)\}_{t\in G^+}$ is non-increasing.
Now,  using \eqref{relation for complete operators}, for any $t,s\in G^+$ and $a\in A$ we get
$$
\overline{\alpha}_t(\overline{L}_{s+t}(1))=\overline{\alpha}_t(\overline{L}_{t}(\overline{L}_{s}(1)))=\overline{\alpha}_t(1)\overline{L}_{s}(1)\overline{\alpha}_t(1)=\overline{\alpha}_t(1)\overline{L}_{s}(1).
$$
Hence 
\begin{align*}
\al_s(\al_t(a))&=L_s^{-1}(\overline{L}_{s}(1)\al_t(a))=L_s^{-1}(\overline{L}_{s}(1)\overline{\al}_t(1)\al_t(a))=L_s^{-1}(\overline{\alpha}_t(\overline{L}_{s+t}(1))\al_t(a))
\\
&=L_s^{-1}(\overline{\alpha}_t(\overline{L}_{s+t}(1)a))=L_s^{-1}(L_t^{-1}(\overline{L}_{s+t}(1)a))=L_{s+t}^{-1}(\overline{L}_{s+t}(1)a)
\\
&=\al_{s+t}(a).
\end{align*}
\end{proof}
We also reveal a connection between   semigroup corner systems $(A,G^+,\alpha,L)$ and the concept of an interaction group introduced in \cite{exel4}. We emphasize that Exel in \cite{exel4} considered arbitrary discrete groups but he assumed that  the algebra $A$ and the maps involved in an interaction are unital. Since the latter  requirements  applied to $(A,G^+,\alpha,L)$ would force $\alpha$ and $L$ to act by automorphisms, we consider a version of \cite[Definition 3.1]{exel4} where we drop  the assumptions on  units. We formulate it using our abelian group $G$.

\begin{defn}[cf. Definition 3.1 of \cite{exel4}]
An \emph{interaction group} is a triple $(A,G,\VV)$ where $\VV:G\to \Pos(A)$ is a partial representation, that is, $\VV_0=id$,  and
\begin{equation}\label{partial representation}
 \VV_g \VV_h \VV_{-h}=\VV_{g+h}\VV_{-h},  \qquad \VV_{-g} \VV_{g} \VV_{h}=\VV_{-g}\VV_{g+h}, \qquad g,h \in G; 
\end{equation}
and for each $g\in G$ the pair $(\VV_g,\VV_{-g})$ is an interaction in the sense of \cite{exel-inter}, that is in view of \eqref{partial representation} it suffices to require that 
$$
\VV_g(A)\subseteq MD(\VV_{-g}), \qquad\textrm{and}\qquad  \VV_{-g}(A)\subseteq MD(\VV_{g}),
$$
cf. \cite[Remark 3.15]{kwa-exel}.
\end{defn}
\begin{prop}
Suppose that 
$(A,G^+,\alpha,L)$ is a  semigroup corner system. Then putting
\begin{equation}\label{corner interaction}
\VV_g:=
\begin{cases}\alpha_g  & \textrm{if } g \geq 0,\\
L_{-g}  & \textrm{if } g \leq 0,
\end{cases} \qquad g \in G,
\end{equation}
we get an interaction group $\VV$.
\end{prop}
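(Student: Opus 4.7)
The plan is to verify the three defining conditions of an interaction group directly: $\VV_0=\mathrm{id}$, the interaction pair property for each index, and the partial representation identities. The first is immediate since, as $\alpha$ and $L$ are semigroup homomorphisms from $G^+$ with identity $0$, we have $\alpha_0=L_0=\mathrm{id}$. For the interaction pair property of $(\VV_g,\VV_{-g})$, say for $g\geq 0$, I would use \eqref{corner transfer formula} and \eqref{relation for complete operators} to see that $L_g\circ\alpha_g(a)=a\overline{L}_g(1)$ and $\alpha_g\circ L_g(a)=\overline{\alpha}_g(1)a\overline{\alpha}_g(1)$. These formulas show that $L_g$ inverts $\alpha_g$ on $\alpha_g(A)$, whence $\alpha_g(A)\subseteq MD(L_g)$; the symmetric inclusion $L_g(A)\subseteq MD(\alpha_g)$ follows similarly from \eqref{corner endomorphism formula}.

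The hard part is the partial representation identities $\VV_g\VV_h\VV_{-h}=\VV_{g+h}\VV_{-h}$ and $\VV_{-g}\VV_g\VV_h=\VV_{-g}\VV_{g+h}$ for arbitrary $g,h\in G$. Because $G$ is totally ordered, I would split into cases according to the signs of $g$, $h$ and $g+h$. The easy cases are those where $g,h$ have the same sign (so the semigroup law applies directly) and those where the intermediate element $\VV_h\VV_{-h}(a)=e_h a e_h$, with $e_h=\overline{\alpha}_h(1)$ or $\overline{L}_{-h}(1)$, gets absorbed by $\VV_{g+h}$. In each case the verification reduces to a short calculation using: the semigroup laws $\alpha_s\alpha_t=\alpha_{s+t}$ and $L_sL_t=L_{s+t}$ on $G^+$; the two corner identities above; the non-increasing nature of $\{\overline{\alpha}_t(1)\}_{t\geq 0}$ and $\{\overline{L}_t(1)\}_{t\geq 0}$ noted in \eqref{families of projections}; and the centrality of $\overline{L}_t(1)$ in $M(A)$.

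The main obstacle is the mixed-sign case, for instance $g\geq 0$, $h\leq 0$, $g+h\leq 0$, where the identity reduces to showing $\alpha_g(a)\,\overline{\alpha}_g(\overline{L}_{-h}(1))=\alpha_g(a)\,\overline{L}_{-g-h}(1)$. The key trick is the decomposition $-h=g+(-g-h)$ in $G^+$, which yields $\overline{L}_{-h}(1)=\overline{L}_g(\overline{L}_{-g-h}(1))$ by the $L$-semigroup law, and then applying the corner identity $\overline{\alpha}_g\overline{L}_g(x)=\overline{\alpha}_g(1)x\overline{\alpha}_g(1)$ to $x=\overline{L}_{-g-h}(1)$ together with centrality of $\overline{L}_{-g-h}(1)$ gives
\[
\overline{\alpha}_g(\overline{L}_{-h}(1))=\overline{\alpha}_g(1)\,\overline{L}_{-g-h}(1)\,\overline{\alpha}_g(1)=\overline{\alpha}_g(1)\,\overline{L}_{-g-h}(1),
\]
from which the desired equality is immediate. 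Analogous decompositions handle the remaining mixed-sign subcases. The second family of identities $\VV_{-g}\VV_g\VV_h=\VV_{-g}\VV_{g+h}$ follows by a parallel case analysis (or by replacing $(g,h)$ with suitable substitutions and invoking the just-proved identity together with the interaction pair property).
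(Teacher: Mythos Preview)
Your proposal is correct and follows essentially the same route as the paper: both verify the partial representation identities by a case-by-case analysis according to the signs of $g$, $h$, and $g+h$, using the corner identities $\alpha_t\circ L_t(a)=\overline{\alpha}_t(1)a\overline{\alpha}_t(1)$ and $L_t\circ\alpha_t(a)=\overline{L}_t(1)a$, the semigroup laws, the monotonicity of the projection families, and centrality of $\overline{L}_t(1)$; your worked mixed-sign case (decomposing $-h=g+(-g-h)$ and applying $\overline{\alpha}_g\overline{L}_g$ to $\overline{L}_{-g-h}(1)$) is exactly the computation the paper carries out in its case~5). One small remark: your justification ``$L_g$ inverts $\alpha_g$ on $\alpha_g(A)$, whence $\alpha_g(A)\subseteq MD(L_g)$'' needs one more line, since being multiplicative on a subalgebra is weaker than lying in the multiplicative domain; the missing step is that $\alpha_g\circ L_g$ is a conditional expectation onto the corner $\alpha_g(A)$, so for $c\in\alpha_g(A)$ and $b\in A$ one has $L_g(bc)=\alpha_g^{-1}\big((\alpha_g L_g)(b)\,c\big)=L_g(b)L_g(c)$ (the paper sidesteps this by citing \cite[Proposition~4.13]{kwa-exel}).
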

\begin{proof} It is tempting to apply Proposition 13.4 from \cite{exel4}. However, its proof relies  in an essential way on the the assumption that the maps preserve units, which we dropped in our setting. Thus we need to prove it ad hoc. 
In particular, by \cite[Proposition 4.13]{kwa-exel} we know that for each $g\in G$, $(\VV_g,\VV_{-g})$ is an interaction. Hence it suffices to check \eqref{partial representation}. To this end,  note that for $t\in G^+$ and $a\in A$ we have
$$
(\VV_t\VV_{-t})(a)=\overline{\alpha}_t(1)a\overline{\alpha}_t(1),\qquad (\VV_{-t}\VV_t)(a)=\overline{L}_t(1)a.
$$
We recall that  the families of projections \eqref{families of projections} are non-increasing. 
We proceed with a case by case proof:

1) If $g,h\geq 0$ or $g,h \leq 0$ then relations \eqref{partial representation} hold by semigroup laws for $\alpha$ and $L$.

2) If  $g \leq 0\leq h$ and  $ h\leq -g$, then 
$$
(\VV_g \VV_h \VV_{-h})(a)=L_{-g}( \overline{\alpha}_h(1)a\overline{\alpha}_h(1))=L_{-g}(a)=L_{-g-h}(L_h(a))=(\VV_{g+h} \VV_{h})(a),
$$
\begin{align*}
(\VV_{-g} \VV_g \VV_{h})(a)&= \overline{\alpha}_{-g}(1)\alpha_h(a)\overline{\alpha}_{-g}(1)=\alpha_h(\overline{\alpha}_{-g-h}(1)a\overline{\alpha}_{-g-h}(1))
\\
&= \alpha_h(\alpha_{-g-h}(L_{-g-h}(a)))=\alpha_{-g}(L_{-g-h}(a))=(\VV_{-g} \VV_{g+h})(a).
\end{align*}

3) If   $g \leq 0\leq h$ and  $ h\geq -g$, then 
\begin{align*}
(\VV_g \VV_h \VV_{-h})(a)&=L_{-g}( \overline{\alpha}_h(1)a\overline{\alpha}_h(1))=\overline{\alpha}_{g+h}(1)L_{-g}(a)\overline{\alpha}_{g+h}(1)
\\
&=\alpha_{g+h}(L_{g+h}(L_{-g}(a)))=\alpha_{g+h}(L_{h}(a))=(\VV_{g+h} \VV_{h})(a),
\end{align*}
\begin{align*}
(\VV_{-g} \VV_g \VV_{h})(a)&= \overline{\alpha}_{-g}(1)\alpha_h(a)\overline{\alpha}_{-g}(1)=\alpha_h(a)=\alpha_{-g}(\alpha_{g+h}(a))
=(\VV_{-g} \VV_{g+h})(a).
\end{align*}

4) If  $h \leq 0\leq g$ and  $ g\geq -h$, then 
$$
(\VV_g \VV_h \VV_{-h})(a)=\alpha_{g}( \overline{L}_{-h}(1)a)=\al_{g}(a)=\alpha_{g+h}(\alpha_{-h}(a))=(\VV_{g+h} \VV_{h})(a),
$$
\begin{align*}
(\VV_{-g} \VV_g \VV_{h})(a)&= \overline{L}_{g}(1)L_{-h}(a)=L_{-h}(\overline{\alpha}_{-h}(\overline{L}_{g}(1))a)=L_{-h}\big(\overline{\alpha}_{-h}(\overline{L}_{-h}(\overline{L}_{g+h}(1)))a\big)
\\
&= L_{-h}(\overline{\alpha}_{-h}(1) \overline{L}_{g+h}(1) \overline{\alpha}_{-h}(1)a)=L_{-h}( \overline{L}_{g+h}(1)a)
\\
&=L_{-h}( L_{g+h}(\alpha_{g+h}(a)))=L_{g}(\alpha_{g+h}(a))=(\VV_{-g} \VV_{g+h})(a).
\end{align*}

5) If   $h \leq 0\leq g$ and  $ g\leq -h$, then 
\begin{align*}
(\VV_g \VV_h \VV_{-h})(a)&= \alpha_{g}( \overline{L}_{-h}(1)a)=\overline{\alpha}_{g}(\overline{L}_{g}( \overline{L}_{-g-h}(1))\alpha_{g}(a)
\\
&=\overline{\alpha}_{g}(1) \overline{L}_{-g-h}(1)\overline{\alpha}_{g}(1) \alpha_{g}(a)=\overline{L}_{-g-h}(1) \alpha_{g}(a)
\\
&=L_{-g-h}( \alpha_{-h}(a))= (\VV_{g+h} \VV_{h})(a),
\end{align*}
\begin{align*}
(\VV_{-g} \VV_g \VV_{h})(a)&= \overline{L}_{g}(1) L_{-h}(a)=L_{-h}(a)=L_{g}(L_{-g-h}(a))
=(\VV_{-g} \VV_{g+h})(a).
\end{align*}

\end{proof}
\subsection{Fell bundles and $C^*$-algebras associated to semigroup corner systems}
Let $(A,G^+,\alpha,L)$ be a  semigroup corner system. 
The authors of  \cite{Kwa-Leb} (who considered the case with $A$ unital) associated  to $(A,G^+,\alpha,L)$ a Banach $*$-algebra $\ell^1(G^+,\alpha,A)$ and then constructed a faithful representation of $\ell^1(G^+,\alpha,A)$ on a Hilbert space. This regular representation induces a pre-$C^*$-norm on $\ell^1(G^+,\alpha,A)$. Completion of $\ell^1(G^+,\alpha,A)$ in this norm yields the crossed product $A\rtimes_\alpha G^+$, see  \cite[Subsection 6.5]{Kwa-Leb}. In this subsection we generalize this construction to non-unital case, by making  explicit the Fell bundle structure underlying the $*$-Banach algebra $\ell^1(G^+,\alpha,A)$.

We associate to $(A,G^+,\alpha,L)$ a Fell bundle $\B=\{B_g\}_{g\in G}$ defined as follows. For any $t\in G^+$, we denote by $\delta_t$ and $\delta_{-t}$  abstract markers and  consider Banach spaces
\begin{equation}\label{banach spaces for Fell bundel}
B_t:=\{a\delta_t:  a \in A\overline{\al}_t(1)\}, \qquad\quad   B_{-t}:=\{a\delta_{-t}:  a \in \overline{\al}_t(1)A\} 
\end{equation}
naturally isomorphic to $A\overline{\al}_t(1)$ and $\overline{\al}_t(1)A$, respectively.
We define the `star' and the `multiplication' operations by the formulas: 
\begin{equation}\label{invol}
 (a\delta_{g})^*:=a^*\delta_{-g},
\end{equation}

\begin{equation}\label{mul1}
a\delta_{g}\cdot b\delta_{h}
:=\begin{cases} 
a\al_{g}(b)\delta_{g+h} & g,h \geq 0, \\
 L_{-g}(ab)\delta_{g+h}, &   g<0,\,\,h \geq 0,  \,\, g+h \geq 0, \\
 L_{h}(ab)\delta_{g+h}, &  g<0,\,\, h \geq 0,\,\, \, g+h <0,\\
 a\alpha_{g+h}(b)\delta_{g+h}, &   h<0,\,\,g \geq 0,  \,\, g+h \geq 0, \\
\alpha_{-g-h}(a)b\delta_{g+h}, & h<0,\,\, g \geq 0,\,\, \, g+h <0,\\
 \al_{h}(a)b\delta_{g+h} & g,h < 0  .
 \end{cases}
\end{equation}
 To understand where these relation come from, see Remark \ref{dictionary remark} below. The well-definiteness of \eqref{invol} is clear, and with a little more effort, cf. the first part of  proof of \cite[Proposition 4.2]{Kwa-Leb},   it can  also be seen for \eqref{mul1}.
\begin{prop}\label{Fell bundle of corner system}
The family $\B=\{B_g\}_{g\in G}$   of Banach spaces \eqref{banach spaces for Fell bundel} with operations  \eqref{invol}, \eqref{mul1} forms a Fell bundle.
\end{prop}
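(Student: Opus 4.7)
The plan is to verify the Fell bundle axioms directly from the formulas \eqref{banach spaces for Fell bundel}--\eqref{mul1}, leveraging the algebraic identities \eqref{relation for complete operators}, \eqref{corner transfer formula}, \eqref{corner endomorphism formula}, the semigroup laws for $\alpha$ and $L$, and the monotonicity of the families of projections in \eqref{families of projections}.

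First I would observe that each $B_g$ inherits a Banach space structure from its natural identification with a closed subspace of $A$: for $g \geq 0$ with the right ideal $A\overline{\alpha}_g(1)$, and for $g \leq 0$ with the left ideal $\overline{\alpha}_{-g}(1)A$. Since $\overline{\alpha}_0(1)=1$, this gives $B_0 \cong A$, and the operations \eqref{invol}, \eqref{mul1} restricted to $B_0$ recover the $*$-algebra structure of $A$. Next, I would check that $B_g B_h \subseteq B_{g+h}$ by going through the six cases in \eqref{mul1}. For instance, for $g, h \geq 0$, one uses $\alpha_g(A\overline{\alpha}_h(1)) \subseteq \overline{\alpha}_g(1)A\overline{\alpha}_{g+h}(1) \subseteq A\overline{\alpha}_{g+h}(1)$ via the monotonicity $\overline{\alpha}_g(1) \geq \overline{\alpha}_{g+h}(1)$. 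For the mixed case $g < 0 \leq h$ with $g+h \geq 0$, one combines $L_{-g}(A) = \overline{L}_{-g}(1)A$ with \eqref{relation for complete operators} applied to $\alpha_{g+h}$.

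The involutive axioms $(x^*)^* = x$ and $\|x^*\| = \|x\|$ are immediate from \eqref{invol}. The $C^*$-identity and positivity of $x^*x$ in $B_0 \cong A$ are the crucial checks, and they can be handled in one stroke: for $x = a\delta_g$ with $g \geq 0$, applying \eqref{mul1} to $x^*x = (a^*\delta_{-g})(a\delta_g)$ gives $x^*x = L_g(a^*a)\delta_0$. Since $a \in A\overline{\alpha}_g(1)$ we have $a^*a \in \overline{\alpha}_g(1)A\overline{\alpha}_g(1) = \alpha_g(A)$, and then \eqref{corner transfer formula} yields $L_g(a^*a) = \alpha_g^{-1}(a^*a)$, which is positive in $A$ with norm $\|a^*a\| = \|a\|^2 = \|x\|^2$. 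The case $g < 0$ is symmetric. Submultiplicativity $\|xy\| \leq \|x\|\|y\|$ then follows from the $C^*$-identity together with the bound $\|y^*x^*xy\| \leq \|y^*\|\cdot\|x^*x\|\cdot\|y\|$, once associativity is in place.

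The remaining axioms — associativity $(xy)z = x(yz)$ and the $*$-anti-homomorphism property $(xy)^* = y^*x^*$ — reduce to a case analysis over the signs of the indices. The number of cases can be cut down by involution symmetry: associativity for the triple $(x,y,z)$ of indices $(g,h,k)$ is equivalent, via $*$, to associativity for $(z^*,y^*,x^*)$ of indices $(-k,-h,-g)$. I expect the main obstacle to be precisely this bookkeeping: no individual case is conceptually hard, since each reduces to one invocation of \eqref{relation for complete operators} together with the semigroup laws and the monotonicity in \eqref{families of projections}, but keeping track of the subcases corresponding to the relative signs of $g$, $h$, $k$, $g+h$, $h+k$, $g+h+k$ demands discipline. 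A cleaner but heavier alternative would be to first dilate $(A,G^+,\alpha,L)$ to a genuine $G$-action $(B,G,\beta)$ on a larger $C^*$-algebra $B$ in which $A$ sits as a full corner via a projection $p \in M(B)$, realize $B_g$ as the closed subspace $p \cdot B \cdot \beta_g(p) \cdot u_g$ inside the crossed product $B \rtimes_\beta G$ (where $u_g$ is the implementing unitary), identify this with \eqref{banach spaces for Fell bundel}, and inherit all Fell bundle axioms from the ambient $C^*$-structure; the tradeoff is the overhead of importing the dilation machinery.
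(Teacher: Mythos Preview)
Your proposal is correct and follows essentially the same route as the paper: verify the Fell bundle axioms directly from \eqref{invol}--\eqref{mul1}, with the substantive work being the anti-homomorphism property $(xy)^*=y^*x^*$ and associativity, and use the involution symmetry to reduce the associativity case analysis by half. The paper's proof is terser---it defers the actual case-by-case computations to \cite[Theorem 4.3]{Kwa-Leb} and only records the adjoint trick for $g+h+f\leq 0$---whereas you spell out the $C^*$-identity and positivity checks explicitly and mention the dilation alternative, but the underlying argument is the same.
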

\begin{proof} The only not obvious axioms of a Fell bundle that we need to check, see, for instance \cite[Definition 16.1]{exel-book}, are:
\begin{equation}\label{relations to be checked to get a Fell bundle}
(a\delta_{g}\cdot b\delta_{h})^*=( b\delta_{h})^*\cdot(a\delta_{g})^*, \qquad a\delta_{g} \cdot (b\delta_{h}\cdot c\delta_{f}) =(a\delta_{g}\cdot b\delta_{h}) \cdot c\delta_{f},
\end{equation}
for all $a\delta_{g}\in B_g$,  $b\delta_{h}\in B_h$,  $c\delta_{f}\in B_f$, $g,h,f \in G^+$. The first  relation in \eqref{relations to be checked to get a Fell bundle} follows from the first part of the proof of \cite[Theorem 4.3]{Kwa-Leb}, but can  also be  inferred directly from \eqref{mul1}. The second relation in \eqref{relations to be checked to get a Fell bundle} was checked in the second  part of the proof of \cite[Theorem 4.3]{Kwa-Leb} in the case when $g+h+f \geq 0$. The case $g+h+f \leq 0$ can be  covered by passing to adjoints.
\end{proof}
\begin{defn}\label{definition of semigroup crossed product}
We call $\B=\{B_g\}_{g\in G}$ constructed above the \emph{Fell bundle associated to the corner system}   $(A,G^+,\alpha,L)$. We define the \emph{crossed product} for $(A,G^+,\alpha,L)$ to be  
$$
A\rtimes_{\alpha,L}G^+:=C^*(\B),
$$
the cross sectional $C^*$-algebra of $\B$. We will identify $A$ with $B_0 \subseteq A\rtimes_{\alpha,L}G^+$.
\end{defn}
The crossed product $A\rtimes_{\alpha,L}G^+$ can be viewed as a  crossed product for the semigroup $\alpha$, for the  semigroup $L$, or as a crossed product for the group interaction $\VV$ given by   \eqref{corner interaction}. To show this we  use the following lemma, which is of its own interest. It is related to the problem of extension of a representation of a semigroup  to a partial representation of a group, studied for instance in \cite{exel-book}, cf. \cite[Definition 31.19]{exel-book}.
\begin{lem}\label{extensions of semigroup homomorphisms}
Any semigroup homomorphism $U:G^+\to B$ to  a multiplicative subsemigroup of a $C^*$-algebra $B$ consisting of  partial isometries  extends to a $*$-partial representation  $V:G\to B$ of $G$.
\end{lem}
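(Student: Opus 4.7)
The plan is to define $V: G \to B$ by
\begin{equation*}
V_g := \begin{cases} U_g & \text{if } g \in G^+, \\ U_{-g}^* & \text{if } g \in -G^+, \end{cases}
\end{equation*}
and to verify that this is a $*$-partial representation. The definition is unambiguous since $G = G^+ \cup (-G^+)$ with $G^+ \cap (-G^+) = \{0\}$ and both formulas yield $V_0 = U_0$, which is a projection because it is an idempotent partial isometry. By construction $V$ extends $U$ and satisfies $V_{-g} = V_g^*$, so only the partial-representation identities
\begin{equation*}
V_g V_h V_{-h} = V_{g+h} V_{-h}, \qquad V_{-g} V_g V_h = V_{-g} V_{g+h}, \qquad g,h\in G,
\end{equation*}
remain to be established. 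Since $V_{-g}=V_g^*$, passing to adjoints interchanges these two identities, so it suffices to verify the first.

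I would then proceed by exhaustive case analysis on the signs of $g$, $h$ and $g+h$, which is permitted by the total order on $G$. The two "same-sign" cases $g,h\in G^+$ and $g,h\in -G^+$ are immediate from the semigroup law $U_sU_t=U_{s+t}$ together with the partial-isometry identity $U_sU_s^*U_s=U_s$. In the mixed case $g\in G^+$, $h=-k$ with $k\in G^+$ and $g\geq k$, factoring $U_g = U_{g-k}U_k$ and using $U_kU_k^*U_k=U_k$ gives
\begin{equation*}
V_gV_hV_{-h} \;=\; U_gU_k^*U_k \;=\; U_{g-k}U_kU_k^*U_k \;=\; U_{g-k}U_k \;=\; U_g \;=\; V_{g+h}V_{-h}.
\end{equation*}
The remaining mixed-sign cases (in particular $g\in -G^+$, $h\in G^+$) either reduce to the previous one by adjoint-symmetry or to the subcase treated below.

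The main obstacle is the subcase $g\in G^+$, $h=-k\in -G^+$ with $0\leq g<k$, which requires the identity $U_gU_k^*U_k = U_{k-g}^*U_k$. Using the factorizations $U_k = U_gU_{k-g}=U_{k-g}U_g$ (available because $G$ is abelian) together with the partial-isometry identities, both sides can be rewritten in terms of the final projection $P_g:=U_gU_g^*$ and the initial projection $Q_{k-g}:=U_{k-g}^*U_{k-g}$, and the identity reduces to the relation $P_gQ_{k-g}U_g = Q_{k-g}U_g$. This in turn follows from the standing hypothesis that every product $U_sU_t$ belongs to a semigroup of partial isometries, so that $(U_sU_t)^*(U_sU_t)$ and $(U_sU_t)(U_sU_t)^*$ are projections; combined with the commutativity $U_sU_t = U_tU_s$, this forces enough compatibility between the families $\{P_s\}_{s\in G^+}$ and $\{Q_t\}_{t\in G^+}$ to yield the required relation, and hence the partial-representation axioms.
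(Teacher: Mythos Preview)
Your approach is the same as the paper's: define $V_g=U_g$ for $g\ge 0$ and $V_g=U_{-g}^*$ for $g\le 0$, reduce to the single identity $V_gV_hV_{-h}=V_{g+h}V_{-h}$, and verify it by case analysis on the signs of $g,h,g+h$, using the commutation of initial and final projections.

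Two places need tightening. First, your final sentence (``forces enough compatibility \ldots'') is exactly the fact the paper states cleanly at the outset: a product $uv$ of partial isometries is again a partial isometry if and only if $u^*u$ commutes with $vv^*$ (see \cite[Proposition 12.8]{exel-book}); since $U_sU_t=U_{s+t}$ is a partial isometry by hypothesis, all projections $U_sU_s^*$ and $U_t^*U_t$ commute. With this in hand, your reduction $P_gQ_{k-g}U_g=Q_{k-g}P_gU_g=Q_{k-g}U_g$ is immediate.

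Second, your claim that the cases $g\le 0,\ h\ge 0$ ``reduce to the previous one by adjoint-symmetry'' is not justified: passing to adjoints in $V_gV_hV_{-h}=V_{g+h}V_{-h}$ yields an instance of the \emph{other} identity $V_{-g'}V_{g'}V_{h'}=V_{-g'}V_{g'+h'}$, not another instance of the first one with swapped signs. The paper simply treats $g\le 0\le h$ (split into $h\le -g$ and $h\ge -g$) by direct computation, and the same commutation fact disposes of them; you should do likewise rather than appeal to a symmetry that is not there.
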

\begin{proof}
Assume that $U:G^+\to B$ is a semigroup homomorphism attaining values in partial isometries.  We only need to show, cf. \cite[9.2]{exel-book}, that putting $V_t:=U_t$ and  $V_{-t}:=U_t^*$, for $t\in G^+$, we have
$$
V_g V_h V_{-h}=V_{g+h}V_{-h},   \qquad g,h \in G.
$$ 
Since  the product of two partial isometries is a partial isometry if and only if the corresponding initial and final projections  commute, see \cite[Proposition 12.8]{exel-book},  we conclude that the projections $U_tU_t^*$, $U^*_s U_s$  commute for all $s,t \in G^+$. We have the following cases:

1) If $g,h\geq 0$ or $g,h \leq 0$ then $V_g V_h V_{-h}=V_{g+h}V_{-h}$ holds because  $V$ is a semigroup  homomorphism when restricted to  $G^+$ or to $-G^+$.

2) If  $g \leq 0\leq h$ and  $ h\leq -g$, then 
$$
V_g V_h V_{-h}=U_{-g}^* U_h U_{h}^*= (U_{-g-h}^* U_h^*)U_h U_{h}^*=U_{-g-h}^* U_h^*=V_{g+h}V_{-h}.
$$

3) If   $g \leq 0\leq h$ and  $ h\geq -g$, then 
\begin{align*}
V_g V_h V_{-h}&=U_{-g}^* U_h U_{h}^*=U_{-g}^*(U_{-g} U_{g+h})(U_{g+h}^* U_{-g})=(U_{-g}^*U_{-g}) (U_{g+h}U_{g+h}^*) U_{-g}
\\
&= (U_{g+h}U_{g+h}^*)(U_{-g}^*U_{-g}) U_{-g}=U_{g+h}U_{g+h}^* U_{-g}=U_{g+h}U_{h}^*=V_{g+h}V_{-h}.
\end{align*}

4) If  $h \leq 0\leq g$ and  $ g\geq -h$, then 
$$
V_g V_h V_{-h}=U_{g} U_{-h}^* U_{-h}=U_{g+h}U_{-h} U_{-h}^* U_{-h}=U_{g+h}U_{-h}=V_{g+h}V_{-h}.
$$

5) If   $h \leq 0\leq g$ and  $ g\leq -h$, then 
\begin{align*}
V_g V_h V_{-h}&=U_{g} U_{-h}^* U_{-h}=U_{g} (U_{g}^* U_{-g-h}^*) (U_{-g-h}  U_{g})=(U_{g} U_{g}^*) (U_{-g-h}^* U_{-g-h})  U_{g})
\\
&= (U_{-g-h}^* U_{-g-h}) (U_{g} U_{g}^*) U_{g})=U_{-g-h}^* U_{-g-h} U_{g}=U_{-g-h}^* U_{-h}=V_{g+h}V_{-h}.
\end{align*}

\end{proof}
\begin{prop}\label{Proposition for representations} Suppose $(A,G^+,\alpha,L)$ is a corner system. Let $\pi:A\to B(H)$ be a non-degenerate representation and let $U:G^+\to B(H)$ be a mapping. The following statements are equivalent:
\begin{itemize}
\item[(i)] $U$ is a semigroup homomorphism, and 
$$
U_t\pi(a)U_t^*=\pi(\alpha_t(a)), \qquad  U_t^*\pi(a)U_t=\pi(L_t(a)), \qquad \textrm{for all }a\in A,\,  t\in G^+.
$$
\item[(ii)] $U$ is a semigroup homomorphism, and 
$$
U_t\pi(a)U_t^*=\pi(\alpha_t(a)), \qquad  \pi((\ker\alpha_t)^\bot)\subseteq U_t^*U_t\pi(A),\quad \textrm{for all }a\in A,\,  t\in G^+.
$$

\item[(iii)]  $U$ is a semigroup homomorphism, and 
$$
U_t^*\pi(a)U_t=\pi(L_t(a)), \quad  \pi((\ker L_t|_{MD(L_t)})^{\bot})\subseteq U_t\pi(A)U_t^*,\quad \textrm{for all }a\in A,\,  t\in G^+.
$$
\item[(iv)]  $U$ extends to a $*$-partial representation  $V:G\to B(H)$ of $G$ such that  
$$
V_g\pi(a)V_{g^{-1}}=\pi(\VV_g(a)), \qquad \textrm{for all }a\in A,\,  g\in G,
$$
where $\VV$ is the group interaction given by   \eqref{corner interaction}.
\end{itemize}
\end{prop}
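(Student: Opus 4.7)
The plan is to treat the four conditions together by exploiting the non-degenerate extension $\overline\pi:M(A)\to B(H)$ of $\pi$ to pin down the initial and final projections of $U_t$, and then to use the corner identity \eqref{relation for complete operators} together with the descriptions in \eqref{relation for domain predomain} to transport covariance data between the $\alpha$-side and the $L$-side. The common preliminary observation is that in each of (i)--(iii) the operator $U_t$ is automatically a partial isometry with
\[
U_tU_t^*=\overline\pi(\overline\alpha_t(1)), \qquad U_t^*U_t=\overline\pi(\overline L_t(1)),
\]
which follows by evaluating the available covariance identity on a bounded approximate unit of $A$ (and using the second identity in (iii), respectively the first in (ii), combined with the other information).

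With this in hand, the implications (i)$\Rightarrow$(ii) and (i)$\Rightarrow$(iii) are immediate: the two projection equalities above, combined with the identifications $(\ker\alpha_t)^\bot=\overline L_t(1)A$ and $(\ker L_t|_{MD(L_t)})^\bot=\overline\alpha_t(1)A\overline\alpha_t(1)$ from \eqref{relation for domain predomain}, give the stated inclusions (in fact equalities) by direct computation. For (ii)$\Rightarrow$(i) one uses $U_t=U_tU_t^*U_t$ and \eqref{relation for complete operators} to rewrite
\[
U_t^*\pi(a)U_t=U_t^*\pi(\overline\alpha_t(1)a\overline\alpha_t(1))U_t=U_t^*\pi(\alpha_t(L_t(a)))U_t=U_t^*U_t\,\pi(L_t(a))\,U_t^*U_t,
\]
and then appeals to the hypothesised inclusion $\pi(L_t(A))\subseteq U_t^*U_t\,\pi(A)$: for self-adjoint $b\in L_t(A)$ there exists $c\in A$ with $\pi(b)=U_t^*U_t\pi(c)=\pi(c)^*U_t^*U_t$, and a short computation gives $U_t^*U_t\pi(b)=\pi(b)=\pi(b)U_t^*U_t$, which by linearity holds for all $b\in L_t(A)$. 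Taking $b=L_t(a)$ collapses the outer projections in the display and yields the $L$-covariance. Dually, for (iii)$\Rightarrow$(i) the inclusion $\pi(\alpha_t(A))\subseteq U_t\pi(A)U_t^*$ furnishes $c_a\in A$ with $\pi(\alpha_t(a))=U_t\pi(c_a)U_t^*$; applying $U_t^*(\cdot)U_t$ to both sides, invoking the transfer identity $L_t(\alpha_t(a))=a\overline L_t(1)$ together with the centrality of $\overline L_t(1)$ in $M(A)$ (cf.\ \eqref{families of projections}), gives $U_t^*U_t(\pi(a)-\pi(c_a))=0$, which, since $U_t^*U_t\xi=0\Leftrightarrow U_t\xi=0$ for the partial isometry $U_t$, forces $U_t\pi(a)U_t^*=U_t\pi(c_a)U_t^*=\pi(\alpha_t(a))$.

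Finally, (i)$\Leftrightarrow$(iv) is a direct application of Lemma~\ref{extensions of semigroup homomorphisms}: the partial isometry property of $U$ already established allows it to be extended uniquely to a $*$-partial representation $V$ of $G$ by $V_t=U_t$, $V_{-t}=U_t^*$ for $t\in G^+$, and via the definition \eqref{corner interaction} of $\VV$, the covariance relation in (iv) for $g\in G^+$ is precisely the $\alpha$-identity in (i), while for $g\in -G^+$ it is the $L$-identity; the converse is immediate by restriction of $V$ to $G^+$, which is automatically a semigroup homomorphism because it coincides with $U$ there. The main obstacle is the collapsing argument in (ii)$\Rightarrow$(i) and (iii)$\Rightarrow$(i): extracting a full two-sided covariance from a one-sided covariance plus a density/inclusion hypothesis requires the simultaneous use of the centrality of the projections $\overline\alpha_t(1)$, $\overline L_t(1)$, the corner relation \eqref{relation for complete operators}, and the self-adjointness trick to pass the projection $U_t^*U_t$ (or $U_tU_t^*$) from one side of $\pi(A)$ to the other.
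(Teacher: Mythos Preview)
Your treatment of the equivalences among (i), (ii) and (iii) is essentially correct and in fact more self-contained than the paper's, which for (i)$\Leftrightarrow$(ii) simply cites \cite[Proposition 4.2]{kwa-rever}. Two minor remarks. First, your (iii)$\Rightarrow$(i) can be streamlined: there is no need to pick an element $c_a$, since the inclusion $\pi(\alpha_t(A))\subseteq U_t\pi(A)U_t^*$ already forces $\pi(\alpha_t(a))=U_tU_t^*\pi(\alpha_t(a))U_tU_t^*$, and then the $L$-covariance together with $L_t(\alpha_t(a))=\overline L_t(1)a$ and $U_t^*U_t=\overline\pi(\overline L_t(1))$ give directly
\[
\pi(\alpha_t(a))=U_t\,\pi(L_t(\alpha_t(a)))\,U_t^*=U_t\,\overline\pi(\overline L_t(1))\pi(a)\,U_t^*=U_t\pi(a)U_t^*.
\]
This is the route taken in the paper. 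Second, in your closing summary you invoke ``the centrality of the projections $\overline\alpha_t(1),\overline L_t(1)$''; but $\overline\alpha_t(1)$ is \emph{not} central in $M(A)$ in general (only $\overline L_t(1)$ is, cf.\ \eqref{families of projections}). Fortunately your actual arguments never use centrality of $\overline\alpha_t(1)$, so this is only a slip.

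There is, however, a genuine gap in your (iv)$\Rightarrow$(i). You assert that the restriction of $V$ to $G^+$ ``is automatically a semigroup homomorphism because it coincides with $U$ there'', but condition (iv) does not assume that $U$ is a semigroup homomorphism---this is precisely what must be deduced. A $*$-partial representation only gives $V_tV_sV_{-s}=V_{t+s}V_{-s}$ for $t,s\in G^+$, not $V_tV_s=V_{t+s}$. The paper closes this gap by first using the covariance $V_{-t}\pi(a)V_t=\pi(L_t(a))$ on an approximate unit to obtain $U_t^*U_t=\overline\pi(\overline L_t(1))$; since $\{\overline L_t(1)\}_{t\in G^+}$ is a non-increasing family of central projections, the projections $\{U_t^*U_t\}_{t\in G^+}$ are non-increasing and mutually commuting, whence $U_{t+s}U_s^*U_s=U_{t+s}$. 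Combining this with the partial-representation axiom yields
\[
U_tU_s=U_tU_sU_s^*U_s=U_{t+s}U_s^*U_s=U_{t+s},
\]
which supplies the missing semigroup law.
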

\begin{proof} 

(i)$\Leftrightarrow$(ii). This follows from \cite[Proposition 4.2]{kwa-rever}.

(i)$\Rightarrow$(iii). This is clear because $(\ker L_t|_{MD(L_t)})^{\bot}=\alpha_t(A)$, see \eqref{relation for domain predomain}.

(iii)$\Rightarrow$(i). Let $a\in A$ and  $t\in G^+$. Denote by $\overline{\pi}:M(A)\to B(H)$ the unique extension of $\pi$ to a representation of $M(A)$. Note that $\overline{\pi}(\overline{L}_t(1))=U^*_tU_t$, so in particular $U_t$ is a partial isometry. Thus in view of \eqref{corner endomorphism formula} and \eqref{relation for domain predomain} we get 
\begin{align*}
\pi(\alpha_t(a))&=U_tU_t^*\pi(\alpha_t(a))U_tU_t^*=\pi(L_t(\alpha_t(a)))=U_t\pi(\overline{L_t}(1)a) U_t^*
\\
&=U_t\overline{\pi}(\overline{L}_t(1))\pi(a) U_t^*=U_t\pi(a) U_t^*.
\end{align*}

(i)$\Rightarrow$(iv). This follows from Lemma \ref{extensions of semigroup homomorphisms} since  $\overline{\pi}(\overline{L}_t(1))=U^*_tU_t$ is a projection and hence $U_t$ is a partial isometry, for every $t\in G^+$.

(iv)$\Rightarrow$(i).  For $t\in G^+$, $U_t$ is a partial isometry and we necessarily have $V_{-t}=U_t^*$. Moreover,  $\overline{\pi}(\overline{L}_t(1))=U^*_tU_t$ and hence $\{U^*_tU_t\}_{t\in G^+}$ forms a non-decreasing family of projections. Using this, for any $t,s\in G^+$,  we get
$$
U_t U_s =  U_t U_s U_s^* U_s = U_{t+s} U_{s}^* U_s=  U_{t+s}.
$$ 

This proves the equivalence of conditions (i)-(iv). 

\end{proof}
\begin{thm}\label{Description  by generators and relations} For any corner system $(A,G^+,\alpha,L)$  there is a semigroup homomorphism $u: G^+\to M(A\rtimes_{\alpha,L}G^+)$ taking  values in partial isometries such that we have
\begin{equation}\label{relations for corner system}
u_t a u_t^*=\alpha_t(a), \qquad u_t^* a u_t=L_t(a), \qquad a\in A, \, t\in G^+,
\end{equation}
and  the elements of the form
$$
a=\sum_{t\in F} u_t^*a_{-t} +a_0+  \sum_{x\in F} a_tu_t , \quad \ F\subseteq G^+\setminus\{0\}, \ \ \vert F\vert < \infty,
$$
where $a_{t}\in A \overline{\alpha}_t(1)$, $a_{-t}\in \overline{\alpha}_t(1)A$, form a dense $*$-subalgebra of $A\rtimes_{\alpha,L}G^+$. 

The crossed product $A\rtimes_{\alpha,L}G^+$ is a universal $C^*$-algebra for pairs $(\pi,U)$ satisfying the equivalent conditions (i)-(iv) in Proposition \ref{Proposition for representations}, in the sense that for any such pair the assignments 
\begin{equation}\label{maps to be extended to the Fell bundle}
(\pi\rtimes U)(a_t u_t) \longmapsto \pi(a_t)U_t, \qquad (\pi\rtimes U)(u_t^*a_{-t}) \longmapsto U_t^*\pi(a_{-t}), \qquad t\in G^+,
\end{equation}
extend to a non-degenerate representation $\pi\rtimes U$ of $A\rtimes_{\alpha,L}G^+$  and every non-degenerate representation of $A\rtimes_{\alpha,L}G^+$ arises this way.
\end{thm}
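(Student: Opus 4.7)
The plan is to realise each $u_t$ as a multiplier of $A\rtimes_{\alpha,L}G^+=C^*(\B)$ living in the bundle fibre of degree $t$, and then derive both the explicit dense subalgebra and the universal property from the universal property of $C^*(\B)$.

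First, I would construct $u_t$ for $t\in G^+$ by exploiting the projection $\overline{\alpha}_t(1)\in M(A)$ and an approximate unit $\{e_\lambda\}$ of $A$: the net $\{e_\lambda\overline{\alpha}_t(1)\delta_t\}\subseteq B_t$ is uniformly bounded, and a direct inspection of \eqref{mul1} shows that for every homogeneous $a\delta_g\in\bigoplus_g B_g$ both the left and right products $(e_\lambda\overline{\alpha}_t(1)\delta_t)\cdot(a\delta_g)$ and $(a\delta_g)\cdot(e_\lambda\overline{\alpha}_t(1)\delta_t)$ are Cauchy in $C^*(\B)$; hence the net converges strictly to an element $u_t\in M(C^*(\B))$, which may be thought of as the formal symbol ``$\overline{\alpha}_t(1)\delta_t$''. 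Computing with \eqref{invol} and \eqref{mul1} gives $u_tu_t^*=\overline{\alpha}_t(1)$ and $u_t^*u_t=\overline{L}_t(1)$, so $u_t$ is a partial isometry, while the covariance identities \eqref{relations for corner system} reduce to $\overline{\alpha}_t(1)\alpha_t(a)\overline{\alpha}_t(1)=\alpha_t(a)$ and $L_t(\overline{\alpha}_t(1)a\overline{\alpha}_t(1))=L_t(a)$, both of which hold by \eqref{corner transfer formula}. The semigroup law $u_su_t=\overline{\alpha}_s(\overline{\alpha}_t(1))\delta_{s+t}=\overline{\alpha}_{s+t}(1)\delta_{s+t}=u_{s+t}$ is immediate. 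Once this is in place, the equalities
$$
a_tu_t=a_t\overline{\alpha}_t(1)\delta_t=a_t\delta_t\ (a_t\in A\overline{\alpha}_t(1)),\qquad u_t^*a_{-t}=a_{-t}\delta_{-t}\ (a_{-t}\in\overline{\alpha}_t(1)A),
$$
identify the finite sums in the statement with exactly the dense $*$-subalgebra $\bigoplus_{g\in G}B_g\subseteq C^*(\B)$, settling the density claim.

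For the universal property, let $(\pi,U)$ be a pair satisfying the equivalent conditions of Proposition \ref{Proposition for representations}, and let $V$ be the $*$-partial representation of $G$ extending $U$ afforded by item (iv). Define a linear map $T$ on $\bigoplus_g B_g$ by $T(a_t\delta_t):=\pi(a_t)U_t$ for $t\geq 0$ and $T(a_{-t}\delta_{-t}):=U_t^*\pi(a_{-t})$; each $T|_{B_g}$ is isometric because $U_tU_t^*=\overline{\pi}(\overline{\alpha}_t(1))$. Compatibility with \eqref{invol} is immediate from $V_{-g}=V_g^*$, while compatibility with the six cases of \eqref{mul1} is a case by case verification in which each case is settled by one application of the covariance identity $U_t\pi(a)U_t^*=\pi(\alpha_t(a))$ or $U_t^*\pi(a)U_t=\pi(L_t(a))$, combined with the partial representation identities \eqref{partial representation} to reduce mixed products of $U_s,U_t,U_s^*,U_t^*$ to the single partial isometries $U_{g+h}$ or $U_{g+h}^*$ dictated by the bundle. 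This produces a representation of $\B$, and the universal property of $C^*(\B)$ integrates it to a non-degenerate representation $\pi\rtimes U$ of $A\rtimes_{\alpha,L}G^+$ satisfying \eqref{maps to be extended to the Fell bundle}. Conversely, any non-degenerate representation $\rho$ of $A\rtimes_{\alpha,L}G^+$ restricts to a non-degenerate $\pi:=\rho|_A$, and $U_t:=\overline{\rho}(u_t)$ satisfies the covariance identities \eqref{relations for corner system} and the semigroup law, giving condition (i) of Proposition \ref{Proposition for representations}; checking on the generators $a_tu_t$ and $u_t^*a_{-t}$ yields $\rho=\pi\rtimes U$.

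The main obstacle is the case by case compatibility check for $T$, and the cleanest organisation is to route it through condition (iv) of Proposition \ref{Proposition for representations}: the partial representation axioms \eqref{partial representation} are designed exactly so that the six possibilities in \eqref{mul1} correspond to the six combinations of signs in $V_gV_h$, and the collapse identities for $V$ match the collapse identities satisfied by the corresponding products of $\VV$-maps. The strict-limit construction of $u_t$ in the first step is then routine, granted that $\overline{\alpha}_t(1)$ is a multiplier projection on $A$ and that each fibre $B_t$ is, by inspection of \eqref{mul1}, a Hilbert $A$-bimodule in the natural way.
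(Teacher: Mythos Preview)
Your approach coincides with the paper's: construct each $u_t$ as the strict limit of an approximate-unit net times $\delta_t$, verify the covariance relations and semigroup law by direct computation with \eqref{mul1}, identify the dense subalgebra with $\bigoplus_g B_g$, and then integrate a Fell-bundle representation built from $(\pi,U)$ via the universal property of $C^*(\B)$; the converse goes through the multiplier extension exactly as you describe. The paper offloads the six-case multiplicativity check to \cite[Proposition~5.3]{Kwa-Leb}, whereas you propose to organise it through the partial representation $V$ of condition~(iv), which is an equivalent bookkeeping device.

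One small correction: your claim that each $T|_{B_g}$ is isometric is false in general --- for $a_t\delta_t\in B_t$ one has $\|T(a_t\delta_t)\|^2=\|\pi(L_t(a_t^*a_t))\|$ while $\|a_t\delta_t\|^2=\|L_t(a_t^*a_t)\|$, and these agree only when $\pi$ is faithful on the relevant range. This is harmless, however, since isometry is never used: once $T$ is shown to respect $*$ and multiplication it is automatically contractive on each fibre (because $T|_{B_e}=\pi$ is a $*$-homomorphism), and that is all the universal property of $C^*(\B)$ requires.
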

\begin{proof} Recall that we identify $B_0=A\delta_0$ with $A$. In particular, $A$  
 is a non-degenerate $C^*$-subalgebra of $A\rtimes_{\alpha,L}G^+$, that is $A(A\rtimes_{\alpha,L}G^+)=A\rtimes_{\alpha,L}G^+$. Moreover, if we let $t\in G^+$ and $\{\mu_\lambda\}$ 
be  an approximate unit in $A$, then using \eqref{mul1}  we get
$$
\lim_{\lambda} \mu_\lambda\delta_t a\delta_0=\mu_\lambda \alpha_t (a) \delta_t=\alpha_t (a) \delta_t\,\, \textrm{ and }\,\,\lim_{\lambda} (\mu_\lambda\delta_t)^* a\delta_0=\overline{\alpha_t}(1) \mu_\lambda  a \delta_{-t}=\overline{\alpha_t}(1)   a \delta_{-t}. 
$$
Therefore, we conclude that  $\lim_{\lambda} \mu_\lambda \delta_t$ converges strictly to an element of $M(A\rtimes_{\alpha,L}G^+)$. Let us denote it by $u_t$, and note that we have $u_ta\delta_0=\alpha_t (a) \delta_t$ and $u_t^*a\delta_0=\overline{\alpha}_t(1)   a \delta_{-t}$. For any $t,s\in G^+$ we have
\begin{align*}
u_t u_s  (a\delta_0)&=u_t   \alpha_s(a)\delta_s=\lim_{\lambda} \mu_\lambda\delta_t \alpha_s(a)\delta_s=\lim_{\lambda} \mu_\lambda\alpha_{t}(\alpha_s(a))\delta_{t+s}
\\
&=\alpha_{t+s}(a)\delta_{t+s}= u_{t +s}  (a\delta_0).
\end{align*}
It follows that $G^+\ni t \stackrel{u}{\longmapsto} u_t \in M(A\rtimes_{\alpha,L}G^+)$ is a semigroup homomorphism. We have
$$
u_t a\delta_0 u_t^*=\alpha_t(a)\delta_0, \qquad u_t^* a\delta_0 u_t=L_t(a)\delta_0, \qquad a\in A, \, t\in G^+,
$$
which follows from the following calculations, where $b\in A$ is arbitrary:
$$
u_t (a\delta_0 )u_t^*  (b\delta_0)=\alpha_t(a)\delta_t \cdot \overline{\alpha}_t(1)b\delta_{-t}=\alpha_t(a)\overline{\alpha}_t(1)b \delta_0=\alpha_t(a)\delta_0 \cdot b \delta_0,
$$
$$
u_t^* (a\delta_0 )u_t  (b\delta_0)=\overline{\alpha}_t(1)a\delta_{-t} \cdot \alpha_t(b)\delta_{t}=L_t(\overline{\alpha}_t(1)a \alpha_t(b)) \delta_0
=L_t(a )b \delta_0
=L_t(a )\delta_0 \cdot b \delta_0.
$$
This proves the first part of the assertion, because we clearly have 
$a_t\delta_t=a_t u_t$ and $a_{-t}\delta_{-t}=u_{-t}^*a_{-t} $ for any $a_t\delta_t\in B_t$, $a_{-t}\delta_{-t}\in B_{-t}$, $t\in G^+$.

 Assume now that the pair $(\pi,U)$ satisfies   equivalent conditions (i)-(iv). In view of Definition \ref{definition of semigroup crossed product}, to see that the maps in \eqref{maps to be extended to the Fell bundle} extend to a representation of  $A\rtimes_{\alpha,L}G^+$ it suffices to check that 
$$
(\pi\rtimes U)(a\delta_g)^*=(\pi\rtimes U)\big((a \delta_g)^*\big),\quad  (\pi\rtimes U)\big(a \delta_g \cdot b \delta_h\big) =(\pi\rtimes U)(a \delta_g) (\pi\rtimes U)(a \delta_h),
$$
for all $a\delta_g\in B_g$ and $b\delta_h\in B_h$, $g,h\in G$. The first of the above relations is clear and the second one is shown in the proof of \cite[Proposition 5.3]{Kwa-Leb}.

If  $\sigma:A\rtimes_{\alpha,L}G^+\to B(H)$ is an arbitrary  non-degenerate representation it extends uniquely to a representation $\overline{\sigma}:M (A\rtimes_{\alpha,L}G^+)\to B(H)$.  Clearly, putting  
$$
\pi(a):=\sigma(a\delta_0),\,\,\,   a\in A, \qquad U_t:=\overline{\sigma}(u_t),\,\,\,  t\in G^+,
$$
we get a pair $(\pi,U)$ satisfying  equivalent conditions (i)-(iv) and  such that $\sigma=\pi\rtimes U$.
\end{proof}
\begin{rem}\label{dictionary remark}
Theorem \ref{Description  by generators and relations} allow us to assume the  following `dictionary' determining the structure of $A\rtimes_{\alpha,L}G^+$: we have
$$
a\overline{\alpha}_t(1)\delta_t= a u_t , \qquad\quad  (\overline{\alpha}_t(1)a)\delta_{-t} =u_t^*a, \qquad t\in G^+, a\in A,
$$
and then multiplication of these spanning elements is determined by relations \eqref{relations for corner system}. 
In particular, these relations imply the following commutation relations:
$$
au_{t}^*=u^{*}_t\alpha_{t}(a), \qquad u_{t} a=\alpha_{t}(a)u_{t}, \qquad  a\in A, \, t\in G^+.
$$
Note also that the above description makes explicit the fact that the $C^*$-algebra $A\rtimes_{\alpha,L}G^+$ coincides with the crossed product studied in \cite{Kwa-Leb} when $A$ is untial, see \cite[Theorem 5.4]{Kwa-Leb}. 
\end{rem}
\begin{cor}\label{Exel-Larsen crossed product} Let $(A,G^+,\alpha,L)$ be a corner system. The crossed product $A\rtimes_{\alpha,L}G^+$ is naturally isomorphic to Exel-Larsen crossed product introduced in \cite[Definition 2.2]{Larsen}.
\end{cor}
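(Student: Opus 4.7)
The plan is to invoke the universal characterization of $A\rtimes_{\alpha,L}G^+$ provided by Theorem \ref{Description by generators and relations} and show that it coincides with the universal property defining the Exel--Larsen crossed product in \cite[Definition 2.2]{Larsen}. Since both objects are defined as universal $C^*$-algebras for certain classes of covariant representations, it suffices to show that these two classes coincide.

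First, I would unwind Larsen's definition. Her crossed product is generated, up to the usual universal construction, by a non-degenerate $*$-homomorphism $\pi\colon A\to B(H)$ and a family $\{V_t\}_{t\in G^+}$ satisfying the Exel-type covariance $V_t^*\pi(a)V_t=\pi(L_t(a))$ together with the Cuntz--Pimsner/saturation relation that forces $V_t V_t^*$ to implement the conditional expectation $E_t=\alpha_t\circ L_t$ onto the corner $\overline{\alpha}_t(1)A\overline{\alpha}_t(1)$. A routine computation shows that the relation $V_t^*\pi(a)V_t=\pi(L_t(a))$ automatically forces each $V_t$ to be a partial isometry (its initial projection is $\pi\bigl(\overline{L_t}(1)\bigr)$), and that the two relations together force $V$ to be a semigroup, via the computation $V_tV_s = V_tV_s V_s^*V_s = V_{t+s}V_s^*V_s = V_{t+s}$ already carried out in the proof of Lemma \ref{extensions of semigroup homomorphisms}.

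Next, I would match Larsen's conditions with item (iii) of Proposition \ref{Proposition for representations}. Her transfer-operator covariance is exactly $U_t^*\pi(a)U_t=\pi(L_t(a))$. Her Cuntz--Pimsner relation amounts to the requirement that $\pi(\alpha_t(A))\subseteq U_t\pi(A)U_t^*$, and by \eqref{relation for domain predomain} we have $\alpha_t(A)=(\ker L_t|_{MD(L_t)})^{\bot}$, so this is precisely the second relation appearing in (iii). Thus the covariant representations of $(A,G^+,\alpha,L)$ in Larsen's sense coincide, as a class, with the pairs $(\pi,U)$ of Proposition \ref{Proposition for representations}. By Theorem \ref{Description by generators and relations}, both $A\rtimes_{\alpha,L}G^+$ and Larsen's crossed product are the universal $C^*$-algebras for this same class of representations, so the canonical map identifying the generators extends to a $*$-isomorphism.

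The main obstacle, as I see it, is not mathematical depth but rather a careful bookkeeping of conventions: Larsen works with a general Exel--Larsen system while we are restricted to the corner case, and she phrases the saturation condition via an auxiliary product system / redundancy relation rather than directly via an inclusion of images. The task is to translate her formulation into the inclusion $\pi(\alpha_t(A))\subseteq U_t\pi(A)U_t^*$ and verify that no representations are lost or gained in the translation. Once this correspondence is in place, the isomorphism is purely formal from the universal properties, and the spanning-elements description in Theorem \ref{Description by generators and relations} guarantees that the map is in fact canonical on the generating $*$-subalgebra.
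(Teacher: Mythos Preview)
Your overall strategy---show that both algebras are universal for the same class of covariant pairs $(\pi,U)$---is exactly what the paper does, though the paper routes the argument through Larsen's product-system description $\TT_X/\I_K$ (via \cite[Proposition 4.3]{Larsen}) rather than working with representations directly.

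There is, however, a genuine gap. What you dismiss as ``bookkeeping of conventions'' is in fact the substance of the proof. Larsen's crossed product is not defined via covariant representations but as the quotient of the Toeplitz algebra $\TT_X$ of a product system by a redundancy ideal $\I_K$ generated by elements $i_X(a)-i_X^{(t)}(\phi_t(a))$ for $a\in K_t:=A\alpha_t(A)A\cap\phi_t^{-1}(\K(X_t))$. To match this with condition~(iii) of Proposition~\ref{Proposition for representations} one must (a)~show that the corner hypothesis forces $A\alpha_t(A)A\subseteq\phi_t^{-1}(\K(X_t))$, so that $K_t=A\alpha_t(A)A$, and (b)~identify the abstract element $i_X^{(t)}(\phi_t(\alpha_t(a)))$ with $i_{G^+}(t)\,i_A(a)\,i_{G^+}(t)^*$, so that the redundancy relations become exactly $i_A(\alpha_t(a))=i_{G^+}(t)\,i_A(a)\,i_{G^+}(t)^*$. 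Neither step is trivial; the paper imports both from \cite[Theorem~4.22 and Lemma~3.12]{kwa-exel}. You have asserted the conclusion of this translation without supplying or citing any argument for it.

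A smaller point: your derivation of the semigroup law is both unnecessary and not well founded. Larsen's \cite[Proposition~4.3]{Larsen} already provides $i_{G^+}$ as a semigroup homomorphism, so nothing needs to be derived. Moreover, the step $V_tV_sV_s^*=V_{t+s}V_s^*$ that you invoke is the partial-representation identity from condition~(iv) of Proposition~\ref{Proposition for representations}; it is not a consequence of the two covariance relations alone, and the computation you cite (which actually appears in the proof of (iv)$\Rightarrow$(i) in Proposition~\ref{Proposition for representations}, not in Lemma~\ref{extensions of semigroup homomorphisms}) presupposes it.
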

\begin{proof} By \cite[Proposition 4.3]{Larsen}, we may identify  Exel-Larsen crossed product for $(A,G^+,\alpha,L)$ with the quotient $C^*$-algebra $\TT_X/\I_K$ where $\TT_X$ is the Toeplitz algebra of a  product system $X=\bigsqcup_{t\in G^{+}} X_t$ naturally associated to $(A,G^+,\alpha,L)$, see \cite[Proposition 3.1]{Larsen}, and $\I_K$ is an ideal in $\TT_X$  generated by differences 
$$
i_X(a) - i_X^{(t)}(\phi_t(a)), \qquad \textrm{for }a\in K_t:=A\alpha_t(A)A\cap \phi_t^{-1}(\K(X_t)) \textrm{ and }  t \in G^+, 
$$
where $i_X:X\to \TT_X$ is a universal representation of $X$ in $\TT_X$, $i^{(t)}_X:\K(X_t)\to \TT_X$ is the associated homomorphism, and $\phi_t:A\to \LL(X_t)$ is the left action of $A$ on $X_t$, $t\in G^+$. For more details see \cite{Larsen}. 
As shown in \cite[Proposition 4.3]{Larsen}, there exists a semigroup homomorphism $i_{G^+}:G^+\to M(\TT_X)$ such that putting  $i_A:=i_X|_{X_0}=i_X|_{A}$ we get that 
$$
\TT_X=C^*(\bigcup_{t\in G^{+}} i_A(A)i_{G^+}(t)),\quad\textrm{and}\quad 
i_{G^+}(t)^*i_A(a)i_{G^+}(t)=i_A(L_t(a)), \qquad a\in A,\,\, t\in G^+.
$$
Moreover, the latter picture of $\TT_X$ is universal (note that the author of \cite{Larsen} considers also additional relations $i_{G^+}(t)i_A(a)=i_A(\alpha_t(a))i_{G^+}(t)$, $a\in A$, $t\in G^+$, but by \cite[Proposition 4.3]{kwa-exel} they are automatic). In particular, the assignments 
$$
i_A(a)i_{G^+}(t) \longmapsto a u_t, \qquad a\in A,\, t\in G^+,
$$
determine a well-defined surjective homomorphism $\Phi:\TT_X\to A\rtimes_{\alpha,L}G^+$. In the proof of \cite[Theorem 4.22]{kwa-exel} it is shown that, under our assumptions, we have $A\alpha_t(A)A\subseteq  \phi_t^{-1}(\K(X_t))$. Hence $K_t=A\alpha_t(A)A$. Another argument in the proof of \cite[Theorem 4.22]{kwa-exel}, cf. also \cite[Lemma 3.12]{kwa-exel}, shows that 
$$
i_X^{(t)}(\phi_t(\alpha_t(a)))=i_{G^+}(t) i_A(a) i_{G^+}(t)^*, \qquad a\in A,\, t\in G^+.
$$
This allows us to conclude that $\I_K$ is generated by the differences 
$$
i_A(\alpha_t(a)) - i_{G^+}(t) i_A(a) i_{G^+}(t)^*, \qquad \textrm{for }a\in A \textrm{ and }  t \in G^+.
$$
Accordingly, $\I_K\subseteq \ker\Phi$ and $\Phi$ factors through to a surjective homomorphism $\Psi:\TT_X/\I_K\to A\rtimes_{\alpha,L}G^+$. By the universality of $A\rtimes_{\alpha,L}G^+$, $\Psi$ admits an inverse. Hence $\TT_X/\I_K\cong A\rtimes_{\alpha,L}G^+$. 
\end{proof}

\subsection{Partial dynamical systems dual to  semigroup corner systems}
Let $(A,G^+,\alpha,L)$ be a  semigroup corner system. For each  $ t\in G^+$, $L_t(A)$ is an ideal in $A$ and  $\alpha_t(A)$ is a hereditary subalgebra of $A$. Thus we  may treat $\widehat{L_t(A)}$ and $\widehat{\alpha_t(A)}$ as open subsets of $\SA$.  Then the mutually inverse isomorphisms $\alpha_t:L_t(A)\to \alpha_t(A)$ and  $L_t:\alpha_t(A)\to L_t(A)$ give rise to partial homeomorphisms of  $\SA$:
$$
\widehat{\alpha}_t([\pi]):=[\pi\circ \alpha_t], \qquad \widehat{L}_{t}([\pi]):=[\pi\circ L_t],
$$ 
cf. \cite[Section 4.5]{kwa-rever} for a detailed description of these maps. Using  the group interaction \eqref{corner interaction} we can express it in a more symmetric way. Namely, we have
homeomorphisms
$$
\widehat{\VV}_g:\widehat{\VV_g(A)}\to \widehat{\VV_{-g}(A)} \qquad\textrm{where}\qquad \widehat{\VV}_g([\pi])=[\pi\circ \VV_g], \qquad g \in G,
$$
and we assume the identification $\widehat{\VV_g(A)}=\{[\pi]\in \SA: \pi(\VV_g(A))\neq 0\}$.
\begin{lem}\label{lemma for duals to interactions} The family  $(\{\widehat{\VV_g(A)}\}_{g\in G}, \{\widehat{\VV}_g\}_{g\in G})$ is a partial action on $\SA$ which  coincides with the opposite to the partial action dual to the Fell bundle $\B=\{B_g\}_{g\in G}$ associated to  $(A,G^+,\alpha,L)$.

In particular,   $(\{\widehat{\VV_g(A)}\}_{g\in G}, \{\widehat{\VV}_g\}_{g\in G})$ is a lift of a  partial action  $(\{\Prim (\VV_g(A))\}_{g\in G}, \{\widecheck{\VV}_g\}_{g\in G})$ on $\Prim(A)$ given by 
$$
\widecheck{\VV}_g(P)=A\VV_{-g}(P)A, \qquad g\in G;
$$
in other words, $\widecheck{\VV}_{-t}(P)=A\alpha_t(P)A$ and $\widecheck{\VV}_t(P)=L_t(P)$, for $t\in G^+$.
\end{lem}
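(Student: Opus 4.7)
The plan is to identify $\widehat{\VV}_g$ with $\h_{-g}$ (the ``inverse'' of the dual action $\h$ of the Fell bundle $\B$), which immediately gives the partial-action property via Proposition \ref{proposition on dual partial systems} since the assignment $g \mapsto \h_{-g}$ preserves the partial-action axioms when $G$ is abelian. The descent to $\Prim(A)$ is then routine.

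First, I would compute the domain ideals $D_t = B_tB_{-t}$ for $t \in G^+$ directly from the multiplication \eqref{mul1}, obtaining $D_t = A\overline{\alpha}_t(1)A$ and $D_{-t} = \overline{L}_t(1)A = L_t(A)$. Since $\alpha_t(A) = \overline{\alpha}_t(1)A\overline{\alpha}_t(1)$ is a full hereditary $C^*$-subalgebra of $D_t$ (fullness because $D_t$ is the ideal generated by $\alpha_t(A)$), the standard identification yields $\widehat{D_t} = \widehat{\alpha_t(A)} = \widehat{\VV_t(A)}$ as open subsets of $\widehat{A}$; similarly $\widehat{D_{-t}} = \widehat{L_t(A)} = \widehat{\VV_{-t}(A)}$. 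Hence the domains and codomains of $\widehat{\VV}_{\pm t}$ and $\h_{\mp t}$ match.

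The key computation is to show $\h_t = \widehat{\VV}_{-t}$ on $\widehat{L_t(A)}$ for $t \in G^+$. Reading off from \eqref{mul1} the inner-product formulas ${}_{D_t}\langle a\delta_t, b\delta_t\rangle = ab^*$ and $\langle a\delta_t, b\delta_t\rangle_{D_{-t}} = L_t(a^*b)$ and the left $D_t$-action $c \cdot a\delta_t = ca\delta_t$ on $B_t$, I would construct for a non-degenerate representation $\pi$ of $L_t(A)$ on $H_\pi$ an isometry $V\colon H_\pi \to B_t\otimes_\pi H_\pi$ via
$$
V(h) := \lim_\lambda \alpha_t(e_\lambda)\delta_t\otimes_\pi \pi(e_\lambda)h,
$$
where $\{e_\lambda\}$ is an approximate unit in $L_t(A)$. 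The inner-product computation $\|\alpha_t(e_\lambda)\delta_t\otimes_\pi \pi(e_\lambda)h\|^2 = \langle \pi(e_\lambda)h, \pi(L_t(\alpha_t(e_\lambda^2)))\pi(e_\lambda)h\rangle = \langle h, \pi(e_\lambda^4)h\rangle \to \|h\|^2$ shows $V$ is well-defined and isometric, and the identity $\alpha_t(c)\cdot\alpha_t(e_\lambda)\delta_t = \alpha_t(ce_\lambda)\delta_t$ shows that $V$ intertwines $\pi$ with $(B_t\dashind\pi)\circ \alpha_t|_{L_t(A)}$. Using that $\alpha_t(A)$ is a full hereditary subalgebra of $D_t$, the image of $V$ is dense in the $\overline{\alpha}_t(1)$-compression of $B_t\otimes_\pi H_\pi$, so it extends to a unitary realising $[\pi\circ L_t]$ (viewed as an element of $\widehat{\alpha_t(A)}=\widehat{D_t}$) as $[B_t\dashind\pi]$. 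This yields $\h_t([\pi]) = [\pi\circ L_t] = \widehat{\VV}_{-t}([\pi])$, equivalently $\widehat{\VV}_t = \h_{-t}$. The case $t \in -G^+$ follows by symmetry using that $B_{-t} = B_t^*$ exchanges the roles of the left and right bimodule structures.

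Finally, for the descent to $\Prim(A)$: the continuous surjection $\widehat{A}\to \Prim(A)$ given by $[\pi]\mapsto \ker\pi$ factors $\widehat{\VV}$ through a partial action on $\Prim(A)$, exactly as in Proposition \ref{proposition on dual partial systems}. For $g = -t \leq 0$ and $P\in \Prim(L_t(A))$ represented by $[\pi]$ with $\ker\pi = P$, the ideal $\ker(\pi\circ L_t)$ of $\alpha_t(A)$ equals $\alpha_t(P)$, and the corresponding primitive ideal of $A$ under the hereditary identification is $A\alpha_t(P)A = A\VV_{-(-t)}(P)A$. For $g = t \geq 0$ one symmetrically obtains $\widecheck{\VV}_t(P) = L_t(P) = AL_t(P)A$, using that $L_t(A)$ is an ideal of $A$. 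These agree with the stated formulas. The main technical obstacle is the explicit verification of the intertwining isometry in the middle step—handling multiplier extensions, approximate units, and the hereditary identification—but it parallels the analogous arguments in \cite[Lemma 6.7]{kwa-szym} and \cite[Proposition 2.18]{kwa-interact}.
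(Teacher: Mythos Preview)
Your proposal is correct and follows essentially the same strategy as the paper: identify the domains $D_{\pm t}$ with $A\overline{\alpha}_t(1)A$ and $L_t(A)$, then exhibit a concrete unitary equivalence between the induced representation and a representation of the form $\pi\circ(\text{corner endomorphism})$, and finally read off the action on $\Prim(A)$ from the Rieffel correspondence.

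The execution differs in two minor but instructive ways. First, you induce via $B_t$ and build the intertwiner with an approximate-unit limit $V(h)=\lim_\lambda \alpha_t(e_\lambda)\delta_t\otimes_\pi\pi(e_\lambda)h$; the paper instead induces via $B_{-t}$ and obtains the unitary in one line as $a\delta_{-t}\otimes_\pi h\mapsto \pi(a)h$, avoiding all limits and the subsequent density argument on the $\overline{\alpha}_t(1)$-compression. Second, for the $\Prim(A)$ statement you chase kernels through the hereditary identification, whereas the paper simply invokes the ideal-level Rieffel formula \eqref{Rieffel homeomorphism} and computes $B_tIB_{-t}=(Au_t)I(u_t^*A)=A\alpha_t(I)A$ and $B_{-t}IB_t=L_t(I)$ directly from the dictionary in Remark \ref{dictionary remark}. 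Both routes are valid; the paper's choices make the argument shorter and bypass the approximate-unit bookkeeping.
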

\begin{proof} For any $t\in G^+$, we have $A\alpha_t(A)A=A\overline{\alpha}_t(1)A=B_t\cdot B_{-t}=D_t$ and $L_t(A)=L_t(\overline{\alpha}_t(1)A\overline{\alpha}_t(1))=B_{-t}\cdot B_t=D_{-t} $. Thus, with our identifications,  we have $\widehat{L_t(A)}=\widehat{D}_{-t}$ and
$$
\widehat{\alpha_t(A)}=\{[\pi]\in \SA: \pi(\alpha_t(A))\neq 0\}=\{[\pi]\in \SA: \pi(A\alpha_t(A)A)\neq 0\}=\widehat{D}_t.
$$
Let now $\pi:A\to \B(H)$ be   an irreducible representation   such that  $\pi(\alpha_t(A))\neq 0$.  Then $\widehat{\alpha}_t([\pi])$ is the equivalence class of the representation $\pi\circ\alpha_t:A\to B(\pi(\alpha_t(A))H)$.  For any $a_i\delta_{-t} \in B_{-t}=\overline{\alpha}_t(1)A\delta_{-t}$, $h_i\in H$,  $i=1,...,n$,  we have 
$$
\|\sum_{i} a_i \delta_{-t}  \otimes_\pi h_i\|^2=\|\sum_{i,j} \langle h_i, \pi( a_i^*a_j)h_j\rangle_p\|= \|\sum_{i} \pi(a_i)h_i\|^2.
$$
Since $\pi(\alpha_t(A))H=\pi(\overline{\alpha}_t(1)A)H$, we see  that $a\delta_{-t}\otimes_\pi h \mapsto \pi(a)h $  yields a unitary operator $U:B_{-t}\otimes_{\pi} H \to \pi(\alpha_t(A))H$. Furthermore, for $a\in A$, $b\in \alpha_t(A)$ and $h\in H$ we have
\begin{align*}
 \Big( B_{-t}\dashind^{D_t}_{D_{-t}}(\pi)(a) U^*\Big) \pi(b)h&= B_{-t}\dashind^{D_t}_{D_{-t}}(\pi)(a)\,   (b\delta_{-t} \otimes_\pi h)= (\alpha_t(a)b)\otimes_\pi h
\\
&=(\alpha_t(a)b\delta_{-t})\otimes_\pi h =  \Big(U^* (\pi\circ \alpha_t)(a)\Big) \pi(b)h.
\end{align*}
Hence   $U$  intertwines $B_{-t}\dashind^{D_t}_{D_{-t}}$ and  $\pi\circ\alpha_t$. Accordingly,  $\h_{-t}=\widehat{\alpha}_t$ and  $\h_{t}=\widehat{\alpha}_t^{-1}=\widehat{L}_{t}$. This proves the first part of the assertion. To show the second part, we use the `dictionary' from Remark \ref{dictionary remark}. Then it is immediate that the corresponding Rieffel homeomorphisms, cf. \eqref{Rieffel homeomorphism},  are given by 
$$
h_t(I)=(A u_t)  I (u_t^*A) = A\al_t(I)A, 
$$
$$
h_{-t}(I)=(u_t^*A)   I (A u_t)  = L_t(I)=AL_t(I)A,
$$
for any $I\in \I(A)$ and $t\in G^+$.
\end{proof}

\begin{defn}
We call  $(\{\widehat{\VV_g(A)}\}_{g\in G}, \{\widehat{\VV}_g\}_{g\in G})$ and  $(\{\Prim (\VV_g(A))\}_{g\in G}, \{\widecheck{\VV}_g\}_{g\in G})$  %$(\{\Prim (\VV_g(A))\}_{g\in G}, \{\VV_g^{\I}\}_{g\in G})$ 
described above  \emph{partial dynamical systems dual to the interaction} $\VV$. 
\end{defn}
Before we state the main result of this subsection we need a lemma and a definition. 
\begin{lem}\label{lemma for invariant ideals}
If $I\in \I(A)$, then the following conditions are equivalent:
\begin{itemize}
\item[(i)] $\alpha_t(I)=\overline{\alpha}_{t}(1)I\overline{\alpha}_{t}(1)$ for every $t\in G^+$,
\item[(ii)] $L_t(I)=\overline{L}_{t}(1)I$  for every $t\in G^+$,
\item[(iii)] $\VV_g(I)\subseteq I$  for every $g\in G$, where $\VV$ is the group interaction
\item[(iv)] $\widehat{I}$ is invariant under the partial action $(\{\widehat{\VV_g(A)}\}_{g\in G}, \{\widehat{\VV}_g\}_{g\in G})$.
\end{itemize}
In particular, if the above equivalent conditions hold, then we have a quotient semigroup corner system $(A/I,G^+,\alpha^I,L^I)$ where $\alpha_t^I(a)=a +I$ and   $L^I_t(a)=a +I$, and its associated group interaction  is given by  $\VV_g^I(a)=a + I$, for $a\in A$, $t\in G^+$, $g\in G$.
\end{lem}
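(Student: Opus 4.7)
The plan is to prove the equivalence (i)$\Leftrightarrow$(ii)$\Leftrightarrow$(iii) by direct manipulation using the corner formula \eqref{relation for complete operators}, to deduce (iii)$\Leftrightarrow$(iv) from the two previously established lemmas, and finally to verify routinely that invariance suffices to pass $(A,G^+,\alpha,L)$ to the quotient.

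The key algebraic identity I will need is $L_t(\alpha_t(a))=\overline{L}_t(1)a$ for all $a\in A$: indeed $\alpha_t(\overline{L}_t(1)a)=\overline{\alpha}_t(\overline{L}_t(1))\alpha_t(a)=\overline{\alpha}_t(1)\alpha_t(a)=\alpha_t(a)$, while $\overline{L}_t(1)a\in\overline{L}_t(1)A=L_t(A)$ and $\alpha_t$ is injective on $L_t(A)=(\ker\alpha_t)^{\bot}$, so applying $L_t$ to $\alpha_t(L_t \alpha_t(a))=\overline{\alpha}_t(1)\alpha_t(a)\overline{\alpha}_t(1)=\alpha_t(a)=\alpha_t(\overline{L}_t(1)a)$ yields the claim. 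With this in hand, assuming only that $\alpha_t(I)\subseteq I$ and $L_t(I)\subseteq I$ for every $t\in G^+$ (which together is precisely (iii)), the equality in (i) follows from $\alpha_t(I)\subseteq I\cap\overline{\alpha}_t(1)A\overline{\alpha}_t(1)=\overline{\alpha}_t(1)I\overline{\alpha}_t(1)$ for the forward inclusion, and from $\overline{\alpha}_t(1)a\overline{\alpha}_t(1)=\alpha_t(L_t(a))\in\alpha_t(I)$ (for $a\in I$) for the reverse. Similarly (ii) follows from $L_t(I)\subseteq I\cap\overline{L}_t(1)A=\overline{L}_t(1)I$ and, for the reverse, from $\overline{L}_t(1)a=L_t(\alpha_t(a))\in L_t(I)$.

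The converse implications (i)$\Rightarrow$(iii) and (ii)$\Rightarrow$(iii) are then short: given (i), for $a\in I$ write $\overline{\alpha}_t(1)a\overline{\alpha}_t(1)=\alpha_t(b)$ with $b\in I$, apply $L_t$ and use $L_t\circ\alpha_t(b)=\overline{L}_t(1)b$ to conclude $L_t(a)=\overline{L}_t(1)b\in I$; the case of (ii) is symmetric, using that $\alpha_t(a)=\alpha_t(\overline{L}_t(1)a)$ lies in $\alpha_t(L_t(I))\subseteq I$. The equivalence (iii)$\Leftrightarrow$(iv) is almost immediate from the two lemmas already available: by Lemma \ref{lemma for duals to interactions} the partial action dual to $\B$ and $(\{\widehat{\VV_g(A)}\}_{g\in G},\{\widehat{\VV}_g\}_{g\in G})$ are opposites of each other and hence share the same invariant sets; by Lemma \ref{lemma on invariance} invariance of $\widehat{I}$ is equivalent to $\B$-invariance of $I$, and the dictionary of Remark \ref{dictionary remark} gives $B_tIB_{-t}=A\alpha_t(I)A$ and $B_{-t}IB_t=AL_t(I)A$, so $\B$-invariance reduces exactly to (iii).

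Finally, the ``in particular'' clause is a routine verification. The invariance in (iii) makes $\alpha^I_t\colon A/I\to A/I$ and $L^I_t\colon A/I\to A/I$ well defined, and they inherit from $\alpha$ and $L$ the semigroup law, the transfer identity, and extendability. The corner identity \eqref{relation for complete operators} for $(A/I,\alpha^I_t,L^I_t)$ follows from the one for $(A,\alpha_t,L_t)$ by passing to the quotient, since the canonical extension $M(A)\to M(A/I)$ sends $\overline{\alpha}_t(1)$ to $\overline{\alpha^I_t}(1)$. The formula for $\VV^I$ is then immediate from \eqref{corner interaction}. The main point to be careful about — and the only nontrivial technical step — is the identity $L_t\circ\alpha_t=\overline{L}_t(1)\cdot$, which is not stated elsewhere in the paper but follows from \eqref{relation for complete operators} as indicated above and drives both non-trivial directions of the main equivalence.
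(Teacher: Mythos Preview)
Your argument is correct and self-contained. The paper, by contrast, does not prove this lemma directly: it simply cites \cite[Lemma 2.22]{kwa-interact}, which establishes the equivalence in the unital case, and remarks that the proof carries over to the non-unital setting; the ``in particular'' clause is then declared clear. So you have supplied what the paper outsources. Your route hinges on the identity $L_t(\alpha_t(a))=\overline{L}_t(1)a$ together with the defining corner relation $\alpha_t(L_t(a))=\overline{\alpha}_t(1)a\overline{\alpha}_t(1)$, and these two formulas make the cycle (i)$\Leftrightarrow$(ii)$\Leftrightarrow$(iii) completely transparent. For (iii)$\Leftrightarrow$(iv) you correctly reduce to $\B$-invariance via Lemmas \ref{lemma on invariance} and \ref{lemma for duals to interactions}; note that in the computation $B_{-t}IB_t=AL_t(I)A$ one is implicitly using that $L_t(I)$ is already an ideal in $A$ (which follows because $L_t|_{\alpha_t(A)}$ is an isomorphism onto the complemented ideal $L_t(A)$), so that $AL_t(I)A=L_t(I)$. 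The payoff of your approach is that the reader does not need to locate the cited reference and check the passage from unital to non-unital; the payoff of the paper's approach is brevity.
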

\begin{proof} The equivalence of conditions (i)-(iv)  follows from \cite[Lemma 2.22]{kwa-interact}, which was proved for unital  $A$  but the proof carries over to the general case. The second part of the assertion is now clear.
\end{proof}
\begin{defn}\label{invariant ideals for corner systems definition} If $I\in \I(A)$ satisfies the equivalent conditions   (i)-(iv) in Lemma \ref{lemma for invariant ideals} we  call $I$  an \emph{invariant ideal} for $(A,G^+,\alpha,L)$.

\end{defn}
\begin{thm}\label{first of main results of the section}
Let $(A,G^+,\alpha,L)$ be a semigroup corner system, $\{\VV_g\}_{g\in G}$ its associated group interaction, and $\widehat{\VV}=(\{\widehat{\VV_g(A)}\}_{g\in G}, \{\widehat{\VV}_g\}_{g\in G})$  and $\widecheck{\VV}=(\{\Prim (\VV_g(A))\}_{g\in G}, \{\widecheck{\VV}_g\}_{g\in G})$ the dual partial actions. Then 
\begin{itemize}
\item[(i)] If  $\widehat{\VV}$ is topologically free, then every pair $(\pi,U)$ satisfying  equivalent conditions (i)-(iv) in Proposition \ref{Proposition for representations} such that  $\pi$ is faithful gives rise to a faithful representation $\pi\rtimes U$ of $A\rtimes_{\alpha,L}G^+$.
\item[(ii)] If  $\widehat{\VV}$ is residually topologically free, then the map
$
 J\mapsto J\cap A
$
is a homeomorphism from $\I(A\rtimes_{\alpha,L}G^+)$ onto the subspace of $\I(A)$ consisting of  invariant ideals for $(A,G^+,\alpha,L)$.
\item[(iii)] If  $\widehat{\VV}$ is residually topologically free, $A$ is separable and $G^+$ is countable then we have a homeomorphism
$$
\Prim(A\rtimes_{\alpha,L}G^+)\cong \OO(\Prim A), 
$$
where $\OO(\Prim A)$ is the quasi-orbit space associated to  $\widecheck{\VV}$.
\end{itemize}
\end{thm}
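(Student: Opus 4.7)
The plan is to reduce all three assertions to the corresponding results for the Fell bundle $\B=\{B_g\}_{g\in G}$ associated with $(A,G^+,\alpha,L)$ via Proposition \ref{Fell bundle of corner system}. Since $G$ is a totally ordered abelian group it is amenable, hence $\B$ is amenable by \cite[Theorem 20.7]{exel-book}, and therefore
\[
A\rtimes_{\alpha,L}G^+ \;=\; C^*(\B) \;=\; C^*_r(\B).
\]
In particular, $\B$ is exact (both sequences \eqref{sequence which is always exact} and \eqref{sequence to be exact} coincide), which is what allows us to apply Theorem \ref{Sierakowski's ;) theorem} cleanly. By Lemma \ref{lemma for duals to interactions}, the partial dynamical system dual to $\B$ is the opposite of $\widehat{\VV}$; since inverting partial homeomorphisms preserves fixed-point sets and orbits, topological freeness of $\widehat\VV$ is equivalent to topological freeness of the dual of $\B$, and by combining Lemmas \ref{lemma on invariance}, \ref{lemma for duals to interactions}, \ref{lemma for invariant ideals} one sees that the corresponding notions of invariant subset of $\SA$, $\B$-invariant ideal of $A=B_0$, and invariant ideal for $(A,G^+,\alpha,L)$ in the sense of Definition \ref{invariant ideals for corner systems definition}, all coincide. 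Consequently residual topological freeness of $\widehat\VV$ is equivalent to residual topological freeness of the partial action dual to $\B$.

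For (i), Theorem \ref{uniqueness theorem for fell bundles} gives that $\B$ enjoys the intersection property. If $(\pi,U)$ is a pair as in Proposition \ref{Proposition for representations} with $\pi$ faithful, Theorem \ref{Description  by generators and relations} gives a $*$-homomorphism $\pi\rtimes U$ defined on $A\rtimes_{\alpha,L}G^+=C^*_r(\B)$ whose restriction to $B_0=A$ equals $\pi$. Thus $\ker(\pi\rtimes U)\cap A=\{0\}$, and the intersection property forces $\ker(\pi\rtimes U)=\{0\}$.

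For (ii), residual topological freeness of $\widehat\VV$ combined with Corollary \ref{residual topological freeness corollary} yields the residual intersection property of $\B$. Together with exactness of $\B$, Theorem \ref{Sierakowski's ;) theorem} then gives a lattice homeomorphism $\I(A\rtimes_{\alpha,L}G^+)\cong \I^\B(A)$ via $J\mapsto J\cap A$, and the identifications recalled above match $\I^\B(A)$ with the set of ideals of $A$ invariant for the corner system. For (iii), under the separability assumptions $A\rtimes_{\alpha,L}G^+=C^*(\B)$ is separable (each $B_t$ is a closed subspace of $A$ and $G$ is countable), so Theorem \ref{Primitive ideal space description} applies to $\B$ and yields $\Prim(A\rtimes_{\alpha,L}G^+)\cong \OO(\Prim A)$ for the quasi-orbit space of the partial action of $G$ on $\Prim(A)$ dual to $\B$, which by Lemma \ref{lemma for duals to interactions} has the same orbits (and therefore the same quasi-orbit space) as $\widecheck{\VV}$.

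The routine but main bookkeeping obstacle is the dictionary between the four languages in play (the bundle $\B$, the interaction group $\VV$, the semigroups $\alpha$ and $L$), and in particular verifying that the three notions of invariant ideal match up; all of this is isolated in Lemmas \ref{lemma on invariance}, \ref{lemma for duals to interactions}, and \ref{lemma for invariant ideals}. A subtler point, which is what makes the reduction to the Fell-bundle framework painless, is the appeal to amenability of $G$ to ensure that $A\rtimes_{\alpha,L}G^+$ coincides with $C^*_r(\B)$, so that the criteria of Section~3 (which are formulated for the reduced cross-sectional algebra) can be invoked verbatim for the full crossed product constructed in Definition \ref{definition of semigroup crossed product}.
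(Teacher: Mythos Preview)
Your proposal is correct and follows essentially the same route as the paper: reduce everything to the associated Fell bundle via Lemma \ref{lemma for duals to interactions}, invoke amenability of $G$ to get $C^*(\B)=C^*_r(\B)$, and then apply Theorem \ref{uniqueness theorem for fell bundles}, Corollary \ref{residual topological freeness corollary}, and Theorem \ref{Primitive ideal space description} for parts (i), (ii), and (iii) respectively. The paper's proof is terser and leaves implicit the points you spell out (that passing to the opposite partial action preserves topological freeness and orbits, and that the several notions of invariant ideal coincide), but these are exactly the bookkeeping details isolated in the lemmas you cite.
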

\begin{proof} Since $G$ is amenable we have $A\rtimes_{\alpha,L}G^+=C^*(\B)=C^*_r(\B)$ for the associated Fell bundle.

(i) By Lemma \ref{lemma for duals to interactions} and Theorem \ref{uniqueness theorem for fell bundles}, $A\rtimes_{\alpha,L}G^+=C^*_r(\B)$ has the intersection property. Since $\ker(\pi\rtimes U)\cap A=\ker\pi=\{0\}$, we conclude that $\ker(\pi\rtimes U)=\{0\}$.

(ii) Apply Lemma \ref{lemma for duals to interactions} and Corollary  \ref{residual topological freeness corollary}.

(iii) Apply Lemma \ref{lemma for duals to interactions} and Theorem \ref{Primitive ideal space description}.
 
 \end{proof}

\subsection{Purely infinite crossed products  for semigroup corner systems}

We fix a semigroup corner system $(A,G^+,\alpha,L)$. Let $\B=\{B_g\}_{g\in G}$ be its associated Fell bundle, and $\VV=\{\VV_g\}_{g\in G}$ be its associated group interaction.
\begin{lem}\label{aperiodicity for corner systems lemma}
The following conditions are equivalent
\begin{itemize}
\item[(i)]  $\B=\{B_g\}_{g\in G}$ is aperiodic,
\item[(ii)] for  each $t\in G^+\setminus\{0\}$, each $a\in A$ and every hereditary
subalgebra $D$ of $A$ 
$$\inf \{\|d a \alpha_t(d)\| : d\in D,\,d \geq 0,\,\, \|d\|=1\}=0.$$
\item[(iii)] for  each $t\in G^+\setminus\{0\}$, each $a\in A$ and every hereditary
subalgebra $D$ of $A$ 
$$\inf \{\|d L_t((da)^*da)d\| : d\in D,\,d \geq 0,\,\, \|d\|=1\}=0.$$

\item[(iv)] for  each $g\in G\setminus\{0\}$, each $a\in A$ and every hereditary
subalgebra $D$ of $A$ 
$$\inf \{\|d \VV_g((da)^*da)d\| : d\in D,\,d \geq 0,\,\, \|d\|=1\}=0.$$

\end{itemize}
\end{lem}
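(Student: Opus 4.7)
The approach is to translate aperiodicity of $\B$ (condition (i)) into an explicit norm formula using the dictionary of Remark \ref{dictionary remark}, and then match this with the expressions appearing in (ii), (iii), (iv). Writing a generic element of $B_t$ (for $t\in G^+\setminus\{0\}$) as $cu_t$ with $c\in A\overline{\alpha}_t(1)$, and of $B_{-t}$ as $u_t^*c$ with $c\in \overline{\alpha}_t(1)A$, and repeatedly using the relations $u_ta=\alpha_t(a)u_t$, $au_t^*=u_t^*\alpha_t(a)$, $u_t^*au_t=L_t(a)$ and $u_tu_t^*=\overline{\alpha}_t(1)$ in $M(A)$, I would first establish the two identity chains
\begin{align*}
\|d(cu_t)d\|^2 &= \|dc\alpha_t(d)\|^2 = \|dL_t((dc)^*(dc))d\|, \\
\|d(u_t^*c)d\|^2 &= \|\alpha_t(d)cd\|^2 = \|dL_t((cd)(cd)^*)d\|,
\end{align*}
valid for every $d\in D^+$ of norm one. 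The first equality in each line comes from $u_tu_t^*=\overline{\alpha}_t(1)$ together with $\alpha_t(d)\overline{\alpha}_t(1)=\alpha_t(d)$; the second uses the transfer-operator identity $L_t(\alpha_t(a)x\alpha_t(a))=aL_t(x)a$ after the collapse $u_t^*(\cdot)u_t=L_t(\cdot)$.

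With these formulas in hand the three equivalences become routine. For (i) $\Leftrightarrow$ (ii), the first identity in each line shows that aperiodicity on the fibre $B_{\pm t}$ is exactly condition (ii) restricted to $a\in A\overline{\alpha}_t(1)$, and this restriction is harmless because $\|da\alpha_t(d)\|=\|da\overline{\alpha}_t(1)\alpha_t(d)\|$. For (i) $\Leftrightarrow$ (iii), the second identity yields $\|db_gd\|^2=\|dL_t((da)^*(da))d\|$ with $a=c$ or $a=c^*$ lying in $A\overline{\alpha}_t(1)$, and once more the restriction on $a$ is vacuous because $L_t(a^*d^2a)=L_t((a\overline{\alpha}_t(1))^*d^2(a\overline{\alpha}_t(1)))$ directly from the definition of $L_t$.

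For (iv), observe that $\VV_{-t}=L_t$ makes the $g\le 0$ part of (iv) coincide with (iii), so the implication (iv) $\Rightarrow$ (iii) $\Leftrightarrow$ (i) is immediate. Conversely, (iii) gives the $g\le 0$ half of (iv) for free, while for $g=t>0$ the algebraic identity
\begin{align*}
\|d\alpha_t((da)^*(da))d\| = \|\alpha_t(d)\alpha_t(a)d\|^2 = \|d\alpha_t(a^*)\alpha_t(d)\|^2
\end{align*}
exhibits the quantity in (iv) as a particular instance of (ii) with $b=\alpha_t(a^*)\in A$, closing the cycle (i) $\Leftrightarrow$ (ii) $\Leftrightarrow$ (iii) $\Leftrightarrow$ (iv).

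The main (rather mild) obstacle is the careful bookkeeping of the projections $\overline{\alpha}_t(1)$ and $\overline{L}_t(1)$ that appear every time an element of $B_{\pm t}$ is paired with elements of $A$ via the partial isometries $u_t$; one must consistently verify that the resulting restrictions on the $a$-variable in the estimates of (ii)--(iv) do not weaken the conditions, which is what the absorption rules for $\overline{\alpha}_t(1)$ described above ensure.
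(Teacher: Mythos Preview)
Your proposal is correct and follows essentially the same route as the paper's own proof: both arguments use the dictionary $a\overline{\alpha}_t(1)\delta_t=au_t$, $\overline{\alpha}_t(1)a\delta_{-t}=u_t^*a$ to compute $\|d\,b_g\,d\|$ for $b_g\in B_{\pm t}$ as $\|dc\alpha_t(d)\|$ (giving (i)$\Leftrightarrow$(ii)), then invoke the transfer-operator identity $\|dc\alpha_t(d)\|^2=\|dL_t((dc)^*dc)d\|$ (giving (ii)$\Leftrightarrow$(iii)), and finally handle the $g>0$ half of (iv) via the multiplicativity of $\alpha_t$, reducing $\|d\alpha_t((da)^*da)d\|$ to $\|d\alpha_t(a^*)\alpha_t(d)\|^2$, an instance of (ii). Your extra remarks on the absorption of $\overline{\alpha}_t(1)$ make explicit a point the paper leaves implicit, but the structure of the argument is the same.
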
  
\begin{proof} Let  $t\in G^+\setminus\{0\}$ and  $a,d\in A$ where  $d \geq 0$.  Since 
$
\|d  \big(a\overline{\alpha}_t(1)\delta_t\big) d  \|=\|d a\alpha_t(d)\delta_t \|=\|d a\alpha_t(d)\|$ and  $\|d  \big(\overline{\alpha}_t(1) a\delta_{-t}\big) d  \|=\|\alpha_t(d) d a\delta_{-t} \|=\|d a^*\alpha_t(d)\|$ we see  that (i)$\Leftrightarrow$(ii).
Since 
$$
\|d a \alpha_t(d)\|^2=\| \alpha_t(d) a^*d d a \alpha_t(d)\|= \|L_t (\alpha_t(d) (da)^*da \alpha_t(d))\|=\|d L_t((da)^*da)d\|
$$
we get  (ii)$\Leftrightarrow$(iii). The implication  (iv)$\Rightarrow$(iii) is clear. Moreover, $\big($(ii)$\Leftrightarrow$(iii)$\big)$ $\Rightarrow$(iv) because  
$$
\|d \alpha_t((da)^*da)d\|=\|d \alpha_t(a^*) \alpha_t(d)\|^2.
$$
\end{proof}
\begin{defn}\label{aperiodicity for corner systems definition} We  say that a semigroup corner system $(A,G^+,\alpha,L)$ is \emph{aperiodic} if the equivalent conditions in Lemma \ref{aperiodicity for corner systems lemma} are satisfied. We say $(A,G^+,\alpha,L)$  is \emph{residually aperiodic} if the quotient system $(A/I,G^+,\alpha^I,L^I)$ is aperiodic  for every  invariant  ideal $I$ for $(A,G^+,\alpha,L)$.

\end{defn}
Now, we formulate notions of  residually infinitess and paradoxicality for corner systems. 
\begin{defn}\label{paradoxical definition} Let $a \in A^+\setminus\{0\}$. We  say that   $a$ is \emph{infinite} for  $(A,G^+,\alpha,L)$ if there is $b\in A^+\setminus\{0\}$ such that for any $\varepsilon >0$ there are elements  $t_1,...,t_{n+m}\in G^+$, and  $a_{\pm k} \in aA$,  $k=  1,  2,..., n+m$
such that
\begin{equation}\label{relation 2}
 a\approx_\varepsilon\sum_{k=1}^{n} \alpha_{t_{k}}(a_{-k}^*a_{-k}) + L_{t_{k}}(a_{k}^*a_{k}), \quad b\approx_\varepsilon\sum_{k= n+1}^{n+m} \alpha_{t_k}(a_{-k}^*a_{-k}) + L_{t_{k}}(a_{k}^*a_{k}),
\end{equation}
\begin{align}
\|a_{k}^* a_{l}\|< \frac{\varepsilon}{\max\{n^2, m^2\}} \quad\textrm{ for all }\,\,l,k =\pm 1,..., \pm(n+m), \,\, k\neq l \label{relation 3}.
\end{align}
If the above conditions hold for $b$ equal to $a$ then we  say that   $a$  is \emph{paradoxical} for $(A,G^+,\alpha,L)$. We say that $a$ is \emph{residually infinite} for  $(A,G^+,\alpha,L)$ if for every invariant ideal $I$ for $(A,G^+,\alpha,L)$ either $a\in I$ or the image of $a$ in $A/I$ is infinite for $(A/I,G^+,\alpha^I,L^I)$. \end{defn}
\begin{prop}\label{justification of paradoxicality}
If  $a \in A^+\setminus\{0\}$  is residually infinite (resp.  paradoxical) for $(A,G^+,\alpha,L)$ then it is residually infinite (resp. paradoxical) for the associated Fell bundle. 
\end{prop}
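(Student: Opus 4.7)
\begin{Proof of}{Proposition \ref{justification of paradoxicality} (plan)}
The plan is to handle the (non-residual) infinite and paradoxical cases by an explicit lift of the decomposition of $a$ into Fell bundle elements, and then bootstrap the residual case via passage to quotients. For the non-residual part, given $\varepsilon > 0$ I would apply Definition \ref{paradoxical definition} at scale $\varepsilon/4$ to obtain $t_k \in G^+$ and $a_{\pm k} \in aA$ for $k = 1, \ldots, n+m$, and lift these to Fell bundle elements
\[
y_k := \alpha_{t_k}(a_{-k})\delta_{-t_k}, \qquad z_k := a_k\overline{\alpha}_{t_k}(1)\delta_{t_k}.
\]
A Cohen--Hewitt factorization $a_{\pm k} = a c_{\pm k}$ with $c_{\pm k} \in A$, combined with the multiplication rule \eqref{mul1}, places $y_k \in a B_{-t_k}$ and $z_k \in a B_{t_k}$. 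The same rule yields $y_k^* y_k = \alpha_{t_k}(a_{-k}^* a_{-k})\delta_0$ and $z_k^* z_k = L_{t_k}(a_k^* a_k)\delta_0$, so condition \eqref{relation 2} translates into the Fell bundle approximations $\sum_{k \leq n}(y_k^* y_k + z_k^* z_k) \approx_{\varepsilon/4} a$ and $\sum_{k > n}(y_k^* y_k + z_k^* z_k) \approx_{\varepsilon/4} b$.

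For the cross-term bound, another case-by-case application of \eqref{mul1}, together with contractivity of the $C^*$-maps $\alpha_{t_k}$ and $L_{t_k}$, shows that each product between two distinct elements of $\{y_1, z_1, \ldots, y_{n+m}, z_{n+m}\}$ has norm at most $\|a_i^* a_j\|$ for some distinct $i, j \in \{\pm 1, \ldots, \pm(n+m)\}$, hence at most $\varepsilon/(4\max\{n^2, m^2\})$. Since $4\max\{n^2, m^2\} = \max\{(2n)^2, (2m)^2\}$, after relabeling the $2(n+m)$ Fell bundle elements one obtains exactly the $\B$-infiniteness witnesses at scale $\varepsilon$, with $2n$ summands for $a$ and $2m$ for $b$. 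The paradoxical case is verbatim the same with $b$ replaced by $a$ throughout.

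For the residual case I would first match $\B$-invariant ideals of $B_e = A$ with invariant ideals for $(A, G^+, \alpha, L)$: combining Lemma \ref{lemma on invariance} with the identification (Lemma \ref{lemma for duals to interactions}) of the partial action dual to $\B$ as the opposite of $\widehat{\VV}$, the equivalent conditions of Lemma \ref{lemma for invariant ideals} coincide with $\B$-invariance. For any ideal $\J = \{J_g\}_{g \in G}$ in $\B$ with $I := J_e$, the map $B_t \to (A/I)\overline{\alpha^I_t}(1)\delta_t$ sending $a\overline{\alpha}_t(1)\delta_t \mapsto (a+I)\overline{\alpha^I_t}(1)\delta_t$ has kernel $J_t$ (using $I \cap A\overline{\alpha}_t(1) = I\overline{\alpha}_t(1)$) and intertwines the Fell bundle operations, yielding a canonical identification $\B/\J \cong \B^I$, where $\B^I$ denotes the Fell bundle of $(A/I, G^+, \alpha^I, L^I)$. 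If $a + I \neq 0$, the hypothesis provides an infinite (resp.\ paradoxical) decomposition of $a + I$ in the quotient corner system, and applying the non-residual argument to that quotient produces a $\B^I$-infinite, hence $\B/\J$-infinite, decomposition of $a + I$.

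The main obstacle I expect is the cross-term bookkeeping: each of the mixed products $y_k^* y_l$, $z_k^* z_l$, $y_k^* z_l$, $z_k^* y_l$ (including the degenerate same-$k$ pairings such as $y_k^* z_k$, where the starred indices $-k$ and $k$ are still distinct) reduces via \eqref{mul1} to an element of the form $\alpha_t(a_\bullet^* a_\bullet)\overline{\alpha}_s(1)\delta_r$ or $L_t(a_\bullet^* a_\bullet)\delta_r$ and is controlled by contractivity, but one must check in every sign pattern that the resulting starred pair of indices from $\{\pm 1, \ldots, \pm(n+m)\}$ is genuinely distinct, so that the small-$\varepsilon$ bound of Definition \ref{paradoxical definition} is applicable.
\end{Proof of}
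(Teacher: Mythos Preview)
Your proposal is correct and follows essentially the same argument as the paper: the paper works in the multiplier picture, setting $b_k:=a_k u_{t_k}$ and $b_{n+k}:=a_{-k}u_{t_k}^*$ (which via the dictionary of Remark~\ref{dictionary remark} are precisely your $z_k$ and $y_k$), and then bounds cross-terms by $\|a_k^*a_l\|$ using contractivity of the partial isometries $u_t$, exactly mirroring your case-by-case use of \eqref{mul1} and contractivity of $\alpha_{t_k},L_{t_k}$. Your treatment of the residual step is in fact more explicit than the paper's, which simply invokes the coincidence of $\B$-invariant and $(A,G^+,\alpha,L)$-invariant ideals without spelling out the isomorphism $\B/\J\cong\B^I$.
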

\begin{proof} Suppose that $a\in A^+\setminus\{0\}$ is  infinite for  $(A,G^+,\alpha,L)$. Let $\varepsilon >0$ and choose $t_1,...,t_{n+m}\in G^+$, and  $a_{\pm k} \in aA$,  $k=  1,  2,..., n+m$
 as in Definition \ref{paradoxical definition} but for $\varepsilon/4$. We use the `dictionary' from Remark \ref{dictionary remark} and put
$$
b_k:=a_k u_{t_k}, \qquad b_{n+k}:=a_{-k}u_{t_k}^*, \qquad \textrm{for }k=1,...,n;
$$
$$
b_{n+k}:=a_ku_{t_k}, \qquad b_{n+m+k}:=a_{-k}u_{t_k}^*, \qquad \textrm{for }k=n+1,...,m.
$$
Clearly, $b_i \in a B_{s_i}$, for $i=1,...,2(n+m)$, where  $s_i:=t_i$,  $s_{n+i} :=-t_i$ for $i=1,...,n$; and $s_ {n+i}:=t_i$,  $s_{n+m+i}:=-t_i$ for $i=n+1,...,m$. Let $i,j =1,...,2(n+m)$. Assume that $i\neq j$. Then  $b_i= a_k u_{t_k}$ or $a_k u_{t_{-k}}^*$ and $b_j= a_l u_{t_l}$ or $b_j= a_l u_{t_{-l}}^*$. In any case $k\neq l$, and thus  by \eqref{relation 3} we get
$$
\|b_i^* b_j\| \leq \|a_k^* a_l\|  < \frac{\varepsilon}{\max\{(2n)^2, (2m)^2\}}.
$$
On the other hand, we have 
$$
\sum_{i=1}^{2n} b_i^* b_i=  \sum_{k=1}^{n} (a_k u_{t_k})^* a_k u_{t_k} +\sum_{k=1}^{n} (a_{-k}u_{t_k}^*)^* a_{-k}u_{t_k}^*=\sum_{k=1}^{n} \alpha_{t_{k}}(a_{-k}^*a_{-k}) + L_{t_{k}}(a_{k}^*a_{k}),
$$
and similarly  $\sum_{i=2n+1}^{2m} b_i^* b_i=\sum_{k=n+1}^{m} \alpha_{t_{k}}(a_{-k}^*a_{-k}) + L_{t_{k}}(a_{k}^*a_{k})$. Thus using \eqref{relation 2} we get
$$
a\approx_\varepsilon\sum_{i=1}^{2n} b_i^* b_i, \qquad b\approx_\varepsilon \sum_{i=2n+1}^{2m} b_i^* b_i.
$$
Hence $a$ is infinite for $\B=\{\B_g\}_{g\in G}$. 

Replacing, in the above argument, $b$ with $a$ one obtains that if  $a$ is  paradoxical for $(A,G^+,\alpha,L)$ then it paradoxical for $\B$. Using the fact that invariant ideals for $(A,G^+,\alpha,L)$ and $\B$-invariant ideals coincide, see Lemma \ref{lemma for duals to interactions}, one gets that if  $a$ is residually infinite for $(A,G^+,\alpha,L)$ then $a$ is residually $\B$-infinite.
\end{proof}
Now we are ready to state the conditions implying pure infiniteness of $A\rtimes_{\alpha,L}G^+$. 
\begin{thm}\label{pure infiniteness for paradoxical corner systems} Suppose that  $(A,G^+,\alpha,L)$  is a residually aperiodic semigroup corner system  and one of the following two conditions holds 
\begin{itemize} 
\item[(i)] $A$ contains finitely many invariant ideals for $(A,G^+,\alpha,L)$ and every element in $A^+\setminus\{0\}$ is  Cuntz equivalent to a residually infinite element for $(A,G^+,\alpha,L)$,
\item[(ii)] $A$ has the ideal property and every element in $A^+\setminus\{0\}$ is  Cuntz equivalent to a residually infinite element for $(A,G^+,\alpha,L)$,
\item[(iii)] $A$ is of real rank zero  and every non-zero projection in $A$ is Cuntz equivalent to a residually infinite element for $(A,G^+,\alpha,L)$.
\end{itemize}
Then  $A\rtimes_{\alpha,L}G^+$ has the ideal property and is purely infinite. 
\end{thm}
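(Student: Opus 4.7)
The plan is to reduce the theorem to its Fell-bundle counterpart, Theorem \ref{pure infiniteness for paradoxical Fell bundles}, applied to the Fell bundle $\B=\{B_g\}_{g\in G}$ associated with $(A,G^+,\alpha,L)$ via Proposition \ref{Fell bundle of corner system}. Since $G$ is an ordered abelian group, it is amenable, so the bundle $\B$ is amenable by \cite[Theorem 20.7]{exel-book}. Consequently $A\rtimes_{\alpha,L}G^+=C^*(\B)=C^*_r(\B)$, and $\B$ is automatically exact (the sequence \eqref{sequence which is always exact} coincides with \eqref{sequence to be exact}). The unit fiber of $\B$ is $B_0=A$.

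Second, I would establish the dictionary between the structures on the system side and on the bundle side. By Lemma \ref{lemma for duals to interactions} combined with Lemma \ref{lemma for invariant ideals}, the invariant ideals for $(A,G^+,\alpha,L)$ of Definition \ref{invariant ideals for corner systems definition} are precisely the $\B$-invariant ideals in $B_0=A$. In particular, condition (i) on the number of invariant ideals, and conditions (ii) and (iii) on the ideal property/real rank zero of $A$, match the corresponding hypotheses of Theorem \ref{pure infiniteness for paradoxical Fell bundles} verbatim. Next, by Proposition \ref{justification of paradoxicality}, every element of $A^+\setminus\{0\}$ that is residually infinite for $(A,G^+,\alpha,L)$ is residually $\B$-infinite; Cuntz equivalence is preserved as it depends only on the ambient $C^*$-algebra.

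Third, I would show that residual aperiodicity of $(A,G^+,\alpha,L)$ implies residual aperiodicity of $\B$. Lemma \ref{aperiodicity for corner systems lemma} already gives that aperiodicity of the system is equivalent to aperiodicity of $\B$. For the residual version, let $\J=\{J_g\}_{g\in G}$ be an ideal in $\B$ and put $I:=J_0\in\I^\B(A)$. Using the explicit description \eqref{banach spaces for Fell bundel}--\eqref{mul1} of $\B$ together with Proposition \ref{proposition on induced ideals}, one identifies $J_t=(A\overline{\alpha}_t(1)\cap I)\delta_t$ (and analogously for $-t$), and checks that the quotient Fell bundle $\B/\J$ is canonically isomorphic to the Fell bundle associated with the quotient corner system $(A/I,G^+,\alpha^I,L^I)$ from Lemma \ref{lemma for invariant ideals}. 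Then aperiodicity of $(A/I,G^+,\alpha^I,L^I)$, given by residual aperiodicity of $(A,G^+,\alpha,L)$, becomes aperiodicity of $\B/\J$ via Lemma \ref{aperiodicity for corner systems lemma}.

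Finally, with $\B$ exact and residually aperiodic, and with the hypotheses (i), (ii) or (iii) of Theorem \ref{pure infiniteness for paradoxical Fell bundles} verified through the translations above, that theorem yields that $C^*_r(\B)=A\rtimes_{\alpha,L}G^+$ has the ideal property and is purely infinite. The only mildly delicate step is the third one: verifying that the quotient Fell bundle of $\B$ by $\J$ is (canonically isomorphic to) the Fell bundle of the quotient corner system, which is a direct but bookkeeping-heavy check using the multiplication rules \eqref{mul1} and the identifications $A\overline{\alpha}_t(1)/I\overline{\alpha}_t(1)\cong (A/I)\overline{\alpha_t^I}(1)$; I expect this to be the main (though essentially routine) obstacle.
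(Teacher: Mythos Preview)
Your proposal is correct and takes essentially the same approach as the paper, which simply writes ``Apply Proposition \ref{justification of paradoxicality} and Theorem \ref{pure infiniteness for paradoxical Fell bundles}.'' You have spelled out the details that the paper leaves implicit: that amenability of $G$ gives exactness of $\B$ and $A\rtimes_{\alpha,L}G^+=C^*_r(\B)$ (this is stated in the proof of Theorem \ref{first of main results of the section}); that invariant ideals for the system coincide with $\B$-invariant ideals (used in the proof of Proposition \ref{justification of paradoxicality}); and that residual aperiodicity of the system matches residual aperiodicity of $\B$ via identifying $\B/\J$ with the Fell bundle of the quotient system, which the paper takes for granted by virtue of Definition \ref{aperiodicity for corner systems definition} and Lemma \ref{aperiodicity for corner systems lemma}(i).
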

\begin{proof}
Apply Proposition \ref{justification of paradoxicality} and Theorem \ref{pure infiniteness for paradoxical Fell bundles}.
\end{proof}
\begin{rem}\label{Eduard remark}  The main result of \cite{Ortega-Pardo} is a criterion of pure infiniteness of Stacey's crossed product $A\rtimes_\alpha \N$ of  a unital separable $C^*$-algebra  $A$ of real rank zero by an injective endomorphism $\alpha:A\to A$. This result can be deduced from Theorem \ref{pure infiniteness for paradoxical corner systems}. Indeed, the authors of \cite{Ortega-Pardo} used the fact that   $A\rtimes_\alpha \N$ is Morita equivalent to the crossed product $B\rtimes_\beta \Z$ by an automorphism $\beta:B\to B$, where $B$ is a separable $C^*$-algebra of real rank zero. They  assumed that $\beta$ satisfies the residual Rokhlin$^*$ property, cf. \cite[Definition 2.1]{s} and $\alpha$ (and therefore also $\beta$) residually contracts projections, cf. \cite[Definition 3.2]{Ortega-Pardo}. By \cite[Corollary 2.22]{s} and \cite[Theorem 10.4]{OlPe},  the former property is equivalent to residual aperiodicity of $(B, \N, \{\beta^n\}_{n\in \N}, \{\beta^{-n}\}_{n\in \N})$. The latter property readily implies that every projection in $B$ is residually infinite for $(B, \N, \{\beta^n\}_{n\in \N}, \{\beta^{-n}\}_{n\in \N})$.  Hence Theorem \ref{pure infiniteness for paradoxical corner systems}(iii) applies to $(B, \N, \{\beta^n\}_{n\in \N}, \{\beta^{-n}\}_{n\in \N})$.

\end{rem}

\end{document}